\theoremstyle{plain}
\newtheorem{theorem}{Theorem}[section]
\newtheorem{corollary}[theorem]{Corollary}
\newtheorem{proposition}[theorem]{Proposition}
\newtheorem{lemma}[theorem]{Lemma}
\theoremstyle{definition}
\newtheorem{definition}[theorem]{Definition}
\newtheorem{remark}[theorem]{Remark}
\definecolor{lyucol}{rgb}{0.5, 0.1, 0.1}
\definecolor{gray}{rgb}{0.5, 0.5, 0.5}
\newcommand{\lyu}{}
\newcommand{\bas}{}
\newcommand{\barv}{v}
\renewcommand{\P}[1]{\mathbb P\left(#1\right)}
\newcommand{\E}[1]{\mathbb E\left[#1\right]}
\newcommand{\e}{\mathrm e}
\newcommand{\N}{\mathbb N}
\newcommand{\R}{\mathbb R}
\newcommand{\Z}{\mathbb Z}
\newcommand{\wt}{\widetilde}
\newcommand{\luh}{<_{\mathrm{UH}}}
\newcommand{\lequh}{\leq_{\mathrm{UH}}}
\newcommand{\be}{\begin{equation}}
\newcommand{\ee}{\end{equation}}
\newcommand{\ba}{\begin{aligned}}
\newcommand{\ea}{\end{aligned}}
\newcommand{\dd}{\mathrm{d}}
\newcommand{\cA}{\mathcal A}
\newcommand{\cB}{\mathcal B}
\newcommand{\cC}{\mathcal C}
\newcommand{\cD}{\mathcal D}
\newcommand{\cE}{\mathcal E}
\newcommand{\cF}{\mathcal F}
\newcommand{\cG}{\mathcal G}
\newcommand{\cH}{\mathcal H}
\newcommand{\cI}{\mathcal I}
\newcommand{\cK}{\mathcal K}
\newcommand{\cL}{\mathcal L}
\newcommand{\cN}{\mathcal N}
\newcommand{\cO}{\mathcal O}
\newcommand{\cP}{\mathcal P}
\newcommand{\cQ}{\mathcal Q}
\newcommand{\cR}{\mathcal R}
\newcommand{\cS}{\mathcal S}
\newcommand{\cT}{\mathcal T}
\newcommand{\cV}{\mathcal V}
\newcommand{\cW}{\mathcal W}
\newcommand{\toindis}{\overset d\longrightarrow}
\newcommand{\toinp}{\overset{\mathbb P}{\longrightarrow}}
\newcommand{\toas}{\overset{\mathrm{a.s.}}{\longrightarrow}}
\newcommand{\ind}{\mathbbm 1}
\newlength{\dhatheight}
\pgfplotsset{compat = 1.17}
\newcommand{\eps}{\varepsilon}
\renewcommand{\P}[1]{\mathbb P\left(#1\right)}
\renewcommand{\P}[1]{\mathbb P\left(#1\right)}
\newcommand{\invisible}[2]{%
\ifthenelse{\isempty{#1}}
{}
{#2}
}
\newcommand{\Pk}[1]{\mathbb P_{k_0}\left(#1\right)}
\newcommand{\Ek}[1]{\mathbb E_{k_0}\left[#1\right]} 
\newcommand{\Pq}[1]{\mathbb P_q\left(#1\right)}
\newcommand{\Eq}[1]{\mathbb E_q\left[#1\right]}
\title{\scshape
  Dynamic random graphs with vertex removal}
\author[1]{Josep D\'iaz\footnote{Supported by grant MOTION, PID2020-112581GB-C21 from MCIN/ AEI /10.13039/501100011033}}
\author[2,3]{Lyuben Lichev}
\author[2,3]{Bas Lodewijks\footnote{Supported by grant GrHyDy ANR-20-CE40-0002}}
\affil[1]{Universitat Polit\`ecnica de Catalunya, Barcelona, Spain}
\affil[2]{Univ. Jean Monnet, Saint-Etienne, France}
\affil[3]{Institut Camille Jordan, Lyon, France}
\begin{document}

\maketitle

\begin{abstract}
We introduce and analyse a Dynamic Random Graph with Vertex Removal (DRGVR) defined as follows. At every step, with probability $p > 1/2$, a new vertex is introduced and, with probability $1-p$, a vertex chosen uniformly at random among the present ones (if any) is removed from the graph together with all edges adjacent to it. In the former case, the new vertex connects by an edge to every other vertex with probability \lyu{inversely} proportional to the number of vertices already present.

We prove that the DRGVR converges to a local limit and determine this limit. Moreover, we analyse its component structure and distinguish a subcritical and a supercritical regime with respect to the existence of a giant component. As a byproduct of this analysis, we obtain upper and lower bounds for the critical parameter. Furthermore, we provide precise expression of the maximum degree (as well as in- and out-degree for a natural orientation of the DRGVR). Several concentration and stability results complete the study.
\end{abstract}

\section{Introduction}\label{sec:intro}

Since the appearance of the first random graph models in the late 50's~\cite{ER59,Gilbert59}, the study of random graphs has attracted great interest both from theoretical and applied point of view. 

In the last twenty years, apart from the study of classical random graph theory, much effort has been made to provide and analyse accurate models for large-scale real-world networks like the internet or different P2P social networks. One key characteristic of such networks is that for most of them, the degree of the vertices follows an inverse power law, that is, the proportion of vertices with degree $d$ is approximately $cd^{-\alpha}$ for some constants $c,\alpha > 0$. 
As a consequence, the fact that the classical random graphs as $G(n,m)$ and $G(n,p)$ typically have Poisson degree distribution makes them unsuitable as models for real-world networks. An important contribution was made by Albert and Barab\'asi~\cite{Barabasi99} who proposed a scale-free dynamic network model with a preferential attachment rule. 
In their model, a new vertex is introduced at every step. At its arrival, a vertex chooses to attach to existing vertices with probability proportional to the degree of the existing vertices. Since Albert and Barab\'asi's breakthrough, the analysis of their scale-free model and its variations has been object of intensive study, see for example~\cite{Mitzenmacher01,van2017}.

Although the power-law behaviour is an important characteristic of real-world networks, it is not the only one. One natural observation is that besides the creation of vertices in a dynamic network, sometimes vertices could also disappear: 
for instance, in a social network, a user could delete their account or, in a web network, a server could break down due to failure. 
However, most dynamic network models did not consider the possibility of vertex removal. Among the few exceptions are~\cite{CL04,Cooper04,Dei10}, which both consider settings based on Albert and Barab\'asi's scale-free model. 
In the model of Chung and Lu~\cite{CL04}, each of vertex creation, vertex deletion, edge creation and edge deletion may happen at any given step with probabilities that sum up to 1. 
When a vertex is created, it connects to an existing vertex by a single edge according to a preferential attachment rule. The authors provide bounds for the diameter, the graph distance between two typical vertices, the connected components and the spectrum of the adjacency matrix. 
In the similar model of Cooper, Frieze and Vera~\cite{Cooper04}, every vertex is connected to $m\ge 1$ neighbours at its arrival. The authors analyse the degree sequence of the obtained dynamic graph and prove its scale-freeness. \bas{Finally, the tree model of Deijfen~\cite{Dei10} considers vertex creation and vertex ``death'', where dead vertices no longer make new connections but are still considered a part of the tree. A precise and general result for the limiting degree distribution is obtained.
Furthermore, the recent works of Bellin, Blanc-Renaudie, Kammerer, and Kortchemski~\cite{BelBlaKamKort23.1,BelBlaKamKort23.2} consider the height and the scaling limit of the uniform attachment tree with freezing.
There, at every step, a vertex is either added or frozen (equivalent to vertex death in the model of Deijfen).}

The notion of a dynamic network on a set of aging individuals has also been considered. Instead of deleting a vertex at a given step, which forbids later vertices to attach to it, several works~\cite{BSW13, DHKS17, GvdHW17} considered models in which new vertices connect to any fixed vertex with smaller probability as time goes by. 
The main motivation of such models comes from real-world networks like the citation network where empirical observations show that papers usually get less popular with time. 
Contrary to~\cite{CL04, Cooper04} and similarly to the model we introduce below, the authors of~\cite{GvdHW17} consider a setting where every new vertex connects to the present ones by a random number of edges. This allows new vertices to occasionally have large degrees even upon their arrival, which is another common feature of a number of real-world networks.

In this paper, we introduce and study a new model of a dynamically growing network with vertex removal based on uniform attachment.

\begin{definition}[Dynamic Random Graph with Vertex Removal (DRGVR)]\label{def:drgvr}
	\hspace{0.2em} Fix constants $\beta>0$, $\eps \in (0,1/2]$ and a sequence $(\xi_n)_{n\ge 0}$ of i.i.d.\ random variables with Bernoulli distribution with parameter $p:=1/2+\eps$. 
	We define a sequence of graphs $(G_n)_{n\ge 0} = ((V_n, E_n))_{n\ge 0}$ by initialising $G_0$ to be the empty graph and, for every $n\ge 1$, we construct $G_n$ from $G_{n-1}$ as follows. 
	If $\xi_n=1$, set $V_n = V_{n-1}\cup \{n\}$ and, conditionally on $V_{n-1}$, \bas{if $V_{n-1}\neq\emptyset$,} add each of the edges $(\{i,n\})_{i\in V_{n-1}}$ to $E_{n-1}$ independently with probability $\min\big\{\tfrac{\beta}{|V_{n-1}|},1\big\}$ to construct $E_n$ \bas{and, if $V_{n-1} = \emptyset$, set $E_n=E_{n-1}=\emptyset$.}
	If $\xi_n=0$, select a vertex uniformly at random from $V_{n-1}$ (if any) and remove it together with all edges incident to~it. \lyu{Finally, we define the \emph{mark function} $M_n:v\in V(G_n)\mapsto v/n\in [0,1]$.}
\end{definition}

\bas{The graph $G_n$ being constructed recursively, its edges may naturally be equipped with an orientation from the vertex arriving at a later step to the vertex arriving earlier.}  
In this case, the edge set is denoted by $\vv{E}_n$, and the directed graph itself by $\vv{G}_n$.

\bas{Though similar to the model of Bellin et al.~\cite{BelBlaKamKort23.1,BelBlaKamKort23.2}, te main differences with the model studied here are $(1)$ vertices and edges are not removed, frozen vertices simply cannot make new connections $(2)$ addition and freezing of vertices can also take place according to a predetermined deterministic rule, whereas we only consider addition or removal at every step according to the outcome of i.i.d.\ Bernoulli random variables $(3)$ the model of Bellin et al.\ is a tree, whereas the model studied here construct a graph (though the analysis can be adapted to a tree model, where newly added vertices connect to one other vertex, too).}

Finally, we remark that the well-studied \emph{Dubin's model}~\cite{Dur03, KW88, Shepp89} is a particular case of the DRGVR for $\eps=1/2$ (or equivalently $p=1$). In this particular setting Shepp~\cite{Shepp89} showed the existence of a sharp threshold at $\beta = 1/4$ for the appearance of a giant component, and Dereich and M\"orters later recovered the result in a more general framework, see Proposition~1.3 in~\cite{DerMor13}. Similar results were obtained in~\cite{DorMenSam01, Dur03} and for the closely related CHKNS model.

\paragraph{Notation.} Throughout the paper, we write $\N:=\{1,2,\ldots\}$ for the set of natural numbers, $\N_0:=\mathbb N\cup \{0\}$ and, for every $t\geq 1$,  $[t]:=\{1,2,\ldots,t\}$. For $x\in\R$, we let $\lceil x\rceil:=\min\{n\in\Z: n\geq x\}$ and $\lfloor x\rfloor:=\max\{n\in\Z: n\leq x\}$. For positive real sequences $(a_n,b_n)_{n\ge 1}$, we say that $a_n=o(b_n)$ if $\lim_{n\to\infty} a_n/b_n=0$, $a_n=\omega(b_n)$ if $b_n = o(a_n)$, $a_n\sim b_n$ if $\lim_{n\to\infty} a_n/b_n=1$, $a_n=\mathcal{O}(b_n)$ if there exists a constant $C>0$ such that $a_n\leq Cb_n$ for all $n\ge 1$, and $a_n=\Omega(b_n)$ if $b_n = \cO(a_n)$. 
By default, we allow the constant $C$ to depend on the parameters of the problem that are \emph{fixed}, that is, do not depend on the choice of a sufficiently large $n$; when this is not the case, we explicitly mention that the constant is absolute. 

For a graph $G$, the \emph{size} of $G$, denoted $|G|$, is the number of vertices in $G$. 
For two vertices $u,v$ in $G$, we let $u\to v$ denote the fact that $u$ is connected to $v$ by a (directed) edge.
Moreover, for two finite rooted graphs $(G,o_G)$ and $(H,o_H)$ (i.e.\  graphs with distinguished vertices, called the root), we let $(G,o_G)\cong (H,o_H)$ denote the fact that the two rooted graphs are isomorphic. \lyu{Note that, when roots are clear from the context, they are sometimes omitted to improve readability.}
\lyu{The space of finite rooted graphs is denoted by $\cG$ and is equipped with the metric 
	\[d: ((G,o_G),(H,o_H))\mapsto \inf\{2^{-r}: B_r(G,o_G)\cong B_r(H,o_H)\}.\]}
For random variables $(X_n)_{n\ge 1}$ and $X$, we let $X_n\toindis X$, $X_n\toinp X$ and $X_n\toas X$ denote convergence in distribution, in probability and almost sure convergence of $(X_n)_{n\ge 1}$ to $X$, respectively. 
Furthermore, we let $\mathfrak{L}(X)$ denote the distribution of $X$. 
Finally, for an event $\cE$ in a given probability space, we let $\cE^c$ denote the \emph{complement} of $\cE$.

\lyu{
\subsection{A brief introduction to local convergence}
	To introduce our results, we first briefly explain several variants of the notion of \emph{local convergence} of a sequence of graphs; a more detailed account on the topic can be found in Sections~2.3 and~2.4 in~\cite{Hof24} and in~\cite{GHL20,Sal11}. 
	For a graph $H$, a vertex $v$ in $H$ and an integer $r\ge 0$, the \emph{ball with radius $r$ around $v$ in $H$}, denoted $B_r(H, v)$, is the graph whose vertices are the ones at distance at most $r$ from $v$ in $H$ and whose edges are the ones in $H$ containing an endpoint at distance at most $r-1$ from $v$. (Note that the ball $B_r(H, v)$ is often defined as the graph induced by the vertices at distance at most $r$ from $v$. In our work, it turns out to be more convenient to exclude the edges between the vertices in the last layer.) We often assume that $B_r(H,v)$ is a graph rooted at $v$. 
	
	Fix a sequence of (deterministic or random) finite rooted graphs $(H_n,o_n)_{n\ge 1}$, where $o_n$ is a vertex of $H_n$ chosen uniformly at random.
	Given a probability measure $\mu$ on the set of finite rooted graphs $\cG$, we say that 
	\begin{itemize}
		\item $(H_n,o_n)_{n\ge 1}$ \emph{converges locally in distribution} to $\mu$ if, for every integer $r\ge 1$ and every finite rooted graph $(\bar{H},\bar{o})$,
		\[\mathbb P(B_r(H_n,o_n) = (\bar{H},\bar{o}))\to \mu(\{(H,o)\in \cG: B_r(H,o) = (\bar{H}, \bar{o})\}).\]
		Equivalently, $(H_n,o_n)_{n\ge 1}$ \emph{converges locally weakly} to $\mu$ if
		\[\mathbb E[h(H_n,o_n)]\to \mu(h)\]
		for every bounded and continuous function $h: \cG\to \mathbb R$, where $\mu(h)$ denotes the expectation of $h$ with respect to $\mu$.
		\item $(H_n,o_n)_{n\ge 1}$ \emph{converges locally in probability} to $\mu$ if the sequence of random variables $\mathbb E[h(H_n,o_n)\mid H_n]$ converges in probability to $\mu(h)$ for every bounded and continuous function $h: \cG\to \mathbb R$.
		\item $(H_n,o_n)_{n\ge 1}$ \emph{converges locally almost surely} to $\mu$ if the sequence of random variables $\mathbb E[h(H_n,o_n)\mid H_n]$ converges almost surely to $\mu(h)$ for every bounded and continuous function $h: \cG\to \mathbb R$.
	\end{itemize}
	In each case, the limiting measure $\mu$ is often identified with a (possibly random) rooted graph.

    \bas{\paragraph{Local convergence in distribution of marked rooted graphs.} 
    We now extend the concept of local convergence in distribution to marked local convergence in distribution.
    Marked local convergence considers marked rooted graphs $(G,o,M(G))$ where $M$ is a map from $V(G)$ to a Polish space $\Xi$ endowed with a metric $\dd_\Xi$. \lyu{The space of marked finite rooted graphs is denoted by $\cG_m$ (where $m$ is not an index and stands for ``marked'') and is naturally equipped with the metric
    \begin{align*}
    d_m: ((G,o_G,M_G),(H,o_H,M_H))\mapsto \inf\{2^{-r}:\, &B_r(G,o_G)\overset{\phi}{\cong} B_r(H,o_H)\quad\text{and}\\
    &\forall v\in B_r(G,o_G), |M_G(v)-M_H(\phi(v))|\le 1/r\}.
    \end{align*}}
    }} \lyu{A sequence of marked rooted graphs $(H_n,o_n,M_n)_{n\geq 1}$ then converges locally in distribution to a measure $\mu$ on $\cG_m$ if, for every integer $r\geq 1$, every finite rooted graph $(\bar H, \bar o)$ and every family of measurable sets $(A_v)_{v\in V(\bar H)}\subseteq \Xi$, 
 \be \ba 
 \mathbb P({}&\exists\, \phi_n: B_r(H_n,o_n)\overset{\phi_n}{\cong}(\bar H,\bar o), \ M_{n}(\phi_n^{-1}(v))\in A_{v} \text{ for all } v\in V(\bar H)))\\ 
 &\to \mu(\{(H,o,\lyu{M_H})\in \cG_m: B_r(H,o)\overset{\phi}{\cong}(\bar H,\bar o),\ M_H(\phi^{-1}(v))\in A_{v}\text{ for all }v\in V(\bar H)\}).
 \ea\ee}
 In this paper, we concentrate on local convergence in distribution \bas{of marked rooted random graphs, where we work with the mark space $\Xi:=[0,1]$ endowed with the Euclidean metric}.
	In fact, we provide a stronger quantitative version of that convergence for the DRGVR in terms of the \emph{total variation distance} defined as follows:
	For two probability distributions $\mu_1$ and $\mu_2$ defined on a common probability space $\Omega$, the total variation distance between $\mu_1$ and $\mu_2$ is defined as 
	\begin{equation}\label{eq:dTV}
 \begin{split}
     d_{\mathrm{TV}}(\mu_1,\mu_2)
     &= \sup_{A} |\mu_1(A)-\mu_2(A)|\\
     &=\inf\{\lyu{2\sup_{A}\P{\{X\in A\}\setminus \{Y\in A\}}}: (X,Y)\text{ is a coupling of }(\mu_1,\mu_2)\},
 \end{split}
	\end{equation} 
	\lyu{where the supremum is taken over a collection of measurable events generating the underlying $\sigma$-algebra (in many cases, the cylindric events).}
 By taking $\Omega=\cG_{\lyu{m}}$, we consider the total variation distance between the distributions of \bas{$B_r(G_n,k_0,M_\lyu{n})$} and \bas{$B_r(\cT,0,M_\infty)$}, where $(\cT,0)$ is a rooted random graph defined below and $k_0$ is a vertex chosen uniformly at random from $G_n$. \bas{Here, we abuse notation to let $B_r(G_n,k_0,M_\lyu{n})$} denote the $r$-neighbourhood of $k_0$ in $G_n$, including the marks of the vertices in the $r$-neighbourhood. 
 
    Throughout the paper, the definition of the total variation distance in terms of couplings is particularly useful.

\subsection{Results}

The first and main result of this paper deals with the local convergence \lyu{in distribution} of the DRGVR model. To this end, we define the following multi-type branching process.

\begin{definition}[Binomial birth-death tree]\label{def:wll}
	Fix $\beta>0$, $\eps\in (0,1/2]$ and set $p := 1/2 +\eps$.
	We define a multi-type branching process $\cT$ with type space $(0,1]$ as follows. The root $0$ of $\cT$ has type $a_0\sim \mathrm{Beta}(\tfrac{p}{2\eps},1)$. Then, any vertex $v$ in $\cT$ with type $a_v$ produces an offspring independently of all other vertices according to a Poisson point process on $(0,1]$. Conditionally on $a_v$, the \lyu{intensity} of this Poisson process is given by
	\begin{equation}\begin{aligned}\label{eq:lambda}
			& \lambda_v^-(\dd x):=\frac{\beta p}{2\eps a_v}x^{(1-p)/2\eps}\,\dd x \quad&&\text{ for } x\in(0,a_v],\quad \text{and}\\
			& \lambda^+(\dd x):=\frac{\beta p}{2\eps}x^{(1-p)/(2\eps)-1}\,\dd x \quad &&\text{ for } x\in(a_v,1].
	\end{aligned}\end{equation}
	The types of the vertices in the offspring are identified by their position in the Poisson point process. \bas{Finally, define the mark function \lyu{$M_{\infty}:v\in V(\cT)\mapsto a_v\in [0,1]$.}}
\end{definition}

\begin{theorem}\label{thrm:LWC intro}
	Fix $\beta>0$ and $\eps\in(0,1/2]$, and consider the DRGVR model and the Binomial birth-death tree given in Definitions~\ref{def:drgvr} and~\ref{def:wll}, respectively. Let $k_0$ be a vertex selected uniformly at random from $V_n$. Then, for any $r\in\N$,\bas{
	\begin{equation}
		 d_{\mathrm{TV}}(\mathfrak{L}(B_r(G_n,k_0,M_\lyu{n})),\mathfrak{L}(B_r(\cT,0,M_{\infty})))\leq (\log\log n)^{-(1-p)/(2\eps)-1/r}. 
	\end{equation} 
	In particular, $(G_n,k_0,M_\lyu{n})_{n\ge 1}$ converges locally \lyu{in distribution} as a marked rooted graph to the random marked rooted tree $(\cT, 0, M_{\infty})$. }
\end{theorem}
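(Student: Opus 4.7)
The plan is to couple a breadth-first search (BFS) exploration of $B_r(G_n,k_0)$ with the corresponding BFS exploration of $(\cT,0)$ up to depth $r$, and then to bound the total variation distance between the two couplings. The first ingredient is to show that a vertex born at time $m\leq n$ is still present at time $n$ with probability $(1+o(1))(m/n)^{(1-p)/(2\eps)}$, which follows from expanding the product $\prod_{k=m+1}^{n}(1-(1-p)/|V_{k-1}|)$ together with the concentration estimate $|V_k|\approx 2\eps k$. Combined with the fact that each index $m\leq n$ produces a vertex with probability $p$, the density of the normalized arrival index $k_0^\star/n$ of a uniformly chosen $k_0\in V_n$ is proportional to $x^{(1-p)/(2\eps)}$ on $(0,1]$, which is precisely the density of the root type $a_0\sim\mathrm{Beta}(p/(2\eps),1)$ in $\cT$.

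The second step is a one-step Poisson approximation for the neighborhood of a vertex $v$ of relative arrival time $a$. Backward neighbors arise from edges added at the birth of $v$: each already-present vertex is independently a neighbor with probability $\beta/|V_{an-1}|$ and must additionally survive to time $n$; combining the survival estimate above with a Binomial-to-Poisson comparison shows that the relative times of these neighbors form a point process close in total variation to a Poisson process with intensity $\lambda_v^-$. Forward neighbors come from indices $m>an$ which produce a vertex (probability $p$), attach to $v$ (probability $\beta/|V_{m-1}|$) and subsequently survive; an analogous calculation yields intensity $\lambda^+$.

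The third step iterates this one-step approximation along the BFS up to depth $r$. Two issues arise. First, the successive one-step approximations are not strictly independent, since conditioning on what is revealed at earlier steps slightly biases the intensities at later steps; however, each such conditioning perturbs the intensity by a factor $1+O(1/n)$, which summed over the $O(1)$-in-probability vertices of $B_r(G_n,k_0)$ (by comparison with $\cT$) is negligible. Second, two explored vertices could share a common neighbor in $G_n$, producing a cycle absent from $\cT$; a union bound over pairs of explored vertices, using the fact that no two independently sampled types coincide to within a single index $1/n$ with high probability, bounds the collision probability.

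The main obstacle is to extract the quantitative rate $(\log\log n)^{-(1-p)/(2\eps)-1/r}$. Every error estimate above degrades as an explored vertex has relative time $a_v$ close to zero, since the Binomial-to-Poisson comparison has error of order $1/(a_vn)$ and the concentration of $|V_{a_vn}|$ is weaker for small $a_v$. Truncate the exploration on the event $\cE_n$ that every revealed type stays above a threshold $\delta_n=(\log\log n)^{-c}$ for a suitable $c=c(r)$. On $\cE_n$ all accumulated errors are $\mathrm{poly}(1/n)$, whereas the probability of $\cE_n^c$ is bounded by iterating the tail estimate $\P{a_v<\delta_n\mid v\in\text{BFS}}\leq C\delta_n^{(1-p)/(2\eps)}$ over the $r$ BFS levels. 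Balancing the two errors yields the claimed bound, with the extra $1/r$ in the exponent reflecting the $r$-fold accumulation of truncation losses across the BFS depth.
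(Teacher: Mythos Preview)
Your overall architecture matches the paper's proof: the paper also couples the BFS exploration of $B_r(G_n,k_0)$ with the generation-by-generation construction of $\cT$, establishes the root-type coupling, proves a Poisson approximation for left- and right-neighbours separately, and then propagates by induction over the depth. The conditioning and cycle issues you raise are handled exactly the way you describe, via conditional versions of the survival/concentration lemmas and a union bound on collisions with already-explored vertices.

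The quantitative analysis in your last paragraph, however, does not work as written and would not produce the stated exponent. Three points:

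\emph{Wrong tail exponent.} For the root, $\P{a_0<\delta}=\delta^{p/(2\eps)}$, and for an $L$-child of a vertex with type $a_u$ the mass of the Poisson intensity on $(0,\delta)$ is $\tfrac{\beta}{a_u}\delta^{p/(2\eps)}$. The exponent is $p/(2\eps)$, not $(1-p)/(2\eps)$; the latter is the exponent of the \emph{density}, not of the cumulative tail. With the wrong exponent your ``balancing'' cannot reproduce $(\log\log n)^{-(1-p)/(2\eps)-1/r}$.

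\emph{A single threshold is not enough.} Because the $L$-child tail carries the factor $1/a_u$, a uniform threshold $\delta_n$ gives a bound of order $\delta_n^{p/(2\eps)}/a_u$, which blows up if the parent type is itself near the threshold. The paper avoids this by using a \emph{decreasing} sequence of thresholds, $a_v\geq(\log\log n)^{-(q+1)}$ at depth $q$, so that conditioning on the parent being above $(\log\log n)^{-q}$ keeps the child-tail bound controlled.

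\emph{Origin of the $1/r$.} The extra $-1/r$ in the exponent is not simply ``$r$-fold accumulation''. It comes from a separate truncation on the branching: the paper requires each vertex to have at most $(\log\log n)^{1/r}$ children (an event that fails with probability $o((\log\log n)^{-s})$ for any $s$), so that after $q$ levels there are at most $(\log\log n)^{q/r}$ vertices to union-bound over. The worst level is $q=r-1$, giving a multiplicative loss of $(\log\log n)^{(r-1)/r}$ against the per-vertex bound $(\log\log n)^{-p/(2\eps)}$; since $p/(2\eps)-1=(1-p)/(2\eps)$, this is exactly $(\log\log n)^{-(1-p)/(2\eps)-1/r}$. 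Your appeal to ``$O(1)$-in-probability vertices'' is too weak for this union bound; you need a deterministic bound on the number of vertices per level, and that bound is what fixes the exponent.
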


\lyu{We remark that local convergence in probability and almost sure local convergence of the randomly rooted graphs $(G_n)_{n\ge 1}$ cannot be deduced from Theorem~\ref{thrm:LWC intro}.}
\lyu{At the same time, the existence of a local limit is novel even in the setting of Dubin's model (that is, the case $p=1$ of the DRGVR).}

The following corollary is obtained by applying Theorem~\ref{thrm:LWC intro} for $r=1$ and integrating the \lyu{intensity} of the Poisson process associated to the offspring of the origin over the interval $(0,1]$. \lyu{For every $\lambda > 0$, let $\mathrm{Po}(\lambda)$ denote the Poisson distribution with parameter~$\lambda$.}

\begin{corollary}\label{cor:deg}
Let $D_n^0$ denote the degree of the vertex $k_0$ and, conditionally on $a_0$, let 
$$X_0\sim \lyu{\mathrm{Po}}\Big(\frac{\beta p}{1-p}\Big(1-\frac{2p-1}{p}a_0^{(1-p)/(2\eps)}\Big)\Big).$$
Then, $\dd_\mathrm{TV}(\mathfrak{L}(D^0_n),\mathfrak{L}(X_0))\leq (\log\log n)^{-p/(2\eps)}$.
\end{corollary}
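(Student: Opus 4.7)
The plan is to deduce the corollary directly from Theorem~\ref{thrm:LWC intro} applied with $r=1$, followed by two short observations: a simplification of the exponent in the TV bound, and a routine integration of the offspring intensities at the root of $\cT$.

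First, I would plug $r=1$ into Theorem~\ref{thrm:LWC intro} to obtain a TV bound between the laws of the radius-one neighbourhoods of $k_0$ in $G_n$ and of the root in $\cT$. Using $p=1/2+\eps$ one has $1-p+2\eps=p$, so the exponent $-(1-p)/(2\eps)-1/r$ collapses to $-p/(2\eps)$ for $r=1$, matching the exponent stated in the corollary.

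Next, I would use the coupling formulation of $d_{\mathrm{TV}}$ from~\eqref{eq:dTV}: since both $D_n^0$ and $X_0$ are deterministic functions (the degree of the root) of the respective radius-one balls, any optimal coupling of the balls automatically couples $D_n^0$ and $X_0$ so that they agree whenever the balls do. Consequently,
\begin{equation*}
d_{\mathrm{TV}}(\cL(D_n^0),\cL(X_0))\leq d_{\mathrm{TV}}(\cL(B_1(G_n,k_0)),\cL(B_1(\cT,0))),
\end{equation*}
and combining with the previous step yields the required inequality.

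Finally, I would verify that $X_0$ (the number of offspring of the root in $\cT$) has the distribution announced in the statement. By Definition~\ref{def:wll}, conditionally on $a_0$ this number is Poisson with rate equal to the total mass of $\lambda_0^-$ on $(0,a_0)$ plus the total mass of $\lambda^+$ on $(a_0,1]$. Evaluating each of the two elementary integrals separately and then simplifying via the identities $1-p+2\eps=p$ and $1-2p=-2\eps$ gives the rate $\tfrac{\beta p}{1-p}\bigl(1-\tfrac{2p-1}{p}a_0^{(1-p)/(2\eps)}\bigr)$ exactly. No real obstacle arises here: all three steps are short, and the only computation is the evaluation of two elementary antiderivatives.
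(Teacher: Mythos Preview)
Your proposal is correct and follows precisely the route the paper indicates: apply Theorem~\ref{thrm:LWC intro} with $r=1$, simplify the exponent via $1-p+2\eps=p$, use that the degree is a function of the radius-one ball to pass the TV bound, and integrate the intensities in Definition~\ref{def:wll} to identify the Poisson rate. There is nothing to add; this is exactly the paper's proof.
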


\lyu{We note that Corollary~\ref{cor:deg} was already obtained in~\cite{BolJanRio05} without explicit bound on the error probability for a related inhomogeneous random graph model.}




\lyu{While the DRGVR is defined via a natural stochastic dynamics, its Poisson degree sequence derived in~Corollary~\ref{cor:deg} suggests a possible connection with some inhomogeneous Erd\H{o}s-R\'enyi model. 
In Section~\ref{sec inhom E-R}, we show that such a connection exists in a strong sense.
More precisely, we construct a coupling between the graph $G_n$ and two different (but closely related) inhomogeneous Erd{\H o}s-R\'enyi graphs $G^{(1)}_n$ and $G^{(2)}_n$ such that $G^{(1)}_n\subseteq G_n \subseteq G^{(2)}_n$. It allows us to ignore the randomness coming from the arrival times of the vertices in $V_n$ and fit this model in a common framework developped by Bollob\'as and Riordan~\cite{BolJanRio05}.
This comparison has several more global important consequences.}
First, we discuss the conditions for the emergence of a giant component and the relation with the local limit. Define
\begin{equation}\label{eq:survprob} 
\gamma = \gamma(\beta):=\P{|\cT|=\infty}
\end{equation} 
to be the survival probability of the branching process $\cT$. 

\begin{theorem}\label{thm components intro}
Fix $\beta > 0$ and $\eps\in (0,1/2]$, and consider the DRGVR model given in Definition~\ref{def:drgvr}. Moreover, consider the survival probability $\gamma$ of its local limit given in~\eqref{eq:survprob}. Let $\cC_1 = \cC_1(G_n)$ and $\cC_2 = \cC_2(G_n)$ denote the first and the second largest component in $G_n$. Then, there exists $\beta_c = \beta_c(p)$ such that:
\begin{enumerate}
	\item if $\beta < \beta_c$, then $|\cC_1|/n$ converges in probability to $0$ as $n\to \infty$;
	\item if $\beta > \beta_c$, then $|\cC_1|/n$ converges in probability to $2\eps \gamma\in(0,1]$ and, moreover, $|\cC_2| = \cO(\log n)$ with high probability.
\end{enumerate}
\end{theorem}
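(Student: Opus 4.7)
My plan is to derive global component statistics from the local weak convergence of Theorem~\ref{thrm:LWC intro}, and then to run a sprinkling argument to handle the uniqueness of the giant. Set
$$\beta_c(p) := \inf\{\beta > 0 : \gamma(\beta) > 0\}.$$
Since the offspring intensities $\lambda_v^-$ and $\lambda^+$ in Definition~\ref{def:wll} scale linearly in $\beta$, a standard coupling makes $\gamma(\beta)$ non-decreasing in $\beta$, so $\beta_c(p)\in[0,\infty]$ is well defined. For each $K\in\N$, let $N_K := |\{v\in V_n : |\cC(v)|\geq K\}|$, and observe that $|\cC(v)|\geq K$ is equivalent to $|B_{K-1}(G_n,v)|\geq K$. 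Theorem~\ref{thrm:LWC intro} applied with $r=K-1$ gives $\E{N_K}/|V_n|\to\P{|B_{K-1}(\cT,0)|\geq K}$; a joint version of that theorem for two independent uniform roots (obtained by essentially the same explore-and-couple argument) upgrades this to a second-moment bound and hence to $N_K/|V_n|\toinp \P{|B_{K-1}(\cT,0)|\geq K}$. As $K\to\infty$ the right-hand side tends to $\gamma$, and combined with $|V_n|/n\toas 2\eps$ (law of large numbers for $(\xi_n)$), a diagonal argument yields $N_K/n\toinp 2\eps\gamma$ for $K=K(n)\to\infty$ slowly enough.

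In the subcritical regime $\gamma=0$, so $N_K/n\toinp 0$ for any fixed $K$; since $N_K\geq|\cC_1|$ whenever $|\cC_1|\geq K$, this immediately gives $|\cC_1|/n\toinp 0$. In the supercritical regime, the identity $N_K=\sum_i |\cC_i|\ind_{|\cC_i|\geq K}$ shows that the total mass of ``large'' components is $(2\eps\gamma+o(1))n$; the remaining issue is to merge them into a single component. I would split the construction into a \emph{bulk} phase $[1,n_1]$ with $n_1=\lfloor n/2\rfloor$, producing $\Theta(n)$ vertices of $G_{n_1}$ lying in components of size $\geq K$, and a \emph{sprinkling} phase $(n_1,n]$ contributing $\Theta(n)$ further vertex insertions. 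Conditionally on the component structure of $G_{n_1}$, each vertex added in the sprinkling phase joins any given component $C$ with probability $\sim \beta|C|/|V|$, so two linear-sized components are merged by a single new vertex with probability $\Omega(1)$; across $\Theta(n)$ insertions, all linear-sized components coalesce with probability $1-\exp(-\Omega(n))$, yielding $|\cC_1|=(2\eps\gamma+o(1))n$. For the second largest component, a component $C$ of size $c\log n$ fails to attach to the giant at a given sprinkling step with probability $1-\Omega(c\log n/n)$; across $\Theta(n)$ such steps this is at most $n^{-c\beta/2}$, so taking $c$ sufficiently large and union-bounding over the $\mathcal{O}(n)$ candidate components of size $\geq c\log n$ gives $|\cC_2|=\cO(\log n)$ w.h.p.

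The main technical obstacle is the interplay between vertex deletions and the sprinkling argument: a deletion in the bulk phase may fragment a nascent large component, and a deletion in the sprinkling phase may sever a newly created merging edge. One way around this is to couple the DRGVR with a monotone inhomogeneous random-graph model (of Norros–Reittu type, in the spirit of Section~\ref{sec inhom E-R}) on which the classical sprinkling arguments apply directly, and then transfer the conclusion back via the total-variation control provided by Theorem~\ref{thrm:LWC intro}. Alternatively, one can condition on the full trajectory of $(|V_n|)_n$ (which remains $\Theta(n)$ w.h.p.\ by the law of large numbers) and absorb deletions into the analysis using degree concentration: each deletion removes only $\cO(\log n)$ edges and therefore cannot disconnect a $\Theta(n)$-vertex component already present in $G_{n_1}$.
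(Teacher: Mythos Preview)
Your approach is genuinely different from the paper's. The paper never runs a sprinkling argument on the DRGVR itself; instead it proves a sandwich lemma (Lemma~\ref{lemma sandwich}) showing that w.h.p.\ $G_{BD}(1-\hat\delta_n,n)\subseteq G_n\subseteq G_{BD}(1+\hat\delta_n,n)$ for an inhomogeneous Erd\H{o}s--R\'enyi graph $G_{BD}$, verifies that the associated kernel sequence is graphical with limit $\kappa(x,y)=\beta\max\{x,y\}^{-2\eps/p}$, and then imports the full component dichotomy (including uniqueness of the giant and $|\cC_2|=\cO(\log n)$) directly from Bollob\'as--Janson--Riordan. The identification $\rho(\kappa)=\gamma$ falls out because the sandwich forces the local weak limits of $G_{BD}(1\pm\hat\delta_n,n)$ and $G_n$ to coincide. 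There is no time-splitting and no separate treatment of merging.

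Your first half (extracting $N_K/|V_n|\toinp\P{|\cT|\ge K}$ from local weak convergence, then letting $K\to\infty$) is a sound and standard route to the subcritical statement and to the \emph{upper} bound $|\cC_1|\le (2\eps\gamma+o(1))n$ in the supercritical case; note, however, that the two-root version of Theorem~\ref{thrm:LWC intro} you invoke is not proved in the paper and would need its own coupling argument.

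The real gap is the sprinkling step. The DRGVR is not monotone in time: between step $n_1$ and step $n$ roughly $(1-p)n/2$ vertices are \emph{removed}, and each removal can delete a cut vertex. Your claim that ``each deletion removes only $\cO(\log n)$ edges and therefore cannot disconnect a $\Theta(n)$-vertex component'' is false as stated: a degree-$2$ vertex can be a cut vertex of a linear-sized component, and you have $\Theta(n)$ such deletions to survive, not one. So neither the survival of the bulk giant nor the persistence of merging bridges is guaranteed without a robustness argument (e.g.\ showing the giant is an expander, or that it has edge-connectivity $\omega(1)$), which is substantial extra work. The $|\cC_2|=\cO(\log n)$ bound has the same problem: even if a moderate component attaches to the giant via a sprinkled vertex, that vertex may later be deleted. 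Your workaround (a)---couple to a monotone inhomogeneous model---is precisely what the paper does via Lemma~\ref{lemma sandwich}, and once you have that coupling the sprinkling becomes unnecessary because BJR already delivers the conclusion. Workaround (b) as written does not close the gap.
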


\begin{remark} 
$(i)$ 
\lyu{Let us give an intuitive explanation for the constant $2\eps \gamma$ in the second part of Theorem~\ref{thm components intro}. On the one hand, the local convergence in distribution of $(G_n,k_0)$ to $(\cT,0)$ shows that the number of vertices in components of size $k\ge 1$ in $G_n$ converges to the probability that $|\cT| = k$.
	Thus, up to showing that only $o(n)$ vertices typically participate in components $\cC$ of size $|\cC| = o(n)$ with $|\cC|\to \infty$, the proportion of vertices in the giant component $\cC_1$ has to converge to the survival probability $\gamma$ of $\cT$. 
	On the other hand, simple concentration arguments show that $|G_n|/n$ converges almost surely to $2\eps$. Combining the two observations justifies the result.}

\noindent
$(ii)$ It follows directly from the definition of the DRGVR model that \emph{bond percolation} on $G_n$ with retention probability $q$ yields a graph with a giant component only when $\beta>\beta_c$ and $q>\beta_c/\beta$.  
\end{remark} 

\bas{
\begin{remark}\label{rem:brw}
    One can formulate the local weak limit $(\cT,o)$ as a branching random walk on $(-\infty,0]$ with a killing barrier at zero. 
    To this end, start with an initial vertex located at position $y_0=\log a_0$. 
    Then, each vertex produces children conditionally on its location $y<0$ according to a Poisson point process $\Pi_y$ on $\R$ with intensity
    \be 
    \pi_y(\dd x):= \frac{\beta p}{2\eps}\e^{\frac{1-p}{2\eps}y}\big(\e^{\frac{p}{2\eps} x}\ind_{\{x\leq 0\}}+\e^{\frac{1-p}{2\eps}x}\ind_{\{x>0\}}\big)\dd x.
    \ee 
    The children are then positioned at locations $y+\Pi_y$, and only children with a location smaller than $0$ are retained. 
    \lyu{The above point of view was taken by Dereich and M\"orters~\cite{DerMor13} in their analysis of preferential attachment models without vertex deletion. In their setting, the Poisson process $\Pi_y$ is independent of the position of $y < 0$.
    Despite this simplification, while they manage to provide precise qualitative information for the critical parameters of the model, the survival probability $\gamma$ was only analysed via Monte Carlo simulations even in this simpler setting.}
\end{remark}}

Theorem~\ref{thm components intro} raises the natural question of how $\beta_c$ behaves as a function of $p$. The following proposition partially answers this question by providing bounds for $\beta_c$ in terms of $p$.

\begin{proposition}\label{prop beta_c intro}
In the setup of Theorem~\ref{thm components intro}, the threshold function $\beta_c=\beta_c(p)$ is a non-increasing continuous function over the interval $(1/2, 1]$. Moreover, for every $p\in (1/2, 1]$,
$$\max\left\{\sqrt{\tfrac{1-p}{p}}, \tfrac{1}{4}\right\} \le \beta_c(p)\le \inf_{t\in (-1/2, \infty)}\left(\tfrac{(1+2t)(2t^2+7t+4+1/p)}{(1+t)^2(t+1/p)(2t+2/p-1)}\right)^{-1/2}\le \sqrt{\tfrac{2-p}{p(1+4p)}}.$$
\end{proposition}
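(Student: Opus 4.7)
The plan is to characterize $\beta_c(p)$ via spectral properties of the mean-offspring kernel of the local limit $\cT$. After the change of variables $u_v:=a_v^{p/(2\eps)}$ for every vertex $v$, a direct computation gives that $u_0$ is uniform on $(0,1]$ and, writing $r:=(2p-1)/p\in[0,1]$, the offspring of a vertex of $u$-type $u_v$ forms a Poisson point process on $(0,1]$ with symmetric intensity $k_\beta(u_v,z)\,\dd z$, where $k_\beta(u,z):=\beta/\max(u,z)^r$. The associated integral operator $K_\beta$ on $L^2((0,1])$ is positive, self-adjoint, and (for $p<1$) Hilbert--Schmidt, hence compact. Linearizing the extinction equation $q(u)=\exp(-\int_0^1 k_\beta(u,z)(1-q(z))\,\dd z)$ at $q\equiv 1$ yields the classical criterion $\gamma(\beta)>0\iff\rho(K_\beta)>1$; combined with Theorem~\ref{thm components intro} this gives $\beta_c(p)=1/\rho(K_1)$, which is the identity underlying every subsequent claim.

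Monotonicity and continuity follow easily. Since $r=2-1/p$ is increasing in $p$ and $\max(u,z)\le 1$, the kernel $k_1$ is non-decreasing in $p$, so $\rho(K_1)$ is non-decreasing and $\beta_c(p)$ is non-increasing. Continuity on $(1/2,1)$ comes from continuous variation of $K_1$ in Hilbert--Schmidt norm, and continuity at $p=1$ from an approximation argument using Dubin's model. For the first lower bound, I use the Hilbert--Schmidt bound
\[ \rho(K_\beta)\le\|K_\beta\|_{\mathrm{HS}}=\beta\left(\int_0^1\int_0^1\max(u,z)^{-2r}\,\dd u\,\dd z\right)^{1/2}=\beta\sqrt{p/(1-p)}, \]
so $\beta<\sqrt{(1-p)/p}$ forces $\gamma(\beta)=0$, yielding $\beta_c(p)\ge\sqrt{(1-p)/p}$ for $p<1$ (the bound is trivial at $p=1$). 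The lower bound $\beta_c(p)\ge 1/4$ is immediate from $\beta_c(1)=1/4$ (Shepp~\cite{Shepp89}) combined with the monotonicity.

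For the upper bound, I use the Rayleigh-type inequality $\rho(K_\beta)^2\ge\|K_\beta\phi\|^2/\|\phi\|^2$ valid for any $\phi\in L^2((0,1])$. With the trial function $\phi(u)=u^t$ for $t>-1/2$, computing the action of $K_1$ explicitly and invoking the polynomial identity
\[ (t+1)^2(t+1/p)(2t+2/p-1) - 2r(t+1)(2t+2/p-1) + r^2(t+1/p)=(t+1-r)^2(2t^2+7t+4+1/p) \]
(verified by substituting $r=2-1/p$ and expanding) yields
\[ \frac{\|K_1\phi\|^2}{\|\phi\|^2}=\frac{(2t+1)(2t^2+7t+4+1/p)}{(t+1)^2(t+1/p)(2t+2/p-1)}. \]
Supercriticality then holds whenever $\beta^2$ exceeds the reciprocal of this expression, so optimizing over $t>-1/2$ gives the middle bound and evaluating at $t=0$ recovers the final explicit bound $\sqrt{(2-p)/(p(1+4p))}$. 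The main obstacle is rigorously justifying the spectral characterization $\gamma>0\iff\rho(K_\beta)>1$ in the continuous-type setting, notably at the boundary $p=1$ where $K_\beta$ is no longer Hilbert--Schmidt and Dubin's special structure must be invoked directly.
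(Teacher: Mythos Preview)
Your proposal is correct and follows essentially the same route as the paper. After your change of variables $u=a^{p/(2\eps)}$, your kernel $k_\beta(u,z)=\beta\max(u,z)^{-r}$ with $r=2\eps/p$ is precisely the paper's kernel $\kappa$, and your bounds via the Hilbert--Schmidt norm and the trial functions $\phi(u)=u^t$ reproduce the paper's computations verbatim (the paper's Lemma~\ref{lem bounds norm}).

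The one genuine difference is in how you establish the identification $\beta_c(p)=1/\|K_1\|$. You argue directly from the branching process $\cT$ by linearizing the extinction equation, whereas the paper instead sandwiches $G_n$ between two inhomogeneous Erd\H{o}s--R\'enyi graphs (Lemma~\ref{lemma sandwich}) and imports the spectral criterion from Bollob\'as--Janson--Riordan~\cite{BolJanRio05}; this delivers $\beta_c=1/\|T_\kappa\|$ without any separate analysis of the extinction equation and handles the non-compact case $p=1$ uniformly. Your route is perfectly standard for compact positive kernels (hence for $p<1$), but, as you note, at $p=1$ the operator is no longer Hilbert--Schmidt and your continuity argument there remains a sketch; the paper's dominated-convergence argument for continuity of $p\mapsto\|T_{\kappa,1}\|$ works uniformly on $(1/2,1]$ since the $p=1$ kernel dominates all others and has finite operator norm.
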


\begin{remark}
While the tighter upper bound for $\beta_c(p)$, as provided by the infimum in Proposition~\ref{prop beta_c intro} is strictly smaller for all $p\in(1/2,1]$, we still provide a weaker but also simpler upper bound in terms of $p$. This bound is realised by taking $t=0$ in the infimum. Note that $\beta_c(1)=1/4$ and $\lim_{p\downarrow 1/2}\beta_c(p)=1$, as can be observed in Figure~\ref{fig:betac}.
\end{remark}

\begin{figure}[ht]
\centering
\includegraphics[width=0.8\textwidth]{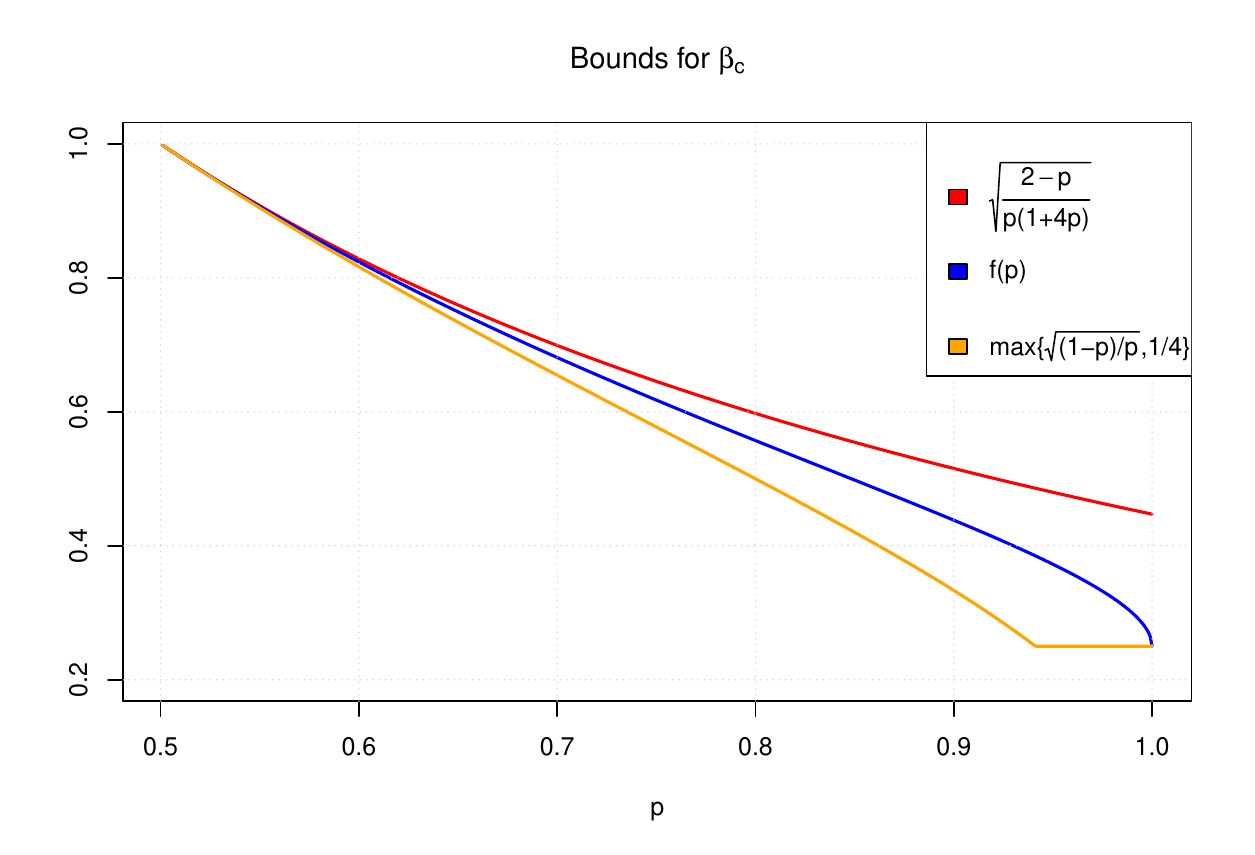}
\caption{A plot of the lower bound and the two upper bounds for $\beta_c$ from Proposition~\ref{prop beta_c intro}. The sharper upper bound given by
	$f(p) := \inf_{t\in(-1/2,\infty)} \Big(\frac{(1+2t)(2t^2+7t+4+1/p)}{(1+t)^2(t+1/p)(2t+2/p-1)}\Big)^{-1/2}$ is approximated numerically.}
\label{fig:betac}
\end{figure}


\lyu{Our next result exhibits a stability property of the giant component in the supercritical regime. 
More precisely, we show that, when $\beta > \beta_c$, given a sufficiently small constant $\lambda = \lambda(\beta) > 0$, removing the set $V_{\lambda n}$ containing the vertices born before time $\lambda n$ from $G_n$ modifies the size of the giant component only slightly.
We further prove some properties of the second largest component and the metric structure of the giant in the perturbed graph.}

\begin{theorem}\label{thm approximate intro}
Consider the setup of Theorem~\ref{thm components intro} with $\beta>\beta_c(p)$ and recall the constant $\gamma = \gamma(p)$ from~\eqref{eq:survprob}. Then, for every $\delta \in (0,\gamma)$ there exists $\lambda > 0$ such that the graph $G^\lambda_n$, induced from $G_n$ by the vertices in $V_n\setminus V_{\lambda n}$, satisfies w.h.p.\ each of the following statements: 
\begin{enumerate}
	\item $(2\eps \gamma - \delta) n\le |\cC_1(G^\lambda_n)|\le (2\eps \gamma+\delta)n$,
	\item $|\cC_2(G^\lambda_n)| = \cO(\log n)$,
	\item there is a constant $\zeta = \zeta(\beta, p, \lyu{\lambda})>0$ such that two vertices of $G^\lambda_n$ chosen uniformly at random are at graph distance in the interval $[(1-\delta)\zeta \log n, (1+\delta) \zeta \log n]$ (in $G^\lambda_n$).
\end{enumerate}
\end{theorem}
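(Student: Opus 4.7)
The plan is to exploit the fact that, restricted to vertices born after time $\lambda n$, the graph $G_n^\lambda$ behaves essentially as an inhomogeneous random graph on the type space $(\lambda, 1]$, which brings items 1--3 into the framework of Bollob\'as--Janson--Riordan. Indeed, conditional on the sequence $(|V_t|)_{t\ge 1}$, edges in $G_n^\lambda$ are independent Bernoulli: for $i<j$ in $V_n \setminus V_{\lambda n}$, the edge $\{i,j\}$ is present with probability $\beta/|V_{j-1}|$. By Lemma~\ref{lemma:condconc}, $|V_t|/t \to 2\eps$ almost surely, so after rescaling birth times to $[0,1]$ one couples $G_n^\lambda$ with an inhomogeneous random graph whose symmetric kernel is $\kappa(x,y) = \beta/(2\eps \max(x,y))$ on the type space $(\lambda, 1]$, up to a $(1+o(1))$ multiplicative error on each edge probability.

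For item 1, I would argue that $G_n^\lambda$ admits a local weak limit $\cT_\lambda$: the branching process of Definition~\ref{def:wll} with every offspring Poisson process further restricted to $(\lambda, 1]$. Its survival probability $\gamma_\lambda$ converges monotonically to $\gamma$ as $\lambda \downarrow 0$. Combined with $|V_n \setminus V_{\lambda n}|/n \to 2\eps(1 - o_\lambda(1))$ (a direct consequence of Lemma~\ref{lemma:condconc} together with the bound $|V_n \cap V_{\lambda n}| \le |V_{\lambda n}|$), the classical first- and second-moment analysis of the giant in an inhomogeneous random graph yields $|\cC_1(G_n^\lambda)| \approx |V_n \setminus V_{\lambda n}| \cdot \gamma_\lambda$. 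Choosing $\lambda$ small enough places this within $\delta n$ of $2\eps \gamma n$.

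For item 2, a standard subcritical exploration argument applies: the component of a vertex outside $\cC_1$ is dominated by the dual branching process obtained by conditioning $\cT_\lambda$ on extinction, which is subcritical. Exponential tail bounds on its total progeny, combined with a union bound over the $O(n)$ vertices, yield $|\cC_2(G_n^\lambda)| = O(\log n)$ w.h.p. For item 3, I would run joint breadth-first explorations from two uniformly chosen vertices of $V_n \setminus V_{\lambda n}$. Up to generation $\zeta (\log n)/2$, each exploration is well-coupled with a copy of $\cT_\lambda$ conditioned on survival, whose generation sizes grow exponentially at rate $r > 1$ determined by the spectral radius of the integral operator associated to $\kappa$ on $(\lambda, 1]$. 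The two fronts meet once the product of their sizes reaches order $n$, giving a typical distance $\zeta \log n + O(1)$ with $\zeta = 1/\log r$, and tight concentration of the generation sizes provides the $(1 \pm \delta)$ factors.

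The main obstacle is item 3, which requires sharp two-sided concentration of the generation sizes over $\Theta(\log n)$ generations, together with a matching lower bound ensuring that the explorations do not meet substantially earlier than expected. Items 1 and 2 are more direct consequences of the local weak limit (Theorem~\ref{thrm:LWC intro}) and standard inhomogeneous random graph theory, modulo a careful coupling of $G_n^\lambda$ with a Bollob\'as--Janson--Riordan graph on $(\lambda, 1]$.
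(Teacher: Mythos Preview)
Your approach is essentially the same as the paper's: reduce $G_n^\lambda$ to an inhomogeneous random graph on the type space $(\lambda,1]$ with bounded kernel $\kappa^\lambda(x,y)=\beta\max\{x,y\}^{-2\eps/p}$ (equivalently, your $\beta/(2\eps\max\{x,y\})$ after the change of variables used in Section~\ref{sec inhom E-R}), and then invoke the Bollob\'as--Janson--Riordan theory. The paper does not redo the exploration/dual-process arguments you sketch for items~2 and~3; it simply checks that $(\kappa_n^\lambda)$ is a graphical sequence with bounded limit $\kappa^\lambda$ and quotes Theorems~3.1, 3.12 and~3.14 from~\cite{BolJanRio05} directly. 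That is cleaner and avoids the ``main obstacle'' you identify.

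One point you should make explicit: as written, your $\zeta=1/\log r$ depends on $\lambda$ through the spectral radius $r=\|T_{\kappa^\lambda}\|$, whereas the theorem asserts $\zeta=\zeta(\beta,p)$ independent of $\lambda$ and $\delta$. The paper handles this by proving $\|T_{\kappa^\lambda}\|\to\|T_\kappa\|$ as $\lambda\downarrow 0$ (via monotone convergence, since $\kappa^\lambda$ increases to $\kappa$), and then setting $\zeta=1/\log\|T_\kappa\|$; for any $\delta$, one chooses $\lambda$ small enough that the typical distance $(\log n)/\log\|T_{\kappa^\lambda}\|$ lies in $[(1-\delta)\zeta\log n,(1+\delta)\zeta\log n]$. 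Without this convergence step your argument does not yield a single constant $\zeta$.
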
 

The graph $G_n$ being constructed recursively, its edges may naturally be equipped with an orientation from the vertex arriving at a later step to the vertex arriving earlier. 
Therefore, by setting $V_n = \{v_1, \ldots, v_{|V_n|}\}$ with $v_i<v_j$ if $i<j$, for every $j\in [|V_n|]$, we let $d^-_n(v_j)$ (respectively $d^+_n(v_j)$) denote the number of neighbors of $v_j$ in the set $\{v_1, \ldots, v_{j-1}\}$ (respectively in the set $\{v_{j+1}, \ldots, v_{|V_n|}\}$). 
Also, we let $d^s_n(v_j) := d^+_n(v_j)+d^-_n(v_j)$ denote the total degree of $v_j$ in $G_n$. 
We remark that, in general, we identify the vertices in the process in two different ways: as elements of $V_n = \{v_1, \ldots, v_{|V_n|}\}$, and by their labels which indicate the step at which they were born. The latter notation is more general as it is applied to all vertices in the process and not only the ones that survive until step $n$.

The following theorem precisely characterises the maximum degree, the maximum in-degree and the maximum out-degree, as well as the the labels of vertices that attain these maximal degrees.

\begin{theorem}\label{thrm:max intro}
Consider the DRGVR model as in Definition~\ref{def:drgvr} with $\beta>0$ and $\eps \in(0,1/2)$.
Then, for $\square\in\{s,+\}$,
\begin{equation}\label{eq:2ndords}
	\frac{(\log\log n)^2}{\log n}\max_{v\in V_n} d_n^\square(v) -\log\log n-\log\log\log n \toinp 1+\log \Big(\frac{\beta p}{1-p}\Big),
\end{equation}  
and 
\begin{equation}\label{eq:2ndordmin}
	\frac{(\log\log n)^2}{\log n}\max_{v\in V_n} d_n^-(v) -\log\log n-\log\log\log n\toinp 1+\log\beta.
\end{equation}
Finally, let $\cI^{\square}_n:=\{u\in V_n \text{ and }d_n^\square(u)=\max_{v\in V_n}d_n^\square(v)\}$ be the set of vertices that attain the maximum (in-/out-)degree. Then, for $\square\in\{s,+\}$ and for any fixed $\delta\in(0,1)$, 
\begin{equation}\label{eq:loc}
	\frac{\log \min\cI^\square_n}{\log n}\toinp 1\quad \text{and}\quad \P{\max\cI^\square_n\leq \delta n}=1-o(1),\quad \text{and} \quad \frac{\min\cI^-_n}{n}\toinp 1.
\end{equation} 
\end{theorem}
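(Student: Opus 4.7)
The plan is to establish each of the asymptotic formulas in~\eqref{eq:2ndords}--\eqref{eq:2ndordmin} by first approximating the (in-, out-, total) degree of a fixed surviving vertex by a Poisson random variable whose parameter can be read off the local-limit description of Theorem~\ref{thrm:LWC intro}, and then extracting the extreme value over $V_n$ by first- and second-moment arguments. Throughout, write $\alpha:=(1-p)/(2\eps)$ and $M:=\beta p/(1-p)$.

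First, I would freeze the environment $\cF_n:=\sigma(\xi_1,\ldots,\xi_n,\text{removal choices})$. Conditionally on $\cF_n$ the edges of $G_n$ are independent Bernoullis, so the degree of a surviving vertex $k$ is a sum of independent indicators. Combining earlier concentration results (in particular $|V_m|/(2\eps m)\to 1$ uniformly and $\P{k\in V_n}=(1+o(1))(k/n)^{\alpha}$) with a Le Cam/Stein bound, I would show that $d_n^+(k)\mid\{k\in V_n\}$ is within total variation $o(1)$ of $\mathrm{Poi}(\mu_k)$, where $\mu_k:=M(1-(k/n)^\alpha)$, and that $d_n^-(k)\mid\{k\in V_n\}$ is close to $\mathrm{Poi}(\beta(k/n)^\alpha)$: here $d_n^-(k)$ is the $\mathrm{Poi}(\beta)$ number of at-birth connections of $k$ thinned by survival of its initial neighbours, each surviving independently with probability $(1+o(1))(k/n)^\alpha$ given $k$ itself survives. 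Because the two contributions involve disjoint pools of Bernoullis they are conditionally independent, so $d_n^s(k)\approx\mathrm{Poi}(\mu_k+\beta(k/n)^\alpha)$; the combined rate $M+(\beta-M)(k/n)^\alpha$ is maximised as $k/n\to 0$ with value $M$, matching the parameter of $d_n^+(k)$.

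Second, for the upper bounds I would apply the first-moment method to $X_L^\square:=|\{k\in V_n: d_n^\square(k)\ge L\}|$. Using the Poisson tail $\P{\mathrm{Poi}(\lambda)\ge L}=(1+o(1))(\lambda e/L)^L(2\pi L)^{-1/2}$ and approximating the sum over $k$ by an integral in $\theta=k/n$,
\begin{equation*}
\E{X_L^\square}\,\leq\,(1+o(1))\,n\int_0^1\theta^\alpha\bigl(\lambda^\square(\theta)e/L\bigr)^L L^{-1/2}\,\dd\theta,
\end{equation*}
where $\lambda^+(\theta)=M(1-\theta^\alpha)$, $\lambda^-(\theta)=\beta\theta^\alpha$ and $\lambda^s(\theta)=\lambda^+(\theta)+\lambda^-(\theta)$. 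The substitution $u=\theta^\alpha$ turns each integral into a Beta-type integral producing a polynomial-in-$L$ correction, and choosing
\begin{equation*}
L=\frac{\log n}{(\log\log n)^2}\Bigl(\log\log n+\log\log\log n+1+\log c_\square+\delta\Bigr),
\end{equation*}
with $c_\square=M$ for $\square\in\{s,+\}$ and $c_\square=\beta$ for $\square=-$, makes $\E{X_L^\square}\to 0$ for every fixed $\delta>0$.

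For the matching lower bounds I would use the second-moment method: conditionally on $\cF_n$, the Bernoulli indicators contributing to $d_n^\square(k_1)$ and $d_n^\square(k_2)$ at two distinct surviving vertices use almost disjoint edges, whence $\mathrm{Cov}\bigl(\ind[d_n^\square(k_1)\ge L],\,\ind[d_n^\square(k_2)\ge L]\bigr)$ is of lower order than the diagonal contribution, and Chebyshev then gives $\P{X_L^\square>0}\to 1$ for the corresponding $\delta<0$. The location statements in~\eqref{eq:loc} follow from a Laplace analysis of the integrand $\theta^\alpha(\lambda^\square(\theta)e/L)^L$: its mass concentrates at $\theta\approx(\log n)^{-1/\alpha}$ for $\square\in\{s,+\}$, giving $k=n^{1-o(1)}$ and $k=o(n)$; and at $\theta\to 1$ for $\square=-$, giving $k/n\to 1$. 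The main obstacle will be carrying out the Poisson approximation uniformly over a growing collection of vertices of very different ``ages''; the conditioning on $\{k\in V_n\}$ tilts the deletion trajectory slightly away from $k$, and controlling this bias -- especially for very small $k$ where the survival probability itself is of polynomial order in $n$ -- will require a careful martingale/coupling argument that replaces the random deletion history by its expected trajectory up to an error that is uniform in dyadic blocks of $k$.
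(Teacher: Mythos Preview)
Your strategy is correct and close in spirit to the paper's, with one packaging difference and one technical point that needs strengthening.

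The packaging difference: rather than conditioning on the full environment $\cF_n$ and then invoking concentration of the random Poisson rates, the paper first establishes a sandwich (Lemma~\ref{lemma sandwich}) trapping $G_n$ w.h.p.\ between two inhomogeneous Erd\H{o}s--R\'enyi graphs $G_{BD}(1\pm\hat\delta_n,n)$ on $V_n$ with \emph{deterministic} edge probabilities $p_{ij}$ that depend only on the rank $j$ within $V_n$. All degree-tail estimates and the second-moment argument are then carried out in these two auxiliary graphs, where edges are unconditionally independent and the in-degrees (resp.\ out-degrees) of distinct vertices involve disjoint edge sets, hence are \emph{exactly} independent (this is the content of Lemma~\ref{lemma:maxsum}). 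This is precisely the ``tilting / uniform-in-age'' obstacle you flag at the end, resolved once and for all; what it buys is that no covariance bound is needed and one never has to condition on $\{k\in V_n\}$.

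The technical point: a total-variation Poisson approximation of order $o(1)$ is \emph{not} sufficient, because the relevant tail events $\{d_n^\square(k)\ge L\}$ have probability of order $1/n$, and an $o(1)$ TV error swamps them. For upper bounds the paper applies the Bernoulli Chernoff bound $\P{\sum_j I_j\ge L}\le \exp(-tL+\lambda(\e^t-1))$ directly---no Poisson step. For lower bounds it couples each Bernoulli $I_j$ to a Poisson $P_j$ so that $I_j\ge\ind_{\{P_j\le 1\}}P_j$ almost surely (Lemma~\ref{lemma:poibercoupling}), writes $d_{1,n}^\square(v_i)\ge W_i-Z_i$ with $W_i=\sum_j P_j$ exactly Poisson and $Z_i=\sum_j\ind_{\{P_j>1\}}P_j$, and shows $\sum_i\P{W_i\ge b_n^\square+e_n}\to\infty$ while $\sum_i\P{Z_i\ge e_n}\to 0$ for a slack $e_n\asymp\log n/(\log\log n)^2$. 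Your integral/Laplace description of where the maximisers sit is exactly the paper's analysis.
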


Note that Theorem~\ref{thrm:max intro} does not include the case $p=1$ (or, equivalently, $\eps=1/2$) in which no vertices are removed. Indeed, as can be observed from the results by Lodewijks~\cite{Lod21,Lod22} and Banerjee and Bhamidi~\cite{BanBha21} (in the case of deterministic out-degree equal to $1$), the behaviour of large-degree vertices and their labels in the case $p=1$ is rather different. Most notably, the vertices with maximal (in-)degree have labels of the order $n^{(1-1/(2\log 2))+o(1)}$ instead of $n^{1-o(1)}$.

The last result in this section concerns a large family of Lipschitz-type \lyu{network statistics}. Fix a function $f$ from the set of finite directed graphs to the real numbers. We say that $f$ is \emph{$L$-Lipschitz} if, for any two directed graphs $H_1$ and $H_2$ that differ in only one edge (that is, $|E(H_1)\setminus E(H_2)|+|E(H_2)\setminus E(H_1)|\le 1$), we have $|f(H_1) - f(H_2)|\le L$. We remark that the definition does not make any reference to the vertex sets of $H_1$ and $H_2$; in particular, the family of functions we are interested in is insensitive to isolated vertices. 

\begin{theorem}\label{thm concentration intro}
Fix $\beta>0$ and $\eps\in(0,1/2]$, and consider the DRVGR model as in Definition~\ref{def:drgvr}. 
Fix an integer $L\in \mathbb N$ and an $L$-Lipschitz function $f$ defined on the set of directed graphs. 
Then, for every $t\ge 0$,
\begin{align*}
	(i)\,&\P{\Big|f(\vv{G}_n) - \mathbb E\left[f(\vv{G}_n)\mid |V_n|, (d_n^+(v_i))_{i=1}^{|V_n|}\right]\Big|\ge t\;\Big|\; |V_n|, (d_n^+(v_i))_{i=1}^{|V_n|}}\le 2\exp\left(-\tfrac{t^2}{8 |E_n| L^2}\right)\hspace{-2.7pt},\\
	(ii)\,&\P{\Big|f(\vv{G}_n) - \mathbb E\left[f(\vv{G}_n)\mid (V_i)_{i=1}^n, |E_n|\right]\Big|\ge t\;\Big|\; (V_i)_{i=1}^n, |E_n|}\le 2\exp\left(-\tfrac{t^2}{8 |E_n| L^2}\right).
\end{align*}
\end{theorem}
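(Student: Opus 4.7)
The plan is to derive both statements from the Azuma--Hoeffding inequality applied to a Doob martingale that exposes the edges of $\vv G_n$ one at a time. The crucial structural observation is that after conditioning on the prescribed $\sigma$-algebra $\cF$, the total number of edges $|E_n|$ is $\cF$-measurable: directly in part $(ii)$, and in part $(i)$ via $|E_n|=\sum_{i=1}^{|V_n|}d_n^+(v_i)$. This is precisely what allows the tail bound to scale with $|E_n|$ rather than with the total number of vertex pairs $\binom{|V_n|}{2}$.

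Concretely, I would fix any canonical total order on the potential edges $\{v_i,v_j\}$ with $i<j$, for instance lexicographic on $(i,j)$. Conditionally on $\cF$, list the actual edges of $\vv G_n$ in this order as $e_1,\ldots,e_{|E_n|}$ and define $\cF_k:=\cF\vee\sigma(e_1,\ldots,e_k)$. Setting $M_k:=\mathbb E[f(\vv G_n)\mid\cF_k]$, the sequence $(M_k)_{k=0}^{|E_n|}$ is a martingale with $M_0=\mathbb E[f(\vv G_n)\mid\cF]$ and $M_{|E_n|}=f(\vv G_n)$. The key step is the almost-sure bound $|M_k-M_{k-1}|\le 2L$, which I would establish by an edge-swap coupling: given $\cF_{k-1}$, two possible values $a,b$ for $e_k$ can be coupled so that the conditional distributions of the remaining edges are matched up to a single-edge swap, and by the $L$-Lipschitz property such a swap changes $f$ by at most $2L$. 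Azuma--Hoeffding then gives, conditionally on $\cF$,
\[
\P{|f(\vv G_n)-\mathbb E[f(\vv G_n)\mid\cF]|\ge t\mid\cF}\le 2\exp\Big(-\frac{t^2}{2\cdot|E_n|\cdot(2L)^2}\Big)=2\exp\Big(-\frac{t^2}{8|E_n|L^2}\Big).
\]

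I expect the main obstacle to be the justification of the edge-swap coupling, especially in part $(ii)$. In part $(i)$ the conditioning only fixes the out-degrees, so there is substantial symmetry among admissible edge configurations and a near-uniform swap should suffice. In part $(ii)$ the conditioning on $(V_i)_{i=1}^n$ makes the potential edge indicators $(X_{ij})_{i<j}$ independent Bernoullis with possibly distinct probabilities $p_{ij}=\beta/s_j$, where $s_j$ denotes the number of vertices present just before $v_j$ arrives; further conditioning on $|E_n|$ then gives a non-uniform distribution on edge sets of fixed size, and the coupling must respect these weights. This can likely be handled by writing the conditional law explicitly as a weighted measure on $|E_n|$-subsets of potential edges and pairing up configurations that differ in a single edge swap, with the coupling weights compensating for the weight ratios.
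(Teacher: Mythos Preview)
Your high-level plan matches the paper's: build a Doob martingale with $|E_n|$ steps (using that $|E_n|$ is $\cF$-measurable), bound each increment by $2L$ via an edge-swap argument, and apply Azuma. The genuine gap is in the filtration you chose. Listing the \emph{actual} edges of $\vv G_n$ in a fixed lexicographic order and revealing $e_1,e_2,\ldots$ does not yield increments bounded by $2L$, because learning $e_k$ simultaneously reveals that every potential edge strictly between $e_{k-1}$ and $e_k$ is absent, and this can be arbitrarily much information. A concrete counterexample in the setting of part~(i): take $|V_n|=N+1$ with out-degree sequence $d_n^-(v_{N+1})=3$ and $d_n^-(v_j)=0$ for $j\le N$, so that conditionally $\vv G_n$ is a uniformly random $3$-element subset of $\{v_iv_{N+1}:i\in[N]\}$. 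With the $L$-Lipschitz function $f(G)=L\cdot|\{i\in\{N-2,N-1,N\}:v_iv_{N+1}\in E(G)\}|$ one has $X_0=9L/N$, but on the positive-probability event $\{e_1=v_{N-2}v_{N+1}\}$ the graph is forced to equal $\{v_{N-2}v_{N+1},v_{N-1}v_{N+1},v_Nv_{N+1}\}$ and hence $X_1=3L$; thus $|X_1-X_0|>2L$ as soon as $N\ge 10$. Your swap coupling cannot repair this: if the remaining edges conditionally on $e_k=a$ include some edge in the interval $(a,b)$, then every graph compatible with $e_k=a$ differs from every graph compatible with $e_k=b$ by at least four edges.

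The paper engineers the filtration so that each step really is a single symmetric edge decision whose alternatives differ by exactly one swap. For part~(i) (where the paper in fact conditions on the out-degrees $d_n^-$, under which the out-neighbours of $v_j$ form a uniformly random $d_n^-(v_j)$-subset of $\{v_1,\ldots,v_{j-1}\}$, independently across $j$) it orders the outgoing half-edges and at step $k$ reveals the \emph{target} of the $k$-th half-edge; two possible targets then yield equinumerous families of completions, related by the obvious bijection that swaps one edge for another. For part~(ii) the paper samples the $|E_n|$ edges sequentially, at each step picking a not-yet-chosen edge with probability proportional to its (history-dependent) weight, and again argues via a swap of the edge chosen at step $k$. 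The moral is that ``one martingale step $=$ one edge decision among exchangeable alternatives'' is what makes the $2L$ bound go through; imposing an external fixed order on the edges breaks this.
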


Note that, despite the fact that both statements show conditional concentration of $f(\vv{G}_n)$, the two are quite different in nature. 
In the first case, the conditioning is done \lyu{over $|V_n|$ and} the out-degrees of all vertices in $\vv{G}_n$ \lyu{but the vertex process $(V_i)_{i=1}^{n-1}$ is irrelevant.}
In the second case, the conditioning is done over the entire vertex process $(V_i)_{i=1}^n$ but the only structural information about $\vv{G}_n$ is its number of edges.

\paragraph{Main ideas of the proofs.} The proofs of the main results use a variety of techniques. First, Theorem~\ref{thrm:LWC intro} is obtained by providing a coupling between the Breadth-First Search (BFS) exploration of the neighbourhood of a vertex $k_0$ selected uniformly at random in $G_n$, and the recursive construction of the multi-type branching process $\cT$ with root $0$, as defined in Definition~\ref{def:wll}. 
This coupling is mainly inspired by techniques used for proving the local limit of affine preferential attachment models (see~\cite{BerBorChaSab14,Lo21}).
\lyu{Our main technical contribution is to incorporate the vertex deletion mechanism in the definition of the DRGVR by using an iterative two-step coupling scheme from~\cite{BerBorChaSab14,Lo21}.
The approach goes roughly as follows: given that the $r$-th neighbourhoods of $k_0$ in $G_n$ and $0$ in $\cT$ have been coupled, we consider two corresponding vertices at distance $r$ from the roots of $G_n$ and $\cT$, respectively, and couple the rescaled labels and positions of their children, respectively, with an intermediary discretised Poisson point process.}
\lyu{We note that an alternative approach towards a proof of a local limit goes through a sandwiching lemma (Lemma~\ref{lemma sandwich}) and by applying results from~\cite{BolJanRio05}, but remark that this has several disadvantages compared to our approach: 
\begin{itemize}
    \item Our results provide quantitative bounds on the total variation distance;
    \item We obtain a more general \emph{marked} local convergence;
    \item Our construction is explicit and the method provides a systematic approach for proving quantitative local limit theorems where the limiting object is naturally seen within an ambient geometric space (here $[0,1]$);
    \item We believe that an approach similar to ours can be applied to preferential attachment models with vertex removal where the results from~\cite{BolJanRio05}
    may not apply.
\end{itemize}}
The proof of Theorem~\ref{thm components intro} combines a comparison of the DRGVR model $G_n$ with a particular inhomogeneous Erd\H{o}s-R\'enyi graph model on the vertex set $V_n$ (of $G_n$) and results by Bollob\'as, Janson and Riordan~\cite{BolJanRio05}. 
More precisely, we use necessary and sufficient conditions for the existence of a giant component of a wide family of inhomogeneous random graph models from~\cite{BolJanRio05}, and a \lyu{stochastic} comparison between the DRGVR model and an instance of this family allows us to transfer their results to our setting. Proposition~\ref{prop beta_c intro} is a byproduct of this proof and is shown by analysing a related linear operator. 
Theorem~\ref{thm approximate intro} combines the fact that the restriction of the inhomogeneous Erd\H{o}s-R\'enyi graph to $V_n\setminus V_{\lambda n}$ satisfies that every two edges appear in $G^\lambda_n$ with probabilities which are a constant factor away from each other (this is not the case for $G_n$ itself) with results from~\cite{BolJanRio05}.

Theorem~\ref{thrm:max intro} combines the comparison between the DRGVR model and an inhomogeneous Erd{\H o}s-R\'enyi graph model with precise bounds on the tail distribution of the vertex-degrees for the latter model. These bounds are used to show that, when the expected number of vertices with degree at least $k_n$ either tends to zero (respectively to infinity), then the largest degree is at most (respectively at least) $k_n$ with high probability. With the growth rate of the maximum degree at hand, the tail distribution bounds can be used to argue that vertices with ``too small'' or ``too large'' label have degrees that are substantially smaller than the maximum degree in $G_n$. 

Finally, the proof of Theorem~\ref{thm concentration intro} is based on constructing martingales with bounded differences and the use of the classical Azuma inequality.

\paragraph{Organisation of the paper.}  
We provide some preliminary results in Section~\ref{sec:prelim} that are used in the proofs of the results presented in Section~\ref{sec:intro}. Sections~\ref{sec:bfs}, \ref{sec:lwl} and~\ref{sec:lwlcont} are dedicated to proving our main result (Theorem~\ref{thrm:LWC intro}) regarding the local convergence \lyu{in distribution} of the DRGVR model. In Section~\ref{sec inhom E-R}, we compare our model with a particular instance of the inhomogeneous Erd{\H o}s-R\'enyi graph, and use established results for inhomogeneous random graphs to prove Theorem~\ref{thm components intro}, Proposition~\ref{prop beta_c intro} and Theorem~\ref{thm approximate intro}. Section~\ref{sec:max} is then devoted to proving the size of the maximum degree and the labels of the vertices that attain the maximum degree (Theorem~\ref{thrm:max intro}). Finally, in Section~\ref{sec:lipschitz}, we prove Theorem~\ref{thm concentration intro}.

\section{Preliminaries}\label{sec:prelim}

\lyu{In this section, we introduce some preliminary lemmas. The reader eager to dive into the proofs of the main results is invited to consult the part of Section~\ref{sec:2.2} before Lemma~\ref{lemma:condconc} and skip the remaining preliminaries on first read; these may be safely consulted occasionally later on.}

\subsection{General probabilistic preliminaries}

Let $\Gamma$ be a probability distribution on $(0,1]\times \{0,1\}$ defined as
\begin{equation}\label{eq:gamma}
\Gamma(x,y):=px^{(1-p)/(2\eps)}\,\mathrm{d} x\, \delta_1( y)+\big(1-px^{(1-p)/(2\eps)}\big)\,\mathrm{d}x\,\delta_0(y),
\end{equation} 
where $\delta_0$ and $\delta_1$ are Dirac measures \lyu{at 0 and 1, respectively}.
As we shall see, the distribution $\Gamma$ is tightly connected to the distribution of the birth times of the vertices in $V_n$. For now, we state and prove a key property of this distribution.

\begin{lemma}\label{lemma:beta}
Let $(X,Y)$ be sampled from $\Gamma$. Then, for every $x\in(0,1]$,
\begin{equation}
	\P{X\leq x\,|\, Y=1}=x^{p/(2\eps)}.
\end{equation} 
In particular, conditionally on $Y=1$, $X \sim \mathrm{Beta}(\tfrac{p}{2\eps},1)$. Equivalently, let $X\sim \mathrm{Unif}(0,1)$ and, conditionally on $X$, $Y\sim \mathrm{Ber}(pX^{(1-p)/(2\eps)})$. Then, conditionally on $Y=1$, $X \sim \mathrm{Beta}(\tfrac{p}{2\eps},1)$.
\end{lemma}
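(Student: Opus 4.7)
The plan is to prove the lemma by a direct computation of the conditional distribution, exploiting the algebraic identity $(1-p)/(2\eps)+1 = p/(2\eps)$ that follows from $p=1/2+\eps$.

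First, I would compute the marginal $\P{Y=1}$ by integrating the density appearing in the first summand of $\Gamma$: namely,
\[
\P{Y=1}=\int_0^1 p u^{(1-p)/(2\eps)}\,\dd u = \frac{p}{(1-p)/(2\eps)+1}.
\]
Here the critical observation is that $(1-p)/(2\eps)+1 = (1-p+2\eps)/(2\eps) = p/(2\eps)$, since $1-p = 1/2-\eps$ and therefore $1-p+2\eps = 1/2+\eps = p$. This simplifies the denominator and yields $\P{Y=1}=2\eps$.

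Next, for $x\in(0,1]$ I would compute the joint probability
\[
\P{X\leq x,\, Y=1} = \int_0^x pu^{(1-p)/(2\eps)}\,\dd u = \frac{p}{p/(2\eps)}\,x^{p/(2\eps)} = 2\eps\, x^{p/(2\eps)},
\]
using the same algebraic identity. Dividing by $\P{Y=1}=2\eps$ gives the desired conditional CDF $\P{X\leq x\mid Y=1}=x^{p/(2\eps)}$. To conclude the identification with the $\mathrm{Beta}(p/(2\eps),1)$ distribution, I would recall that the CDF of $\mathrm{Beta}(\alpha,1)$ on $[0,1]$ is precisely $x^\alpha$, which matches with $\alpha=p/(2\eps)$.

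Finally, for the equivalent reformulation I would just check that the two descriptions of the joint law coincide: when $X\sim\mathrm{Unif}(0,1)$ and $Y\mid X\sim\mathrm{Ber}(pX^{(1-p)/(2\eps)})$, the joint density on $(0,1]\times\{0,1\}$ is
\[
\ind_{\{y=1\}} p x^{(1-p)/(2\eps)}\,\dd x + \ind_{\{y=0\}}\bigl(1-px^{(1-p)/(2\eps)}\bigr)\dd x,
\]
which is exactly $\Gamma$. The conditional statement then follows from the first part. I do not anticipate any real obstacle; the only subtlety is to keep track of the exponent simplification $(1-p)/(2\eps)+1=p/(2\eps)$, which both makes the integrals tractable and reveals the Beta parameter explicitly.
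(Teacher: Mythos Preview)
Your proof is correct and follows essentially the same approach as the paper: both compute the conditional CDF as the ratio $\int_0^x ps^{(1-p)/(2\eps)}\,\dd s \big/ \int_0^1 ps^{(1-p)/(2\eps)}\,\dd s$ and simplify using $(1-p)/(2\eps)+1=p/(2\eps)$. You merely spell out a few extra details (the value $\P{Y=1}=2\eps$ and the verification that the two descriptions of $\Gamma$ agree) that the paper leaves implicit.
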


\begin{proof}
Recall that $p = \tfrac{1}{2}+\eps$. Thus, both results readily follow from the fact that
\begin{equation}
	\P{X\leq x\,|\, Y=1}=\frac{\int_0^x ps^{(1-p)/(2\eps)}\,\mathrm ds}{\int_0^1 ps^{(1-p)/(2\eps)}\,\mathrm ds}=x^{p/(2\eps)},
\end{equation} 
as desired.
\end{proof}

The next lemma provides a useful coupling of the Bernoulli and the Poisson distributions.

\begin{lemma}[\cite{Lin02}, page 5, (1.11)]\label{lemma:poibercoupling}
Fix $\lambda \in (0,1)$, and let $X\sim \lyu{\mathrm{Po}}(\lambda)$ and $Y$ be a Bernoulli random variable with success probability $\lambda$. There exists a coupling $(\widehat X,\widehat Y)$ of $(X,Y)$ such that \lyu{$\widehat Y\ge \ind_{\{\widehat X\le 1\}} \widehat X$} almost surely and
\begin{equation}
	\mathbb P(\widehat X\neq \widehat Y)\leq \lyu{\lambda^2}.
\end{equation} 
\end{lemma}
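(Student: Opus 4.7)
The plan is to build the coupling by a simple thresholding of the Poisson variable and verify the two conclusions by direct computation. I would first sample $\hat X\sim\mathrm{Poi}(\lambda)$ together with an independent uniform $U\sim\mathrm{Unif}(0,1)$, and set $\hat Y:=1$ on the event $\{\hat X\ge 1\}$ and $\hat Y:=\ind_{\{U\le q\}}$ on $\{\hat X=0\}$, where $q:=(\lambda-1+e^{-\lambda})/e^{-\lambda}$ is tuned so that $\hat Y\sim\mathrm{Ber}(\lambda)$. The elementary inequalities $1-\lambda\le e^{-\lambda}$ and $\lambda\le 1$ give $q\in[0,1]$ for $\lambda\in(0,1]$, so $(\hat X,\hat Y)$ is a genuine coupling of $(X,Y)$.

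Next I would compute the coupling error directly. The disagreement event is the disjoint union of $\{\hat X=0,\hat Y=1\}$, of probability $e^{-\lambda}q=\lambda-1+e^{-\lambda}$, and $\{\hat X\ge 2\}$, of probability $1-e^{-\lambda}(1+\lambda)$ (on which $\hat Y=1\ne\hat X$ by construction). Summing and simplifying,
\begin{equation*}
\mathbb P(\hat X\ne\hat Y)=\lambda(1-e^{-\lambda})\le\lambda^2\le 2\lambda^2,
\end{equation*}
which matches the required bound.

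For the pointwise inequality, the construction forces $\hat X=1\Rightarrow\hat Y=1$ and $\hat X=0\Rightarrow\hat Y\in\{0,1\}$, so the coupling delivers $\hat Y\ge\ind_{\{\hat X\le 1\}}\hat X$ almost surely. I should flag, however, that the literal reading of the lemma's inequality, $\hat X\ge\ind_{\{\hat Y\le 1\}}\hat Y=\hat Y$ (since $\hat Y$ is Bernoulli), is unachievable by any coupling: it would require $\{\hat Y=1\}\subseteq\{\hat X\ge 1\}$ and hence $\lambda=\mathbb P(\hat Y=1)\le\mathbb P(\hat X\ge 1)=1-e^{-\lambda}$, a contradiction for every $\lambda>0$. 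The correct and natural form of the pointwise inequality is therefore the reversed one produced by the coupling above, matching Lindvall's formulation on page~5, (1.11) of~\cite{Lin02}; this is also the form that gets invoked in the later applications in Sections~\ref{sec:bfs}--\ref{sec:lwlcont}.

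The main obstacle is precisely this marginal asymmetry at the atom $0$, namely $\mathbb P(X=0)=e^{-\lambda}>1-\lambda=\mathbb P(Y=0)$. It forces $\mathbb P(\hat X=0,\hat Y=1)>0$ in every coupling, which simultaneously fixes the direction of the pointwise domination and pins down the leading-order term $\lambda(1-e^{-\lambda})$ in the coupling error. Once this asymmetry is accepted, the remaining steps — solving the linear equation for $q$, checking $q\in[0,1]$, and bounding $\lambda(1-e^{-\lambda})$ by $\lambda^2$ via the one-term Taylor estimate $1-e^{-\lambda}\le\lambda$ — are routine.
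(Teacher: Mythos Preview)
The paper does not supply its own proof of this lemma; it is stated with a citation to Lindvall~\cite{Lin02} and used as a black box. Your construction is the standard maximal coupling (threshold the Poisson at $1$ and patch the Bernoulli mass on $\{\hat X=0\}$), and your computations for the marginal of $\hat Y$, the feasibility of $q\in[0,1]$, and the disagreement probability $\lambda(1-e^{-\lambda})\le\lambda^2$ are all correct.

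You are also right that the pointwise inequality as literally written in the lemma cannot hold: with $Y$ Bernoulli one has $\ind_{\{Y\le 1\}}Y=Y$, and no coupling can achieve $\hat X\ge\hat Y$ a.s.\ since $\mathbb P(X=0)=e^{-\lambda}>1-\lambda=\mathbb P(Y=0)$. The intended statement is the reversed one, $\hat Y\ge\ind_{\{\hat X\le 1\}}\hat X$, which your coupling delivers. This is confirmed by the paper's own usage: in Section~\ref{sec:max} (proof of Theorem~\ref{thrm:max intro}) the lemma is invoked precisely in the form ``$I_j\ge\ind_{\{P_j\le 1\}}P_j$'' with $I_j$ Bernoulli and $P_j$ Poisson. (In Sections~\ref{sec:lwl}--\ref{sec:lwlcont} only the $2\lambda^2$ bound is used, not the pointwise inequality.)
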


\begin{lemma}\label{lemma:coupling poisson}
Fix $\lambda_1, \lambda_2 > 0$. There exists a coupling $(X_1, X_2)$ of the Poisson distributions with means $\lambda_1$ and $\lambda_2$ so that
\begin{equation}
	\mathbb P(X_1\neq X_2)\leq |\lambda_1 - \lambda_2|.
\end{equation} 
\end{lemma}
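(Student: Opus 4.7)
The plan is to use the standard additive coupling of Poissons, exploiting the fact that an independent sum of Poisson random variables is again Poisson with the summed parameter. Without loss of generality I would assume $\lambda_1 \le \lambda_2$ (else swap the roles). I would then take $X_1 \sim \mathrm{Poi}(\lambda_1)$ and, independently of $X_1$, let $Z \sim \mathrm{Poi}(\lambda_2 - \lambda_1)$, and define $X_2 := X_1 + Z$. By the convolution (equivalently, superposition) property, $X_2 \sim \mathrm{Poi}(\lambda_2)$, so $(X_1,X_2)$ is a valid coupling of the two marginals prescribed in the statement.

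Next I would observe that on this coupling $\{X_1 \neq X_2\}$ is precisely $\{Z \ge 1\}$, so
\be
\mathbb P(X_1 \neq X_2) = \mathbb P(Z\ge 1) = 1 - \e^{-(\lambda_2 - \lambda_1)} \le \lambda_2 - \lambda_1 = |\lambda_1 - \lambda_2|,
\ee
where the inequality uses $1-\e^{-t}\le t$ for $t\ge 0$. This would complete the proof.

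There is essentially no obstacle: the only thing to be slightly careful about is the case $\lambda_1 > \lambda_2$, which is handled by symmetry (or by the same construction with roles reversed), and the edge case where one of the parameters equals zero, which is absorbed naturally since $\mathrm{Poi}(0)$ is the constant $0$ random variable and the convolution identity still holds. The bound is tight up to the first-order Taylor expansion, which is why the statement is phrased with $|\lambda_1-\lambda_2|$ rather than a sharper $1-\e^{-|\lambda_1-\lambda_2|}$, and this suffices for all subsequent applications in the paper.
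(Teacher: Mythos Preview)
Your proof is correct and is essentially identical to the paper's own argument: both use the additive coupling $X_2 = X_1 + Z$ with $Z\sim\mathrm{Poi}(\lambda_2-\lambda_1)$ independent, and bound $\mathbb P(Z\ge 1)=1-\e^{-(\lambda_2-\lambda_1)}\le \lambda_2-\lambda_1$. (In fact you were slightly more careful than the paper, which contains a harmless typo writing $\mathbb P(Y\ge 0)$ where $\mathbb P(Y\ge 1)$ is meant.)
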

\begin{proof}
Assume without loss of generality that $\lambda_1 < \lambda_2$. Let $X_1\sim \mathrm{Po}(\lambda_1)$ and let $Y\sim \mathrm{Po}(\lambda_2-\lambda_1)$ be a Poisson variable independent from $X_1$. Set $X_2 = X_1 + Y$. Then, $X_2\sim \mathrm{Po}(\lambda_2)$ and 
\begin{equation*}
	\P{X_1\neq X_2} = \P{Y\ge \lyu{1}} = 1 - \e^{-(\lambda_2 - \lambda_1)}\le \lambda_2 - \lambda_1,
\end{equation*}
where the last inequality uses that, for all $x\in \mathbb R$, $1 - x\le \e^{-x}$.
\end{proof}

Finally, we state versions of the well-known \emph{Chernoff's inequality} and \emph{Azuma's inequality}. Recall that a (finite or infinite) stochastic process $(X_n)_{n\ge 0}$ is called a \emph{martingale} if for every $n\ge 0$, $\mathbb E[|X_n|] < \infty$ and $\mathbb E[X_{n+1}\mid (X_i)_{i=0}^n] = X_n$.

\begin{lemma}\label{lemma:chern}
\begin{enumerate}
	\item[(i)]\label{lem chern} \emph{(Theorem~2.1 and Corollary~2.3 in~\cite{JLR00})} Define the mapping $\phi: x\in (-1, \infty)\mapsto (1+x)\log(1+x)-x$. Given a binomial random variable $X$,
\end{enumerate}
\vspace{-1.5em}
\begin{align*}
	\mathbb P(X - \mathbb E[X] \ge t)&\le \exp{}\left( - \mathbb E[X]\cdot \phi\Big(\tfrac{t}{\mathbb E[X]}\Big) \right) \le \exp{}\left( - \tfrac {t^2}{2 (\mathbb E[X] + t/3)} \right) \text{ for all } t \ge 0,\\
	\mathbb P(X - \mathbb E[X] \le -t)&\le \exp{}\left( - \mathbb E[X]\cdot \phi\Big(\tfrac{-t}{\mathbb E[X]}\Big) \right) \le \exp{}\left( - \tfrac {t^2}{2 (\mathbb E[X] + t/3)} \right) \text{ for all } t\in [0,\mathbb E[X]).
\end{align*}
\begin{enumerate}
	\item[(ii)]\label{lem azuma}\emph{(Theorem~2.25 in~\cite{JLR00})} Fix a sequence of positive real numbers $(c_i)_{i=1}^n$ and a random variable $X$. Suppose that $(X_n)_{n\ge 0}$ is a martingale such that $X_n = X$, $X_0 = \mathbb E[X]$ and for every $i\in [n]$, $|X_{i-1}-X_i|\le c_i$. Then, for all $t \ge 0$,
	\begin{equation*}
		\mathbb P(|X - \mathbb E[X]| \ge t) \le 2\exp{}\left( - \frac {t^2}{2 \sum_{i=1}^n c_i^2} \right).
	\end{equation*}
\end{enumerate}
\end{lemma}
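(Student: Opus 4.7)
Both parts of Lemma~\ref{lemma:chern} are classical — they are cited from~\cite{JLR00} — so the plan is to reconstruct the standard Chernoff/exponential-Markov scheme: bound the moment generating function of the centred quantity of interest, apply Markov's inequality to $e^{\lambda\,\cdot}$, and optimise over $\lambda$.

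For part~\eqref{lem chern}, I would first compute the MGF of $X\sim \mathrm{Bin}(n,q)$ exactly, namely $\mathbb E[e^{\lambda X}] = (1-q+qe^\lambda)^n$. Writing $\mu:=\mathbb E[X]=nq$ and applying Markov to $e^{\lambda X}$ with $\lambda=\log(1+t/\mu)$ gives
\begin{equation*}
\P{X-\mu\ge t}\;\le\; e^{-\lambda(\mu+t)}(1-q+qe^\lambda)^n \;\le\; \exp\!\Bigl(-\mu\,\phi(t/\mu)\Bigr),
\end{equation*}
where the last step uses $\log(1-q+qe^\lambda)\le q(e^\lambda-1)$ (from $\log(1+x)\le x$) to simplify the $n$-th power into the rate function $\mu\phi(t/\mu)$. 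The lower-tail bound is obtained symmetrically with $\lambda<0$. Passing to the weaker quadratic form amounts to checking the elementary inequalities $\phi(x)\ge x^2/(2(1+x/3))$ for $x\ge 0$ and its lower-tail counterpart on $[0,1]$, both of which follow by Taylor expansion and comparison with the explicit primitive.

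For part~\eqref{lem azuma}, my plan is to use the standard Hoeffding lemma as an ingredient: if $\mathbb E[Y]=0$ and $Y\in[a,b]$, then $\mathbb E[e^{\lambda Y}]\le \exp(\lambda^2(b-a)^2/8)$. This is proved by convexity, bounding $e^{\lambda y}$ above by the chord through $(a,e^{\lambda a})$ and $(b,e^{\lambda b})$, then optimising the resulting function of $\lambda$. Setting $D_i:=X_i-X_{i-1}$ and $\mathcal F_{i-1}:=\sigma(X_0,\ldots,X_{i-1})$, the martingale property gives $\mathbb E[D_i\mid \mathcal F_{i-1}]=0$ and by hypothesis $|D_i|\le c_i$, hence Hoeffding applied conditionally (with $b-a\le 2c_i$) yields $\mathbb E[e^{\lambda D_i}\mid \mathcal F_{i-1}]\le \exp(\lambda^2 c_i^2/2)$. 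Iterating the tower property,
\begin{equation*}
\mathbb E\!\left[e^{\lambda(X_n-X_0)}\right]\;\le\;\exp\!\Bigl(\tfrac{\lambda^2}{2}\textstyle\sum_{i=1}^n c_i^2\Bigr).
\end{equation*}
Markov applied to $e^{\lambda(X-\mathbb E[X])}$ and optimising at $\lambda=t/\sum c_i^2$ produces the one-sided bound $\exp(-t^2/(2\sum c_i^2))$, and repeating the argument with $-X$ in place of $X$ and union-bounding gives the two-sided statement with the factor $2$.

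\textbf{Main obstacle.} There is no genuine obstacle, since both inequalities are textbook material; the only slightly delicate steps are the convexity argument inside Hoeffding's lemma and the elementary but fiddly verification of the quadratic relaxations of $\phi$. For the paper itself, the honest ``plan'' is simply to cite~\cite{JLR00}, which is what the authors do.
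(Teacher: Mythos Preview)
Your proposal is correct: the paper provides no proof at all for this lemma, simply citing \cite{JLR00}, and your reconstruction of the standard Chernoff/exponential-Markov argument and the Hoeffding-lemma-plus-tower-property derivation of Azuma is exactly the textbook route one would take. You already note this yourself at the end of the plan, so there is nothing to add.
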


\subsection{Preliminaries for the DRGVR}\label{sec:2.2}

\paragraph{An alternative viewpoint on the vertex process $(V_n)_{n\ge 0}$.} In this section, we introduce a formal framework that help us to keep track of the vertex process $(V_n)_{n\ge 0}$. To begin with, we define recursively a set of \emph{marks} $(\nu_{i,n})_{1\le i\le n}$ as follows:

\begin{itemize}
\item if $n = 1$, set $\nu_{1,1} = 1$ if $\xi_1 = 1$ and $\nu_{1,1} = 0$ otherwise,
\item if $n\ge 2$ and $\xi_n = 1$, set $\nu_{i,n} = \nu_{i,n-1}$ for all $i\in [n-1]$ and $\nu_{n,n}=1$,
\item if $n\ge 2$ and $\xi_n = 0$, select a uniformly random element $i_n$ from $V_{n-1}$ and set $\nu_{i_n,n} = \nu_{n,n} = 0$ and $\nu_{i,n} = \nu_{i,n-1}$ for all $i\in [n-1]\setminus \{i_n\}$.
\end{itemize}

In particular, by definition $V_n = \{i\in [n], \nu_{i,n} = 1\}$, so the set of all marks contains the entire information for the vertex process $(V_n)_{n\ge 1}$ (and is, in a sense, equivalent to it). From this point, we say that the vertices in $V_n$ are the ones that are \emph{alive after $n$ steps} (of the vertex process), or equivalently \emph{survive after $n$ steps}. One advantage of the marks is that they allow to describe the distribution of the alive vertices in a clear way. More precisely, as we shall see in Lemma~\ref{lemma:rootcouple}, the empirical distribution
\begin{equation}
\Gamma^{(n)}:=\frac{1}{n} \sum_{i=1}^n \delta_{(i/n,\nu_{i,n})}
\end{equation} 
over the set $(0,1]\times \{0,1\}$, where $\delta_{(x,y)}$ denotes a Dirac mass at the point $(x,y)\in (0,1]\times \{0,1\}$, converges in distribution to the distribution $\Gamma$ defined in~\eqref{eq:gamma}.

\paragraph{Further preliminary results.} We define the event
\begin{equation}\label{eq:An}
\cQ_n := \{||V_n| - 2\eps n| \le n^{2/3}\}.
\end{equation} 
\begin{lemma}\label{lemma:condconc}
Fix $j\lyu{=j(n)}\in[n]$ and $S\subset [n]$ such that $j=\omega(1)$ and $|S|=o(j^{\lyu{1/3}})$. Then, there exists a constant $C = C(\eps)>0$ such that, \lyu{for all sufficiently large $n$,}
\begin{equation}
	\mathbb P(\cQ_j^c\,\mid\, \forall i\in S: \nu_{i,i}=1)\leq \e^{-Cj^{1/3}}.
\end{equation} 
\end{lemma}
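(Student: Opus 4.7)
The plan is to exploit the identity $|V_j| = 2 N_j^+ - j + E_j$, where $N_j^+ := \sum_{i=1}^j \xi_i$ counts the ``add'' steps and $E_j := \sum_{i=1}^j (1-\xi_i)\ind_{\{V_{i-1}=\emptyset\}}$ counts the removal attempts wasted on an empty graph. This follows by telescoping $|V_n|-|V_{n-1}|=\xi_n - (1-\xi_n)\ind_{\{V_{n-1}\neq \emptyset\}}$. Since $2\eps=2p-1$, one obtains $|V_j|-2\eps j=2(N_j^+-pj)+E_j$, so it is enough to show that each summand has absolute value at most $j^{2/3}/2$ outside an event of conditional probability $\exp(-\Omega(j^{1/3}))$.

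By construction of the marks, $\nu_{i,i}=1$ if and only if $\xi_i=1$, so the conditioning event equals $\{\xi_i=1\text{ for all }i\in S\}$ and only pins down a subset of the i.i.d.\ Bernoulli increments driving the process. Conditionally on this event, $N_j^+ = |S\cap[j]| + Y$ with $Y\sim \mathrm{Bin}(j-|S\cap[j]|,p)$, so the conditional expectation of $N_j^+$ differs from $pj$ by $(1-p)|S\cap[j]|=o(j^{2/3})$. Lemma~\ref{lemma:chern}\ref{lem chern} applied to $Y$ then gives
\begin{equation*}
\P{|N_j^+-pj|\ge j^{2/3}/4 \mid \forall i\in S: \nu_{i,i}=1}\le \exp(-\Omega(j^{1/3})).
\end{equation*}

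For the second summand, I would use stochastic domination: couple the conditional process with the unconditional one by sharing the Bernoullis $(\xi_i)_{i\notin S}$ and forcing $\xi_i=1$ for $i\in S$ only on the conditional side. Induction on $i$ then shows that the conditional walk always dominates the unconditional one, hence the conditional $E_j$ is stochastically dominated by $L_j:=|\{i\le j: V_{i-1}=\emptyset\}|$ for the unconditional walk. Because the unconditional walk has positive drift $2\eps>0$, a standard excursion decomposition shows that $L_\infty:=\lim_j L_j$ is a compound geometric random variable: between consecutive departures from $\emptyset$ the walk stays there for a geometric$(p)$ time and then escapes to infinity with probability at least $2\eps/p$, where $(1-p)/p<1$ is the classical return probability from $1$ to $0$. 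This yields a finite moment generating function for $L_\infty$ in a neighbourhood of $0$, whence $\P{E_j\ge j^{2/3}/2 \mid \forall i\in S: \nu_{i,i}=1}\le \exp(-\Omega(j^{2/3}))$. A union bound of the two displays concludes the proof.

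The main delicate point is the stochastic domination step, which requires carefully handling the reflection at $\emptyset$: one must verify that forcing more ``add'' steps truly cannot push the walk to $\emptyset$ more often in the coupled pair of walks. Once this is set up, the tail estimate for $L_\infty$ is classical and, crucially, independent of $j$ and $S$, so the resulting constant $C$ depends only on $\eps$.
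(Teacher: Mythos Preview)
Your proof is correct and takes a genuinely different route from the paper's. Both arguments ultimately rest on the same two probabilistic facts (a Chernoff bound for the binomial driving sequence, and an exponential tail for the total time the reflected walk spends at the origin), but they organise them differently.

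The paper treats the two tails separately. For the lower tail it uses that $|V_j|$ stochastically dominates the \emph{unreflected} biased walk $S_j$ (and that conditioning on births in $S$ only pushes $|V_j|$ up), reducing to Chernoff for $(S_j+j)/2\sim\mathrm{Bin}(j,p)$. For the upper tail it first argues that conditioning on births at $S$ is dominated by conditioning on births at the \emph{last} $|S|$ steps, and then writes the resulting process as $|S|+S_{j-|S|}+Z_{j-|S|}$, where $Z_i$ counts running minima of the free walk; the bound $\P{\sup_i Z_i\ge t}\le e^{-ct}$ plays the same role as your exponential tail for $L_\infty$.

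Your decomposition $|V_j|-2\eps j=2(N_j^+-pj)+E_j$ is cleaner in two respects: it handles both tails in one stroke, and the effect of the conditioning is completely transparent (it simply pins $|S\cap[j]|$ of the Bernoullis and shifts $\E[N_j^+]$ by $(1-p)|S\cap[j]|=o(j^{2/3})$). Your coupling for $E_j$---sharing $(\xi_i)_{i\notin S}$ and forcing $\xi_i=1$ on $S$ only on the conditional side, then checking pathwise that $|V_i^{c}|\ge|V_i^{u}|$ for all $i$ and hence $E_j^{c}\le E_j^{u}\le L_j$---is airtight (the only case to watch is $\xi_i^{c}=\xi_i^{u}=0$ with $V_{i-1}^{u}=\emptyset\ne V_{i-1}^{c}$, where $|V_i^{c}|=|V_{i-1}^{c}|-1\ge 0=|V_i^{u}|$). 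The paper's approach, by contrast, buys a slightly more modular argument at the cost of the somewhat ad hoc ``move $S$ to the last $|S|$ steps'' domination. Either way the constant $C$ depends only on $\eps$ through $p=\tfrac12+\eps$.
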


\begin{proof}
The proof consists of two parts. \lyu{To estimate the lower tail of $|V_n|$}, note that the process $(|V_i|)_{i\ge 0}$ dominates a random walk $(S_i)_{i\ge 0}$ on $\mathbb Z$ that makes a step $+1$ with probability $p$ and $-1$ with probability $1-p$. 
Moreover, the variable $\widehat S_i := (S_i+i)/2$ has Binomial distribution with parameters $i$ and $p$. Using Chernoff's bound for $\widehat S_j$ shows that
\begin{align*}
	& \P{|V_j|-2\eps j \le \lyu{-}j^{2/3}\,\Big|\, \forall i\in S: \nu_{i,i}=1}
	\leq\; 
	\P{|V_j|-2\eps j \le \lyu{-}j^{2/3}}\\ 
	& \leq \P{S_j-2\eps j \le \lyu{-}j^{2/3}}\leq \P{\widehat S_j-(1/2+\eps) j \le \lyu{-}j^{2/3}/2} = \e^{-\Omega(j^{1/3})}.    
\end{align*}

For the upper \lyu{tail of $|V_n|$}, for all $i\ge 0$, we define $Z_i$ as the number of steps $r\in [i]$ at which \lyu{$S_r < \min_{t\in \{0,\ldots,r-1\}} S_t$}, that is, the last step of the random walk is \lyu{smaller than the minimum over all values up to that step}. 
We now show that, conditionally on the event $\{\forall i\in S: \nu_{i,i}=1\}$, $|V_j|$ is dominated by the process $\widetilde M_j := |S|+S_{j-|S|}+Z_{j-|S|}$. 
On the one hand, $(|V_i|)_{i\ge 0}$ and $(S_i+Z_i)_{i\ge 0}$ have the same distribution (which is the one of a discrete random walk \lyu{with steps $\{1,-1\}$, a fixed positive drift and} conditioned to stay non-negative). 
On the other hand, $|V_j|$ conditioned on births \lyu{at steps $j-|S|+1,\ldots,j$} stochastically dominates $|V_j|$ conditioned on $\{\forall i\in S: \nu_{i,i}=1\}$ (i.e.\ births at steps $i\in S$) for any other choice of $S\subseteq [j]$. Indeed, by introducing $|S|$ vertices \lyu{at steps $j-|S|+1,\ldots,j$,} we ensure that none of these vertices is removed until step $j$.

At the same time, the distribution of $\max_{i\ge 0} Z_i$ is dominated by a geometric random variable since at every step $i\ge 0$, the event $\{\forall t > i, S_t - S_i > 0\}$ has strictly positive probability depending only on $\eps$, see e.g. Example 21.2 in~\cite{LP17}. Therefore, together with Chernoff's inequality, we get
\begin{align*}
	& \P{|V_j|-2\eps j \ge j^{2/3}\mid \forall i\in S, \nu_{i,i}=1}\\
	\le\; 
	& \P{S_{j-|S|}+Z_{j-|S|} + |S| -2\eps j \ge j^{2/3}}\\
	\le\;
	& \P{S_{j-|S|}-2\eps (j-|S|) \ge j^{2/3}/3} + \P{Z_{j-|S|} \ge j^{2/3}/3}\\ 
	\le\; 
	& \e^{-\Omega(j^{1/3})} + \e^{-\Omega(j^{2/3})} = \e^{-\Omega(j^{1/3})},
\end{align*}
which finishes the proof.
\end{proof}

For $i\in [n]$ and a set $S\subseteq [\lyu{n}]$, define the event $\cW_i(S) = \{S\lyu{\cap [i]}\subseteq V_i\}$, that is, at each of the steps in $\lyu{S\cap [i]}$, a vertex was born, and each of these vertices survives until step $i$. Similarly to Lemma~\ref{lemma:condconc}, the following lemma allows us to control the number of vertices alive after $n$ steps conditionally on the introduction of vertices at certain steps.

\lyu{\begin{lemma}\label{lemma:condsurv}
	Fix $j_0 = j_0(n) = \omega(1)$, $j\in [j_0,n]$, a set $S\subseteq [j_0,n]$ and a non-empty set $R\subseteq [j_0,j]$ such that $|S\cup R|^{6} = o(j_0)$. Then,
	\begin{equation}\label{eq:2.6}
		\P{\cW_n(R) \mid \cW_n(S)\cap \cW_{j}(R)} = \left(1+\cO\left(\frac{|R|}{j^{1/6}}\right)\right)\left(\frac{j}{n} \right)^{|R|(1-p)/(2\eps)},
	\end{equation}
	where the constant in the $\cO$-term is independent of the parameters.
	\end{lemma}}
	
	In particular, the lemma implies that, if the set $S$ of vertices conditioned to survive after $n$ steps is not ``too large'', the probability that a vertex born at step $j$ survives after step $n$ remains the same as in the unconditional setting up to lower order terms.
	
	\lyu{\begin{proof}[Proof of Lemma~\ref{lemma:condsurv}]
	The event is trivial when $p=1$ or $j=n$, so we assume that $p < 1$ and $j\le n-1$.
	We also assume that $j_0 = \min(S\cup R)$: indeed, given a fixed $j\ge \min(S\cup R)$, proving the inequality for the largest possible value of $j_0$ implies it for smaller values satisfying the assumptions in the statement.
	By the definition of conditional probability and the fact that $\cW_n(R)\subseteq \cW_j(R)$,  
	we have
	\begin{equation}\label{eq:probeq}
		\P{\cW_n(R)\mid \cW_n(S)\cap \cW_j(R)} = \frac{\P{\cW_n(S)\cap \cW_n(R)}}{\P{\cW_n(S)\cap \cW_j(R)}}.
	\end{equation} 
	We first show that for every $i\in [j_0,n]$, $\P{\cW_i(S\cup R)}$ is equal to
	\begin{equation}\label{eq:induction}
		p^{|(S\cup R)\cap [i]|} \prod_{t=j_0+1}^{i} \left(1 - \ind_{t\notin S\cup R} (1+\cO(t^{-1/3})) \frac{(1-p)|(S\cup R)\cap [t-1]|}{2\eps t} \right),    
	\end{equation}
	where all constants in the $\cO$-terms here and below are all uniformly bounded in absolute value.
	We note that, for every $i\ge j_0$, $p^{|(S\cup R)\cap [i]|} \ge 2^{-|S\cup R|}$ and the product in~\eqref{eq:induction} is bounded from below by
	\[\exp\bigg(-2(1-p)\sum_{t=j_0}^i \frac{|S\cup R|}{2\eps t}\bigg) = \exp(-\cO(j_0^{1/6} \log i)).\]
	In particular, combining the fact that $|S\cup R|^6 = o(j_0)$ with Lemma~\ref{lemma:condconc} shows that~\eqref{eq:induction} is larger than $i^{4/3} \mathbb P(\cQ_i^c)$ for all $i\in [j_0,n]$, with $\cQ_i$ introduced in~\eqref{eq:An}.
	
	We turn to showing~\eqref{eq:induction} by induction. The base case $i=j_0$ is trivially satisfied. Fix an integer $i\in [j_0,n-1]$ and suppose that the induction hypothesis is satisfied for $i$. 
	If $i+1\in S\cup R$, then $\P{\cW_{i+1}(S\cup R)} = p\, \mathbb P(\cW_i(S\cup R))$ and the induction hypothesis for $i+1$ is satisfied.
	Suppose that $i+1\notin S\cup R$ and write $\P{\cW_{i+1}(S\cup R)}$ as
	\begin{align*} 
		&\P{\cW_{i+1}(S\cup R)\mid \cQ_i}\mathbb P(\cQ_i) + \P{\cW_{i+1}(S\cup R)\mid \cQ_i^c}\mathbb P(\cQ_i^c)\\
		=\; 
		&\P{\cW_{i+1}(S\cup R)\mid \cW_i(S\cup R)\cap \cQ_i}\P{\cW_i(S\cup R)\cap \cQ_i} + \cO(\mathbb P(\cQ_i^c))\\
		=\; 
		&\P{\cW_{i+1}(S\cup R)\mid \cW_i(S\cup R)\cap \cQ_i}(\P{\cW_i(S\cup R)} - \cO(\P{\cQ_i^c})) + \cO(\mathbb P(\cQ_i^c)).
	\end{align*}
	Moreover, since $\cW_i(S\cup R)\cap \cQ_i$ is independent of $\xi_{i+1}$ (introduced in Definition~\ref{def:drgvr}),
	\begin{align*}
		\mathbb P(\cW_{i+1}(S\cup R)\mid \cW_i(S\cup R)\cap \cQ_i) 
		&= 1-\frac{(1-p)|(S\cup R)\cap [i]|}{2\eps i + \cO(i^{2/3})}\\ 
		&= 1-(1+\cO(i^{-1/3}))\frac{(1-p)|(S\cup R)\cap [i]|}{2\eps i},
	\end{align*}
	which, together with the fact that $i^{4/3}\mathbb P(\cQ_i^c) = o(\mathbb P(\cW_i(S\cup R)))$, yields 
	\[\P{\cW_{i+1}(S\cup R)} = \bigg(1-(1+\cO(i^{-1/3}))\frac{(1-p)|(S\cup R)\cap [i]|}{2\eps i}\bigg)\P{\cW_i(S\cup R)}\]
	and finishes the induction step.
	Since $R\subseteq [j_0,j]$, the induction hypothesis for $i=n$ implies that $\P{\cW_n(S\cup R)}$ is equal to
	\begin{equation*}
		\mathbb P(\cW_{j}(S\cup R)) p^{|S\cap [j+1,n]|} \prod_{t=j+1}^{n} \left(1 - \ind_{t\notin S} (1+\cO(t^{-1/3})) \frac{(1-p)|(S\cup R)\cap [t-1]|}{2\eps t} \right).   
	\end{equation*}
	A similar proof by induction yields that, for every $i\in [j+1,n]$, $\P{\cW_i(S)\cap \cW_{j}(R))}$ is equal to
	\begin{equation*}
		\P{\cW_{j}(S)\cap \cW_{j}(R)} p^{|S\cap [j+1,i]|} \prod_{t=j+1}^{i} \left(1 - \ind_{t\notin S} (1+\cO(t^{-1/3})) \frac{(1-p)|S\cap [t-1]|}{2\eps t} \right).
	\end{equation*}
	Then, using that $\cW_{j}(S\cup R) = \cW_{j}(S)\cap \cW_{j}(R)$ together with the approximation $1-x = \exp(-x+o(x^{4/3}))$ when $x\to 0$, we conclude that~\eqref{eq:probeq} is equal to
	\begin{align*}
		\exp\bigg(-\sum_{t=j+1}^n \ind_{t\notin S} \bigg((1+\cO(t^{-1/3})) \frac{(1-p)|S\cup R|}{2\eps t} - (1+\cO(t^{-1/3})) \frac{(1-p)|S|}{2\eps t}\bigg)\bigg).
	\end{align*}
	However, since $t\ge j_0$ and $t^{-1/3} |S\cup R|\le t^{-1/6}$, we get that, for every $t\in [j+1,n]$,
	\[(1+\cO(t^{-1/3})) \frac{(1-p)|S\cup R|}{2\eps t} - (1+\cO(t^{-1/3})) \frac{(1-p)|S|}{2\eps t} = (1+\cO(t^{-1/6})) \frac{(1-p)|R|}{2\eps t}.\]
	Moreover, the terms corresponding to $t\in [j+1,n]\cap S$ that were excluded from the summation contribute at most $\cO(|S\cup R|/j) = \cO(j^{-1/2})$ to the total sum. 
	Hence, summing the above expression over $t\in [j+1,n]$ gives us that
	\begin{align*}
		\P{\cW_n(R)\mid \cW_n(S)\cap \cW_j(R)} 
		&=\, \exp\bigg(-\sum_{t=j+1}^n \ind_{t\notin S} (1+\cO(t^{-1/6})) \frac{(1-p)|R|}{2\eps t}\bigg)\\
		&=\,  \exp\bigg(-\sum_{t=j+1}^n (1+\cO(t^{-1/6})) \frac{(1-p)|R|}{2\eps t}+\cO(j^{-1/2})\bigg),
	\end{align*} 
	and the statement follows from the fact that
	\[\sum_{t=j+1}^n \frac{1}{t} = \log n - \log j + \cO(j^{-1}) \text{ and } \sum_{t=j+1}^n t^{-7/6} = \cO(j^{-1/6}),\]
	which concludes the proof.
	\end{proof}}
	
	\lyu{The previous lemma has the following useful corollary.
\begin{corollary}\label{cor:condsurv}
	Fix $j_0 = j_0(n) = \omega(1)$ and a set $U = \{j_1,\ldots,j_r\}\subseteq [j_0,n]$. For every vector $\mathbf{x}=(x_1,\ldots, x_r)\in \{0,1\}^r$, \bas{define $I_{\mathbf x}:=\{i\in [r]: x_i=1\}$ } and set
	\[\eta(\mathbf{x},U) := \prod_{i\in I_{\mathbf{x}}}  p\bigg(\frac{j_i}{n}\bigg)^{(1-p)/(2\eps)} \prod_{i\in [r]\setminus I_{\mathbf{x}}} \bigg(1-p\bigg(\frac{j_i}{n}\bigg)^{(1-p)/(2\eps)}\bigg).\]
	Then, for every $\mathbf{x}\in \{0,1\}^r$,
	\[\bigg|\mathbb P((\nu_{j,n})_{j\in U} = \mathbf{x}) - \eta(\mathbf{x},U)\bigg| =  \cO\bigg(\frac{2^r r}{j_0^{1/6}} \prod_{i\in I_{\mathbf{x}}} p\bigg(\frac{j_i}{n}\bigg)^{(1-p)/(2\eps)}\bigg),\]
	where the constant in the $\cO$-term is independent of $r$ and $\mathbf{x}\in \{0,1\}^r$.
\end{corollary}
}

\begin{remark}\label{rem:condsurv}
	Corollary~\ref{cor:condsurv} implies that 
	\be 
	\dd_{\mathrm{TV}}(\cL((\nu_{j,n})_{j\in U}),\eta(\cdot,U))=\cO\Big(\frac{4^rr}{j_0^{1/6}}\Big).
	\ee 
	We note that, in particular, this upper bound depends only on $j_0$ and $r=|U|$, not on the exact values of the indices $j_1,\ldots, j_r$.
\end{remark}

\begin{proof}[Proof of Corollary~\ref{cor:condsurv}]
	For all $i\in \{0,1,\ldots,r\}$, denote $U_i = \{j_\ell: \ell\in [1,i]\}$.
	First, by applying Lemma~\ref{lemma:condsurv} consecutively $r$ times (with $S = U_{i-1}$, $R = \{j_i\}$ and $j = j_i$ for all $i\in [r]$), we deduce that
	\begin{align*}
		\mathbb P(\cW_n(U)) 
		&= \prod_{i=1}^r p\,\mathbb P(\cW_n(U_i)\mid \cW_n(U_{i-1})\cap \cW_{j_i}(\{j_i\}))\\
		&= \prod_{i=1}^r \bigg(1 + \cO\bigg(\frac{1}{j_i^{1/6}}\bigg)\bigg) p\bigg(\frac{j_i}{n}\bigg)^{(1-p)/(2\eps)} = \bigg(1 + \cO\bigg(\frac{r}{j_0^{1/6}}\bigg)\bigg) \prod_{i=1}^r p\bigg(\frac{j_i}{n}\bigg)^{(1-p)/(2\eps)}\hspace{-0.2em}.
	\end{align*}
	More generally, the same argument shows that, for all subsets $I\subseteq [r]$,
	\[\mathbb P(\cW_n(\{\nu_{j_i}: i\in I\})) = \bigg(1 + \cO\bigg(\frac{r}{j_0^{1/6}}\bigg)\bigg) \prod_{i\in I} p\bigg(\frac{j_i}{n}\bigg)^{(1-p)/(2\eps)}.\]
	Then, for every vector $\mathbf{x}\in \{0,1\}^r$ with set of 1-bits $I_{\mathbf{x}}$, the inclusion-exclusion principle shows that
	\begin{align*}
		\mathbb P((\nu_{j_i})_{i=1}^r = \mathbf{x}) = \sum_{I_{\mathbf{x}}\subseteq I\subseteq [r]} \bigg(1 + \cO\bigg(\frac{r}{j_0^{1/6}}\bigg)\bigg) (-1)^{|I| - |I_{\mathbf{x}}|} \prod_{i\in I} p\bigg(\frac{j_i}{n}\bigg)^{(1-p)/(2\eps)},
	\end{align*}
	which further rewrites as the sum of $\eta(\mathbf{x},U)$ and
	\begin{align*} 
		\cO\bigg(\frac{r}{j_0^{1/6}} \prod_{i\in I_{\mathbf{x}}} p\bigg(\frac{j_i}{n}\bigg)^{\hspace{-0.5em}(1-p)/(2\eps)} \hspace{-1.2em}\sum_{I_{\mathbf{x}} \subseteq I\subseteq [r]} \prod_{i\in I\setminus I_{\mathbf{x}}} p\bigg(\frac{j_i}{n}\bigg)^{\hspace{-0.5em}(1-p)/(2\eps)}\bigg) = \cO\bigg(\frac{2^r r}{j_0^{1/6}} \prod_{i\in I_{\mathbf{x}}} p\bigg(\frac{j_i}{n}\bigg)^{\hspace{-0.5em}(1-p)/(2\eps)}\bigg).
	\end{align*}
	The corollary follows by observing that the constant in the $\cO$-term is uniform over the choice of the parameters, as this is the case in Lemma~\ref{lemma:condsurv} as well.
	\end{proof}
	
	\begin{lemma}\label{lemma:oldvertgen}
Fix $i = i(n) = \omega(1)$ and \lyu{recall that $V_n = \{v_1, \ldots, v_{|V_n|}\}$.}
\begin{enumerate}[(i)] 
	\item\label{part i} $\lyu{\mathbb P(V_n\neq \emptyset \text{ and } v_1\le i)}\le \frac{i^{p/(2\eps)}}{n^{(1-p)/(2\eps)}}$;
	\item\label{part ii} Fix $\ell = \lceil 4\eps (\log n)^{\lyu{6}p/\eps}\rceil$. Then, there is a sequence $(\delta_i)_{i=1}^{\infty}$ satisfying $\delta_n = o(1)$ such that w.h.p., for all $j\in [\ell, |V_n|]$, one has
	\begin{equation}
		v_j\in \left[(1-\delta_n)n^{(1-p)/p} \left(\frac{j}{2\eps}\right)^{(2\eps)/p}, (1+\delta_n) n^{(1-p)/p} \left(\frac{j}{2\eps}\right)^{(2\eps)/p}\right].
	\end{equation} 
\end{enumerate}
\end{lemma}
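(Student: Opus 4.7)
I would start from the union bound
\[
\mathbb P(v_1 \text{ is defined and } v_1 \le i) \le \sum_{k=1}^i \mathbb P(k \in V_n) = p \sum_{k=1}^i \mathbb P\bigl(\cW_n(\{k\}) \mid \cW_k(\{k\})\bigr),
\]
using that $\{k\in V_n\} = \{\xi_k=1\}\cap \cW_n(\{k\})$. For $k = \omega((\log n)^3)$, Lemma~\ref{lemma:condsurv} applied with $R=\{k\}$, $S=\emptyset$, $j=k$ gives $\mathbb P(\cW_n(\{k\}) \mid \cW_k(\{k\})) = (1+o(1))(k/n)^{(1-p)/(2\eps)}$. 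For the remaining small $k$, I would obtain the same leading-order upper bound by mimicking the short proof of Lemma~\ref{lemma:condsurv}: dominate the product $\prod_{\ell=k+1}^n (1-\ind\{\xi_\ell=0\}/|V_{\ell-1}|)$ by its tail from $\ell\asymp (\log n)^3$ on (where Lemma~\ref{lemma:condconc} provides concentration of $|V_\ell|$), and bound the earlier factors crudely by $1$. The identity $1 + (1-p)/(2\eps) = p/(2\eps)$ together with an integral comparison yields
\[
\sum_{k=1}^i k^{(1-p)/(2\eps)} \le \frac{2\eps}{p}\, i^{p/(2\eps)},
\]
and multiplying by $p/n^{(1-p)/(2\eps)}$ together with $2\eps \le 1$ gives the claimed bound.

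\textbf{Plan for part (ii).} I would introduce the monotone counting variable $N_i := |V_n \cap [i]|$ and exploit the equivalence $\{v_j \le i\} = \{N_i \ge j\}$ to convert the statement into concentration of $N_i$. The key quantitative inputs will be
\[
\mathbb E[N_i] = (1 + o(1))\,\frac{2\eps\, i^{p/(2\eps)}}{n^{(1-p)/(2\eps)}} \qquad \text{and} \qquad \mathrm{Var}(N_i) = o\bigl(\mathbb E[N_i]^2\bigr),
\]
valid uniformly for those $i$ with $\mathbb E[N_i] \to \infty$. The mean estimate follows from part (i). For the variance, I would apply Lemma~\ref{lemma:condsurv} with $R = \{k,l\}$, $j = l$, $S = \emptyset$, so that the two-point probability factorises asymptotically as $\mathbb P(k,l\in V_n) = (1+o(1))\,\mathbb P(k\in V_n)\mathbb P(l\in V_n)$, uniformly over pairs $k<l$ in the relevant range. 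The off-diagonal contributions to $\mathrm{Var}(N_i)$ are then dominated by the diagonal term $\mathbb E[N_i]$, and Chebyshev's inequality delivers pointwise concentration.

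\textbf{From pointwise to uniform concentration and inversion.} To obtain control simultaneously for all $j\in[\ell,|V_n|]$, I would fix a polynomially fine geometric grid $(i_t)$ of $i$-values covering the range $[n^{(1-p)/p}(\log n)^2,\,n]$, apply Chebyshev's inequality at each $i_t$, and union-bound over the grid. Monotonicity of $i\mapsto N_i$ then fills in the gaps between consecutive grid points at the cost of only a $(1+o(1))$ multiplicative error. Inverting the asymptotic relation $\mathbb E[N_i] \approx j$ produces the target value $i^\ast_j = n^{(1-p)/p}(j/(2\eps))^{2\eps/p}$, and the uniform concentration of $N_i$ translates into the claimed sandwich bound $v_j \in [(1-\delta_n)i^\ast_j,\,(1+\delta_n)i^\ast_j]$ with $\delta_n=o(1)$.

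\textbf{Anticipated main obstacle.} The delicate regime is $j$ near the lower end $\ell = \Theta((\log n)^{p/\eps})$, where $\mathbb E[N_{i^\ast_j}]\sim \ell$ is only polylogarithmic in $n$; here Chebyshev leaves little slack once the grid union bound is paid. I expect the hardest bookkeeping to be in tracking the multiplicative error term $1 + O(|R|\log n/j^{1/3})$ coming from Lemma~\ref{lemma:condsurv} uniformly for $|R|=2$ over pairs $k<l$ in the chosen range, so as to confirm that the off-diagonal variance contribution is genuinely negligible even near this threshold, and in ruling out the contribution of pairs $(k,l)$ with $l=O((\log n)^3)$ (for which Lemma~\ref{lemma:condsurv} does not apply directly) by a separate crude bound.
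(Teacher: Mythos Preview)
Your proposal is correct and follows the same overall architecture as the paper: compute $\mathbb E[M_{i,n}]$ (your $N_i$) for part~(i) via Markov, then a second-moment/Chebyshev argument on a geometric grid of size $\Theta((\log n)^2)$ for part~(ii), with monotonicity of $i\mapsto M_{i,n}$ to fill in between grid points and invert.

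The one place where the paper is cleaner, and which dissolves your anticipated obstacle entirely, is a symmetry trick you are missing. Instead of summing $\mathbb P(k\in V_n)$ over $k$ and invoking Lemma~\ref{lemma:condsurv} termwise (which forces you to treat $k\le (\log n)^3$ separately), the paper observes that for any $j\le i$ the conditional survival probability $\mathbb P(\nu_{j,n}=1\mid \nu_{j,i}=1)$ depends only on $i$, not on $j$, because after time $i$ all vertices currently in $V_i$ are exchangeable. Hence
\[
\mathbb E[M_{i,n}]=\sum_{j=1}^i \mathbb P(\nu_{j,i}=1)\,\mathbb P(\nu_{j,n}=1\mid \nu_{j,i}=1)=\mathbb P(\nu_{i,n}=1\mid \nu_{i,i}=1)\,\mathbb E|V_i|,
\]
so Lemma~\ref{lemma:condsurv} is invoked only once, at $j=i=\omega((\log n)^3)$. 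The same factoring works for the second moment: $\sum_{j_1\ne j_2}\mathbb P(\nu_{j_1,n}=\nu_{j_2,n}=1)=\mathbb P(\text{a fixed pair in }V_i\text{ survives to }n)\cdot(\mathbb E[|V_i|^2]-\mathbb E|V_i|)$, again with a single application of Lemma~\ref{lemma:condsurv} at $j=i$ and $|R|=2$. This reduces the variance control to bounds on $\mathbb E|V_i|$ and $\mathbb E[|V_i|^2]$, which come directly from Lemma~\ref{lemma:condconc}, and no separate crude argument for small indices is needed.
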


\begin{proof}
\lyu{If $p=1$, both parts hold trivially. Assume that $p\in (1/2,1)$.}
First, we conduct a first moment computation used in the proof of both Parts~\eqref{part i} and~\eqref{part ii}. For every $i\in \lyu{[n]}$, set $M_{i,n} := |V_i\cap V_n|$ and recall the event $\cQ_i = \{||V_i| - 2\eps i|\le i^{2/3}\}$. 
\lyu{Using that $(\P{\nu_{j,n} = 1})_{j=1}^i$ is increasing with respect to $j$} together with Lemma~\ref{lemma:condsurv} applied with \lyu{$j_0 = \lfloor i^{1/2}\rfloor$,} $i$ instead of $j$, $S = \emptyset$ and $R$ being a single vertex in $V_i$, this implies that
\lyu{
	\begin{align}
		\mathbb E[M_{i,n}] 
		&=\; \sum_{j=1}^i \mathbb P(\nu_{j,n} = 1) = \cO(j_0 \mathbb P(\nu_{j_0,n} = 1))+\sum_{j=j_0+1}^i \mathbb P(\nu_{j,i} = 1)\nonumber\\
		&=\; \cO\Bigg(j_0 p \left(\frac{j_0}{n}\right)^{(1-p)/2\eps}\Bigg)+\sum_{j=j_0+1}^i p\left(1+\cO\left(\frac{1}{i^{1/6}}\right)\right)\left(\frac{j}{n}\right)^{(1-p)/2\eps}\hspace{-0.3em}.
	\end{align}
	However, the sum on the right hand side is or order $\Theta(i\cdot (i/n)^{(1-p)/2\varepsilon}) = \omega(j_0\cdot (j_0/n)^{(1-p)/2\varepsilon})$, which allows us to rewrite the last expression as
	\begin{align}
		\left(1+\cO\left(\frac{1}{i^{1/6}}\right)\right)\sum_{j=1}^i p\left(\frac{j}{n}\right)^{(1-p)/2\eps}
		=\; &\left(1+\cO\left(\frac{1}{i^{1/6}}\right)\right) \frac{2\eps i^{1+(1-p)/(2\eps)}}{n^{(1-p)/(2\eps)}}\nonumber\\ 
		=\; &\left(1+\cO\left(\frac{1}{i^{1/6}}\right)\right) \frac{2\eps i^{p/(2\eps)}}{n^{(1-p)/(2\eps)}},\label{eq exact}
	\end{align}
	where the implicit constants in the asymptotic notation here and below are uniformly bounded in absolute value.}
Part~\eqref{part i} readily follows from~\eqref{eq exact} by Markov's inequality \lyu{and the fact that $2\eps < 2$}. 

\lyu{We turn to a proof} of Part~\eqref{part ii} by a second moment argument. From now on, we assume that $i = \omega(n^{(1-p)/p})$ so that $\mathbb E M_{i,n} = \omega(1)$. \lyu{Denote $\varphi(j_1,j_2) = \mathbb P(\nu_{j_1, n} = \nu_{j_2, n} = 1)$ for ease of writing and note that $\varphi(\cdot, \cdot)$ is increasing with respect to each of its coordinates.} 
\lyu{Then, by combining the said monotonicity with two consecutive applications of Lemma~\ref{lemma:condsurv}, the sum of $\varphi(j_1,j_2)$ over all pairs of distinct $j_1, j_2\in [i]$ is equal to
	\begin{align} 
		&\sum_{\substack{j_1,j_2\in [j_0];\\ j_1\neq j_2}} \varphi(j_1,j_2) + 2\sum_{\substack{j_1\in [j_0];\\ j_2\in [j_0+1,i]}} \varphi(j_1,j_2) +  \sum_{\substack{j_1, j_2\in [j_0+1,i];\\ j_1\neq j_2}} \varphi(j_1,j_2)\nonumber\\
		=\, 
		&\,\, \cO(j_0^2 \varphi(j_0-1,j_0)) + \cO(2j_0 i \varphi(j_0,i))\nonumber\\
		+\, 
		&\sum_{\substack{j_1,j_2\in [j_0+1,i];\\ j_1\neq j_2}}\bigg(1+\cO\bigg(\frac{1}{j_1^{1/6}}\bigg)\bigg) \bigg(1+\cO\bigg(\frac{1}{j_2^{1/6}}\bigg)\bigg) p^2 \left(\frac{j_1j_2}{n^2}\right)^{(1-p)/2\eps}\nonumber.
	\end{align}
	Again, using that the sum above is of order $\Theta(p^2 i^2 (i/n)^{2(1-p)/2\varepsilon})$ and thus dominates 
	\[i^{1/6}(\cO(j_0^2 \varphi(j_0-1,j_0)) + \cO(2j_0 i \varphi(j_0,i))),\]
	we obtain that the above expression rewrites as
	\begin{align} 
		\sum_{\substack{j_1,j_2\in [j_0+1,i];\\ j_1\neq j_2}} \bigg(1+\cO\bigg(\frac{1}{i^{1/6}}\bigg)\bigg) p^2 \left(\frac{j_1j_2}{n^2}\right)^{\hspace{-0.4em}(1-p)/2\eps}\hspace{-0.5em},\nonumber
\end{align}}
\lyu{which further simplifies to
	\begin{equation}\label{eq:snd_moment}
		\left(1+\cO\left(\frac{1}{i^{1/6}}\right)\right) \sum_{\substack{j_1,j_2\in [i];\\ j_1\neq j_2}} 
		p^2 \left(\frac{j_1j_2}{n^2}\right)^{(1-p)/2\eps} = \left(1+\cO\left(\frac{1}{i^{1/6}}\right)\right) \frac{2\eps i^{p/(2\eps)}}{n^{(1-p)/(2\eps)}}.
\end{equation}}

\noindent
Thus, by combining~\eqref{eq exact} and~\eqref{eq:snd_moment}, we conclude that
\begin{align*}
	\mathbb E[M_{i,n}^2] - \mathbb E[M_{i,n}]^2
	\le\; 
	&\Bigg(\mathbb E[M_{i,n}] + \hspace{-0.8em}\sum_{\substack{j_1, j_2\in [i];\\ j_1\neq j_2}}\hspace{-0.8em} \mathbb P(\nu_{j_1, n} = \nu_{j_2, n} = 1)\Bigg) - \lyu{\left(1+\cO\left(\frac{1}{i^{1/6}}\right)\right)} \frac{4\eps^2 i^{p/\eps}}{n^{(1-p)/\eps}}\\
	=\; 
	&\lyu{\left(1+\cO\left(\frac{1}{i^{1/6}}\right)\right)} \frac{2\eps i^{p/(2\eps)}}{n^{(1-p)/(2\eps)}} + \cO\left( \frac{\lyu{i^{p/\eps-1/6}}}{n^{(1-p)/\eps}}\right)\\
	=\; 
	&\cO\left(\lyu{\frac{1}{i^{1/6}}} + \frac{n^{(1-p)/(2\eps)}}{i^{p/(2\eps)}}\right)\mathbb E[M_{i,n}]^2.
\end{align*}
Define $\psi(i) := \lyu{i^{-1/6} + n^{(1-p)/(2\eps)}i^{-p/(2\eps)}}$. Then, Chebyshev's inequality implies
\begin{equation}\label{eq chebyshev ineq}
	\mathbb P(|M_{i,n}-\mathbb E[M_{i,n}]|\ge \psi(i)^{\lyu{1/3}} \mathbb E[M_{i,n}])\le \frac{\mathbb E[|M_{i,n} - \mathbb E [M_{i,n}]|^2]}{(\psi(i)^{\lyu{1/3}} \mathbb E[M_{i,n}])^2} = \cO\big(\psi(i)^{\lyu{1/3}}\big).
\end{equation}

Now, set $\alpha = \alpha(n) = ((n^{(1-p)/p}(\log n)^{\lyu{12}})^{-1} n)^{1/\lfloor \log n\rfloor^2} \lyu{= 1+o(1)}$, and for all integers $k$ between $0$ and $\lfloor \log n\rfloor^2$, set $i_k = \lfloor \alpha^k n^{(1-p)/p} (\log n)^\lyu{12}\rfloor$ \lyu{so that, in particular, $i_{\lfloor \log n\rfloor^2} = n$}. By~\eqref{eq chebyshev ineq}, we have that for all $k$ as above, $|M_{i_k, n} - \mathbb E[M_{i_k, n}]| \geq \psi(i_k)^{\lyu{1/3}} \mathbb E[M_{i_k, n}]$ holds with probability $\cO\big(\psi(i_k)^{\lyu{1/3}}\big)$. Moreover, since $i_k$ is increasing in $k$ (for $n$ sufficiently large) and $\psi(i)$ decreasing in $i$,
\begin{equation}\label{eq:unif_O}
	\sum_{k=\lyu{0}}^{\lfloor \log n\rfloor^2}\psi(i_k)^{\lyu{1/3}}\leq \lyu{(\lfloor \log n\rfloor^2 + 1)} \psi(i_0)^{\lyu{1/3}}=\cO\big((\log n)^{\lyu{2-2p/\eps}}\big)=o(1).
\end{equation}
Thus, a union bound over all $\lfloor \log n\rfloor^2 + 1$ values of $k$ shows that the event
$$\mathfrak G := \{\text{for all \lyu{non-negative} integers } k\le \lfloor (\log n)^2\rfloor, M_{i_k, n} = (1+o(1)) \mathbb E [M_{i_k, n}]\},$$
holds w.h.p.\ \lyu{with the $o(1)$ equal to $\max_{k\in [0, \lfloor \log n\rfloor^2]} \psi(i_k)^{1/3}$ (so uniform over different $k$).}

Finally, note that $(M_{i,n})_{i=1}^n$ is an increasing sequence of random variables since for all $i<j$ one has $V_i\cap V_n\subseteq V_j\cap V_n$. Thus, conditionally on the event $\mathfrak G$, for all positive integers $k\le \lfloor (\log n)^2\rfloor$ and $i\in [i_{k-1}, i_k]$,
\begin{align*}
	1-o(1) 
	&=\; (1-o(1)) \frac{\mathbb E [M_{i_{k-1},n}]}{\mathbb E [M_{i_k,n}]}\le \frac{M_{i_{k-1},n}}{\mathbb E [M_{i_k,n}]}\le\frac{M_{i,n}}{\mathbb E [M_{i,n}]}\\
	&\le\; \frac{M_{i_k,n}}{\mathbb E [M_{i_{k-1},n}]}\le (1+o(1))\frac{\mathbb E [M_{i_k,n}]}{\mathbb E [M_{i_{k-1},n}]} = 1+o(1),
\end{align*}
where the first and the last equalities follow from the fact that, for every positive integer $k\le \lfloor (\log n)^2\rfloor$, 
\[\frac{\mathbb E [M_{i_{k-1},n}]}{\mathbb E [M_{i_k,n}]} = (1+o(1)) \bigg(\frac{i_{k-1}}{i_k}\bigg)^{p/(2\eps)} = 1+o(1).\]
Hence, w.h.p.\  $M_{i,n} = (1+o(1)) \mathbb E [M_{i,n}]$ for all $i\in [i_0, n]$. 

\lyu{Now, fix $j\in [\ell, |V_n|]$. Since $\ell = (2+o(1))\mathbb E[M_{i_0,n}]$, w.h.p.\ there exists a smallest integer $i\in [i_0,n]$ such that $M_{i,n} = j$.
	As $M_{i,n} = (1+o(1))\mathbb E[M_{i,n}]$ (where the $o(1)$ is uniform over $i$ since the $o(1)$ in $\mathfrak G$ is uniform over different $k$) and, by definition, $v_j = \min\{t\in [n]: M_{t,n} = j\}$, $v_j$ satisfies that
	\begin{equation*}
		(1+o(1))\frac{2\eps v_j^{p/(2\eps)}}{n^{(1-p)/(2\eps)}} = j,
	\end{equation*}
	which yields $v_j= (1+o(1)) n^{(1-p)/p} (j/2\eps)^{2\eps/p}$ and completes the proof of the lemma.}
	\end{proof}
	
	\section{Theorem~\ref{thrm:LWC intro}: setting up the framework}\label{sec:bfs} 
	
	To start this section, we introduce some important notation. For any non-negative integer $r$, any graph $G$ and any vertex $v$ in $G$, we let $\partial B_r(G, v)$ denote the set of vertices at graph distance exactly $r$ from $v$ in $G$, that is, $\partial B_r(G, v) := V(B_r(G, v)\setminus B_{r-1}(G, v))$. 
	We recall that some graphs in this paper are naturally defined as rooted but we often omit the root from the notation for simplicity. Also, recall that, for a random variable $X$, we let $\mathfrak{L}(X)$ denote the distribution of $X$.
	
	\paragraph{The exploration process.} Recall the definition of the DRGVR model $G_n$ and the Binomial birth-death tree $\cT$ in Definitions~\ref{def:drgvr} and~\ref{def:wll}, respectively. To prove the local convergence \lyu{in distribution} of $G_n$ to $\cT$, we couple the neighbourhoods of the root 0 in $\cT$ and the neighbourhoods of a vertex $k_0$ in $G_n$ chosen uniformly at random. 
	More precisely, we couple the breadth-first search (BFS) exploration \lyu{from the vertex} $k_0$ in $G_n$ with the iterative construction of $\cT$ starting from the root vertex $0$. In this section, we provide some notation for this BFS exploration.
	
	We start by introducing the \emph{Ulam-Harris tree}, which we use to unify the notation that underpins the construction of the multi-type branching process $\cT$ and the BFS exploration of the neighbourhood of $k_0$ in $G_n$. The Ulam-Harris tree is an infinite rooted tree constructed as follows: its root vertex is denoted by $0$ and, for all $r\ge 0$, the children of any vertex $u:=(0,u_1,\ldots, u_r)$ (where $u_1,\ldots, u_r\in \N$) are given by $((u,i))_{i\ge 1}$. In this way, the vertex $u:=(0,u_1,\ldots, u_r)$ is the $u_r^{\text{th}}$ child of the $u_{r-1}^{\text{th}}$ child of $\ldots$ of the $u_1^{\text{th}}$ child of the root $0$ in the \lyu{breadth}-first order. 
	
	Furthermore, we introduce an ordering $\luh$ on the vertices in the Ulam-Harris tree. For two vertices $u:=(0,u_1, \ldots, u_r)$ and $v:=(0,v_1,\ldots, v_k)$ (with $r,k\in \N_0$ and $(u_i)_{i=1}^r \in \N^r, (v_i)_{i=1}^k\in \N^k$), we write $u\luh v$ when $u$ is smaller than $v$ in BFS order, that is, when either $r<k$ or $r=k$ and $u_j<v_j$, where $j:=\min\{i\in[r]: u_i\neq v_i\}$. For example, $(0,1,2,3)\luh (0,1,2,3,4)$ and $(0,2,5,4)\luh (0,2,6,3)$. In a similar manner, we define the ordering $\lequh$.
	
	From now on, we see the tree $\cT$ as a random sub-tree of the Ulam-Harris tree. More precisely, if a vertex $u := (0, u_1, \ldots, u_r)\in \cT$ has $\tau_u$ children, these are encoded $(u,i)_{i=1}^{\tau_u}$ in increasing order of their types. \lyu{That is, upon existence, the child of $u$ with smallest type is encoded $(u,1)$, the child with the second smallest type is encoded $(u,2)$, etc.}
	Moreover, we slightly abuse notation and use the Ulam-Harris formalism for the BFS exploration of the neighbourhood of $k_0$ in $G_n$. \lyu{(While $G_n$ is not a tree, the vertex $k_0$ is typically not contained in a short cycle, which allows a comparison between the balls with finite radii around $0$ in $\cT$ and around $k_0$ in $G_n$.)} 
	This provides more structure when keeping track of the BFS exploration \lyu{of $G_n$} and the parallel iterative construction of $\cT$.
	For a vertex \lyu{$u=(0,u_1,\ldots, u_r)$} for some $r\ge 0$ and \lyu{$u_1, \ldots, u_r\in \N$} in the sub-tree of the Ulam-Harris tree obtained from the BFS exploration of \lyu{$k_0$} in $G_n$, we let $k_{u}\in [n]$ denote its label in $G_n$ \lyu{(which coincides with its arrival time)}. In what follows, we identify the vertex $u$ in the BFS exploration of $G_n$ by its label $k_{u}$, \lyu{and let $\theta_u$ denote the number of neighbours of $v$ explored after $v$ (that is, its children)}. 
	To illustrate: \lyu{vertex $(0)$} has label $k_0$, and this is the root of the BFS exploration. The neighbours of \lyu{$(0)$} are $((0,i))_{i=1}^{\theta_0}$, where $\theta_0$ denotes the number of neighbours of $k_0$ in $G_n$, and their labels are $(k_{(0,i)})_{i=1}^{\theta_0}$. This continues throughout the BFS exploration. For convenience of notation, we skip the parentheses and write $k_{0,u_1, \ldots, u_r}$ for the label of the vertex $(0,u_1, \ldots, u_r)$, \lyu{and also skip the parentheses and write $0$ for the root $(0)$}.\\
	
	In the BFS exploration \lyu{of $G_n$}, we consider nodes to be active, probed or neutral and let $(\cA_t, \cP_t, \cN_t)_{t\in\N_0}$ denote the sets of active, probed and neutral vertices after $t$ steps of the exploration, respectively. We initialise the process by setting 
	\begin{equation}
(\cA_0,\cP_0,\cN_0)=(\{k_0\},\emptyset,V_n\backslash \{k_0\}).
\end{equation} 

\lyu{If $\cA_{t-1}$ is not empty,} we also let $k[t]$ denote the smallest vertex for the BFS order in $\cA_{t-1}$. That is, \lyu{for every integer} $t\geq1$, if $k_{u}\in \cA_{t-1}$ and $u\luh  w$
for all \lyu{$k_w\in \cA_{t-1}\backslash \{k_u\}$}, then $k[t]=k_{u}$. Let $\cD_t:=\{w\in \cN_{t-1}: \{w,k[t]\}\in E(G_n)\}$ denote the set of neutral vertices attached to $k[t]$. Then, \lyu{if $\cA_\lyu{t-1}\neq \emptyset$,} we update 
\begin{equation}
\lyu{(\cA_{t}, \cP_{t}, \cN_{t})=((\cA_{t-1}\backslash\{k[t]\})\cup \cD_t, \cP_{t-1}\cup \{k[t]\},\cN_{t-1}\,\backslash\, \cD_t)}
\end{equation} 
and, if $\cA_\lyu{t-1}=\emptyset$, we set $(\cA_{\lyu{t}}, \cP_{\lyu{t}}, \cN_{\lyu{t}})=(\cA_{\lyu{t-1}}, \cP_{\lyu{t-1}}, \cN_{\lyu{t-1}})$.

Given the definition of the DRGVR model, it is convenient to split the neighbours of each vertex in the BFS exploration into two parts: indeed, for each vertex, the probability to be adjacent to a vertex with a \emph{smaller} label is different from the probability to be adjacent to a vertex with a \emph{larger} label. 
As a result, for each vertex, we consider its neighbours to the \emph{left} (i.e.\ vertices with a smaller label) and to the \emph{right} (i.e.\ vertices with a larger label) separately. 
Moreover, we refer to these as the $L$-neighbours and $R$-neighbours of $u$ (or of $k_u$), respectively. Furthermore, we let $\theta_u^L$ and $\theta_u^R$ denote the number of $L$-neighbours and $R$-neighbours of $k_u$, respectively, and note that $\theta_u=\theta_u^L+\theta_u^R$.

In an equivalent manner, we define the $L$-children and the $R$-children of a vertex $u$ in the multi-type branching process $\cT$ as in Definition~\ref{def:wll}. Letting $\tau_u$ denote the number of children of $u$, the $L$-children (resp.\ the $R$-children) of $u$ are the children $(u,i)_{i=1}^{\tau_u}$ such that $a_{u,i}<a_u$ (resp.\ $a_{u,i}>a_u$). Let us write $\tau_u^L$ and $\tau_u^R$ for the number of $L$-children and $R$-children of $u$, respectively. As we order the children in increasing order of their types, it follows that $(u,i)_{i=1}^{\tau_u^L}$ are $u$'s $L$-children and $(u,i)_{i=\tau_u^L+1}^{\tau_u^L+\tau_u^R}$ are $u$'s $R$-children. 

\section{\texorpdfstring{Coupling the $1$-neighbourhoods of $k_0$ in $G_n$ and $0$ in $\cT$}{}}\label{sec:lwl}

In this section, we provide a coupling between the neighbours of $k_0$ in the BFS exploration of $G_n$ and the children of the root $0$ of $\cT$. The coupling should be such that:
\begin{enumerate}[(i)]
\item the number of $L$- and $R$-neighbours of $k_0$ in $G_n$ are equal to the number of $L$- and $R$-children of $0$, 
\item the rescaled labels \bas{(i.e.\ the marks)} of the neighbours of $k_0$ and the types \bas{(i.e.\ the types)} of the children of $0$ are approximately the same.
\end{enumerate}
Whilst $(ii)$ is not directly necessary for the $1$-neighbourhoods of $k_0$ and $0$ to be isomorphic, it is necessary \bas{to couple the marks of the vertices in both graphs}. It also ensures we can construct coupling such that the $2$-neighbourhoods are isomorphic.

In this section, we prove that the following events occur with high probability:  
\begin{equation}\begin{aligned}\label{eq:H1}
	\cH_{1,0}&:=\{  a_0\geq 1/\log\log n\},\\
	\cH_{1,1}&:=\{\min_{i\in[  \tau_0^L]}  a_{0,i}>1/(\log\log n)^2\},\\
	\cH_{1,2}&:=\{(B_1(G_n,k_0),k_0)\cong (B_1(\cT,0),0)\}\cap \{\forall u\in V(B_1(\cT,0)), |  a_{u}-\tfrac{k_{u}}{n}|\leq \tfrac{2}{n}\},\\
	\cH_{1,3}&:=\{ \tau_0^L+ \tau_0^R< \log\log n\}.
	\end{aligned}\end{equation}  
	In words, the events $\cH_{1,0}$ and $\cH_{1,1}$ control the types of the root $0$ and the $L$-children of the root $0$ in $\cT$, and ensure that these types are not ``too small''. Similarly, the event $\cH_{1,3}$ ensures that the number of children of the root $0$ in $\cT$ is not ``too large''. The event $\cH_{1,2}$ then states that the $1$-neighbourhoods of $k_0$ in $G_n$ and $0$ in $\cT$ are isomorphic and that the rescaled labels of the neighbours of $k_0$ in $G_n$ are very close to the types of their corresponding counterparts in $\cT$. \lyu{In what follows, similar events ensure the possibility to extend the coupling from the $r$-th to the $(r+1)$-st neighbourhoods as well}. More details related to this follow in the upcoming sections.
	
	We now state the main result of this section.
	
	\begin{lemma}\label{lemma:1couple}
Fix $\beta > 0$ and $\eps\in (0,1/2]$, and consider the DRGVR model and the Binomial birth-death tree model given in Definitions~\ref{def:drgvr} and~\ref{def:wll}, respectively. Recall the events $(\cH_{1,i})_{i=0}^3$ from~\eqref{eq:H1}. There exists a constant $C = C(\beta, \eps) > 0$ such that, for all sufficiently large $n$, 
\begin{equation}
	\mathbb P\bigg(\Big(\bigcap_{i=0}^3 \cH_{1,i}\Big)^c\bigg)\leq \frac{C}{(\log\log n)^{p/(2\eps)}}.
\end{equation} 
\end{lemma}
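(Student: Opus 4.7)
The plan is to apply a union bound over the four events. The dominant contribution to the error comes from $\cH_{1,0}^c$: since $a_0 \sim \mathrm{Beta}(p/(2\eps), 1)$ has CDF $x\mapsto x^{p/(2\eps)}$ on $(0,1]$, one immediately obtains $\P{\cH_{1,0}^c} = (\log\log n)^{-p/(2\eps)}$, providing the claimed order of magnitude. The remaining three events are controlled on the high-probability event $\cH_{1,0}$, and the main work is to construct a successful coupling underlying $\cH_{1,2}$.

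The events $\cH_{1,1}^c$ and $\cH_{1,3}^c$ are handled by direct branching-process calculations on $\cH_{1,0}$. Integrating the intensity $\lambda_0^-$ from Definition~\ref{def:wll} shows that, conditionally on $a_0$, the number of $L$-children of $0$ with type in $(0,1/(\log\log n)^2)$ is Poisson with mean
\begin{equation*}
\int_0^{1/(\log\log n)^2} \frac{\beta p}{2\eps a_0} x^{(1-p)/(2\eps)}\,\dd x = \frac{\beta}{a_0}(\log\log n)^{-p/\eps} \leq \beta(\log\log n)^{1-p/\eps}\quad\text{on }\cH_{1,0}.
\end{equation*}
The algebraic equivalence $\eps \leq 1/2 \Leftrightarrow p/(2\eps) \geq 1 \Leftrightarrow 1-p/\eps \leq -p/(2\eps)$, combined with Markov's inequality, then yields $\P{\cH_{1,1}^c \cap \cH_{1,0}} = \cO((\log\log n)^{-p/(2\eps)})$. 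For $\cH_{1,3}^c$, the total offspring is Poisson with mean $\beta a_0^{p/(2\eps)-1} + \frac{\beta p}{1-p}(1 - a_0^{(1-p)/(2\eps)})$, which on $\cH_{1,0}$ is $o(\log\log n)$ in all cases; a Chernoff bound then gives failure probability $\e^{-\Omega(\log\log n)}$, much smaller than $(\log\log n)^{-p/(2\eps)}$.

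The technical core is the coupling used to establish $\cH_{1,2}$ on $\cH_{1,0}\cap\cH_{1,1}\cap\cH_{1,3}$, constructed in two stages. First, I would couple the rescaled label $k_0/n$ of the uniformly chosen surviving vertex with the root type $a_0$: the empirical distribution $\Gamma^{(n)}$ of the marks converges to $\Gamma$, and by Lemma~\ref{lemma:beta} the conditional density of $k_0/n$ given $k_0 \in V_n$ matches $\mathrm{Beta}(p/(2\eps),1)$; a monotone coupling, combined with the concentration of $|V_n|$ from Lemma~\ref{lemma:condconc}, produces $|k_0/n - a_0| \leq 2/n$ up to a negligible failure event. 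Second, given this identification, one couples the $R$-neighbours of $k_0$ with the $R$-children of $0$: for each $j>k_0$, the indicator of edge $\{k_0,j\}$ is Bernoulli with success probability $\tfrac{\beta}{|V_{j-1}|}$ conditionally on $k_0,j\in V_{j-1}$, and $j$ survives to $n$ with conditional probability $(j/n)^{(1-p)/(2\eps)}(1+o(1))$ by Lemma~\ref{lemma:condsurv}. Replacing $|V_{j-1}|$ by $2\eps j$ via Lemma~\ref{lemma:condconc} and applying the Bernoulli-Poisson coupling of Lemma~\ref{lemma:poibercoupling}, these indicators are matched to Poisson atoms at positions $j/n$ whose discretised intensity equals $\lambda^+$ restricted to $(a_0, 1]$. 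The analogous argument on $V_n\cap[1,k_0)$ produces the $L$-intensity $\lambda_0^-$.

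The main obstacle is ensuring all approximation errors sum to $o((\log\log n)^{-p/(2\eps)})$. The event $\cH_{1,1}$ is designed precisely so that every $L$-neighbour to be coupled has, by Lemma~\ref{lemma:oldvertgen}(\ref{part ii}), a label of order at least $n/(\log\log n)^2$; this is vastly larger than $(\log n)^3$, which renders the error term $|R|\log n/j^{1/3}$ in Lemma~\ref{lemma:condsurv} negligible in the target regime. The event $\cH_{1,3}$ ensures that Lemma~\ref{lemma:condsurv} is iterated over a set of size $\cO(\log\log n)$, keeping the per-step error small. Finally, the Bernoulli-Poisson coupling of Lemma~\ref{lemma:poibercoupling} contributes total error of order $|V_n|(\beta/|V_{k_0-1}|)^2 = \cO(1/n)$, far below the target bound. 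Combining these error estimates with the bounds from the preceding paragraphs yields the claimed inequality.
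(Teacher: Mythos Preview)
Your decomposition and the direct bounds on $\cH_{1,0}^c$, $\cH_{1,1}^c\cap\cH_{1,0}$, and $\cH_{1,3}^c$ match the paper's argument essentially verbatim. Your high-level plan for $\cH_{1,2}$ is also along the right lines: couple the root, then couple the discretised Poisson process encoding the offspring of $0$ with the Bernoulli process encoding the neighbours of $k_0$, separately for $L$- and $R$-neighbours. The paper carries this out through Lemmas~\ref{lemma:rootcouple},~\ref{lemma:Lroot} and~\ref{lemma:1Rcouple}, obtaining $\P{\cH_{1,2}^c\cap\cH_{1,0}}=\cO(1/\log n)$.

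There is, however, a genuine misconception in your sketch regarding the role of $\cH_{1,1}$. You write that ``$\cH_{1,1}$ is designed precisely so that every $L$-neighbour to be coupled has, by Lemma~\ref{lemma:oldvertgen}(\ref{part ii}), a label of order at least $n/(\log\log n)^2$''. But $\cH_{1,1}$ is a statement about the \emph{types} of the $L$-children in $\cT$; it says nothing about labels in $G_n$ until the coupling is already established, so invoking it to set up the coupling is circular. On the $G_n$ side one must separately argue that $k_0$ has no $L$-neighbours with small label, and this requires a different mechanism. The paper does not use $\cH_{1,1}$ at all in the $1$-neighbourhood coupling: in Lemma~\ref{lemma:Lroot} it introduces a \emph{fixed polynomial} cutoff $n^{\alpha/2}$ (with $\alpha=(1-p)/p$), uses Lemma~\ref{lemma:oldvertgen}(\ref{part i}) to show that no vertex with label $\le n^{\alpha/2}$ survives, and for labels $>n^{\alpha/2}$ the error term $\log n/j^{1/3}$ from Lemma~\ref{lemma:condsurv} is polynomially small. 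The event $\cH_{1,1}$ is established purely as a $\cT$-property and only becomes relevant in Section~\ref{sec:lwlcont}, where it guarantees that the types encountered in the $q$-th generation stay above $(\log\log n)^{-(q+1)}$ so that the same cutoff-based argument can be iterated. (Your reference to Lemma~\ref{lemma:oldvertgen}(\ref{part ii}) is likewise misplaced: that part is used for the sandwich argument in Section~\ref{sec inhom E-R}, not here.)

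A smaller point: the paper's root coupling (Lemma~\ref{lemma:rootcouple}) is not a monotone coupling against the limit CDF but a rejection-sampling construction against the joint law $\Gamma$, accepting the first sample whose Bernoulli coordinate equals~$1$ on each side. This is what allows the clean failure bound $\cO(1/\log n)$.
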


Note that we can bound the probability in the statement of Lemma~\ref{lemma:1couple} from above by 
\begin{equation}\label{eq:1couplebound}
\mathbb P\bigg(\Big(\bigcap_{i=0}^3 \cH_{1,i}\Big)^c\bigg)\leq \P{\cH_{1,0}^c}+\P{\cH_{1,1}^c\cap \cH_{1,0}}+\P{\cH_{1,3}^c}+\P{\cH_{1,2}^c\cap \cH_{1,0}}.
\end{equation} 
The first term on the right-hand side is readily bounded: since $a_0\sim \text{Beta}(\tfrac{p}{2\eps}, 1)$, it directly follows that 
\begin{equation}\label{eq:missed}
\P{\cH_{1,0}^c}=(\log\log n)^{-p/(2\eps)}.
\end{equation}
Let us write $\mathbf V^L_0$ for the Poisson point process on $(0,a_0)$ with \lyu{intensity} $\lambda_0^-(\dd x)$ as in~\eqref{eq:lambda}. Using the construction of the offspring of the root as in Definition~\ref{def:drgvr}, we can write
\begin{equation}
\cH_{1,1}=\{\mathbf V^L_0\cap (0,1/(\log\log n )^2)=\emptyset\}.
\end{equation} 

As a result, conditionally on $a_0$, 
\begin{equation}\begin{aligned}\label{eq:h11bound}
	\mathbb P(\cH^c_{1,1}\cap \cH_{1,0})=\P{\mathbf V^L_0\cap (0,1/(\log\log n)^2)\neq \emptyset\,|\, \cH_{1,0}}\P{\cH_{1,0}}
	\end{aligned}\end{equation}
	
	\noindent
	Conditionally on $a_0$ and the event $\cH_{1,0}$, and using that $1-\e^{-x}\leq x$ for all $x\in\R$, the probability of the event $\{\mathbf{V}_0^L\cap(0,1/(\log\log n)^2)\not=\emptyset\}$  is bounded from above by
	\begin{equation}
1-\exp\Big(-\frac{\beta}{a_0}(\log\log n)^{-p/\eps}\Big)\le 1-\exp\big(-\beta (\log\log n)^{1-p/\eps}\big)\leq \beta \big(\log\log n\big)^{-1/(2\eps)},
\end{equation} 
so that taking the \lyu{expectation} with respect to $a_0$ and using~\eqref{eq:h11bound} yields 
\begin{equation}\label{eq:h10h11bound}
\P{\cH^c_{1,1}\cap \cH_{1,0}}=\cO\Big(\big(\log\log n\big)^{-1/(2\eps)}\Big).
\end{equation} 
Furthermore, conditionally on $a_0$, $\tau^L_0+\tau^R_0$ is a Poisson random variable with rate 
\begin{equation}\begin{aligned}\label{eq:meanoffspring}
	\int_0^{a_0}\lambda_{0}^-(\dd x)+\int_{a_0}^1 \lambda^+(\dd x)&=\frac{\beta p}{2\eps}\bigg[\int_0^{a_0} \frac{x^{(1-p)/(2\eps)}}{a_0}\,\dd x+\int_{a_0}^1 x^{(1-p)/(2\eps)-1}\,\dd x\bigg]\\
	&=\beta a_0^{(1-p)/(2\eps)}+\frac{\beta p}{1-p}\big(1-a_0^{(1-p)/(2\eps)}\big)\leq \frac{\beta p}{1-p}.
	\end{aligned} \end{equation} 
	Thus, $\tau^L_0+\tau^R_0$ is stochastically dominated by $\tau'\sim \lyu{\text{Po}}(\tfrac{\beta p }{1-p})$. It follows that, for every $t > 0$,
	\begin{equation}\label{eq:taubound}
\P{\cH_{1,3}^c}\leq \P{\tau'\geq \log\log n}\leq \frac{\mathbb E[\e^{t\tau'}]}{\e^{t\log \log n}} =  \frac{\e^{(\beta p /(1-p))(\e^t-1)}}{(\log n)^t}=o((\log\log n)^{-p/(2\eps)}),
\end{equation} 
where the second inequality \lyu{is an application of Markov's inequality for the random variable $\e^{t\tau'}$}. The main goal of this section is thus to construct a coupling such that the probability of $\cH_{1,2}^c\cap \cH_{1,0}$ is $\cO(\big(\log\log n)^{-p/(2\eps)})$, which completes the proof Lemma~\ref{lemma:1couple}.

\subsection{\texorpdfstring{Coupling the roots of $G_n$ and $\cT$}{}}

We start by coupling the root vertex $0$ of $\cT$ and the uniform vertex $k_0$ from $V_n$. Due to the removal of vertices in this model, this is non-trivial. In the proofs of the local convergence \lyu{in distribution} of preferential attachment models to the P\'olya point tree, which are similar in nature, the vertex $k_0$ is uniform among $[n]$ and the root of the tree has a type $a_0\sim \text{Unif}(0,1)$. Hence, setting $k_0:=\lceil a_0n\rceil$ provides a simple coupling where $k_0/n$ and $a_0$ are sufficiently close. In our case, since $V_n$ is a random subset of $[n]$, this coupling requires more work.

Let us define
\begin{equation}\label{eq:rn} 
\cR_0:=\{a_0\leq \tfrac{k_0}{n}\leq a_0+\tfrac{1}{n}\}.
\end{equation} 
We show the following result.

\begin{lemma}\label{lemma:rootcouple}
Consider the DRGVR model and the Binomial birth-death tree model as in Definitions~\ref{def:drgvr} and~\ref{def:wll}, respectively. Let $k_0$ be a uniform vertex from $V_n$. Then, there exists a constant $c=c(\beta, \eps)>0$ and a coupling of $k_0$ and the root vertex $0$ of $\cT$ such that
\begin{equation}
	\P{\cR_0^c}\leq \frac{c}{\log n}.
\end{equation} 
\end{lemma}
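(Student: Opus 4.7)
The plan is to invoke the coupling characterisation of total variation distance from~\eqref{eq:dTV}. Since $a_0$ has an atomless distribution, $a_0 n \notin \mathbb{N}$ almost surely, so $\cR_0$ is equivalent to the event $\{k_0 = \lceil a_0 n \rceil\}$. Let $\mu$ denote the law of $k_0$ on $[n]$, and let $\nu$ denote the law of $J := \lceil a_0 n \rceil$ on $[n]$. It then suffices to show that $d_{\mathrm{TV}}(\mu, \nu) = O(1/\log n)$, after which the infimum characterisation produces a coupling $(K, J)$ with marginals $(\mu, \nu)$ such that $\P{K \ne J} = O(1/\log n)$; setting $k_0 := K$ and sampling $a_0$ from the conditional law of $\mathrm{Beta}(p/(2\eps), 1)$ given $\lceil a_0 n\rceil = J$ (a law supported on $((J-1)/n, J/n]$) gives the desired coupling, in which $\cR_0$ holds whenever $K = J$.

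To bound $d_{\mathrm{TV}}(\mu, \nu)$, I would compare both mass functions with the target $f(i) := \frac{p}{2\eps n}(i/n)^{(1-p)/(2\eps)}$ suggested by Lemma~\ref{lemma:beta}. Taylor-expanding the Beta density yields $\nu(i) = f(i)(1 + O(1/i))$. For $\mu(i) = \E{\mathbbm{1}\{i \in V_n\}/|V_n|}$, applying Lemma~\ref{lemma:condsurv} with $j = i$, $S = \emptyset$, $R = \{i\}$ (valid for $i = \omega((\log n)^3)$) gives $\P{i \in V_n} = p(i/n)^{(1-p)/(2\eps)}(1 + O(\log n/i^{1/3}))$, and Lemma~\ref{lemma:condconc} allows replacing $|V_n|$ by $2\eps n$ at a multiplicative cost $1 + O(n^{-1/3})$ and an additive cost $e^{-\Omega(n^{1/3})}$. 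Combined, these yield, uniformly in $i \ge i_0 := \lceil(\log n)^6\rceil$, the estimate $\mu(i) = f(i)(1 + O(\log n/i^{1/3})) + O(e^{-\Omega(n^{1/3})})$. Splitting $\sum_i |\mu(i) - \nu(i)|$ at $i_0$, the tail contributes $O(\log n/i_0^{1/3}) \sum_{i \ge i_0} f(i) = O(1/\log n)$, using $\sum_i f(i) = O(1)$ (as $f$ approximates a probability density on $[0,1]$). For the head, $\nu([1,i_0]) = (i_0/n)^{p/(2\eps)}$ is polynomially small when $\eps < 1/2$ (equivalently $p/(2\eps) > 1$), and $\mu([1,i_0]) \le i_0/(\eps n) + e^{-\Omega(n)}$ follows from the crude bound $|V_n \cap [1, i_0]| \le i_0$ combined with concentration of $|V_n|$; both contributions are $o(1/\log n)$.

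The main technical obstacle is to control $\mu(i)$ uniformly in $i$, for which the combined use of Lemmas~\ref{lemma:condconc} and~\ref{lemma:condsurv} is essential. Since Lemma~\ref{lemma:condsurv} only applies once $i$ exceeds polylogarithmic size, small indices $i < i_0$ must be handled separately; this is feasible precisely because the Beta exponent $p/(2\eps)$ strictly exceeds $1$ for $p \in (1/2, 1)$, forcing both $\mu$ and $\nu$ to place only polynomially small mass near the origin. The boundary case $\eps = 1/2$ (equivalently $p = 1$) is trivial: no vertex is ever deleted, so $V_n = [n]$ and $a_0 \sim \mathrm{Unif}(0,1)$, and the deterministic coupling $k_0 := \lceil a_0 n \rceil$ realises $\cR_0$ exactly.
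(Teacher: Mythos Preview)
Your proof is correct and proceeds along a genuinely different route from the paper. The paper constructs an explicit rejection-sampling coupling: it draws an i.i.d.\ sequence $(X_0^j, Y_0^j)_{j\ge 1}$ from the measure $\Gamma$ of~\eqref{eq:gamma}, realises $a_0$ as $X_0^{\kappa_0}$ where $\kappa_0$ is the first index with $Y_0^j = 1$, realises $k_0$ as $\lceil n X_0^{\psi_0}\rceil$ where $\psi_0$ is the first index with $\lceil n X_0^j \rceil \in V_n$, and then couples the two acceptance indicators $Y_0^j$ and $\ind\{\lceil n X_0^j \rceil \in V_n\}$ step by step via a standard Bernoulli coupling, bounding the failure probability by $\E{\kappa_0}$ times the per-step discrepancy. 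You instead compare the laws of $k_0$ and of $\lceil a_0 n \rceil$ directly on $[n]$ and invoke the abstract optimal coupling from~\eqref{eq:dTV}, re-sampling $a_0$ conditionally on $\lceil a_0 n\rceil$ at the end. Both arguments ultimately rest on the same estimate $\P{i \in V_n} = p(i/n)^{(1-p)/(2\eps)}(1 + O(\log n/i^{1/3}))$ supplied by Lemma~\ref{lemma:condsurv}, and both need a head/tail split at a polylogarithmic threshold (your $i_0 = \lceil (\log n)^6 \rceil$ versus the paper's event $\{X_0^1 \ge 1/\log n\}$); the resulting $O(1/\log n)$ bottleneck is identical. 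Your argument is slightly more streamlined; note incidentally that your separate treatment of $\eps = 1/2$ is not actually needed, since $\nu([1,i_0]) = (i_0/n)^{p/(2\eps)}$ is already $o(1/\log n)$ for every $\eps \in (0,1/2]$ (one always has $p/(2\eps) \ge 1$). The paper's coupling is explicit rather than abstract, but since the subsequent sections only use the event $\cR_0$ and not any internal structure of the coupling, either construction serves equally well downstream.
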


\begin{proof} 
We sample an i.i.d.\ sequence $(X_j,Y_j)_{j\in\N}$ with distribution $\Gamma$ (defined in~\eqref{eq:gamma}). Let $\kappa=\inf\{j\in\N: Y_j=1\}$ and set $a_0:=X_\kappa$ as the type of $0$. By Lemma~\ref{lemma:beta}, $a_0$ has the desired distribution. We define $k_0^{(j)}:=\lceil n X_j\rceil$ as a uniform vertex in $[n]$, and write its mark as $\nu^{(j)}:=\nu_{k_0^{(j)},n}$ for brevity. We also introduce $\psi:=\inf\{j\in\N: \nu^{(j)}=1\}$ and set $k_0:=k_0^{(\psi)}$. Note that since the mark of $k_0$ equals one, $k_0$ is uniform among all vertices with mark one, so $k_0$ also has the desired distribution. Our aim is to couple $\kappa$ and $\psi$ so that $\kappa=\psi$ with probability at least $1-c/\log n$. Indeed, conditionally on the equality $\kappa=\psi$, it follows that $a_0\leq \tfrac{k_0}{n}\leq a_0+ \tfrac{1}{n}$ holds almost surely.

\lyu{For every $j$,} we couple $\nu^{(j)}$ with $Y_j\sim \text{Ber}(p(X_j)^{(1-p)/(2\eps)})$ using a standard Bernoulli coupling. Conditionally on $X_j$, this coupling fails with probability
\begin{equation}\label{eq:couplefail}
	\big|\mathbb P\big(\nu^{(j)}=1\,|\, X_j\big)-\mathbb P(Y_j=1\,|\, X_j)\big|. 
\end{equation} 
We say the coupling \lyu{fails} at step $j$ when \lyu{$\nu^{(j)}\neq Y_j$}, and that the entire coupling \lyu{fails} when \lyu{$\kappa\neq \psi$, that is, the coupling fails for some $j\in[\kappa]$}. Hence, 
\begin{equation} 
	\P{\text{Coupling fails}}=
	\E{\P{\cup_{j=1}^{\kappa}\{\text{Coupling fails at step $j$}\}\,\Big|\, \kappa}}.
\end{equation} 
\lyu{We now use the coupling between $\nu^{(j)}$ and $Y_j$, conditionally on $X_j$, with failure probability as in~\eqref{eq:couplefail}, to bound the probability of the event $\{\nu^{(j)}\neq Y_j\}$ from above, independently of the value of $Y_j$ (which is known for each $j\in[\kappa]$ when we condition on $\kappa$).}
Using a union bound and observing that the expression in~\eqref{eq:couplefail} is in fact independent of $j$, we thus obtain the upper bound
\begin{equation}\begin{aligned}
		\E{\kappa\E{\big|\mathbb P\big(\nu^{(1)}=1\,|\,X_1\big)-\P{Y_1=1\,|\, X_1}\big|}},
\end{aligned}\end{equation} 
where the inner \lyu{expectation} is with respect to $X_1$ and the outer \lyu{expectation is} with respect to $\kappa$. 
\bas{By Lemma~\ref{lemma:condsurv} applied with $R = \{\lceil nX_1\rceil\},S=\emptyset$ and $j_0=j=\lceil nX_1\rceil$ \lyu{(so that $\cW_n(S)\cap \cW_j(R)=\Omega$),}
	\be 
	\ind_{\{X_1\geq (\log n)^{-1}\}}\P{\nu^{(1)}=1\,|\, X_1}=\ind_{\{X_1\geq (\log n)^{-1}\}}p\Big(\frac{\lceil nX_1\rceil}{n}\Big)^{(1-p)/(2\eps)}(1+\cO(\lyu{\lceil nX_1\rceil^{-1/6}})).
	\ee 
}
Hence, for some positive constants $C, c>0$,
\begin{equation}\begin{aligned}
		\P{\text{Coupling fails}}
		&\le\; \mathbb P(\text{Coupling fails}\mid X_0^1\geq (\log n)^{-1}) + \mathbb P(X_0^1 < (\log n)^{-1})\\
		&\le\; \lyu{C\left(\frac{\log n}{n}\right)^{1/6}\E{\kappa_0}}+\frac{1}{\log n}\leq \frac{c}{\log n},
\end{aligned}\end{equation}
where we used that $\mathbb E[\kappa] < \infty$ since $\kappa$ has a geometric distribution with parameter $2\eps$. This concludes the proof.
\end{proof}

Having coupled the roots, we can start coupling their direct neighbours. Recall that $\theta_0^L$ and $\theta_0^R$ denote the number of $L$-neighbours and $R$-neighbours of $k_0$, respectively. The aim is to ensure that $\theta_0^L= \tau^L_0$, $\theta_0^R= \tau^R_0$, and that for any neighbour $((0,i))_{i\in[\tau^L_0+ \tau^R_0]}$, it holds that $\tfrac{k_{0,i}}{n}\approx a_{0,i}$. Observe that the latter is not required for the local convergence of the $1$-neighbourhood to hold but is needed to ensure that the $2$-neighbourhoods of the root $0$ in $\cT$ and $k_0$ in $G_n$ can be coupled in later parts of the proof of the local weak convergence.

\subsection{\texorpdfstring{The $1$-neighbourhood of the root}{}}

The children of the root $0$ in $\cT$ are distributed according to a Poisson point process, whilst the direct neighbours of the root $k_0$ in $G_n$ are characterised by a Bernoulli point process (that is, a sequence of Bernoulli random variables). 
To provide a coupling between these \lyu{stochastic processes}, we split the neighbours of $k_0$ into two groups: the $L$-neighbours are the vertices $j\in [k_0-1]$ such that $k_0\to j$ in $G_n$ while the $R$-neighbours are those $j\in\{k_0+1,\ldots, n\}$ for which $j\to k_0$ in $G_n$. As the connection probability to the $L$- and $R$-neighbours is somewhat different, we treat them separately. 

\subsubsection{\texorpdfstring{$L$-children of the root $0$ and $L$-neighbours of $k_0$ in $G_n$}{}}

The $L$-neighbours of the root $k_0$ of $G_n$ can be encoded by a Bernoulli point process $\mathbf I^L_{k_0}=(I_{k_0\to j})_{j=1}^{k_0-1}$, where $I_{k_0\to j}$ equals one if $k_0$ connects to $j$ in $G_n$. 
Similarly, the $L$-children of the root $0$ of $\cT$ can be encoded by a Poisson point process \lyu{with intensity~$\lambda^-_{a_0}$ on the interval $(0,a_0)$ as defined in~\eqref{eq:lambda}.}
We couple the two processes by discretising \lyu{the Poisson process} as follows: for all $j\in[k_0-1]$, define $V_{k_0\to j}\sim \lyu{\text{Po}}(\lambda_j)$ where 
\begin{equation}\ba \label{eq:lambdal}
\lambda_j&:=\int_{\frac{j-1}{n}}^{\frac{j}{n}} \frac{\beta p}{2\eps a_0}x^{(1-p)/(2\eps)}\,\dd x \quad&&\text{ if } j\in[k_0-2],\text{ and }\\ 
 \lambda_{k_0-1}&:=\int_{\frac{k_0-2}{n}}^{a_0} \frac{\beta p}{2\eps a_0}x^{(1-p)/(2\eps)}\,\dd x. &&
 \ea
\end{equation} 
Then, we write $\mathbf V^L_{k_0}=(V_{k_0\to j})_{j=1}^{k_0-1}$.

The following lemma states that $\mathbf I^L_{k_0}$ and $\mathbf V^L_{k_0}$ can be coupled successfully on the event $\cH_{1,0}\cap \cR_0$.

\begin{lemma}\label{lemma:Lroot}
	Consider the Bernoulli point process $\mathbf I^L_{k_0}:=(I_{k_0\to j})_{j=1}^{k_0-1}$ and the discretised Poisson point process $\mathbf V^L_{k_0}:=(V_{k_0\to j})_{j=1}^{k_0-1}$ with means defined in~\eqref{eq:lambdal}. Recall the events $\cH_{1,0}$ and $\cR_0$ from~\eqref{eq:H1} and \eqref{eq:rn}, respectively, and set $\cE_0:=\cR_0\cap \cH_{1,0}$. 
	There exists a coupling of $\mathbf I_{k_0}^L$ and $\mathbf V_{k_0}^L$ such that 
	\be 
	\P{\{ \mathbf I_{k_0}^L\neq \mathbf V_{k_0}^L\}\cap \cE_0\mid k_0}=\cO\Big(\frac{1}{\log n}\Big),
	\ee 
 \lyu{where the constant in the $\cO(\cdot)$ is absolute. Equivalently, upon the event $\cE_0$ and conditionally on $k_0$, the total variation distance between $\mathbf I_{k_0}^L$ and $\mathbf V_{k_0}^L$ is of order $\cO(1/\log n)$.}
	\end{lemma}

\begin{remark}\label{rem:rootL}
	When the coupling \lyu{of the roots in Lemma~\ref{lemma:rootcouple} and the coupling of} the Bernoulli process and the discretised Poisson point process is successful, we immediately obtain that $\theta_0^L=  \tau_0^L$, that is, the number of $L$-neighbours of the root $k_0$ in $G_n$ and the number of $L$-children of the root $0$ in $\cT$ are the same. Moreover, \lyu{the equality $\mathbf I^L_{k_0} = \mathbf V^L_{k_0}$ implies that for every $i\in [\tau_0^L]$, $|a_{0,i}-\tfrac{k_{0,i}}{n}|\leq \tfrac{1}{n}$, thus ensuring that the rescaled labels of the children of $k_0$ in $G_n$ and the types of the children of $0$ in $\mathcal T$ coincide up to an error of $\tfrac{1}{n}$.}
\end{remark}

\begin{proof}[Proof of Lemma~\ref{lemma:Lroot}]
\lyu{Based on the alternative definition of total variation distance in~\eqref{eq:dTV}, we show the stronger statement that 
\begin{equation}\label{eq:conddtvbound}
\ind_{\cE_0}\sum_{x}\big|\P{\mathbf I_{k_0}^L=x\,\big|\, k_0}-\P{\mathbf V_{k_0}^L=x\,\big|\, k_0}\big| =\cO\Big(\frac{1}{\log n}\Big),
\end{equation}
where the sum ranges over all sequences $x$ of $k_0-1$ non-negative integers.}
	For ease of writing throughout the proof, we let $\mathbb P_{k_0}$ denote the probability measure $\mathbb P_{k_0}(\cdot):=\P{\cdot\,|\, k_0}$. We start by introducing a number of quantities that we use in the proof. 
	Let $\mathbf{\wt I}_{k_0}^L=(\wt I_{k_0\to j})_{j\in[k_0-1]}$ be a sequence of independent Bernoulli random variables whose success probabilities conditionally on $k_0$ are equal to
	\be \label{eq:wtIprob}
	\Pk{\wt I_{k_0\to j}=1}=\min\Big\{\frac{\beta p}{2\eps k_0}\Big(\frac jn\Big)^{(1-p)/(2\eps)},1\Big\}, \qquad j\in [k_0-1]. 
	\ee 
	Further, let $(\wt I_{j,k_0})_{j\in[k_0-1]}$ be a sequence of i.i.d.\ Bernoulli random variables whose success probabilities conditionally on $k_0$ are equal to
	\be 
	\Pk{\wt I_{j,k_0}=1}=\min\Big\{\frac{\beta}{2\eps k_0},1\Big\}, \qquad j\in [k_0-1].
	\ee 
	Finally, let $(\wt \nu_{j,n})_{j\in[k_0-1]}$ be a sequence of i.i.d.\ Bernoulli random variables independent of all other variables with success probability 
	\be \label{eq:wtnu}
	\P{\wt{\nu}_{j,n}=1}=p\Big(\frac jn\Big)^{(1-p)/(2\eps)}. 
	\ee 
	In particular, we observe that, conditionally on $k_0$ \lyu{and when $2\eps k_0\ge \beta$}, $\wt I_{k_0\to j}\overset d=\wt I_{j,k_0}\wt \nu_{j,n}$.
	
	Recall the random vertex marks $(\nu_{j,n})_{j\in[n]}$ where $\nu_{j,n}$ equals one if a vertex is added to the graph at step $j$ and this vertex survives until step $n$, and $\nu_{j,n}$ equals zero otherwise. By the triangle inequality
	\begin{align} 
	\ind_{\cE_0}\!\!\!\!{}&\sum_{x \in \N_0^{k_0-1}}\!\!\!\!\big|\Pk{\mathbf I_{k_0}^L=x}-\Pk{\mathbf V_{k_0}^L=x}\big|\nonumber \\ 
	\leq{}& \ind_{\cE_0}\!\!\!\!\sum_{x \in \{0,1\}^{k_0-1}}\!\!\!\!\Big|\Pk{\mathbf I_{k_0}^L=x}-\Pk{(\wt I_{j,k_0}\nu_{j,n})_{j\in[k_0-1]}=x}\Big|\label{eq:dtv1}\\ 
	&+\ind_{\cE_0}\!\!\!\!\sum_{x \in \{0,1\}^{k_0-1}}\!\!\!\!\Big|\Pk{(\wt I_{j,k_0}\nu_{j,n})_{j\in[k_0-1]}=x}-\Pk{\mathbf{ \wt I}_{k_0}^L=x}\Big|\label{eq:dtv2}\\ 
	&+\ind_{\cE_0}\!\!\!\!\sum_{x \in \N_0^{k_0-1}}\!\!\!\!\Big|\Pk{\mathbf{\wt I}_{k_0}^L=x}-\Pk{\mathbf{ V}_{k_0}^L=x}\Big|.\label{eq:dtv3}
	\end{align} 
	We split the proof into three steps, each of which bounds one of the sums on the right-hand side. 
 
\paragraph{\underline{Step $1$}:} Bounding~\eqref{eq:dtv1}. Let $(I_{j,k_0})_{j\in[k_0-1]}$ denote a sequence of Bernoulli random variables \lyu{that are i.i.d.\ in the probability space conditioned on $k_0$ and $V_{k_0-1}$}, with parameter $\min\{\beta/|V_{k_0-1}|,1\}$ if $V_{k_0-1}\neq \emptyset$ and parameter $0$ otherwise, and also independent of $(\nu_{j,n})_{i\in[k_0-1]}$, conditionally on $k_0$ and $V_{k_0-1}$. Then, we can write $I_{k_0\to j}= I_{j,k_0} \nu_{j,n}$. 
	Indeed, the indicator $I_{k_0\to j}$ equals one when $k_0$ sends an edge to $j$. \lyu{For this, vertex $j$ needs to survive after $n$ steps, which occurs with probability 
		\be 
		\frac{\P{\nu_{j,n}=1,\nu_{k_0,n}=1\,|\, k_0,V_{k_0-1}}}{\P{\nu_{k_0,n}=1\,|\,k_0,V_{k_0-1}}},
		\ee
		and the prospective edge appears with probability $\min\{\beta/|V_{k_0-1}|,1\}$, if $V_{k_0-1}\neq \emptyset$, and probability $0$ otherwise.}
	Writing $I_{k_0\to j}=I_{j,k_0}\nu_{j,n}$ allows us to couple the sequences $\mathbf I_{k_0}^L$ and $(\wt I_{j,k_0}\nu_{j,n})_{j\in[k_0-1]}$ by coupling the sequences $(I_{j,k_0})_{j\in[k_0-1]}$ and $(\wt I_{j,k_0})_{j\in[k_0-1]}$. This can be done by applying a standard Bernoulli coupling and yields, by a union bound,
	\be 
	\Pk{\bigcup_{j=1}^{k_0-1}\{ I_{j,k_0}\neq \wt I_{j,k_0}\}}\leq k_0 \bigg|\min\Big\{\frac{\beta}{2\eps k_0},1\Big\}-\Ek{\ind_{\{V_{k_0-1}\neq \emptyset\}} \min\Big\{\frac{\beta}{|V_{k_0-1}|},1\Big\}}\bigg|.
	\ee 
	Now, recall the event $\cQ_n$ from~\eqref{eq:An}. Using Lemma~\ref{lemma:condconc}, for some constant $C_1>0$, we obtain
	\be \ba
	\Ek{\ind_{\cQ_{k_0-1}} \min\Big\{\frac{\beta}{|V_{k_0-1}|},1\Big\}}
	&\leq\Ek{\ind_{\{V_{k_0-1}\neq \emptyset\}} \min\Big\{\frac{\beta}{|V_{k_0-1}|},1\Big\}}\\
	&\leq \Ek{\ind_{\cQ_{k_0-1}} \min\Big\{\frac{\beta}{|V_{k_0-1}|},1\Big\}}+\Pk{\cQ_{k_0-1}^c}\\ 
	&\leq \min\Big\{\frac{\beta}{2\eps k_0},1\Big\}+C_1k_0^{-4/3}.
	\ea \ee 
	We thus arrive at 
	\be\ba  \label{eq:Icouple}
	\Pk{\exists j\in [k_0-1]: I_{k_0\to j}\neq \wt I_{j,k_0}\nu_{j,n}}&\leq \Pk{\exists j\in [k_0-1]: I_{j,k_0}\neq \wt I_{j,k_0}}\leq C_1k_0^{-1/3},
	\ea\ee 
	which completes the first part. 

    \paragraph{\underline{Step 2}:} Bounding~\eqref{eq:dtv2}. Here, we recall that we can write $\wt I_{k_0\to j}=\wt I_{j,k_0}\wt \nu_{j,n}$. As a result, it is only necessary to consider those $\nu_{j,n}$ and $\wt \nu_{j,n}$ such that $\wt I_{j,k_0}=1$. Since the number of $j\in[k_0-1]$ such that $\wt I_{j,k_0}$ equals one converges to a Poisson random variable with mean $\beta/(2\eps)$ (indeed, the sum of the $\wt I_{j,k_0}$ is a binomial random variable with mean $\beta/(2\eps)-o(1)$, conditionally on $k_0$), it follows that we need only consider very few of the $\nu_{j,n}$ and $\wt \nu_{j,n}$ in practice, allowing us to deal with the dependencies among the $\nu_{j,n}$. To do so, we fix $\delta_1,\delta_2\in(0,1)$ and define the event
	\be 
	\cB_n:=\bigg\{\min\{j\in[k_0-1]:\wt I_{j,k_0}=1\}\geq n^{\delta_1}, \sum_{j=1}^{k_0-1}\wt I_{j,k_0}\leq \delta_2\log n\bigg\}.
	\ee 
	Note that $\cB_n $ holds with high probability irrespective of the choice of $\delta_1,\delta_2$: indeed, by a union bound and Markov's inequality, we can bound 
	\be \label{eq:enbound}
	\P{\cB_n ^c\,|\, k_0}\leq \frac{\beta  n^{\delta_1}}{2\eps k_0}+\frac{\beta}{2\eps \delta_2\log n}.
	\ee 
	We then write
	\be\ba \label{eq:dtvcond}
	\ind_{\cE_0}\sum_{x\in \{0,1\}^{k_0-1}}\!\!\!\!\Big|\mathbb P_{k_0}\Big((\wt I_{j,k_0}{}&\nu_{j,n})_{j\in[k_0-1]}=x\Big)-\Pk{(\wt I_{j,k_0}\wt \nu_{j,n})_{j\in[k_0-1]}=x}\Big|\\ 
	\leq \ind_{\cE_0}\mathbb E_{k_0}\Bigg[\sum_{x\in\{0,1\}^{k_0-1} }\!\!\!\!\!\Big|{}&\P{(\wt I_{j,k_0}\nu_{j,n})_{j\in[k_0-1]}=x\,\Big|\, k_0,(\wt I_{j,k_0})_{j\in[k_0-1]}}\\ 
	&-\P{(\wt I_{j,k_0}\wt \nu_{j,n})_{j\in[k_0-1]}=x\,\Big|\, k_0,(\wt I_{j,k_0})_{j\in[k_0-1]}}\Big|\Bigg].
	\ea\ee 
	Now, define
	\be 
	M_n:=\sum_{j=1}^{k_0-1} \wt I_{j,k_0}. 
	\ee 
	Conditionally on the sequence $(\wt I_{j,k_0})_{j\in[k_0-1]}$, let $(j_i)_{i\in [M_n]}$ denote the sequence of indices in increasing order, such that $\wt I_{j_i,k_0}=1$ for all $i\in[M_n]$. We then observe that, with $x=(x_j)_{j\in[k_0-1]}$ and $x'=(x_{j_i})_{i\in[M_n]}$, the events 
	\be 
	\Big\{(\wt I_{j,k_0}\nu_{j,n})_{j\in[k_0-1]}=x\,\Big|\, k_0,(\wt I_{j,k_0})_{j\in[k_0-1]}\Big\} \quad\text{and}\quad \Big\{(\nu_{j_i,n})_{i\in[M_n]}=x'\,\Big|\, k_0,(\wt I_{j,k_0})_{j\in[k_0-1]}\Big\}
	\ee 
	have the same probability, as long as $x_\ell=0$ for all $\ell\in [k_0-1]\setminus \{j_1,\ldots, j_{M_n}\}$. If, instead, $x_\ell=1$ for some $\ell\in [k_0-1]\setminus \{j_1,\ldots, j_{M_n}\}$, then the event on the left-hand side has probability zero. The same holds when we replace $\nu_{j,n}$ by $\wt \nu_{j,n}$. As a result, using this in~\eqref{eq:dtvcond}, we obtain 
	\be \ba 
	\ind_{\cE_0}\mathbb E_{k_0}\Bigg[\sum_{x\in\{0,1\}^{M_n} }\!\!\!\!\!\Big|{}&\P{(\nu_{j_i,n})_{i\in[M_n]}=x\,\Big|\, k_0,(\wt I_{j,k_0})_{j\in[k_0-1]}}\\ 
	&-\P{(\wt \nu_{j_i,n})_{i\in[M_n]}=x\,\Big|\, k_0,(\wt I_{j,k_0})_{j\in[k_0-1]}}\Big| \Bigg]\\ 
	\leq \ind_{\cE_0}\mathbb E_{k_0}\Bigg[\ind_{\cB_n }\sum_{x\in\{0,1\}^{M_n} }\!\!\!\!\!\Big|{}&\P{(\nu_{j_i,n})_{i\in[M_n]}=x\,\Big|\, k_0,(\wt I_{j,k_0})_{j\in[k_0-1]}}\\ 
	&-\P{(\wt \nu_{j_i,n})_{i\in[M_n]}=x\,\Big|\, k_0,(\wt I_{j,k_0})_{j\in[k_0-1]}}\Big| \Bigg]+2\ind_{\cE_0}\Pk{\cB_n ^c}.
	\ea\ee 
	On the event $\cB_n $, it holds that $M_n\leq \delta_2\log n$ and that $j_i\geq n^{\delta_1}$ for all $i\in M_n$. As a result, by summing over all events $\{M_n=r\}$, with $r=1,\ldots, \lfloor \delta_2\log n\rfloor$, we can, for each such $r$, apply Corollary~\ref{cor:condsurv} (and Remark~\ref{rem:condsurv}, in particular) with $j_0=n^{\delta_1}$ to bound the absolute value from above. Together with~\eqref{eq:enbound}, this yields the upper bound 
	\be 
	\ind_{\cE_0}\Bigg[\cO\Bigg(\sum_{r=1}^{\lfloor \delta_2\log n\rfloor} \frac{4^rr}{n^{\delta_1/6}}\Bigg)+\frac{2\beta n^{\delta_1}}{2\eps k_0}+\frac{\beta}{\eps\delta_2\log n}\Bigg]=\cO\Big(\frac{1}{\log n}\Big), 
	\ee 
	where we use that  the bounds on $k_0$ in the event $\cE_0$ and choose $\delta_2$ sufficiently small. 
	
    \paragraph{\underline{Step 3}:} Bounding~\eqref{eq:dtv3}. We do this in two steps. We first couple $\mathbf{\wt I}_{k_0}^L$ to an auxiliary sequence of independent Poisson random variables $\mathbf{\wt V}_{k_0}^L=(\wt V_{k_0\to j})_{j\in[k_0-1]}$, where $\wt V_{k_0\to j}$, conditionally on $k_0$, has a mean equal to the right-hand side of~\eqref{eq:wtIprob}. Then, in a second step, we couple $\mathbf{\wt V}_{k_0}^L$ to $\mathbf V_{k_0}^L$. 
	
	For the first step, we use Lemma~\ref{lemma:poibercoupling} to obtain 
	\be 
	\ind_{\cE_0}\Pk{\wt{\mathbf I}^L_{k_0}\neq \wt{\mathbf V}^L_{k_0}}
	\leq \ind_{\cE_0}\sum_{j=1}^{k_0-1}\Big(\frac{\beta p}{2\eps k_0}\Big(\frac jn\Big)^{(1-p)/(2\eps)}\Big)^2\leq\ind_{\cE_0} \frac{\beta^2p^2}{4\eps^2 k_0^2}  \sum_{j=1}^{k_0-1} \Big(\frac{j}{k_0}\Big)^{(1-p)/\eps}.
	\ee
	As $k_0$ diverges due to the lower bound in the event $\cE_0$, we first observe that 
	\be 
	\frac{1}{k_0}  \sum_{j=1}^{k_0-1} \Big(\frac{j}{k_0}\Big)^{(1-p)/\eps}=\int_0^1 x^{(1-p)/\eps}\,\dd x+o(1),
	\ee 
	so that we arrive at the bound
	\be 
	\ind_{\cE_0}\Pk{\wt{\mathbf I}^L_{k_0}\neq \wt{\mathbf V}^L_{k_0}}=\cO\Big(\frac{\log\log n}{n}\Big),
	\ee 
	by again using the lower bound on $k_0$ in the event $\cE_0$.
	In the second step, we use Lemma~\ref{lemma:coupling poisson} to obtain, with $\lambda_j$ as in~\eqref{eq:lambdal},
	\begin{equation}\begin{aligned}
			\ind_{\cE_0}\P{\wt{\mathbf V}^L_{k_0}\neq \mathbf V^L_{k_0}\,\Big|\, a_0,k_0}&\leq \ind_{\cE_0}\sum_{j=1}^{k_0-1}\Big|\lambda_j-\frac{\beta p}{2\eps k_0}\Big(\frac jn\Big)^{(1-p)/(2\eps)}\Big|.
	\end{aligned} \end{equation} 
	Before we bound this sum, we first note that $a_0\leq k_0/n\leq a_0+1/n$ holds on the event $\cE_0$, so that 
	\be 
	\lambda_{k_0-1}=\int_{(k_0-2)/n}^{(k_0-1)/n}\frac{\beta p}{2\eps a_0}x^{(1-p)/(2\eps)}\,\dd x+\int_{(k_0-1)/n}^{a_0}\frac{\beta p}{2\eps a_0}x^{(1-p)/(2\eps)}\,\dd x.
	\ee 
	Since $a_0\leq 1$ almost surely, we can bound the second integral from above by
	\be 
	\frac{\beta p}{2\eps a_0 }\Big(a_0-\frac{k_0-1}{n}\Big)\leq \frac{\beta p}{2\eps a_0n}=\cO\Big(\frac{\log\log n}{n}\Big), 
	\ee 
	where we use the bounds on $k_0$ and $a_0$ in the event $\cE_0$ in the final two steps as well. We thus obtain 
	\be \ba 
	\ind_{\cE_0}\sum_{j=1}^{k_0-1}\Big|\lambda_j-\frac{\beta p}{2\eps k_0}\Big(\frac jn\Big)^{(1-p)/(2\eps)}\Big|\leq {}& \ind_{\cE_0}\frac{\beta p}{2\eps}\sum_{j=1}^{k_0-1}\Bigg|\int_{(j-1)/n}^{j/n} \!\!\!\!\!\!\!\!\!\!a_0^{-1}x^{(1-p)/(2\eps)}\,\dd x-\frac{1}{ k_0}\Big(\frac jn\Big)^{(1-p)/(2\eps)}\Bigg|\\ 
	&+\cO\Big(\frac{\log\log n}{n}\Big).
	\ea\ee 
	Now, for each $j\in[k_0-1]$,
	\be \ba 
	\Bigg|{}&\int_{(j-1)/n}^{j/n} \!\!\!\!\!\!a_0^{-1}x^{(1-p)/(2\eps)}\,\dd x-\frac{1}{ k_0}\Big(\frac jn\Big)^{(1-p)/(2\eps)}\Bigg|\\ 
	&\leq \Big|\frac{1}{a_0}-\frac{n}{k_0}\Big|\int_{(j-1)/n}^{j/n} x^{(1-p)/(2\eps)}\,\dd x +\frac{n}{k_0}\int_{(j-1)/n}^{j/n} \Big|\Big(\frac jn\Big)^{(1-p)/(2\eps)}-x^{(1-p)/(2\eps)}\Big|\,\dd x\\ 
	&=\Big|\frac{1}{a_0}-\frac{n}{k_0}\Big|\frac1n+\frac{1}{k_0}\Big[\Big(\frac jn\Big)^{(1-p)/(2\eps)}-\Big(\frac{j-1}{n}\Big)^{(1-p)/(2\eps)}\Big],
	\ea \ee 
	where the final step follows from the fact that the integrand in the first integral is bounded from above by $1$, and the integrand in the second integral is maximised for $x=(j-1)/n$. Summing over $j$ and using the bounds on $k_0$ and $a_0$ in the event $\cE_0$ thus yields
	\be 
	\ind_{\cE_0}\sum_{j=1}^{k_0-1}\Big|\lambda_j-\frac{\beta p}{2\eps k_0}\Big(\frac jn\Big)^{(1-p)/(2\eps)}\Big|\leq \ind_{\cE_0}\frac{\beta p}{2\eps}\Big|\frac{1}{a_0}-\frac{n}{k_0}\Big|+ \ind_{\cE_0}\frac{\beta p}{2\eps}\frac{1}{k_0}=\cO\Big(\frac{\log\log n}{n}\Big).
	\ee 
	Together with the first step, we obtain that we can couple $\mathbf{\wt I}_{k_0}^L$ with $\mathbf V_{k_0}^L$ such that
	\be 
	\ind_{\cE_0}\Pk{\mathbf{\wt I}_{k_0}^L\neq \mathbf{V}_{k_0}^L}=\cO\Big(\frac{\log\log n}{n}\Big). 
	\ee 
	Combined with the bounds in the first two parts of the proof, we obtain the desired result and conclude the proof.
\end{proof}

\subsubsection{\texorpdfstring{$R$-children of the root $0$ and $R$-neighbours of $k_0$ in $G_n$}{}}\label{sec:rootR}

The $R$-neighbours of the root $k_0$ of $G_n$ can be encoded by a Bernoulli point process $\mathbf I^R_{k_0}=(I_{j\to k_0})_{j=k_0+1}^n$, where $I_{j\to k_0}$ equals one if $j$ sends an edge to $k_0$ in $G_n$. 
Similarly, the $R$-children of the root $0$ of $\cT$ can be encoded by a Poisson point process \lyu{with intensity $\lambda^+$ on the interval $(a_0,1]$ defined in~\eqref{eq:lambda}.}
We couple the two processes by discretising \lyu{the Poisson process} as follows:
for all $j\in\{k_0+1,\ldots,n\}$, define $V_{k_0\to j}\sim \lyu{\text{Po}}(\lambda_j)$ where 
\begin{equation}\begin{aligned}\label{eq:lambdas}
	\lambda_{k_0+1}&:=\int_{a_0}^{(k_0+1)/n}\frac{\beta p}{2\eps}x^{(1-p)/(2\eps)-1}\, \mathrm{d}x \quad &&\text{ and}\\  \lambda_j&:=\int_{(j-1)/n}^{j/n}\frac{\beta p}{2\eps}x^{(1-p)/(2\eps)-1}\, \mathrm{d}x \quad &&\text{ if } j\in \{k_0+2,\ldots, n\}.
	\end{aligned}\end{equation}
	Then, we write $\mathbf V^R_{k_0}=(V_{j\to k_0})_{j=k_0+1}^n$. Moreover, recall the events \lyu{$\cH_{1,0}$ from~\eqref{eq:H1}, $\cR_0$ from~\eqref{eq:rn} and $\cE_0 = \cR_0\cap \cH_{1,0}$ from Lemma~\ref{lemma:Lroot}. Note that $\tfrac{k_0+1}{n}\geq a_0$ on the event $\cR_0$ (so $\lambda_{k_0+1}$ is non-negative). The following lemma shows that $\mathbf I^R_{k_0}$ and $\mathbf V^R_{k_0}$ can be coupled successfully as long as $\cE_0$ holds.}
	
	\begin{lemma}\label{lemma:1Rcouple}
Consider the Bernoulli point process $\mathbf{I}^R_{k_0}:=(I_{j\to k_0})_{j=k_0+1}^n$ and the discretised Poisson process $\mathbf{V}^R_{k_0}:=(V_{j\to k_0})_{j=k_0+1}^n$ with means given in~\eqref{eq:lambdas}. \bas{There exists a coupling of $\mathbf{I}^R_{k_0}$ and $\mathbf{V}^R_{k_0}$ such that 
	\be 
	\P{\{\mathbf{I}^R_{k_0}\neq \mathbf{V}^R_{k_0}\}\cap \cE_0\mid k_0}=\cO\Big(\frac{\log\log\log n}{\log n}\Big),
	\ee 
\lyu{where the constant in the $\cO(\cdot)$ is absolute. Equivalently, upon the event $\cE_0$ and conditionally on $k_0$, the total variation distance between $\mathbf I_{k_0}^R$ and $\mathbf V_{k_0}^R$ is of order $\cO(\log\log\log n/\log n)$.}}
\end{lemma} 

\begin{remark}\label{rem:rootR}
When the coupling \lyu{of the roots in Lemma~\ref{lemma:rootcouple} and the coupling} of the Bernoulli process and the (discretised) Poisson point process \lyu{in Lemma~\ref{lemma:1Rcouple}} is successful, we immediately obtain that $\theta_0^R=  \tau_0^R$, that is, the number of $R$-neighbours of the root $k_0$ in $G_n$ and the number of $R$-children of the root $0$ in $\cT$ are the same. Moreover, for each $R$-child $(0,i)$ in $\cT$ (with $i\in [\tau_0^L+1, \tau_0^L+\tau_0^R]$) we have $|a_{0,i}-\tfrac{k_{0,i}}{n}|\leq \tfrac{\lyu{2}}{n}$. 
\end{remark}

\begin{proof}[Proof of Lemma~\ref{lemma:1Rcouple}] 
\lyu{As in the proof of Lemma~\ref{lemma:Lroot}, based on the alternative definition of total variation distance in~\eqref{eq:dTV}, we show the stronger statement
\begin{equation}\label{eq:conddtvboundR}
	\ind_{\cE_0}\sum_{x}\big|\Pk{\mathbf I_{k_0}^R=x\mid k_0}-\Pk{\mathbf V_{k_0}^R=x\mid k_0}\big| =\cO\Big(\frac{\log\log\log n}{\log n}\Big),
\end{equation}
where the sum is done over all sequences $x$ of $n-k_0$ non-negative integers. Recall the measure $\mathbb P_{k_0} = \mathbb P(\cdot \mid k_0)$.}
We define $(\wt I_{j,k_0})_{j=k_0+1}^n$ to be a sequence of independent Bernoulli random variables where  $\wt I_{j,k_0}$ has success probability $\min\{\beta/(2\eps j),1\}$. Furthermore, we set $\mathbf{\wt I}_{k_0}^R=(\wt I_{j,k_0}\wt \nu_{j,n})_{j=k_0+1}^n $ where $(\wt \nu_{j,n})_{j=k_0+1}^{n}$ are Bernoulli random variables, independent of \lyu{the previous variables and between themselves}, with success probability as in~\eqref{eq:wtnu}. Then, we split the left-hand side of~\eqref{eq:conddtvboundR} into three parts as follows:
	\begin{align}
	\ind_{\cE_0}\!\!\!\!{}&\sum_{x \in \N_0^{n-k_0}}\!\!\!\!\big|\Pk{\mathbf I_{k_0}^R=x}-\P{\mathbf V_{k_0}^R=x}\big|\nonumber\\ 
	\leq{}& \ind_{\cE_0}\!\!\!\!\sum_{x \in \{0,1\}^{n-k_0}}\!\!\!\!\Big|\Pk{\mathbf I_{k_0}^R=x}-\Pk{(\wt I_{j,k_0}\nu_{j,n})_{j=k_0+1}^n=x}\Big|\label{eq:dtv1R}\\ 
	&+\ind_{\cE_0}\!\!\!\!\sum_{x \in \{0,1\}^{n-k_0}}\!\!\!\!\Big|\Pk{(\wt I_{j,k_0}\nu_{j,n})_{j=k_0+1}^n=x}-\Pk{\mathbf{ \wt I}_{k_0}^R=x}\Big|\label{eq:dtv2R}\\ 
	&+\ind_{\cE_0}\!\!\!\!\sum_{x \in \N_0^{n-k_0}}\!\!\!\!\Big|\Pk{\mathbf{\wt I}_{k_0}^R=x}-\Pk{\mathbf{ V}_{k_0}^R=x}\Big|.\label{eq:dtv3R}
	\end{align} 
    We split the proof into three steps, each of which bounds one of the above terms. 

    \paragraph{\underline{Step 1}:} Bounding~\eqref{eq:dtv1R}. We observe that $(\mathbf{I}^R_{k_0})=(I_{j\to k_0})_{j=k_0+1}^n\overset d=(I_{j,k_0}\nu_{j,n})_{j=k_0+1}^n$ where $(I_{j,k_0})_{j=k_0+1}^n$ is a sequence of Bernoulli random variables \lyu{independent conditionally on $k_0, |V_{j-1}|$ and} such that
	\be 
	\mathbb P_{k_0}(I_{j,k_0}=1\mid |V_{j-1}|)=\min\Big\{\frac{\beta}{|V_{j-1}|},1\Big\}.
	\ee 
	Then, \lyu{after using a standard Bernoulli coupling to couple $(I_{j\to k_0})_{j=k_0+1}^n$ and $(\wt I_{j\to k_0})_{j=k_0+1}^n$, we obtain that}
	\be 
	\Pk{\bigcup_{j=k_0+1}^n\{I_{j,k_0}\neq \wt I_{j,k_0}\}}\leq \sum_{j=k_0+1}^n\bigg|\min\Big\{\frac{\beta}{2\eps j},1\Big\}-\Ek{\min\Big\{\frac{\beta}{|V_{j-1}|},1\Big\}}\bigg|.
	\ee 
	By Lemma~\ref{lemma:condconc}, for some constant $C_1>0$, we have the bounds
	\be \ba
	\Ek{\ind_{\cQ_{j-1}} \min\Big\{\frac{\beta}{|V_{j-1}|},1\Big\}}
	&\leq\Ek{ \min\Big\{\frac{\beta}{|V_{j-1}|},1\Big\}}\\
	&\leq \Ek{\ind_{\cQ_{k_0-1}} \min\Big\{\frac{\beta}{|V_{j-1}|},1\Big\}}+\P{\cQ_{j-1}^c}\\ 
	&\leq \min\Big\{\frac{\beta}{2\eps j},1\Big\}+C_1j^{-4/3}.
	\ea \ee 
	Thus, for some constant $C>0$, 
	\be\ba  \label{eq:IcoupleR}
	\Pk{\exists j\in [n]\setminus[k_0]: I_{j\to k_0}\neq \wt I_{j,k_0}\nu_{j,n}}&\leq \Pk{\exists j\in [n]\setminus[k_0]: I_{j,k_0}\neq \wt I_{j,k_0}}\\ 
	&\leq \lyu{\sum_{j=k_0+1}^n C_1 j^{-4/3}\leq} Ck_0^{-1/3},
	\ea\ee 
	where the second inequality follows from a union bound over all $j\in [n]\setminus[k_0]$. This completes the bound on~\eqref{eq:dtv1R}.
	
	\paragraph{\underline{Step 2}:} Bounding~\eqref{eq:dtv2R}. For $\delta>0$, we define the event
	\be 
	\cB_n:=\bigg\{\sum_{j=k_0+1}^n\wt I_{j,k_0}\leq \delta\log n\bigg\}.
	\ee 
	Note that $\cB_n$ holds with high probability irrespective of the choice of $\delta$: indeed, by Markov's inequality,
	\be \label{eq:bnbound}
	\ind_{\cE_0}\Pk{\cB_n^c}=\ind_{\cE_0}\cO\Big(\frac{\log(n/k_0)}{\log n}\Big)=\cO\Big(\frac{\log\log\log n}{\log n}\Big).
	\ee 
	We then write
	\be\ba \label{eq:dtvcondR}
	\ind_{\cE_0}\sum_{x\in \{0,1\}^{n-k_0}}\!\!\!\!\Big|\mathbb P_{k_0}\Big((\wt I_{j,k_0}{}&\nu_{j,n})_{j=k_0+1}^n=x\Big)-\Pk{(\wt I_{j,k_0}\wt \nu_{j,n})_{j=k_0+1}^n=x}\Big|\\ 
	\leq \ind_{\cE_0}\mathbb E_{k_0}\Bigg[\sum_{x\in\{0,1\}^{n-k_0} }\!\!\!\!\!\Big|{}&\mathbb P_{k_0}((\wt I_{j,k_0}\nu_{j,n})_{j=k_0+1}^n=x\mid (\wt I_{j,k_0})_{j=k_0+1}^n)\\ 
	&-\mathbb P_{k_0}((\wt I_{j,k_0}\wt \nu_{j,n})_{j=k_0+1}^n=x\mid (\wt I_{j,k_0})_{j=k_0+1}^n)\Big| \Bigg].
	\ea\ee 
	Now, define
	\be 
	M_n:=\sum_{j=k_0+1}^n \wt I_{j,k_0}. 
	\ee 
	Conditionally on the sequence $(\wt I_{j,k_0})_{j=k_0+1}^n$, let $(j_i)_{i\in [M_n]}$ denote the sequence of indices in increasing order such that $\wt I_{j_i,k_0}=1$ for all $i\in[M_n]$. Then, given $x=(x_j)_{j=k_0+1}^n$ and $x'=(x_{j_i})_{i\in[M_n]}$, \lyu{conditionally on $k_0$ and $(\wt I_{j,k_0})_{j=k_0+1}^n$,} we observe that the events 
	\be 
	\Big\{(\wt I_{j,k_0}\nu_{j,n})_{j=k_0+1}^n=x\Big\} \quad\text{and}\quad \Big\{(\nu_{j_i,n})_{i\in[M_n]}=x'\Big\}
	\ee 
	have the same probability as long as $x_\ell=0$ for all $\ell\in \{k_0+1,\ldots, n\}\setminus \{j_1,\ldots, j_{M_n}\}$. If, instead, $x_\ell=1$ for some $\ell\in \{k_0+1,\ldots, n\}\setminus \{j_1,\ldots, j_{M_n}\}$, then the event on the left-hand side has probability zero. The same holds when we replace $\nu_{j,n}$ by $\wt \nu_{j,n}$. As a result, using this in~\eqref{eq:dtvcond}, we obtain 
	\be \ba 
	\ind_{\cE_0}\mathbb E_{k_0}\Bigg[\sum_{x\in\{0,1\}^{M_n} }\!\!\!\!\!\Big|{}&\mathbb P_{k_0}((\nu_{j_i,n})_{i\in[M_n]}=x\mid (\wt I_{j,k_0})_{j=k_0+1}^n)\\ 
	&-\mathbb P_{k_0}((\wt \nu_{j_i,n})_{i\in[M_n]}=x\mid (\wt I_{j,k_0})_{j=k_0+1}^n)\Big|\Bigg]\\ 
	\leq \ind_{\cE_0}\mathbb E_{k_0}\Bigg[\ind_{\cB_n}\sum_{x\in\{0,1\}^{M_n} }\!\!\!\!\!\Big|{}&\mathbb P_{k_0}((\nu_{j_i,n})_{i\in[M_n]}=x\mid (\wt I_{j,k_0})_{j=k_0+1}^n)\\ 
	&-\mathbb P_{k_0}((\wt \nu_{j_i,n})_{i\in[M_n]}=x\mid (\wt I_{j,k_0})_{j=k_0+1}^n)\Big|\Bigg]+\ind_{\cE_0}\Pk{\cB_n^c}.
	\ea\ee 
	On the events $\cB_n$ and $\cE_0$, it holds that $M_n\leq \delta\log n$ and that $j_i\geq n/\log\log n$ for all $i\in M_n$, respectively. As a result, by summing over the events $\{M_n=r\}$ with $r=1,\ldots, \lfloor \delta_2\log n\rfloor$, for all such $r$, we can apply Corollary~\ref{cor:condsurv} (and Remark~\ref{rem:condsurv}, in particular) with $j_0=n/\log\log n$ to bound the latter absolute values from above. Together with~\eqref{eq:bnbound}, this yields the upper bound
	\be 
	\ind_{\cE_0}\Bigg[\cO\Bigg(\sum_{r=1}^{\lfloor \delta\log n\rfloor} \frac{4^rr(\log\log n)^{1/6}}{n^{1/6}}\Bigg)+\cO\Big(\frac{\log\log\log n}{\log n}\Big)\Bigg]=\cO\Big(\frac{\log\log\log n}{\log n}\Big), 
	\ee 
	where we  choose $\delta$ sufficiently small. This concludes the second part of the proof.
	
    \paragraph{\underline{Step 3}:} Bounding~\eqref{eq:dtv3R}.	We split the bound of~\eqref{eq:dtv3R} in two further steps. Define $\mathbf{\wt V}_{k_0}^R=(\wt V_{j\to k_0})_{j=k_0+1}^n $ to be a sequence of independent Poisson random variables where $\wt V_{j\to k_0}$ has mean 
	\be \label{eq:Poimean} 
	\frac{\beta p}{2\eps j}\bigg(\frac{j}{n}\bigg)^{(1-p)/(2\eps)} \quad\text{for all}\quad j\in \{k_0+1,\ldots, n\}. 
	\ee 
	For the first step, we use Lemma~\ref{lemma:poibercoupling} to obtain 
	\be 
	\ind_{\cE_0}\Pk{\wt{\mathbf I}^R_{k_0}\neq \wt{\mathbf V}^R_{k_0}}
	\leq \ind_{\cE_0}\sum_{j=k_0+1}^n\Big(\frac{\beta p}{2\eps j}\Big(\frac jn\Big)^{(1-p)/(2\eps)}\Big)^2\leq\ind_{\cE_0} \frac{\beta^2p^2}{4\eps^2 k_0^2}  \sum_{j=k_0+1}^n \Big(\frac{j}{n}\Big)^{(1-p)/\eps}.
	\ee
	By the bound on $k_0$ in the event $\cE_0$, we know that $k_0$ diverges with $n$, so 
	\be 
	\frac{1}{k_0}\sum_{j=k_0+1}^n \Big(\frac{j}{n}\Big)^{(1-p)/\eps}\leq \log\log n\int_0^1 x^{(1-p)/\eps}\,\dd x.
	\ee 
	As a result,
	\be 
	\ind_{\cE_0}\Pk{\wt{\mathbf I}^R_{k_0}\neq \wt{\mathbf V}^R_{k_0}}= \cO((\log\log n)^2/n).
	\ee 
	In the second step, we  recall $\lambda_j$ from~\eqref{eq:lambdas} and use Lemma~\ref{lemma:coupling poisson} to obtain 
	\begin{equation}\begin{aligned}\label{eq:vneqbound}
			\ind_{\cE_0}\P{\wt{\mathbf V}^R_{k_0}\neq \mathbf V^R_{k_0}\,\Big|\, a_0,k_0}&\leq \ind_{\cE_0}\sum_{j=k_0+1}^n\Big|\lambda_j-\frac{\beta p}{2\eps j}\Big(\frac jn\Big)^{(1-p)/(2\eps)}\Big|.
	\end{aligned} \end{equation} 
	Consider any $j\in [k_0+2,n]$. To begin with, 
	\be\ba 
	\Big|\lambda_j-\frac{\beta p}{2\eps j}\Big(\frac jn\Big)^{(1-p)/(2\eps)}\Big|&\leq \frac{\beta p}{2\eps}\int_{(j-1)/n}^{j/n}\Big| x^{(1-p)/(2\eps)-1}-\Big(\frac jn\Big)^{(1-p)/(2\eps)-1}\Big|\,\dd x\\ 
	&\leq \frac{\beta p}{2\eps} \frac1n \Big|\Big(\frac{j-1}{n}\Big)^{(1-p)/(2\eps)-1}-\Big(\frac jn \Big)^{(1-p)/(2\eps)-1}\Big|,
	\ea\ee
where the latter upper bound follows from the fact that the absolute value in the integral is maximised when $x=(j-1)/n$.
\lyu{In each of the cases $\eps\ge 1/6$ and $\eps\le 1/6$ (where $\eps = 1/6$ is the root of $1-p=2\eps$), telescopic summation shows that~\eqref{eq:vneqbound} is at most
\begin{equation}\label{eq:sum_4.5}
\ind_{\cE_0}\Big(\Big|\lambda_{k_0+1} - \frac{\beta p}{2\eps (k_0+1)}\Big(\frac{k_0+1}{n}\Big)^{(1-p)/(2\eps)}\Big|+\frac{\beta p}{2\eps n}\Big(\Big(\frac{k_0+1}{n}\Big)^{(1-p)/(2\eps)-1}+1\Big)\Big).    
\end{equation}
Dominating the remaining absolute value by the sum of the two terms and using~\eqref{eq:lambdas}, we obtain the last expression (and thus~\eqref{eq:vneqbound} as well) is of order $n^{-1+o(1)}$. 
Together with the first two parts, this concludes
the proof.}
\end{proof}

\subsection{Summary}
We are now ready to prove Lemma~\ref{lemma:1couple}. 

\begin{proof}[Proof of Lemma~\ref{lemma:1couple}]
Part of the proof has already been demonstrated in~\eqref{eq:1couplebound},\eqref{eq:missed}, \eqref{eq:h10h11bound} and~\eqref{eq:taubound}. Furthermore, we observe that 
\begin{equation}
	\cH_{1,2}\supseteq\cR_0\cap \{\mathbf I^L_{k_0}=\mathbf V^L_{k_0}\}\cap \{\mathbf{I}^R_{k_0}=\mathbf{V}^R_{k_0}\},
\end{equation} 
where we recall the events $\cR_0$, $\{\mathbf I^L_{k_0}=\mathbf V^L_{k_0}\}$ and $\{\mathbf{I}^R_{k_0}=\mathbf{V}^R_{k_0}\}$ from \eqref{eq:rn} and Lemmas~\ref{lemma:Lroot} and~\ref{lemma:1Rcouple}, respectively. These events ensure that the immediate neighbourhoods of $k_0$ in $G_n$ and $0$ in $\cT$ are isomorphic, since the number of $L$- and $R$-neighbours are the same. 
Thus, from Lemmas~\ref{lemma:rootcouple},~\ref{lemma:Lroot} and~\ref{lemma:1Rcouple} we obtain that
\begin{equation}\begin{aligned}
		\P{\cH_{1,2}^c \cap \cH_{1,0}}\leq\; &\P{\cR_0^c}+\P{\{\mathbf{I}^L_{k_0}\neq \mathbf{V}^L_{k_0}\}\cap \cR_0\cap\cH_{1,0}}\\
		&+\P{\{\mathbf{I}^R_{k_0}\neq \mathbf{V}^R_{k_0}\}\cap \cR_0\cap\cH_{1,0}}=\cO\Big(\frac{\lyu{\log\log\log n}}{\log n}\Big).
\end{aligned}\end{equation}
As a result, by combining this with the steps from~\eqref{eq:1couplebound},~\eqref{eq:missed}, \eqref{eq:h10h11bound} and~\eqref{eq:taubound}, we finally obtain that
\begin{equation}\begin{aligned}
		\P{\Big(\bigcap_{i=0}^3 \cH_{1,i}\Big)^c}&\leq\P{\cH_{1,0}^c}+\P{\cH_{1,1}^c\lyu{\cap \cH_{1,0}}}+\P{\cH_{1,2}^c\cap \cH_{1,0}}+\P{\cH_{1,3}^c}\\ 
		& = \cO\Big(\frac{1}{(\log \log n)^{p/(2\eps)}}\Big),
\end{aligned} \end{equation} 
which finishes the proof.
\end{proof} 

\section{Continuing the coupling}\label{sec:lwlcont}

We continue the construction of the coupling described in the previous sections by providing a coupling of the $r$-neighbourhoods of $k_0$ in $G_n$ and $0$ in $\cT$ that is successful with high probability for any $r\geq 2$. 
Since the number of vertices in $G_n$ is ``large'' w.h.p., exploring a finite neighbourhood of $G_n$ changes the distribution of the remaining graph ``very little''. 
As a result, the coupling of the $r$-neighbourhoods follows a similar approach as the coupling of the $1$-neighbourhood in the previous section. 
Modifications are necessary, though, which makes the proofs somewhat more technical and involved.

As in~\eqref{eq:H1}, fix $r\in\N$ and for all $q\in[\bas{r}]$, define
\begin{equation}\begin{aligned}\label{eq:Hq}
	\cH_{q,1}&:=\{\forall v\in \partial B_{q-1}(\cT,0): \min_{i\in[ \tau_{v}^L]}  a_{v,i}\geq (\log\log n)^{-(q+1)}\},\\
	\cH_{q,2}&:=\{(B_q(G_n,k_0),k_0)\cong (B_q(\cT,0),0)\}\cap \{\forall v\in \partial B_q(\cT,0), |a_{v}-\tfrac{k_{v}}{n}|\leq \tfrac{q+1}{n} \}, \\
	\cH_{q,3}&:=\Big\{\forall v\in V(B_{q-1}(\cT,0)):  \tau_{v}^L+  \tau_{v}^R<(\log \log n)^{1/r}\Big\}.
	\end{aligned} \end{equation} 
\lyu{Note that the dependence of $\cH_{q,3}$ on $r$ ensures that the ball $B_r(\cT,0)$ contains at most $\log\log n$ vertices, which will turn out useful in the sequel.}
	
	Also note that for $r=1$ and \lyu{$q=1$}, these events coincide with the last three events in~\eqref{eq:H1}. When $r>1$, only minor modifications in~\eqref{eq:taubound} are required to show that, when \lyu{$q=1$}, $\cH_{1,3}$ holds with probability at least $1-\cO((\log\log n)^{-p/(2\eps)})$. We now state the main result of this section.
	
	\begin{proposition}\label{prop:qcouple}
Fix $\eps\in(0,\tfrac{1}{2}]$, $r\in \N\backslash \{1\}$ and $q\in [r-1]$.
Recall the events in~\eqref{eq:Hq} and assume that, for all $n$ sufficiently large, there is a coupling of $(G_n,k_0)$ and $(\cT,0)$ such that 
\begin{equation}
	\P{\Big(\bigcap_{i=1}^3 \cH_{q,i}\Big)^c}\leq \lyu{3\beta}(\log\log n)^{(q-1)/r-p/(2\eps)}.
\end{equation} 
Then, there is a coupling of $(G_n,k_0)$ and $(\cT,0)$ such that 
\begin{equation}
	\P{\Big(\bigcap_{i=1}^3 \cH_{q+1,i}\Big)^c}\leq \lyu{3\beta}(\log\log n)^{q/r-p/(2\eps)}.
\end{equation} 
\end{proposition}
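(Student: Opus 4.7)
The plan is to iterate the arguments of Section~\ref{sec:lwl} on a per-vertex basis: for every $v \in \partial B_q(\cT, 0)$ (which, on $\cH_{q,2}$, has a well-defined counterpart $k_v \in V(\partial B_q(G_n, k_0))$), I will extend the coupling by exploring the children of $v$ via the Bernoulli-to-discretised-Poisson constructions of Lemmas~\ref{lemma:Lroot} and~\ref{lemma:1Rcouple}. The goal is to establish each $\cH_{q+1, i}$ while paying only a $(\log\log n)^{1/r}$ penalty in the error exponent.

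I first restrict to the good event $\bigcap_{i=1}^3 \cH_{q,i}$, which by the inductive hypothesis fails with probability at most $C_q(\log\log n)^{(q-1)/r - p/(2\eps)}$. On this event, $\cH_{q,3}$ ensures that the revealed set $S_q := V(B_q(G_n, k_0))$ has cardinality at most $2(\log\log n)^{q/r}$, while the combination of $\cH_{q,1}$ and $\cH_{q,2}$ yields $k_v/n \ge a_v - (q+1)/n \ge \tfrac12 (\log\log n)^{-(q+1)}$ for every $v \in \partial B_q(\cT, 0)$. Since $r$ is fixed, $|S_q|$ is polylogarithmic in $\log\log n$, so the hypotheses of Lemma~\ref{lemma:condsurv} are met uniformly for $j \ge k_v - 1$; consequently, conditioning on $S_q \subseteq V_n$ perturbs the relevant connection and survival probabilities only by a factor $1 + \cO(|S_q|\log n / k_v^{1/3})$, which is absorbed into the existing error terms of Lemmas~\ref{lemma:Lroot} and~\ref{lemma:1Rcouple}. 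For each $v$, I then execute the three-step Bernoulli-to-Poisson constructions of these lemmas on the index set $[n] \setminus S_q$; each individual coupling fails with probability polynomial in $n^{-1}$, and a union bound over the at most $(\log\log n)^{q/r}$ vertices of $\partial B_q(\cT, 0)$, together with a collision bound (ruling out edges from $k_v$ to other vertices of $S_q$ and shared new neighbours between distinct $v$'s), remains polynomial in $n^{-1}$ and is therefore negligible. On the successful joint coupling, every newly revealed neighbour $w$ of $k_v$ satisfies $|a_w - k_w/n| \le 1/n$ by construction, and composing with $|a_v - k_v/n| \le (q+1)/n$ gives $\cH_{q+1, 2}$.

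To establish $\cH_{q+1,1}$, fix $v \in \partial B_q(\cT, 0)$ with $a_v \ge (\log\log n)^{-(q+1)}$. The probability that some $L$-child of $v$ has type below $(\log\log n)^{-(q+2)}$ is at most $\int_0^{(\log\log n)^{-(q+2)}} \lambda_v^-(\dd x) = (\beta/a_v)(\log\log n)^{-(q+2)p/(2\eps)} \le \beta(\log\log n)^{(q+1) - (q+2)p/(2\eps)}$. Using $p/(2\eps) \ge 1$ (equivalent to $\eps \le 1/2$), one checks that $(q+1) - (q+2) p/(2\eps) \le -p/(2\eps)$, so the per-vertex probability is $\cO((\log\log n)^{-p/(2\eps)})$; a union bound over the $(\log\log n)^{q/r}$ vertices of $\partial B_q(\cT, 0)$ yields the required $\cO((\log\log n)^{q/r - p/(2\eps)})$ bound. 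For $\cH_{q+1, 3}$, each offspring count is stochastically dominated by $\mathrm{Poi}(\beta p/(1-p))$ (as in~\eqref{eq:meanoffspring}), so a Chernoff bound identical to the one in~\eqref{eq:taubound} makes the per-vertex tail at $(\log\log n)^{1/r}$ super-polynomially small in $\log\log n$, and the union bound is absorbed.

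The main obstacle is the careful propagation of the conditioning: one must keep $S_q$ small enough to apply Lemma~\ref{lemma:condsurv} uniformly over all prospective neighbours, while simultaneously ensuring that every $v \in \partial B_q(\cT, 0)$ has $a_v$ (and hence $k_v/n$) bounded below by an inverse-polylogarithmic quantity. This is precisely the role played by the $(q+1)$-dependent threshold in $\cH_{q, 1}$ and the $1/r$-shifted threshold in $\cH_{q, 3}$, which are thereby seen to be tuned optimally for the scheme to close. Once these bookkeeping bounds are in place, the remaining estimates follow by direct adaptation of the calculations of Section~\ref{sec:lwl}.
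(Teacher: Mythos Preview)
Your proposal is correct and follows essentially the same strategy as the paper: decompose the failure event via~\eqref{eq:inducbound}, handle $\cH_{q+1,3}$ by Poisson domination and a Chernoff bound (as in~\eqref{eq:H3c}), handle $\cH_{q+1,1}$ by the integral computation and union bound (as in~\eqref{eq:H1c}), and establish $\cH_{q+1,2}$ by rerunning the Bernoulli--to--discretised-Poisson couplings of Section~\ref{sec:lwl} for each vertex on the boundary, with Lemma~\ref{lemma:condsurv} absorbing the conditioning on the already-revealed set. The paper packages the last step more formally via the per-vertex events $\cK_{t,q}$ and the filtration $\wt\cC_t$ (Lemmas~\ref{lemma:kappacouple},~\ref{lemma:qLcouple},~\ref{lemma:qR}), but the content of your ``three-step constructions on $[n]\setminus S_q$ plus collision bound'' is exactly that argument.
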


Proposition~\ref{prop:qcouple} combined with Lemma~\ref{lemma:1couple} has the following result as an immediate corollary.
\begin{corollary}\label{cor:rcouple}
Fix $\eps\in(0,\tfrac{1}{2}]$ and set $p:=\tfrac{1}{2}+\eps$. For every fixed $r\in\N$, there exists a coupling of $(G_n,k_0)$ and $(\cT,0)$ such that
\begin{equation}
	\P{\Big(\bigcap_{i=1}^3 \cH_{r,i}\Big)^c} =  \cO\big((\log\log n)^{-(1-p)/(2\eps)-1/r}\big).
\end{equation} 
\end{corollary}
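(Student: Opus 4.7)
The corollary is a straightforward induction on $q$, with Lemma~\ref{lemma:1couple} providing the base case and Proposition~\ref{prop:qcouple} the inductive step. The only real content is verifying that the exponents line up.

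First, I would observe that Lemma~\ref{lemma:1couple} already yields a coupling of $(G_n,k_0)$ and $(\cT,0)$ such that
\[
\mathbb P\Big(\bigcap_{i=1}^3 \cH_{1,i}\Big)^c \;\le\; \mathbb P\Big(\bigcap_{i=0}^3 \cH_{1,i}\Big)^c \;\le\; C (\log\log n)^{-p/(2\eps)} \;=\; C(\log\log n)^{(1-1)/r - p/(2\eps)}.
\]
Setting $C_1 := C$ this is exactly the hypothesis of Proposition~\ref{prop:qcouple} at level $q=1$.

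Next, I would iterate Proposition~\ref{prop:qcouple} exactly $r-1$ times. Each application transforms a coupling satisfying the bound with exponent $(q-1)/r - p/(2\eps)$ into a new coupling satisfying the bound with exponent $q/r - p/(2\eps)$, while producing a (larger) constant $C_{q+1}$. After $r-1$ such steps we reach level $r$, giving a coupling of $(G_n,k_0)$ and $(\cT,0)$ with
\[
\mathbb P\Big(\bigcap_{i=1}^3 \cH_{r,i}\Big)^c \;\le\; C_r(\log\log n)^{(r-1)/r - p/(2\eps)}.
\]

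To conclude, I would simplify the exponent using the identity $p/(2\eps) - (1-p)/(2\eps) = (2p-1)/(2\eps) = 1$, which follows from $p = \tfrac12 + \eps$. This gives
\[
\frac{r-1}{r} - \frac{p}{2\eps} \;=\; 1 - \frac{1}{r} - \Big(1 + \frac{1-p}{2\eps}\Big) \;=\; -\frac{1-p}{2\eps} - \frac{1}{r},
\]
exactly matching the exponent in the statement of the corollary, so that the bound becomes $\cO\big((\log\log n)^{-(1-p)/(2\eps) - 1/r}\big)$ as required.

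There is essentially no obstacle in this step: all difficult probabilistic arguments are encapsulated in Lemma~\ref{lemma:1couple} (the base case, where the connection probabilities must be approximated by Poisson intensities with control on the rescaled labels) and Proposition~\ref{prop:qcouple} (the inductive step, where the local neighbourhood exploration must be continued without disturbing the distribution of the unexplored portion of $G_n$ too much). The only care needed in the corollary itself is to match the base-case exponent from Lemma~\ref{lemma:1couple} with the $q=1$ hypothesis of Proposition~\ref{prop:qcouple}, and to verify the final algebraic identity relating $(r-1)/r - p/(2\eps)$ to $-(1-p)/(2\eps) - 1/r$.
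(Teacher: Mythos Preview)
Your proposal is correct and is essentially identical to the paper's approach: the paper states that the corollary is an ``immediate'' consequence of combining Lemma~\ref{lemma:1couple} with Proposition~\ref{prop:qcouple}, and your write-up simply makes the induction and the exponent arithmetic explicit. The only detail you gloss over (which the paper also treats as a triviality) is that for $r>1$ the event $\cH_{1,3}$ in~\eqref{eq:Hq} uses the threshold $(\log\log n)^{1/r}$ rather than $\log\log n$ as in~\eqref{eq:H1}, but the same Chernoff bound~\eqref{eq:taubound} handles this with no change to the conclusion.
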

To prove Proposition~\ref{prop:qcouple}, we use that 
\begin{equation}\begin{aligned}\label{eq:inducbound}
	\P{\Big(\bigcap_{i=1}^3 \cH_{q+1,i}\Big)^c}\leq{}&\P{\Big(\bigcap_{i=1}^3 \cH_{q+1,i}\Big)^c\cap \bigcap_{i=1}^3 \cH_{q,i}}+\P{\Big(\bigcap_{i=1}^3 \cH_{q,i}\Big)^c}\\
	\leq{}& \P{\cH_{q+1,3}^c\cap \cH_{q,3}}+\P{\cH_{q+1,1}^c\cap \cH_{q+1,3}\cap \cH_{q,1}}\\
	&+\P{\cH_{q+1,2}^c\cap \cH_{q+1,1}\cap \cH_{q+1,3}\cap \bigcap_{i=1}^3 \cH_{q,i}}+\P{\Big(\bigcap_{i=1}^3 \cH_{q,i}\Big)^c}.
	\end{aligned}\end{equation} 
	We estimate the terms one by one. Let $(P_i)_{i\ge 1}$ be i.i.d.\ $\lyu{\text{Po}}(\tfrac{\beta p}{1-p})$ random variables. Recall that a vertex $v$ in $\cT$ has $\tau^L_v+\tau^R_v$ many children and, using~\eqref{eq:meanoffspring}, one can readily check that, for any type $a_v\in(0,1]$, $\tau^L_v+\tau^R_v$ is stochastically dominated by $P_1$. 
	Since for every $q\in [r-1]$, conditionally on $\cH_{q,3}$, we have that $|\partial B_{q}(\cT,0)|\leq (\log\log n)^{q/r}$, a union bound and a similar approach as in~\eqref{eq:taubound} with $t=1$ yield
	\begin{equation}\begin{aligned}\label{eq:H3c}
	\P{\cH_{q+1,3}^c\cap \cH_{q,3}}
	&\leq \P{\max_{i\in[ (\log\log n)^{q/r}]}P_i\geq (\log\log n)^{1/r}}\\
	&\le (\log\log n)^{q/r}\e^{-(\log\log n)^{1/r}}.
	\end{aligned}\end{equation}
	\lyu{Now, let us write $\mathbf{V}^L_q$ for the Poisson process with intensity $\lambda_v^-(\dd x)$ as in~\eqref{eq:lambda} where $v$ is the left-most vertex in generation $q-1$. By a similar approach as in~\eqref{eq:h11bound} and a union bound over $|\partial B_q(\cT,0)|\leq (\log\log n)^{q/r}$ vertices,} for every $q\in [r-1]$, we obtain
\begin{equation}\label{eq:H1c}
\begin{split}
\P{\cH_{q+1,1}^c\cap \cH_{q+1,3}\cap \cH_{q,1}}
&\lyu{\leq |\partial B_q(\cT,0)| \mathbb P(\mathbf{V}^L_q\cap [0,1/(\log\log n)^{q+2}]\neq\emptyset\mid \cH_{q+1,3}\cap \cH_{q,1})}\\
&\leq \beta (\log\log n)^{q/r+q-(q+1)p/(2\eps)+\lyu{1-p/(2\eps)}}\\
&\leq \beta (\log\log n)^{q/r-p/(2\eps)},     
\end{split}
\end{equation} 
since $\tfrac{p}{2\eps}\geq 1$. By the induction hypothesis, we know that the last term in~\eqref{eq:inducbound} is at most \\ $C_q(\log\log n)^{(q-1)/r-p/(2\eps)}$ for some constant $C_q>0$, so only the third term on the right-hand side of~\eqref{eq:inducbound} is left to analyse.

To extend the coupling from the $q$-neighbourhoods of $k_0$ and $0$ to their respective $(q+1)$-neighbourhoods, we can assume that a coupling of $B_q(G_n,k_0)$ and $B_q(\cT,0)$ exists such that $\cap_{i=1}^3\cH_{q,i}$ holds \lyu{with sufficiently high probability}. Then, for each $v\in \partial B_q(\cT,0)$, we want to couple its children in $\cT$ to the \lyu{unexplored} neighbours of $k_{v}\in \partial B_q(G_n,k_0)$ in $G_n$ so that the number of children and \lyu{unexplored} neighbours, respectively, are equal, and such that their types and rescaled labels are sufficiently close.

Recall \lyu{from Section~\ref{sec:bfs}} that $k[t]$ is the smallest vertex in $\cA_{t-1}$ with respect to the BFS order, and let, for ease of writing, $k_{v(t)}:=k[t]$. Suppose that $k_{v(t)}\in \partial B_q(G_n,k_0)$. We then define the event
\begin{equation}\begin{aligned}\label{eq:kappat}
	\cK_{t,q}:=\{{}& \theta^L_{v(t)}=\tau^L_{v(t)}, \theta^R_{v(t)}=\tau^R_{v(t)}\}\cap\Big\{ \forall i\in[ \tau_{v(t)}]: \Big|  a_{v(t),i}-\tfrac{k_{v(t),i}}{n}\Big|\leq \tfrac{q+2}{n}, k_{v(t),i}\not  \in \cA_{t-1}\Big\},
	\end{aligned}\end{equation} 
	\lyu{where the event $k_{v(t),i}\not\in \cA_{t-1}$ aims to ensure the absence of short cycles around $k_0$, as such would invalidate the coupling. We also define} the collection of random variables 
	\begin{equation}\label{eq:ct}
\wt \cC_t:=((k_{v},a_{v})_{k_{v}\in \cA_{t-1}\cup \cP_{t-1}}, ((\theta_{w}^L,\tau_{w}^L),(\theta_{w}^R,\tau_{w}^R))_{k_{w}\in \cP_{t-1}}).
\end{equation} 

Furthermore, for every $q\lyu{\ge 1}$, define $\rho[q]$ as
\begin{equation}\label{eq:rhoq}
\rho[q]=
\begin{cases}
	|V(B_{q-1}(G_n,k_0))|+1,&  \mbox{ if } \bas{\partial} B_q(G_n,k_0)\neq \emptyset,\\
	\infty, &\text{ otherwise}.
\end{cases}
\end{equation} 
When $\rho[q] < \infty$, it denotes the time step of the BFS exploration at which the smallest vertex $k[\rho[q]]$ (with respect to the BFS order) in $\partial B_q(G_n,k_0)$ is explored. Also observe that, for all $q\in [r-1]$,
\begin{equation}
(\cA_{\rho[q]-1},\cP_{\rho[q]-1},\cN_{\rho[q]-1})=(\partial B_q(G_n,k_0), V(B_{q-1}(G_n,k_0)), V_n\backslash V(B_q(G_n,k_0))).
\end{equation} 
We observe that 
\begin{equation}\label{eq:hq+12}
\cH_{q+1,2}=\cH_{q,2}\cap \bigcap_{t=\rho[q]}^{\rho[q+1]-1}\cK_{t,q}.
\end{equation} 
As a result, to provide an upper bound for the probability 
\begin{equation}
\P{\cH^c_{q+1,2}\cap \cH_{q+1,1}\cap \cH_{q+1,3}\cap \bigcap_{i=1}^3 \cH_{q,i}}, 
\end{equation} 
we first bound
\begin{equation}
\P{\cK^c_{t,q}\cap \cH_{q+1,1}\cap \cH_{q+1,3}\cap \bigcap_{i=1}^3 \cH_{q,i}}
\end{equation} 
for every $t\in \{\rho[q],\ldots, \rho[q+1]-1\}$. 
As for a fixed $q$ and any $t\in \{\rho[q],\ldots, \rho[q+1]-1\}$, these bounds are \lyu{achieved in the same way}, we focus on the case $t=\rho[q]$. That is, we consider the vertex $k_{v[q]}:=k[\rho[q]]$ \lyu{(with $v[q]$ serving as a convenient shorthand notation for the vertex $v(\rho[q])$ in $\cT$)} and couple the \lyu{unexplored} neighbours of $k_{v[q]}$ in $G_n$ to the children of $v[q]$ in $\cT$. For ease of writing, we also set
\begin{equation}\begin{aligned}\label{eq:kapparhoq}
	&\cK_{\rho[q]}:=\cK_{\rho[q],q}\\
	=\;
	&\{\theta_{v[q]}^L= \tau_{v[q]}^L, \theta_{v[q]}^R= \tau^R_{v[q]}\}\cap \Big\{ \forall i\in[ \tau_{v[q]}]: \Big|  a_{v[q],i}-\tfrac{k_{v[q],i}}{n}\Big|\leq \tfrac{q+2}{n}, k_{\barv[q],i}\not\in \cA_{\rho[q]-1}\Big\},
	\end{aligned}\end{equation}
	and 
	\begin{equation}\begin{aligned}\label{eq:Cq}
	\cC_q:=\wt \cC_{\rho[q]}={}&((k_{v},a_{v})_{k_{v}\in \cA_{\rho[q]-1}\cup \cP_{\rho[q]-1}}, ((\theta_{w}^L,\tau_{w}^L),(\theta_{w}^R,\tau_{w}^R))_{k_{w}\in \cP_{\rho[q]-1}})\\
	={}&((k_{v},a_{v})_{k_{v}\in V(B_q(G_n,k_0))}, ((\theta_{w}^L,\tau_{w}^L),(\theta_{w}^R,\tau_{w}^R))_{k_{w}\in V(B_{q-1}(G_n,k_0))}).
	\end{aligned}\end{equation} 
	\lyu{For simplicity of the exposition, we provide the details for the coupling of the neighbourhoods of $v[q]$ in  $B_{q+1}(\cT,0)$ and of $k_{v[q]}$ in $B_{q+1}(G_n,k_0)$ and only briefly touch upon the coupling for the remaining vertices in $\partial B_q(G_n,k_0)$. 
    Indeed, the coupling for these other vertices hides no additional difficulties but requires heavier notation that we decided to spare. 
    The only meaningful difference with the particular case we consider is that we need to ensure that forward edges are never sent to the already processed vertices in the same level. 
    The probability of this bad event is conveniently bounded from above using the event $\cQ_i$ from~\eqref{eq:An} (where $i$ is the label of the processed vertex), Lemma~\ref{lemma:condconc} and the event $\cH_{q,1}$ (ensuring that $i$ is suitably large).}\\
	
	To couple the neighbours of $k_{v[q]}$ in $G_n$ and $v[q]$ in $\cT$, we assume that the random variables in $\cC_q$ have been coupled such that $\cE_q:=\cap_{i=1}^3 \cH_{q,i}$ holds \lyu{with high probability (and more precisely, satisfies the assumption of Proposition~\eqref{prop:qcouple})}. In the upcoming sections we prove the following result.
	
	\begin{lemma}\label{lemma:kappacouple}
Fix $\eps\in(0,\tfrac{1}{2}]$, $r\in \N \setminus \{1\}$, $q\in [r-1]$, and set $p:=\tfrac{1}{2}+\eps$ and $\alpha := \tfrac{1-p}{p}$. 
There exists a coupling of $(G_n,k_0)$ and $(\cT,0)$ such that 
\begin{equation}
	\P{\cK_{\rho[q]}^c\cap \cE_q}= \cO\Big(\frac{1}{\log n}\Big).
\end{equation} 
\end{lemma}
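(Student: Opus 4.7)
The plan is to extend to $k_{v[q]}$ the three-step coupling strategy used for the root in Lemmas~\ref{lemma:Lroot} and~\ref{lemma:1Rcouple}, now carried out conditionally on the sigma-algebra generated by $\cC_q$ and on the event $\cE_q$. Under this conditioning we know that $|a_{v[q]} - k_{v[q]}/n| \le (q+1)/n$, that the type $a_{v[q]}$ is bounded below by a negative power of $\log\log n$ (chained from the root via $\cH_{q,1}$ together with the monotonicity of types along $R$-edges), and that $|V(B_q(G_n,k_0))| = \cO((\log\log n)^q)$ by $\cH_{q,3}$. In particular $k_{v[q]} \ge n(\log\log n)^{-\cO(1)}$, which mirrors the role of $\cH_{1,0}$ in the root case.

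Next, I would split the as-yet-unrevealed neighbours of $k_{v[q]}$ into $L$-neighbours (labels smaller than $k_{v[q]}$) and $R$-neighbours (labels larger than $k_{v[q]}$), ignoring labels already in $\cC_q$. For each group I apply the three-step recipe of Lemmas~\ref{lemma:Lroot} and~\ref{lemma:1Rcouple}: replace the Bernoulli edge-indicator process by one with explicit success probabilities, convert it to an independent Poisson process via Lemma~\ref{lemma:poibercoupling}, and couple it to the continuous Poisson offspring process of $v[q]$ in $\cT$ via Lemma~\ref{lemma:coupling poisson}. The only genuinely new ingredient is that the birth/survival probabilities are now computed conditionally on $\cC_q$; I handle this by invoking Lemma~\ref{lemma:condsurv} with $S$ equal to the set of labels appearing in $\cC_q$ and $R$ a singleton or a pair of candidate neighbours. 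Since $|S| \le |V(B_q(G_n,k_0))| = o(j^{2/3})$ for every relevant $j \ge n^{\alpha/2}$, the conditioning only contributes a $(1 + \cO(\log n/j^{1/3}))$ multiplicative factor, so the error estimates of Lemmas~\ref{lemma:Lroot} and~\ref{lemma:1Rcouple} transfer verbatim and produce the $\cO(\log n/n^{\alpha/6})$ bound.

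Finally, to rule out the possibility that a new neighbour of $k_{v[q]}$ already lies in $\cA_{\rho[q]-1}$ (a cycle, which would immediately spoil the coupling), I use a crude union bound: each vertex in $\cA_{\rho[q]-1}$ is joined to $k_{v[q]}$ with probability $\cO(1/n)$ in the regime $k_{v[q]} \ge n(\log\log n)^{-\cO(1)}$, and $|\cA_{\rho[q]-1}| = \cO((\log\log n)^q)$, so the cycle event has probability $\cO((\log\log n)^q/n)$, which is negligible compared to the target bound.

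The main obstacle will be the bookkeeping of the conditioning on $\cC_q$ in the first of the three coupling steps, namely writing the probability that $k_{v[q]}$ is connected to a specific vertex $j \notin \cC_q$ as the product of a survival probability (controlled by Lemma~\ref{lemma:condsurv}) and a connection probability of the form $\min\{\beta/|V_{j-1}|, 1\}$, while keeping careful track of the conditioning and of the active set. Once this has been set up cleanly, the remaining algebra is a near-verbatim transcription of Steps~1--3 in the proofs of Lemmas~\ref{lemma:Lroot} and~\ref{lemma:1Rcouple}, combined with the union bound above.
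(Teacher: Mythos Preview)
Your proposal is correct and follows essentially the same route as the paper. The paper organizes the argument by first proving two auxiliary lemmas---Lemma~\ref{lemma:qLcouple} for the $L$-neighbours of $k_{v[q]}$ and Lemma~\ref{lemma:qR} for the $R$-neighbours---each of which carries out exactly the three-step coupling you describe, with the conditioning on $\cC_q$ handled via Lemma~\ref{lemma:condsurv} with $S=\cA_{\rho[q]-1}\cup\cP_{\rho[q]-1}$, and with the cycle event bounded by the same union bound you propose; the proof of Lemma~\ref{lemma:kappacouple} itself is then just the two-line observation $\cK_{\rho[q]}\supseteq\cL_{\rho[q]}\cap\cR_{\rho[q]}$.
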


\subsection{\texorpdfstring{Coupling the $L$-children of $v[q]$ in $\cT$ with the $L$-neighbours of $k_{\barv[q]}$ in $G_n$}{}}\label{sec:Lqcouple}

To prove Lemma~\ref{lemma:kappacouple}, we first couple the $L$-neighbours of $k_{v[q]}$ and the $L$-children of $v[q]$ in this section.  We define 
\begin{equation}\begin{aligned}\label{eq:Lq}
	\cL_{\rho[q]}:=\{{}&\theta_{v[q]}^L=\tau_{v[q]}^L\}\cap\{ \forall i\in[\tau_{v[q]}^L]: |a_{v[q],i}-\tfrac{k_{v[q],i}}{n}|\leq \tfrac{q+2}{n}, k_{\barv[q],i}\in \cN_{\rho[q]-1}\}.
	\end{aligned} \end{equation}
	to be the event that the coupling of the $L$-neighbours of $k_{v[q]}$ and the $L$-children of $v[q]$ is successful. In words, $\cL_{\rho[q]}$ says that the number of $L$-neighbours of $k_{\barv[q]}$ is equal to the number of $L$-children of $\barv[q]$, that their respective rescaled labels and types are close, and that the exploration of the $L$-neighbours of $k_{v[q]}$ does not lead to cycles.
	
	We now define an encoding of the $L$-neighbours of $k_{v[q]}$ in $G_n$ and the $L$-children of $v[q]$ in $\cT$ that allows us to construct a coupling such that $\cL_{\rho[q]}$ holds with high probability. Unlike in the proof of Lemma~\ref{lemma:Lroot}, we cannot ensure that $\tfrac{k_{\barv[q]}-1}{n}\leq a_{\barv[q]}$ holds on the event $\cE_q=\cap_{i=1}^3 \cH_{q,i}$. Instead, we let $N_{v[q]}:=\lceil a_{\barv[q]}n\rceil$ and $k^*_q:=\max\{N_{v[q]},k_{\barv[q]}-1\}$, and use $k^*_q$ to construct \lyu{a discretised Poisson process similar to $\mathbf{V}_{k_0}^L$ defined in~\eqref{eq:lambdal}, which is key in our coupling. More precisely, if}
	$N_{v[q]}\geq k_{\barv[q]}-1$, conditionally on $a_{v[q]}$, we define
	\begin{equation}\begin{aligned}\label{eq:lambdaqL}
	\lambda^{\rho[q]}_{N_{v[q]}}&:=\int_{\bas{(N_{v[q]}-1)}/n}^{a_{v[q]}}\frac{\beta p}{2\eps a_{v[q]}}x^{(1-p)/(2\eps)}\,\dd x,
	\qquad&\text{and,}&\\ \lambda^{\rho[q]}_j&:=\int_{\tfrac{j-1}{n}}^{\tfrac{j}{n}}\frac{\beta p}{2\eps a_{v[q]}}x^{(1-p)/(2\eps)}\,\dd x,\qquad  &\text{for }j\in[ N_{v[q]}-1].&
	\end{aligned}\end{equation}
	If, instead, $N_{v[q]}\leq k_{\barv[q]}-2$, we additionally set $\lambda^{\rho[q]}_j:=0$ for all integers $j\in [N_{v[q]}+1,  k^*_q]$. This defines $\lambda^{\rho[q]}_j$ for all $j\in \lyu{[k^*_q]}$, and allows us to define the discretised Poisson point process $\mathbf V^L_{k_{\barv[q]}}:=(V^L_{k_{v[q]}\to j})_{\lyu{j=1}}^{k^*_q}$ where $V^L_{k_{v[q]}\to j}$ is a Poisson random variable with mean $\lambda^{\rho[q]}_j$ (where a Poisson variable with mean zero is equal to zero almost surely). 
	\lyu{More precisely, $\mathbf V^L_{k_{v[q]}}$ must be seen as} a discretisation of the Poisson point process on $(0,a_{v[q]})$ with \lyu{intensity} $\lambda^-_{v[q]}(\dd x)$ that determines the $L$-children of $v[q]$ (see Definition~\ref{def:wll}).
	
	Similarly, we define the  Bernoulli point process $\mathbf I^L_{k_{\barv[q]}}:=(I^L_{k_{v[q]}\to j})_{j=1}^{k^*_q}$ as follows. For all $j\in [k_{\barv[q]}-1]$, $I^L_{k_{v[q]}\to j}$ equals one if $k_{v[q]}$ connects to $j$ by an edge in $G_n$ and $j\in \cN_{\rho[q]-1}\cup \cA_{\rho[q]-1}$, and equals zero otherwise. For $j\in [k_{\barv[q]},k^*_q]$ (that is, when $N_{v[q]}> k_{\barv[q]}-1$), we additionally set $I^L_{k_{v[q]}\to j}:=0$.
	
	Our aim is to couple the Bernoulli point process $\mathbf I^L_{k_{v[q]}}$ and the discretised Poisson point process $\mathbf V^L_{k_{v[q]}}$. To do this, we show the following lemma, which is similar in nature to Lemma~\ref{lemma:Lroot}.
	
	\begin{lemma}\label{lemma:qLcouple}
Recall that $\cE_q=\cap_{i=1}^3\cH_{q,i}$, where the events $(\cH_{q,i})_{i\in[3]}$ are defined in~\eqref{eq:Hq}. Consider the processes $\mathbf I^L_{k_{\barv[q]}}$ and $\mathbf V^L_{k_{\barv[q]}}$. There exists a coupling such that
\begin{equation}
	\P{\big(\{\mathbf I^L_{k_{v[q]}}= \mathbf V^L_{k_{v[q]}}\}\cap \{\forall j\in \cA_{\rho[q]-1}\cap [k_q^*]: I^L_{k_{v[q]}\to j}=0\}\big)^c\cap \cE_q\, \Big|\, \mathcal C_q} = \cO\Big(\frac{1}{\log n}\Big), 
\end{equation} 
\lyu{where the constant in the $\cO(\cdot)$ is absolute. In particular, upon the event $\cE_q$ and conditionally on the random variables in $\cC_q$, the total variation distance between $\mathbf I^L_{k_{v[q]}}$ and $\mathbf V^L_{k_{v[q]}}$ is of order $\cO(1/\log n)$.}
\end{lemma} 

\begin{remark}\label{rem:qL}
When the coupling is successful, in the sense that the event 
\begin{equation}
	\{\mathbf I^L_{k_{v[q]}}= \mathbf V^L_{k_{v[q]}}\}\cap \{\forall j\in \cA_{\rho[q]-1}\cap [k_q^*]: I^L_{k_{v[q]}\to j}=0\}
\end{equation} 
holds, it directly follows that the event $\cL_{\rho[q]}$ holds as well. As a result, 
\begin{equation}
	\P{\cL_{\rho[q]}^c\cap \cE_q}=\cO\left(\frac{1}{\log n}\right).
\end{equation} 
\end{remark}

\begin{proof}[Proof of Lemma~\ref{lemma:qLcouple}]
\lyu{As before, we show the more general statements
\be\label{eq:5.4.1}
	\ind_{\cE_q}\sum_{x}\Big|\P{\mathbf I_{k_{v[q]}}^L=x\,\Big|\,  \cC_q}-\P{\mathbf V_{k_{v[q]}}^L=x\,\Big|\, \cC_q}\Big| =\cO\Big(\frac{1}{\log n}\Big),
	\ee
where the sum ranges over all sequences $x$ of $k_q^*$ non-negative integers, and
	\be\label{eq:5.4.2} 
	\ind_{\cE_q}\P{\{\exists j\in \cA_{\rho[q]-1}\cap [k^*_q]: I^L_{k_{v[q]}\to j}=1\}\cap \{\mathbf I^L_{k_{v[q]}}=\mathbf V^L_{k_{v[q]}}\}\cap \cE_q\,\Big|\, \cC_q}=\cO\Big(\frac{1}{\log n}\Big).
	\ee}
\lyu{We start by showing~\eqref{eq:5.4.1}. Recall the probability measure $\Pq{\cdot}:=\P{\cdot\,|\, \cC_q}$ conditioned on the set of random variables $\cC_q$ and the associated expectation $\Eq{\cdot}$.} Let $(\wt I_{j,k_{v[q]}})_{j\in[k^*_q]}$ be a sequence of independent Bernoulli random variables, where  $\wt I_{j,k_{v[q]}}$ has success probability $\min\{\beta/(2\eps k_{v[q]}),1\}$. Furthermore, $\mathbf{\wt I}_{k_{v[q]}}^L=(\wt I_{j,k_{v[q]}}\wt \nu_{j,n})_{j\in[k^*_q]}$ where the $\wt \nu_{j,n}$ are Bernoulli random variables, independent of everything else, with success probability as in~\eqref{eq:wtnu}.
	\lyu{To estimate the left hand side of~\eqref{eq:5.4.1}, we divide it into three parts:} 
	\begin{align}
	\ind_{\cE_q}{}&\sum_{x \in \N_0^{k^*_q}}\!\!\Big|\Pq{\mathbf I_{k_{v[q]}}^L=x}-\Pq{\mathbf V_{k_{v[q]}}^L=x}\Big|\nonumber \\ 
	\leq{}& \ind_{\cE_q}\!\!\!\!\sum_{x \in \{0,1\}^{k^*_q}}\!\!\Big|\Pq{\mathbf I_{k_{v[q]}}^L=x}-\Pq{(\wt I_{j,k_{v[q]}}\nu_{j,n})_{j\in[k^*_q]}=x}\Big|\label{eq:dtv1q}\\ 
	&+\ind_{\cE_q}\!\!\!\!\sum_{x \in \{0,1\}^{k^*_q}}\!\!\Big|\Pq{(\wt I_{j,k_0}\nu_{j,n})_{j\in[k^*_q]}=x}-\Pq{\mathbf{ \wt I}_{k_{v[q]}}^L=x}\Big|\label{eq:dtv2q}\\ 
	&+\ind_{\cE_q}\!\!\!\!\sum_{x \in \N_0^{k^*_q}}\!\!\Big|\Pq{\mathbf{\wt I}_{k_{v[q]}}^L=x}-\Pq{\mathbf{ V}_{k_{v[q]}}^L=x}\Big|.\label{eq:dtv3q}
	\end{align}
    We split the proof into three steps, each of which bounds one of the above terms. 

    \paragraph{\underline{Step 1}:} Bounding~\eqref{eq:dtv1q}. Let $(I_{j,k_{v[q]}})_{j\in[k^*_q]}$ denote a sequence of Bernoulli random variables \bas{that are independent conditionally on $k^*_q$ and $V_{k_{v[q]}-1}$} and all have parameter $\min\{\beta/|V_{k_{v[q]}-1}|,1\}$, if $V_{k_{v[q]}-1}\neq \emptyset$, and parameter $0$ otherwise.
	\lyu{Moreover, $(I_{j,k_{v[q]}})_{j\in[k^*_q]}$ is} independent of $(\nu_{j,n})_{j\in[k^*_q]}$ conditionally on $k^*_q$ and $V_{k_{v[q]}-1}$.  
	Then, we can write $I_{k_{v[q]}\to j}= I_{j,k_{v[q]}} \nu_{j,n}$. Indeed, the indicator $I_{k_{v[q]}\to j}$ equals one when $k_{v[q]}$ connects to $j$ by an edge, which occurs with probability $\min\{\beta/|V_{k_{v[q]}-1}|,1\}$ if $V_{k_{v[q]}-1}\neq \emptyset$ and $0$ otherwise, 
	and if $j$ survives after $n$ steps, which occurs with probability 
	\be 
	\frac{\P{\nu_{j,n}=1\text{ and }\nu_{i,n}=1\text{ for all }i\in \cA_{\rho[q]-1}\lyu{\cup \cP_{\rho[q]-1}}\,|\, \cC_q,V_{k_0-1}}}{\P{\nu_{i,n}=1\text{ for all }i\in \cA_{\rho[q]-1}\lyu{\cup \cP_{\rho[q]-1}}\,|\,\cC_q,V_{k_0-1}}}.
	\ee
	Writing $I_{k_{v[q]}\to j}=I_{j,k_{v[q]}}\nu_{j,n}$ allows us to couple the sequences $\mathbf I_{k_{v[q]}}^L$ and $(\wt I_{j,k_{v[q]}}\nu_{j,n})_{j\in[k^*_q]}$ by coupling the sequences $(I_{j,k_{v[q]}})_{j\in[k^*_q]}$ and $(\wt I_{j,k_{v[q]}})_{j\in[k^*_q]}$. This can be done by applying a standard Bernoulli coupling and yields, by a union bound,
	\be \ba 
	\mathbb P_q\Bigg({}&\bigcup_{j=1}^{k^*_q}\{I_{j,k_{v[q]}}\neq \wt I_{j,k_{v[q]}}\}\Bigg)\leq k^*_q \Bigg|\min\Big\{\frac{\beta}{2\eps k_{v[q]}},1\Big\}-\mathbb E_q\bigg[\ind_{\{V_{k_{v[q]}-1}\neq \emptyset\}} \min\Big\{\frac{\beta}{|V_{k_{v[q]}-1}|},1\Big\}\bigg]\Bigg|.
	\ea \ee 
	Recalling the event $\cQ_n$ from~\eqref{eq:An} and using Lemma~\ref{lemma:condconc}, for some constant $C_1>0$, we have the bounds
	\be \ba
	\Eq{\ind_{\cQ_{k_{v[q]}-1}} \min\Big\{\frac{\beta}{|V_{k_{v[q]}-1}|},1\Big\}}
	&\leq\Eq{\ind_{\{V_{k_{v[q]}-1}\neq \emptyset\}} \min\Big\{\frac{\beta}{|V_{k_{v[q]}-1}|},1\Big\}}\\
	&\leq \Eq{\ind_{\cQ_{k_{v[q]}-1}} \min\Big\{\frac{\beta}{|V_{k_{v[q]}-1}|},1\Big\}}+\Pq{\cQ_{k_{v[q]}-1}^c}\\ 
	&\leq \min\Big\{\frac{\beta}{2\eps k_{v[q]}},1\Big\}+C_1k_{v[q]}^{-4/3}.
	\ea \ee 
	Thus, we arrive at 
	\be\ba  \label{eq:Icouple2}
	\Pq{\exists j\in [k^*_q]: I_{k_{v[q]}\to j}\neq \wt I_{j,k_{v[q]}}\nu_{j,n}}&\leq \Pq{\exists j\in [k^*_q]: I_{j,k_{v[q]}}\neq \wt I_{j,k_{v[q]}}}\\ 
	&\leq C_1k_{v[q]}^{-1/3},
	\ea\ee 
	which completes the first part. 
	
	\paragraph{\underline{Step 2}:} Bounding~\eqref{eq:dtv2q}. We fix $\delta_1,\delta_2\in(0,1)$ and define the event
	\be 
	\cB_n:=\bigg\{\min\{j\in[k^*_q]:\wt I_{j,k_{v[q]}}=1\}\geq n^{\delta_1}, \sum_{j=1}^{k^*_q}\wt I_{j,k_{v[q]}}\leq \delta_2\log n\bigg\}.
	\ee 
	Note that $\cB_n$ holds with high probability irrespective of the choice of $\delta_1,\delta_2$: indeed, by a union bound and Markov's inequality, we can bound 
	\be \label{eq:bnqlbound}
	\mathbb P_q(\cB_n^c)\leq \frac{\beta  n^{\delta_1}}{2\eps k_{v[q]}}+\frac{\beta}{2\eps \delta_2\log n}.
	\ee 
	We then write
	\be\ba \label{eq:dtvcondL}
	\ind_{\cE_q}\sum_{x\in \{0,1\}^{k^*_q}}\!\!\!\!\Big|\mathbb P_q\Big((\wt I_{j,k_{v[q]}}{}&\nu_{j,n})_{j\in[k^*_q]}=x\Big)-\Pq{(\wt I_{j,k_{v[q]}}\wt \nu_{j,n})_{j\in[k^*_q]}=x}\Big|\\ 
	\leq \ind_{\cE_q}\mathbb E_q\Bigg[\sum_{x\in\{0,1\}^{k^*_q} }\!\!\!\!\!\Big|{}&\P{(\wt I_{j,k_{v[q]}}\nu_{j,n})_{j\in[k^*_q]}=x\,\Big|\, \cC_q,(\wt I_{j,k_{v[q]}})_{j\in[k^*_q]}}\\ 
	&-\P{(\wt I_{j,k_{v[q]}}\wt \nu_{j,n})_{j\in[k^*_q]}=x\,\Big|\, \cC_q,(\wt I_{j,k_{v[q]}})_{j\in[k^*_q]}}\Big|\Bigg].
	\ea\ee 
	Now, define
	\be 
	M_n:=\sum_{j=1}^{k^*_q} \wt I_{j,k_{v[q]}}. 
	\ee 
	Conditionally on the sequence $(\wt I_{j,k_{v[q]}})_{j\in[k^*_q]}$, let $(j_i)_{i\in [M_n]}$ denote the sequence of indices in increasing order such that $\wt I_{j_i,k_{v[q]}}=1$ for all $i\in[M_n]$. We then observe that, with $x=(x_j)_{j\in[k^*_q]}$ and $x'=(x_{j_i})_{i\in[M_n]}$, the events 
	\be 
	\Big\{(\wt I_{j,k_{v[q]}}\nu_{j,n})_{j\in[k^*_q]}=x\,\Big|\, \cC_q,(\wt I_{j,k_{v[q]}})_{j\in[k^*_q]}\Big\} \quad\text{and}\quad \Big\{(\nu_{j_i,n})_{i\in[M_n]}=x'\,\Big|\, \cC_q,(\wt I_{j,k_{v[q]}})_{j\in[k^*_q]}\Big\}
	\ee 
	have the same probability as long as $x_\ell=0$ for all $\ell\in [k^*_q]\setminus \{j_1,\ldots, j_{M_n}\}$. If, instead, $x_\ell=1$ for some $\ell\in [k^*_q]\setminus \{j_1,\ldots, j_{M_n}\}$, then the event on the left-hand side has probability 0. 
	The same holds when we replace $\nu_{j,n}$ by $\wt \nu_{j,n}$. As a result, \lyu{the right hand side of~\eqref{eq:dtvcondL} can be rewritten as} 
	\be \ba 
	\ind_{\cE_q}\mathbb E_q\Bigg[\sum_{x\in\{0,1\}^{M_n} }\!\!\!\!\!\Big|{}&\P{(\nu_{j_i,n})_{i\in[M_n]}=x\,\Big|\, \cC_q,(\wt I_{j,k_{v[q]}})_{j\in[k^*_q]}}\\ 
	&-\P{(\wt \nu_{j_i,n})_{i\in[M_n]}=x\,\Big|\, \cC_q,(\wt I_{j,k_{v[q]}})_{j\in[k^*_q]}}\Big| \Bigg]\\ 
	\leq \ind_{\cE_q}\mathbb E_q\Bigg[\ind_{\cB_n}\sum_{x\in\{0,1\}^{M_n} }\!\!\!\!\!\Big|{}&\P{(\nu_{j_i,n})_{i\in[M_n]}=x\,\Big|\, \cC_q,(\wt I_{j,k_{v[q]}})_{j\in[k^*_q]}}\\ 
	&-\P{(\wt \nu_{j_i,n})_{i\in[M_n]}=x\,\Big|\, \cC_q,(\wt I_{j,k_{v[q]}})_{j\in[k^*_q]}}\Big| \Bigg]+2\cdot \ind_{\cE_q}\Pq{\cB_n^c}.
	\ea\ee 
	On the event $\cB_n$, it holds that $M_n\leq \delta_2\log n$ and that $j_i\geq n^{\delta_1}$ for all $i\in [M_n]$. As a result, for every $r=1,\ldots, \lfloor \delta_2\log n\rfloor$, conditionally on the event $\{M_n = r\}$,
	we can apply Corollary~\ref{cor:condsurv} \bas{(and Remark~\ref{rem:condsurv})} with $j_0=n^{\delta_1}$ to bound the absolute value \lyu{in the last display} from above. Together with~\eqref{eq:bnqlbound} and \lyu{by choosing $\delta_2$ suitably small}, this yields the upper bound
	\be \label{eq:IItilda}
	\ind_{\cE_q}\Bigg[\cO\Bigg(\sum_{r=1}^{\lfloor \delta_2\log n\rfloor} \frac{4^rr}{n^{\delta_1/6}}\Bigg)+\frac{\beta n^{\delta_1}}{\eps k_{v[q]}}+\frac{\beta}{\eps\delta_2\log n}\Bigg]=\cO\Big(\frac{1}{\log n}\Big), 
	\ee 
	where we used the bound on $k_{v[q]}$ from the event $\cE_q$. 
	This concludes the second part. 
	
	\paragraph{\underline{Step 3}:} Bounding~\eqref{eq:dtv3q}. We bound~\eqref{eq:dtv3q} in two further steps. First, we couple $\mathbf{\wt I}_{k_{v[q]}}^L$ to an auxiliary sequence of Poisson random variables $\mathbf{\wt V}_{k_{v[q]}}^L=(\wt V_{k_{v[q]}\to j})_{j\in[k^*_q]}$ \lyu{independent under $\mathbb P_q$ and} where $\wt V_{k_{v[q]}\to j}$
	has mean equal to $\frac{\beta p}{2\eps k_{v[q]}}(j/n)^{(1-p)/(2\eps)}$. Then, in a second step, we couple $\mathbf{\wt V}_{k_{v[q]}}^L$ to $\mathbf V_{k_{v[q]}}^L$. 
	
	For the first step, we use Lemma~\ref{lemma:poibercoupling} to obtain 
	\be 
	\ind_{\cE_q}\Pq{\wt{\mathbf I}^L_{k_{v[q]}}\neq \wt{\mathbf V}^L_{k_{v[q]}}}
	\leq \ind_{\cE_q}\sum_{j=1}^{k^*_q}\Big(\frac{\beta p}{2\eps k_{v[q]}}\Big(\frac jn\Big)^{(1-p)/(2\eps)}\Big)^2\leq\ind_{\cE_q} \frac{\beta^2p^2}{4\eps^2 k_{v[q]}^2}  \sum_{j=1}^{k^*_q} \Big(\frac{j}{k_{v[q]}}\Big)^{(1-p)/\eps}.
	\ee
	From the event $\cE_q$, we know that $k_{v[q]}$ diverges and hence
	\be 
	\frac{1}{k_{v[q]}}\sum_{j=1}^{k^*_q} \Big(\frac{j}{k_{v[q]}}\Big)^{(1-p)/\eps}=\int_0^1 x^{(1-p)/\eps}\, \dd x+o(1). 
	\ee 
	Again, by using the lower bound on $k_{v[q]}$, we arrive at 
	\be 
	\ind_{\cE_q}\Pq{\wt{\mathbf I}^L_{k_{v[q]}}\neq \wt{\mathbf V}^L_{k_{v[q]}}}=\cO\Big(\frac{(\log\log n)^{q+1}}{n}\Big).
	\ee 
	For the second step, recall $\lambda_j^{\rho[q]}$ from~\eqref{eq:lambdaqL}. By Lemma~\ref{lemma:coupling poisson}, we obtain that
	\begin{equation}\begin{aligned}\label{eq:vneqboundql}
			\ind_{\cE_q}\P{\wt{\mathbf V}^L_{k_{v[q]}}\neq \mathbf V^L_{k_{v[q]}}\,\Big|\, a_{v[q]},k_{v[q]}}&\leq \ind_{\cE_q}\sum_{j=1}^{k^*_q}\Big|\lambda_j^{\rho[q]}-\frac{\beta p}{2\eps k_{v[q]}}\Big(\frac jn\Big)^{(1-p)/(2\eps)}\Big|.
	\end{aligned} \end{equation} 
	Moreover, for every \lyu{$j\le j_q := \min\{N_{v[q]},k_{v[q]}-1\}-1$}, the triangle inequality implies that
	\begin{align} 
	\Bigg|{}&\int_{(j-1)/n}^{j/n} \frac{1}{a_{v[q]}} x^{(1-p)/(2\eps)}\,\dd x-\frac{1}{ k_{v[q]}}\Big(\frac jn\Big)^{(1-p)/(2\eps)}\Bigg|\nonumber\\ 
	&\leq \Big|\frac{1}{a_{v[q]}}-\frac{n}{k_{v[q]}}\Big|\int_{(j-1)/n}^{j/n} x^{(1-p)/(2\eps)}\,\dd x +\frac{n}{k_{v[q]}}\int_{(j-1)/n}^{j/n} \Big|\Big(\frac jn\Big)^{(1-p)/(2\eps)}-x^{(1-p)/(2\eps)}\Big|\,\dd x\nonumber\\ 
	&\le \Big|\frac{1}{a_{v[q]}}-\frac{n}{k_{v[q]}}\Big|\frac1n+\frac{1}{k_{v[q]}}\Big|\Big(\frac jn\Big)^{(1-p)/(2\eps)}-\Big(\frac{j-1}{n}\Big)^{(1-p)/(2\eps)}\Big|\nonumber\\
    &\le \lyu{\frac{q+1}{k_{v[q]}\cdot a_{v[q]}n}\frac1n+\frac{1}{k_{v[q]}}\Big|\Big(\frac jn\Big)^{(1-p)/(2\eps)}-\Big(\frac{j-1}{n}\Big)^{(1-p)/(2\eps)}\Big|}\nonumber\\ 
    &= \lyu{n^{-3+o(1)}+\frac{1}{k_{v[q]}}\Big|\Big(\frac jn\Big)^{(1-p)/(2\eps)}-\Big(\frac{j-1}{n}\Big)^{(1-p)/(2\eps)}\Big|,}\label{eq:telescopic_2}
	\end{align}
 \lyu{where the penultimate inequality follows from the second part of the event $\cH_{q,2}$ from~\eqref{eq:Hq}, and the last equality follows from the event $\cH_{q,1}$. Finally, similarly to~\eqref{eq:sum_4.5}, by bounding the absolute values in~\eqref{eq:vneqboundql} for $j\in [\min\{N_{v[q]},k_{v[q]}-1\}]$ by the sum of the two terms, using~\eqref{eq:lambdaqL} and a telescopic summation of~\eqref{eq:telescopic_2}, we obtain that~\eqref{eq:vneqboundql} is at most
	\begin{equation} 
	\ind_{\cE_q}\bigg(n^{-1+o(1)} + n\cdot n^{-3+o(1)} + \frac{1}{k_{v[q]}}\Big(\frac{j_q}{n}\Big)^{(1-p)/(2\eps)}\bigg)=n^{-1+o(1)}.\label{eq:bd 5.15}
	\end{equation}}
	\lyu{Thus, by combining~\eqref{eq:vneqboundql} and~\eqref{eq:bd 5.15}}, we deduce that there exists a coupling of $\mathbf{\wt I}_{k_{v[q]}}^L$ and $\mathbf V_{k_{v[q]}}^L$ such that
	\be 
	\ind_{\cE_q}\Pq{\mathbf{\wt I}_{k_{v[q]}}^L\neq \mathbf{V}_{k_{v[q]}}^L}=\lyu{n^{-1+o(1)}}. 
	\ee 
	Combining this with~\eqref{eq:Icouple2} and~\eqref{eq:IItilda}, we obtain the desired bound for~\eqref{eq:5.4.1}. 
	
	It remains to deal with~\eqref{eq:5.4.2}. By a union bound and Markov's inequality, we obtain 
	\begin{equation}\begin{aligned}\label{eq:cyclebound1}
			\mathbb P{}&\Big(\{\exists j\in \cA_{\rho[q]-1}\cap [k_q^*]: I^L_{\lyu{k_{\barv[q]}\to j}}= 1\}\cap \{\mathbf I^L_{k_{\barv[q]}}=\mathbf V^L_{k_{\barv[q]}}\}\cap \cE_q\Big)\\
			&\leq \E{\ind_{\cE_q}\Pq{\exists j\in \cA_{\rho[q]-1}\cap [k_q^*]: V^L_{\lyu{k_{\barv[q]}\to j}}\geq 1}}\\
			&\leq \E{\ind_{\cE_q}|\cA_{\rho[q]-1}|\max_{j\in[k^*_q]}\lambda_j^{\rho[q]}}.
	\end{aligned} \end{equation} 
	We also observe that, for all $j\in[k^*_q]$,
	\begin{equation}
		\lambda_j^{\rho[q]}\leq \frac{\beta p}{2\eps a_{v[q]}n}\Big(\frac{j+1}{n}\Big)^{(1-p)/(2\eps)}\leq \frac{\beta p}{2\eps a_{v[q]}n}\Big(\frac{\lyu{2}k_{v[q]}}{n}\Big)^{(1-p)/(2\eps)}\leq \frac{\beta p}{\eps n }\lyu{(4}a_{v[q]})^{(1-p)/(2\eps)-1},
	\end{equation} 
	where the final inequality holds on the event $\cE_q$ for all sufficiently large $n$. 
	Regardless of the value of $\eps\in(0,1/2]$, on the event $\cE_q$, we can bound $\lyu{(4}a_{v[q]})^{(1-p)/(2\eps)-1}$ from above by $(\lyu{4}\log\log n)^{q+1}$. 
	Combining this estimate, the bound $|\cA_{\rho[q]-1}|=\cO\big( (\log\log n)^{q/r}\big)$ on the event $\cE_q$ and~\eqref{eq:cyclebound1}, this finally leads to 
	\begin{equation}
		\mathbb P\Big(\{\exists j\in \cA_{\rho[q]-1}\cap [k_q^*]: I^L_{\lyu{k_{\barv[q]}\to j}}= 1\}\cap \{\mathbf I^L_{k_{\barv[q]}}=\mathbf V^L_{k_{\barv[q]}}\}\cap \cE_q\Big)=o\Big(\frac{1}{\log n}\Big), 
	\end{equation} 
	 which concludes the proof.
\end{proof} 

\subsection{\texorpdfstring{Coupling the $R$-children of $v[q]$ in $\cT$ with the $R$-neighbours of $k_{\barv[q]}$ in $G_n$}{}}

In this section, we couple the $R$-neighbours of $k_{v[q]}$ and the $R$-children of $v[q]$. We define
\begin{equation}\label{eq:Rq}
\cR_{\rho[q]}:=\{\theta_{v[q]}^R=\tau_{v[q]}^R\}\cap\{ \forall i\in[\tau_{v[q]}^R]: |a_{v[q],i\bas{+\tau^L_{v[q]}}}-\tfrac{k_{v[q],i\bas{+\tau^L_{v[q]}}}}{n}|\leq \tfrac{q+2}{n}, k_{\barv[q],i\bas{+\tau^L_{v[q]}}}\in \cN_{\rho[q]-1}\}
\end{equation}
to be the event that the coupling of the $R$-neighbours of $k_{v[q]}$ and the $R$-children of $v[q]$ is successful. In words, $\cR_{\rho[q]}$ says that the number of $R$-neighbours of $k_{\barv[q]}$ is equal to the number of $R$-children of $\barv[q]$, that their respective rescaled labels and types are close, and that the exploration of the $R$-neighbours of $k_{v[q]}$ does not lead to cycles. \bas{Note that we implicitly assume that the coupling of the $L$-neighbours of $k_{v[q]}$ and the $L$-children of $v[q]$ is successful, since otherwise $k_{v[q],i+\tau_{v[q]}^L}$ may not be well-defined. As we are interested in the event $\cL_{\rho[q]}\cap cR_{\rho[q]}$, we do not explicitly state this requirement in the event $\cR_{\rho[q]}$.} 

We now define an encoding of the $R$-neighbours of $k_{v[q]}$ in $G_n$ and the $R$-children of $v[q]$ in $\cT$ that allows us to construct a coupling such that $\cR_{\rho[q]}$ holds with high probability. As in Section~\ref{sec:Lqcouple}, we cannot ensure that $\tfrac{k_{\barv[q]}+1}{n}\geq a_{\barv[q]}$ holds on the event $\cE_q=\cap_{i=1}^3 \cH_{q,i}$. Instead, we let $N_{v[q]}:=\lceil a_{\barv[q]}n\rceil$ and $k^*_q:=\min\{N_{v[q]},k_{\barv[q]}+1\}$ and use $k^*_q$ \lyu{to construct a discretised Poisson process similar to $V^R_{k_0}$ defined in~\eqref{eq:lambdas}, which
is key in our coupling. More precisely, if}
$N_{v[q]}\leq k_{\barv[q]}+1$, conditionally on $a_{v[q]}$, we define
\begin{equation}\begin{aligned}\label{eq:lambdaqR}
	&\lambda_{N_{v[q]}}^{\rho[q]}:=\int_{a_{\barv[q]}}^{N_{v[q]}/n} \frac{\beta p}{2\eps}x^{(1-p)/(2\eps)-1}\,\dd x, \text{ and}\\
	&\lambda^{\rho[q]}_j:=\int_{\tfrac{j-1}{n}}^{\tfrac jn}\frac{\beta p}{2\eps}x^{(1-p)/(2\eps)-1}\,\dd x\text{ for all }j\in \{N_{v[q]}+1,\ldots, n\}.
	\end{aligned}\end{equation} 
	If instead $N_{v[q]}\geq k_{\barv[q]}+2$, we additionally set $\lambda^{\rho[q]}_j:=0$ for all integers $j\in [k_q^*, N_{v[q]}-1]$, which defines $\lambda^{\rho[q]}_j$ for all $j\in [k_q^*, n]$. 
	Moreover, this allows us to define a discretised Poisson point process $\mathbf V^R_{k_{\barv[q]}}:=(V^R_{j\to k_{\barv[q]}})_{j=k^*_q}^n$ where $V^R_{j\to k_{\barv[q]}}$ is a Poisson random variable with mean $\lambda^{\rho[q]}_j$ (again, a Poisson variable with mean zero is equal to zero almost surely). 
	\lyu{More precisely, $\mathbf V^R_{k_{v[q]}}$ must be seen as} a discretisation of the Poisson point process on $(a_{v[q]},1)$ with \lyu{intensity} $\lambda^+(\dd x)$ that determines the $R$-children of $v[q]$ (see Definition~\ref{def:wll}).
	
	Similarly, we define the  Bernoulli point process $\mathbf I^R_{k_{\barv[q]}}:=(I^R_{j\to k_{v\lyu{[q]}}})_{j=k^*_q}^n$ as follows. For all $j\in [k_{\barv[q]}+1, n]$, $I^R_{j\to k_{v[q]}}$ equals one if $j$ connects to $k_{v[q]}$ by an edge in $G_n$ and $j\in \cN_{\rho[q]-1}\cup \cA_{\rho[q]-1}$, and equals zero otherwise. If $N_{v[q]}<k_{\barv[q]}+1$, for all $j\in [k^*_q, k_{\barv[q]}]$, we additionally set $I^R_{j\to k_{\barv[q]}}:=0$.
	
	Our aim is to couple the Bernoulli point process $\mathbf I^R_{k_{v[q]}}$ and the discretised Poisson point process $\mathbf V^R_{k_{v[q]}}$. To do this, we show the following lemma, which is similar in nature to Lemma~\ref{lemma:1Rcouple}.
	
	\begin{lemma}\label{lemma:qR}
Recall the event $\cE_q:=\cap_{i=1}^3\cH_{q,i}$ where the events $(\cH_{q,i})_{i\in[3]}$ were defined in~\eqref{eq:Hq}, and consider the processes $\mathbf I^R_{k_{\barv[q]}}$ and $\mathbf V^R_{k_{\barv[q]}}$. There exists a coupling such that
\begin{equation}
	\P{\big(\{\mathbf I^R_{k_{v[q]}}= \mathbf V^R_{k_{v[q]}}\}\cap \{\forall j\in \cA_{\rho[q]-1}\cap [k_q^*, n]: \bas{I}^R_{j\to k_{\barv[q]}}=0\}\big)^c\cap  \cE_q\,\Big|\,\lyu{\cC_q}} = \bas{\cO\Big(\frac{\log\log\log n}{\log n}\Big)},
\end{equation}
\lyu{where the constant in the $\cO(\cdot)$ is absolute. In particular, upon the event $\cE_q$ and conditionally on the random variables in $\cC_q$, the total variation distance between $\mathbf I^R_{k_{v[q]}}$ and $\mathbf V^R_{k_{v[q]}}$ is of order $\cO(\log\log\log n/\log n)$.}
\end{lemma}

\begin{remark}\label{rem:qR}
When the coupling is successful, in the sense that the event 
\begin{equation}
	\{\mathbf I^R_{k_{v[q]}}= \mathbf V^R_{k_{v[q]}}\}\cap \{\forall j\in \cA_{\rho[q]-1}\cap [k_q^*, n]: \bas{I}^R_{j\to k_{\barv[q]}}=0\}
\end{equation} 
holds, it directly follows that the event $\cR_{\rho[q]}$ holds as well. As a result, 
\begin{equation}
	\P{\cL_{\rho[q]}\cap \cR_{\rho[q]}^c\cap \cE_q}=\bas{\cO\Big(\frac{\log\log\log n}{\log n}\Big)},
\end{equation} 
where the event $\cL_{\rho[q]}$ ensures that the event $\cR_{\rho[q]}$ is well-defined.
\end{remark}

\begin{proof}[Proof of Lemma~\ref{lemma:qR}]
\lyu{As usual, we show the more general statements
\be\label{eq:5.6.1}
	\ind_{\cE_q}\sum_{x \in \N_0^{n-k^*_q}}\Big|\Pq{\mathbf I_{k_{v[q]}}^R=x}-\Pq{\mathbf V_{k_{v[q]}}^R=x}\Big| =\cO\Big(\frac{\log\log\log n}{\log n}\Big),
\ee
where the sum ranges over all sequences $x$ of $n-k_q^*$ non-negative integers, and
\be\label{eq:5.6.2} 
\ind_{\cE_q}\Pq{\{\exists j\in \cA_{\rho[q]-1}\cap [k^*_q,n]: I^R_{k_{v[q]}\to j}=1\}\cap \{\mathbf I^R_{k_{v[q]}}=\mathbf V^R_{k_{v[q]}}\}\cap \cE_q}=\cO\Big(\frac{\log\log\log n}{\log n}\Big).
\ee}
\lyu{We start by showing~\eqref{eq:5.6.1}. Recall the probability measure $\Pq{\cdot}:=\P{\cdot\,|\, \cC_q}$ conditioned on the set of random variables $\cC_q$ and the associated expectation $\Eq{\cdot}$.} 
Let $(\wt I_{j,k_0})_{j=k^*_q}^n$ be a sequence of independent Bernoulli random variables, where  $\wt I_{j,k_{v[q]}}$ has success probability $\min\{\beta/(2\eps j),1\}$. Furthermore, $\mathbf{\wt I}_{k_{v[q]}}^R=(\wt I_{j,k_{v[q]}}\wt \nu_{j,n})_{j=k^*_q}^n $, where the $\wt \nu_{j,n}$ are Bernoulli random variables, independent of everything else, with success probability as in~\eqref{eq:wtnu}. \lyu{To estimate the left hand side of~\eqref{eq:5.6.1}, we divide it into three parts:} 
	\begin{align}
	\ind_{\cE_q}\!\!\!\!{}&\sum_{x \in \N_0^{n-k^*_q}}\!\!\Big|\Pq{\mathbf I_{k_{v[q]}}^R=x}-\Pq{\mathbf V_{k_{v[q]}}^R=x}\Big|\nonumber \\ 
	\leq{}& \ind_{\cE_q}\!\!\!\!\sum_{x \in \{0,1\}^{n-k^*_q}}\!\!\Big|\Pq{\mathbf I_{k_{v[q]}}^R=x}-\Pq{(\wt I_{j,k_{v[q]}}\nu_{j,n})_{j=k^*_q}^n=x}\Big|\label{eq:dtv1qR}\\ 
	&+\ind_{\cE_q}\!\!\!\!\sum_{x \in \{0,1\}^{n-k^*_q}}\!\!\Big|\Pq{(\wt I_{j,k_{v[q]}}\nu_{j,n})_{j=k^*_q}^n=x}-\Pq{\mathbf{ \wt I}_{k_{v[q]}}^R=x}\Big|\label{eq:dtv2qR}\\ 
	&+\ind_{\cE_q}\!\!\!\!\sum_{x \in \N_0^{n-k^*_q}}\!\!\Big|\Pq{\mathbf{\wt I}_{k_{v[q]}}^R=x}-\Pq{\mathbf{ V}_{k_{v[q]}}^R=x}\Big|.\label{eq:dtv3qR}
	\end{align}
    We split the proof into three steps, each of which bounds one of the above terms. 

    \paragraph{\underline{Step 1}:} Bounding~\eqref{eq:dtv1qR}. Let $(I_{j,k_{v[q]}})_{j=k^*_q}^n$ denote a sequence of i.i.d.\ Bernoulli random variables, \bas{conditionally on $k^*_q$ and $(V_{j-1})_{j=k^*_q}^n$},  where $I_{j,k_{v[q]}}$ has parameter $\min\{\beta/|V_{j-1}|,1\}$ if $V_{j-1}\neq \emptyset$ and parameter $0$ otherwise, 
 also independent of $(\nu_{j,n})_{j=k^*_q}^n$, conditionally on $k^*_q$ and $V_{j-1}$.  Then, we can write $I_{j\to k_{v[q]}}= I_{j,k_{v[q]}} \nu_{j,n}$. Indeed, the indicator $I_{j\to k_{v[q]}}$ equals one when $j$ makes a connection with $k_{v[q]}$, which occurs with probability $\min\{\beta/|V_{j-1}|,1\}$ if, independently for each $j$, and if $j$ survives after $n$ steps, which occurs with probability    
	\be 
	\frac{\P{\nu_{j,n}=1\text{ and }\nu_{i,n}=1\text{ for all }i\in \cA_{\rho[q]-1}\,|\, \cC_q,V_{k_0-1}}}{\P{\nu_{i,n}=1\text{ for all }i\in \cA_{\rho[q]-1}\,|\,\cC_q,V_{k_0-1}}}.
	\ee
	Writing $I_{j\to k_{v[q]}}=I_{j,k_{v[q]}}\nu_{j,n}$ allows us to couple the sequences $\mathbf I_{k_{v[q]}}^R$ and $(\wt I_{j,k_{v[q]}}\nu_{j,n})_{j=k^*_q}^n$ by coupling the sequences $(I_{j,k_{v[q]}})_{j=k^*_q}^n$ and $(\wt I_{j,k_{v[q]}})_{j=k^*_q}^n$. This can be done by applying a standard Bernoulli coupling and yields, by a union bound,
	\be \ba 
	\mathbb P_q\bigg({}&\bigcup_{jk^*_q}^n: I_{j,k_{v[q]}}\neq \wt I_{j,k_{v[q]}}\bigg)\leq \sum_{j=k^*_q}^n \Bigg|\min\Big\{\frac{\beta}{2\eps j},1\Big\}-\Eq{ \min\Big\{\frac{\beta}{|V_{j-1}|},1\Big\}}\Bigg|.
	\ea \ee 
	By Lemma~\ref{lemma:condconc}, for some constant $C_1>0$, we obtain
	\be \ba
	\Eq{\ind_{\cQ_{j-1}} \min\Big\{\frac{\beta}{|V_{j-1}|},1\Big\}}
	&\leq\Eq{\ind_{\{V_{j-1}\neq \emptyset\}} \min\Big\{\frac{\beta}{|V_{j-1}|},1\Big\}}\\
	&\leq \Eq{\ind_{\cQ_{j-1}} \min\Big\{\frac{\beta}{|V_{j-1}|},1\Big\}}+\Pq{\cQ_{j-1}^c}\\ 
	&\leq \min\Big\{\frac{\beta}{2\eps j},1\Big\}+C_1j^{-4/3}.
	\ea \ee 
	We thus arrive at 
	\be
	\Pq{\exists j\in [k^*_q,n]: I_{j\to k_{v[q]}}\neq \wt I_{j,k_{v[q]}}\nu_{j,n}}\leq \Pq{\exists j\in [k^*_q,n]: I_{j,k_{v[q]}}\neq \wt I_{j,k_{v[q]}}}\leq C_1k_{v[q]}^{-1/3},
	\ee 
	which completes the first part. 
	
	\paragraph{\underline{Step 2}:} Bounding~\eqref{eq:dtv2qR}. We fix $\delta\in(0,1)$ and define the event
	\be 
	\cB_n:=\bigg\{ \sum_{j=k_{v[q]}+1}^n\wt I_{j,k_{v[q]}}\leq \delta\log n\bigg\}.
	\ee 
	Note that $\cB_n$ holds with high probability irrespective of the choice of $\delta$: indeed, by a union bound and Markov's inequality, we can bound 
	\be \label{eq:bnqrbound}
	\ind_{\cE_q}\P{\cB_n^c\,|\, \cC_q}=\ind_{\cE_q}\cO\Big(\frac{\log(n/k_{v[q]})}{\log n}\Big)=\cO\Big(\frac{\log\log\log n}{\log n}\Big).
	\ee 
	We then write
	\be\ba \label{eq:dtvcondqR}
	\ind_{\cE_q}\sum_{x\in \{0,1\}^{n-k^*_q}}\!\!\!\!\Big|\mathbb P_q\Big((\wt I_{j,k_{v[q]}}{}&\nu_{j,n})_{j=k^*_q}^n=x\Big)-\Pq{(\wt I_{j,k_{v[q]}}\wt \nu_{j,n})_{j=k^*_q}^n=x}\Big|\\ 
	\leq \ind_{\cE_q}\mathbb E_q\Bigg[\sum_{x\in\{0,1\}^{n-k^*_q} }\!\!\!\!\!\Big|{}&\P{(\wt I_{j,k_{v[q]}}\nu_{j,n})_{j=k^*_q}^n=x, \cC_q,(\wt I_{j,k_{v[q]}})_{j=k^*_q}^n}\\ 
	&-\P{(\wt I_{j,k_{v[q]}}\wt \nu_{j,n})_{j=k^*_q}^n=x\,\Big|\, \cC_q,(\wt I_{j,k_{v[q]}})_{j=k^*_q}^n}\Big|\Bigg].
	\ea\ee 
	Now, define
	\be 
	M_n:=\sum_{j=k_{v[q]}+1}^n \wt I_{j,k_{v[q]}}. 
	\ee 
	Conditionally on the sequence $(\wt I_{j,k_{v[q]}})_{j=k^*_q}^n$, let $(j_i)_{i\in [M_n]}$ denote the sequence of indices in increasing order, such that $\wt I_{j_i,k_{v[q]}}=1$ for all $i\in[M_n]$. We then observe that, with $x=(x_j)_{j=k^*_q}^n$ and $x'=(x_{j_i})_{i\in[M_n]}$, the events 
	\be 
	\Big\{(\wt I_{j,k_{v[q]}}\nu_{j,n})_{j=k^*_q}^n=x\,\Big|\, \cC_q,(\wt I_{j,k_{v[q]}})_{j=k^*_q}^n\Big\} \quad\text{and}\quad \Big\{(\nu_{j_i,n})_{i\in[M_n]}=x'\,\Big|\, \cC_q,(\wt I_{j,k_{v[q]}})_{j=k^*_q}^n\Big\}
	\ee 
	have the same probability, as long as $x_\ell=0$ for all $\ell\in [k^*_q,n]\setminus \{j_1,\ldots, j_{M_n}\}$. If, instead, $x_\ell=1$ for some $\ell\in [k^*_q,n]\setminus \{j_1,\ldots, j_{M_n}\}$, then the event on the left-hand side has probability zero. The same holds when we replace $\nu_{j,n}$ by $\wt \nu_{j,n}$. As a result, using this in~\eqref{eq:dtvcondqR}, we obtain 
	\be \ba 
	\ind_{\cE_q}\mathbb E_q\Bigg[\sum_{x\in\{0,1\}^{M_n} }\!\!\!\!\!\Big|{}&\P{(\nu_{j_i,n})_{i\in[M_n]}=x\,\Big|\, \cC_q,(\wt I_{j,k_{v[q]}})_{j=k^*_q}^n}\\ 
	&-\P{(\wt \nu_{j_i,n})_{i\in[M_n]}=x\,\Big|\, \cC_q,(\wt I_{j,k_{v[q]}})_{j=k^*_q}^n}\Big| \Bigg]\\ 
	\leq \ind_{\cE_q}\mathbb E_q\Bigg[\ind_{\cB_n}\sum_{x\in\{0,1\}^{M_n} }\!\!\!\!\!\Big|{}&\P{(\nu_{j_i,n})_{i\in[M_n]}=x\,\Big|\, \cC_q,(\wt I_{j,k_{v[q]}})_{j=k^*_q}^n}\\ 
	&-\P{(\wt \nu_{j_i,n})_{i\in[M_n]}=x\,\Big|\, \cC_q,(\wt I_{j,k_{v[q]}})_{j=k^*_q}^n}\Big| \,\Bigg|\, \cC_q \Bigg]+2\ind_{\cE_q}\Pq{\cB_n^c}.
	\ea\ee 
	On the event $\cB_n$, it holds that $M_n\leq \delta\log n$. As a result, by summing over all events $\{M_n=r\}$, with $r=1,\ldots, \lfloor \delta\log n\rfloor$, we can, for each such $r$, apply Corollary~\ref{cor:condsurv} \bas{ (in particular Remark~\ref{rem:condsurv})} with $j_0=k_{v[q]}$  (which is at least $n(\log\log n)^{-(q+1)}$ on the event $\cC_q$) to bound the absolute value from above. Together with~\eqref{eq:bnqrbound}, this yields the upper bound
	\be 
	\ind_{\cE_q}\Bigg[\cO\Bigg(\sum_{r=1}^{\lfloor \delta\log n\rfloor} \frac{4^rr(\log\log n)^{(q+1)/6}}{n^{1/6}}\Bigg)+\cO\Big(\frac{\log\log\log n}{\log n}\Big)\Bigg]=\cO\Big(\frac{\log\log\log n}{\log n}\Big), 
	\ee
	where we use the bounds on $k_{v[q]}$ in the event $\cE_q$ and choose $\delta$ sufficiently small, which concludes the second part. 
	
	\paragraph{\underline{Step 3}:} Bounding~\eqref{eq:dtv3qR}. We bound~\eqref{eq:dtv3qR} in two further steps. We first couple $\mathbf{\wt I}_{k_{v[q]}}^R$ to an auxiliary sequence of independent Poisson random variables $\mathbf{\wt V}_{k_{v[q]}}^R=(\wt V_{j\to k_{v[q]}})_{j=k^*_q}^n$, where $\wt V_{j\to k_{v[q]}}$ has a mean equal to the right-hand side of~\eqref{eq:Poimean}. Then, in a second step, we couple $\mathbf{\wt V}_{k_{v[q]}}^R$ to $\mathbf V_{k_{v[q]}}^R$. 
	
	For the first step, we use Lemma~\ref{lemma:poibercoupling} to obtain 
	\be 
	\ind_{\cE_q}\Pq{\wt{\mathbf I}^R_{k_{v[q]}}\neq \wt{\mathbf V}^R_{k_{v[q]}}}
	\leq \ind_{\cE_q}\sum_{j=k^*_q}^n\Big(\frac{\beta p}{2\eps j}\Big(\frac jn\Big)^{(1-p)/(2\eps)}\Big)^2\leq\ind_{\cE_q} \frac{\beta^2p^2}{4\eps^2 k_{v[q]}^2}  \sum_{j=k^*_q}^n \Big(\frac{j}{n}\Big)^{(1-p)/\eps}.
	\ee
	\bas{ We bound
		\be 
		\frac1n \sum_{j=k^*_q}^n \Big(\frac{j}{n}\Big)^{(1-p)/\eps} \leq \int_0^1 x^{(1-p)/\eps}\,\dd x+o(1).
		\ee 
		Hence, by the bound on $k_{v[q]}$ in the event $\cE_q$, we obtain
		\be 
		\ind_{\cE_q}\Pq{\wt{\mathbf I}^R_{k_{v[q]}}\neq \wt{\mathbf V}^R_{k_{v[q]}}}=\cO\Big(\frac{(\log\log n)^{2(q+1)}}{n}\Big).
		\ee 
	}
	In the second step, we use Lemma~\ref{lemma:coupling poisson} to obtain 
	\begin{equation}\begin{aligned}\label{eq:vneqboundqr}
			\ind_{\cE_q}\Pq{\wt{\mathbf V}^R_{k_{v[q]}}\neq \mathbf V^R_{k_{v[q]}}}&\leq \ind_{\cE_q}\sum_{j=k^*_q}^n\Big|\lambda_j^{\rho[q]}-\frac{\beta p}{2\eps j}\Big(\frac jn\Big)^{(1-p)/(2\eps)}\Big|.
	\end{aligned} \end{equation} 
\lyu{Then, on the one hand, for each $j\in[\max\{N_{v[q]},k_{\barv[q]}+1\}+1,n]$,
	\begin{align*} 
	\Big|\lambda^{\rho[q]}_j-\frac{\beta p}{2\eps j}\Big(\frac jn\Big)^{(1-p)/(2\eps)}\Big|
 &\leq\frac{\beta p}{2\eps}\int_{(j-1)/n}^{j/n} \Big| x^{(1-p)/(2\eps)-1}-\Big(\frac jn\Big)^{(1-p)/(2\eps)-1}\Big|\,\dd x\\
 &\leq\frac{\beta p}{2\eps n} \Big| \Big(\frac{j-1}{n}\Big)^{(1-p)/(2\eps)-1}-\Big(\frac jn\Big)^{(1-p)/(2\eps)-1}\Big|.
	\end{align*} 
By bounding each of the remaining absolute values in~\eqref{eq:vneqboundqr} using $\cH_{1,q}\cap \cH_{2,q}$ (see~\eqref{eq:Hq}), using~\eqref{eq:lambdaqR} and a telescopic summation of~\eqref{eq:vneqboundqr} (whose sign only depends on $\eps$), similarly to~\eqref{eq:sum_4.5} and~\eqref{eq:bd 5.15}, we obtain that~\eqref{eq:dtv3qR} is at most
\begin{equation}\label{eq:step3qR}
\ind_{\cE_q}\Big(n^{-1+o(1)} + \frac{\beta p}{2\eps n} \Big| \Big(\frac{k^*_q-1}{n}\Big)^{(1-p)/(2\eps)-1}-1\Big|\Big) = n^{-1+o(1)}.
\end{equation}} 
We deduce that we can couple $\mathbf{\wt I}_{k_{v[q]}}^R$ with $\mathbf V_{k_{v[q]}}^R$ such that
	\be 
	\ind_{\cE_q}\Pq{\mathbf{\wt I}_{k_{v[q]}}^R\neq \mathbf{V}_{k_{v[q]}}^R\,\Big|\, \cC_q}=\cO\Big(\frac{\log\log\log n}{\log n}\Big). 
	\ee 
Combined with the bounds on~\eqref{eq:dtv1qR} and~\eqref{eq:dtv2qR}, we obtain the desired bound on~\eqref{eq:5.6.1}.
	
It remains to bound~\eqref{eq:5.6.2}. Via a union bound and Markov's inequality, we obtain 
	\begin{equation}\begin{aligned}\label{eq:cyclebound}
			\mathbb P{}&\Big(\{\exists j\in \cA_{\rho[q]-1}\cap [k_q^*,n]: I^R_{j\to k_{\barv[q]}}= 1\}\cap \{\mathbf I^R_{k_{\barv[q]}}=\mathbf V^R_{k_{\barv[q]}}\}\cap \cE_q\Big)\\
			&\leq \E{\ind_{\cE_q}\P{\exists j\in \cA_{\rho[q]-1}\cap [k_q^*,n]: V^R_{j\to k_{\barv[q]}}\geq 1\,\Big|\, \cC_q}}\\
			&\leq \E{\ind_{\cE_q}|\cA_{\rho[q]-1}|\max_{j\in[k^*_q,n]}\lambda_j^{\rho[q]}}.
	\end{aligned} \end{equation} 
	We then observe that for all $j\in[ k^*_q,n]$,
	\begin{equation}
		\lambda_j^{\rho[q]}\leq \frac{\beta p}{2\eps}\Big(\frac{j+1}{n}\Big)^{(1-p)/(2\eps)-1}\leq \frac{\beta p}{2\eps a_{v[q]}n}\Big(\frac{k_{v[q]}}{n}\Big)^{(1-p)/(2\eps)}\leq \frac{\beta p}{\eps n }a_{v[q]}^{(1-p)/(2\eps)-1},
	\end{equation} 
	where the final inequality holds on the event $\cE_q$ for all sufficiently large $n$. Regardless of the value of $\eps\in(0,1/2)$, we can bound $a_{v[q]}^{(1-p)/(2\eps)-1}$ from above by $(\log\log n)^{q+1}$ on the event $\cE_q$. Using this upper bound in~\eqref{eq:cyclebound} and bounding $|\cA_{\rho[q]-1}|=\cO\big( (\log\log n)^{q/r}\big)$ on $\cE_q$, this finally leads to 
	\begin{equation}
		\mathbb P\Big(\{\exists j\in \cA_{\rho[q]-1}\cap [k_q^*, n]: I^R_{j\to k_{\barv[q]}}= 1\}\cap \{\mathbf I^R_{k_{\barv[q]}}=\mathbf V^R_{k_{\barv[q]}}\}\cap \cE_q\Big)=\cO\Big(\frac{(\log\log n)^{q/r+q+1}}{n}\Big), 
	\end{equation}
which finishes the proof.
\end{proof}

\subsection{Proof of Lemma~\ref{lemma:kappacouple}}

In this section, we prove Lemma~\ref{lemma:kappacouple} using the coupling of the immediate neighbours of $k_{\barv[q]}$ in $G_n$ and $\barv[q]$ in $\cT$ constructed in the previous sections.

\begin{proof}[Proof of Lemma~\ref{lemma:kappacouple}]
Recall $\cL_{\rho[q]}, \cR_{\rho[q]}$ from~\eqref{eq:Lq} and~\eqref{eq:Rq}, respectively, and observe that $\cK_{\rho[q]}$ is implied by $\cL_{\rho[q]}\cap \cR_{\rho[q]}$. 
It thus follows from Lemmas~\ref{lemma:qLcouple} and~\ref{lemma:qR} that
\begin{equation}
	\P{\cK_{\rho[q]}^c\cap \cE_q}\leq \P{\cL_{\rho[q]}^c\cap\cE_q}+\P{\cL_{\rho[q]}\cap \cR_{\rho[q]}^c\cap\cE_q} = \lyu{\cO\Big(\frac{\log\log\log n}{\log n}\Big)}
\end{equation} 
\lyu{with an absolute constant in the $\cO(\cdot)$,} as desired.
\end{proof} 

\paragraph{\texorpdfstring{Coupling the remaining vertices in $B_q(G_n,k_0)$ and $B_q(\cT,0)$.}{}}
In the previous sections, we covered the exploration of the neighbours of $k_{\barv[q]}$, the smallest vertex in $\partial B_q(G_n,k_0)$ with respect to the BFS ordering, and coupled this process with the construction of the children of $\barv[q]$ in $\cT$. 
As discussed prior to this coupling, the same proofs with only minor modifications can be used to couple \emph{all} the vertices in  $\partial B_q(G_n,k_0)$ and in $\partial B_q(\cT,0)$. \lyu{The changes to be done consist og:} 
\begin{itemize}
    \item \lyu{Estimating the probability that an edge is sent to an already processed vertex in the same level. As already pointed out, the probability of this bad event is readily bounded from above in parallel with the above proof using the event $\cQ_i$ from~\eqref{eq:An} (where $i$ is the label of the processed vertex), Lemma~\ref{lemma:condconc} and the event $\cH_{q,1}$ (ensuring that $i$ is suitably large).}
    \item Estimating the number of \emph{probed} and \emph{active} vertices in the BFS exploration of $G_n$. 
    However, the bounds used in the proofs of the lemmas in this section \lyu{are still valid in the general case.}
\end{itemize}


\subsection{Summary}\label{sec:summary}
We are now ready to prove Proposition~\ref{prop:qcouple}.

\begin{proof}[Proof of Proposition~\ref{prop:qcouple}]
We start by recalling the bounds in~\eqref{eq:inducbound}, \eqref{eq:H3c} and~\eqref{eq:H1c} together with the hypothesis in Proposition~\ref{prop:qcouple}, which yields
\begin{equation}\begin{aligned}\label{eq:beforelast}
		\P{\Big(\bigcap_{i=1}^3 \cH_{q+1,i}\Big)^c}\leq \P{\cH_{q+1,2}^c\cap \cH_{q+1,1}\cap \cH_{q+1,3}\cap \bigcap_{i=1}^3 \cH_{q,i}}+2\beta (\log\log n)^{q/r-p/(2\eps)}
\end{aligned} \end{equation} 
for all sufficiently large $n$. Recall $\cK_{t,q}$ from~\eqref{eq:kappat}, $\cK_{\rho[q]}=\cK_{\rho[q],q}$ from~\eqref{eq:kapparhoq} and that, for all $t \in [\rho[q] ,\rho[q+1]-1]$ (with $\rho[q]$ defined in~\eqref{eq:rhoq}), we have $k_{v(t)}=k[t]$. 
(Otherwise said, $k_{v(t)}$ is the vertex in $\cA_{t-1}$ with smallest BFS order whose immediate neighbours are to be explored at step $t$ of the BFS exploration.) 
Recalling that $\cE_q=\cap_{i=1}^3\cH_{q,i}$, it follows from~\eqref{eq:hq+12} that
\begin{equation}
	\P{\cH^c_{q+1,2}\cap \cE_q}=\P{\bigg(\bigcap_{t=\rho[q]}^{\rho[q+1]-1}\cK_{t,q}\bigg)^c\cap \cE_q}\lyu{\le}\E{\sum_{t=\rho[q]}^{\rho[q+1]-1}\ind_{\cE_q}\ind_{ \cap_{s=\rho[q]}^{t-1} \cK_{s,q}}\
		\P{\cK_{t,q}^c\,\Big|\, \wt \cC_t}}.
\end{equation} 
\lyu{As discussed in the paragraph before Section~\ref{sec:summary}, all bounds in Section~\ref{sec:lwlcont} done for $k[q]$ remain valid for all $t\in [\rho[q],\rho[q+1]-1]$.}
Moreover, conditionally on $\cE_q \cap (\cap_{s=\rho[q]}^{t-1} \cK_{s,q})$, it follows that $\rho[q+1]-\rho[q]\leq (\log\log n)^{q/r}$, 
so we obtain the upper bound 
\begin{equation}
	\P{\cH^c_{q+1,2}\cap \bigcap_{i=1}^3\cH_{q,i}} = \cO\Big(\frac{(\log\log n)^{1+q/r}}{\log n}\Big).
\end{equation} 
Using this bound in~\eqref{eq:beforelast} finally yields 
\begin{equation}
	\P{\Big(\bigcap_{i=1}^3 \cH_{q+1,i}\Big)^c} \lyu{\le 3\beta}(\log\log n)^{q/r-p/(2\eps)}, 
\end{equation} 
which concludes the proof.
\end{proof}

With Proposition~\ref{prop:qcouple} and Corollary~\ref{cor:rcouple} at hand, Theorem~\ref{thrm:LWC intro} readily follows.

\begin{proof}[Proof of Theorem~\ref{thrm:LWC intro}]
Fix any $r\in \N$. Under the coupling of $B_r(G_n,k_0)$ and $B_r(\cT,0)$ from Corollary~\ref{cor:rcouple}, we have that
\begin{equation}\begin{aligned}
		d_{\mathrm{TV}}(\mathfrak{L}(B_r(G_n,k_0)),\mathfrak{L}(B_r(\cT,0)))
		&\leq \P{B_r(G_n,k_0)\not \cong B_r(\cT,0)}\\
		&= \cO((\log\log n)^{-(1-p)/(2\eps)-1/r}),
\end{aligned}\end{equation} 
and hence $(\cT,0)$ is the local limit of $(G_n,k_0)$.
\end{proof}

\section{The DRGVR model as an inhomogeneous random graph}\label{sec inhom E-R}

In this section, we compare our model with an inhomogeneous version of the Erd\H{o}s-R\'enyi random graph, \lyu{which allows us to transfer some established results for the latter to the DRGVR}. 
Such a comparison method was previously introduced and used by Chung and Lu in~\cite{CL04} to compare preferential attachment models with vertex and edge removal to inhomogeneous random graphs. 
Here, we are able to provide a more precise comparison thanks to the more tractable dynamics of the DRGVR model.

Recall the notations $V_n$, $\beta$, $\eps$ and $p$ from Definitions~\ref{def:drgvr} and~\ref{def:wll}. We define the following inhomogeneous random graph model.

\begin{definition}[Birth-death inhomogeneous Erd\H{o}s-R\'enyi graph]\label{def BDIER}
Fix a constant $\Delta > 0$ and an integer $n$. The \emph{Birth-death inhomogeneous Erd\H{o}s-R\'enyi graph}, denoted $G_{BD}(\Delta, n)$, has vertex set $V_n$ and, for all $1\leq i<j \le |V_n|$, the edge between the $i^{\text{th}}$ and the $j^{\text{th}}$ vertex in $V_n$ is sampled independently of other edges with probability
\begin{equation}\label{eq def p_ij}
	p_{ij} = p_{ij}(\Delta) = \Delta \beta (2\eps )^{-(1-p)/p}\frac1n \Big(\frac jn\Big)^{-2\eps/p}.
\end{equation}
\end{definition}

Observe that we condition on $V_n$ in the above definition. On the event $\cQ_n = \{||V_n| - \mathbb E |V_n|| \le n^{2/3}\}$ (which holds w.h.p.\ by Lemma~\ref{lemma:condconc}),~\eqref{eq def p_ij} rewrites as 
\begin{equation}\label{eq p_ij under cA_n}
p_{ij}=(1+o(1))\frac{\Delta \beta}{|V_n|}\Big(\frac{j}{|V_n|}\Big)^{-2\eps/p}.
\end{equation}
The reason for introducing this Erd\H{o}s-R\'enyi-type model is that w.h.p.\ one may ``sandwich'' $G_n$ between $G_{BD}(1-\hat\delta_n, n)$ and $G_{BD}(1+\hat\delta_n, n)$ where $(\hat\delta_n)_{n\ge 1}$ is a sequence satisfying \lyu{$\hat\delta_n = o(1)$.} 
\lyu{One significant advantage of this alternative Erd\H{o}s-R\'enyi model is that its} vertices``carry no randomness'' in the sense that, as opposed to the DRGVR, these do not have random arrival times and the birth-death process serving to define $(G_n)_{n\ge 1}$ is independent of the states of the edges in $G_{BD}(\Delta, n)$. \lyu{The next lemma shows} the coupling claimed above.

\begin{lemma}\label{lemma sandwich}
There is a sequence $(\hat\delta_n)_{n\ge 1}$ tending to $0$ and \lyu{such that $G_{BD}(1-\hat\delta_n, n)$, $G_n$ and $G_{BD}(1+\hat\delta_n, n)$ can be coupled so that} w.h.p.
\begin{equation*}
	G_{BD}(1-\hat\delta_n, n)\subseteq G_n\subseteq G_{BD}(1+\hat\delta_n, n).
\end{equation*}
\end{lemma}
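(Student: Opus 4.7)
The plan is to condition on the entire birth-death history $(V_i)_{i\ge 0}$, which in particular fixes the birth times $v_1<\dots<v_{|V_n|}$ of the vertices of $V_n$. Under this conditioning $G_n$ becomes an inhomogeneous random graph on $V_n$ in which the edge between the $i$-th and $j$-th vertex (with $i<j$) is present independently of all other edges with probability $q_{ij}:=\min\{\beta/|V_{v_j-1}|,1\}$: the edge is created exactly at step $v_j$ and both endpoints lie among the $|V_{v_j-1}|$ vertices alive at that step. Consequently it suffices to exhibit a sequence $\hat\delta_n=o(1)$ such that w.h.p.\ the inequalities $p_{ij}(1-\hat\delta_n)\le q_{ij}\le p_{ij}(1+\hat\delta_n)$ hold simultaneously for every pair $i<j\le|V_n|$; the double inclusion will then follow from the monotone coupling that uses a single uniform random variable per potential edge.

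The heart of the proof is a uniform estimate of $|V_{v_j-1}|$. Lemma~\ref{lemma:oldvertgen}(ii) furnishes $v_j=(1+o(1))\,n^{(1-p)/p}(j/(2\eps))^{2\eps/p}$ with high probability, uniformly in $j\in[\ell,|V_n|]$ for $\ell=\lceil 4\eps(\log n)^{p/\eps}\rceil$. Lemma~\ref{lemma:condconc} provides $\bigl||V_i|-2\eps i\bigr|\le i^{2/3}$ with error $\e^{-\Omega(i^{1/3})}$ for any fixed $i$, and a union bound along a dyadic grid (as in the proof of Lemma~\ref{lemma:oldvertgen}(ii)) upgrades this to a uniform-in-$i$ statement for all $i\ge(\log n)^3$. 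Composing the two estimates yields $|V_{v_j-1}|=(1+o(1))\cdot 2\eps v_j=(1+o(1))(2\eps)^{(1-p)/p}n^{(1-p)/p}j^{2\eps/p}$ uniformly for $j\in[\ell,|V_n|]$, and direct comparison with~\eqref{eq def p_ij} gives $q_{ij}=(1+o(1))\,p_{ij}(1)$ for every $i<j$ with $j\ge\ell$.

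The remaining $O(\ell)=O((\log n)^{p/\eps})$ small indices $j<\ell$ require a separate, slightly coarser argument: Lemma~\ref{lemma:oldvertgen}(i) combined with a matching first-moment upper bound on $\P{v_j\le tn^{(1-p)/p}}$ (obtained from $\E[M_{i,n}]/j$ via Markov) gives $v_j=\Theta_P(n^{(1-p)/p})$, and the same dyadic-grid concentration of $|V_i|$ then transfers to $|V_{v_j-1}|=\Theta_P(n^{(1-p)/p})$. Because only polylogarithmically many indices are affected, the looser constants produced here are absorbed into a choice of $\hat\delta_n=o(1)$ that decays sufficiently slowly.

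The main obstacle is ensuring the $(1+o(1))$-concentration of $|V_{v_j-1}|$ uniformly over $j$: the naive Chernoff union bound over $\Theta(n)$ indices is too weak, and the proof must replay the logarithmic-grid argument from Lemma~\ref{lemma:oldvertgen}(ii). Once the pointwise sandwich of edge probabilities is secured, the monotone coupling via shared uniforms finishes the argument.
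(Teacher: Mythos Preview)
Your overall strategy is correct and matches the paper's: condition on the history $(V_i)_{i\ge 0}$ so that $G_n$ becomes an inhomogeneous random graph with edge probabilities $q_{ij}=\beta/|V_{v_j-1}|$, then compare $q_{ij}$ to $p_{ij}$ and finish with the monotone coupling via shared uniforms. For the bulk indices $j\ge\ell$ your argument is essentially the paper's. (One remark: for the concentration of $|V_i|$ over $i\ge n^{(1-p)/(2p)}$ the naive union bound already works, since $\P{\cQ_i^c}\le\e^{-\Omega(i^{1/3})}$ is summable; no dyadic grid is needed.)

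The genuine gap is your treatment of the small indices $j<\ell$. You write that $v_j=\Theta_P(n^{(1-p)/p})$ and hence $|V_{v_j-1}|=\Theta_P(n^{(1-p)/p})$, and then claim that ``the looser constants produced here are absorbed into a choice of $\hat\delta_n=o(1)$''. This does not work. For small $j$ the random variable $v_j/\bigl(n^{(1-p)/p}(j/2\eps)^{2\eps/p}\bigr)$ has fluctuations of constant order (indeed it converges in distribution to a nondegenerate limit), so $q_{ij}/p_{ij}$ is a random quantity bounded away from $1$ with positive probability. No choice of $\hat\delta_n=o(1)$ can then enforce $(1-\hat\delta_n)p_{ij}\le q_{ij}\le(1+\hat\delta_n)p_{ij}$ for these pairs; the fact that only polylogarithmically many $j$ are affected is irrelevant, since you are trying to establish an inequality, not bound an expectation.

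The paper fixes this differently: rather than attempting the probability sandwich for $j\le\ell$, it observes that on the event $\{v_1\ge n^{(1-p)/(2p)}\}$ (which holds w.h.p.\ by Lemma~\ref{lemma:oldvertgen}(i)) both $q_{ij}$ and $p_{ij}(1\pm\hat\delta_n)$ are $n^{-\Omega(1)}$ for every pair with $j\le\ell$. Since there are only $\binom{\ell}{2}=(\log n)^{\cO(1)}$ such pairs, a union bound shows that w.h.p.\ none of these edges appears in any of the three graphs, so the inclusions hold trivially for them. Replacing your final paragraph with this argument closes the gap.
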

\begin{proof}
Recall the set $\{v_1, \ldots, v_{|V_n|}\} = V_n$, the integer $\ell = \ell(n)$ and the sequence $(\delta_i)_{i=1}^{\infty}$ from Part~\eqref{part ii} of Lemma~\ref{lemma:oldvertgen}, and the event $\cQ_i = \{||V_i|-2\eps i|\le i^{2/3}\}$. We prove the lemma for $\hat\delta_n = \delta_n + \max\{\delta_n^{1/2}, (\log n)^{-1}\}$. Also, for any $j\in [|V_n|]$, set 
\begin{equation*}
	r_j = \left\lfloor (1-\delta_n)n^{(1-p)/p} \left(\frac{j}{2\eps}\right)^{2\eps/p}\right\rfloor \text{  and  } R_j = \left\lceil (1+\delta_n)n^{(1-p)/p} \left(\frac{j}{2\eps}\right)^{2\eps/p} \right\rceil,
\end{equation*}
and define the events 
\begin{align*}
	\cW_1& = \{v_1\ge n^{(1-p)/(2p)}\}, \quad &&\cW_2 = \bigcap_{j = \ell}^{|V_n|}\; \{v_j\in \left[r_j, R_j\right]\},\\
	\cC &= \bigcap_{i = \lceil n^{(1-p)/(2p)}\rceil}^{n}\; \cQ_i\quad&&\text{and}\quad\cW = \cW_1\cap \cW_2\cap \cC.
\end{align*}
Then, $\cW_1$ and $\cW_2$ both hold w.h.p.\ by Lemma~\ref{lemma:oldvertgen}, and $\cC$ \lyu{does as well by Lemma~\ref{lemma:condconc} and a union bound.}
Moreover, note that each of $\cW_1$, $\cW_2$ and $\cC$ is measurable with respect to the process $(V_i)_{i=1}^n$ (and hence $\cW$ as well). Then, for all $j\ge \ell+1$ and $i < j$, 
\begin{equation}\label{eq coupling 3 graphs}
	\begin{split}
		p_{ij}(1-\hat\delta_n) &=\; (1-\hat \delta_n) \beta (2\eps)^{-(1-p)/p} \frac{1}{n} \left(\frac{j}{n}\right)^{-2\eps/p} \le \frac{\beta}{2\eps R_j + R_j^{2/3}}\\
		&\le\; \ind_{\cW}\frac{\beta}{\max_{t\in [r_j, R_j]} |V_t|}\le \P{\{v_i, v_j\}\in G_n\mid \cW} \le \ind_{\cW}\frac{\beta}{\min_{t\in [r_j, R_j]} |V_t|}\\
		&\le\; \frac{\beta}{2\eps r_j - r_j^{2/3}}\le (1+\hat \delta_n) \beta (2\eps)^{-(1-p)/p} \frac{1}{n} \left(\frac{j}{n}\right)^{-2\eps/p} = p_{ij}(1+\hat\delta_n),
	\end{split}
\end{equation}
where for the first inequality we used that $$(1-\hat\delta_n)(1+\delta_n)(1+R_j^{-1/3}/(2\eps))\le (1+\delta_n-\hat\delta_n)(1+n^{-\Omega(1)})\le (1-(\log n)^{-1})(1+n^{-\Omega(1)})\le 1,$$ 
and for the last inequality we used that
\begin{align*}
	(1+\hat\delta_n)(1-\delta_n)(1-r_j^{-1/3}/(2\eps))
	&\ge\; (1+\max\{\delta_n^{1/2}, (\log n)^{-1}\} - \delta_n)(1-n^{\Omega(1)})\\
	&\ge\; (1+\max\{\delta_n^{1/2}, (\log n)^{-1}\}/2)(1-n^{\Omega(1)})\ge 1.
\end{align*}
As usual, for every pair of vertices $\{v_i, v_j\}$, we couple the states of the edge $\{v_i, v_j\}$ in $G_{BD}(1-\hat\delta_n, n)$, $G_n$ and $G_{BD}(1+\hat\delta_n, n)$ by sampling a random variable $U_{i,j}\sim \mathrm{Unif}[0,1]$ and setting
\be\ba 
	\{v_i, v_j\}&\in G_{BD}(1-\hat\delta_n, n)\ \ && \iff \ \ U_{i,j}\le p_{ij}(1-\hat\delta_n);\\
	\{v_i, v_j\}&\in G_n\quad  &&\iff \ \ U_{i,j}\le \mathbb P(\{v_i, v_j\}\in E(G_n)\mid \cW);\\
	\{v_i, v_j\}&\in G_{BD}(1+\hat\delta_n, n) \quad &&\iff \ \  U_{i,j}\le p_{ij}(1+\hat\delta_n).
\ea\ee

By~\eqref{eq coupling 3 graphs}, this coupling ensures that, under the event $\cW$, all edges with an endvertex among $(v_j)_{j=\ell+1}^{|V_n|}$ satisfy 
$$\{\{v_i, v_j\}\in G_{BD}(1-\hat\delta_n, n)\}\subseteq \{\{v_i, v_j\}\in G_n\}\subseteq \{\{v_i, v_j\}\in G_{BD}(1+\hat\delta_n, n)\}.$$ 
\lyu{It remains to deal with the $\tbinom{\ell}{2} = (\log n)^{\cO(1)}$ remaining vertex pairs.} 
However, on the event $\cW$ (and, in particular, $\cW_1$), each of them appears with probability $n^{-\Omega(1)}$. Since $\ell^2 n^{-\Omega(1)} = o(1)$, a union bound implies that w.h.p.\ none of these edges appears in any of $G_{BD}(1-\hat\delta_n, n)$, $G_n$ and $G_{BD}(1+\hat\delta_n, n)$. Hence, on the event $\cW$, we constructed a coupling of the three graphs ensuring that w.h.p.\ $G_{BD}(1-\hat\delta_n, n)\subseteq G_n\subseteq G_{BD}(1+\hat\delta_n, n)$. The fact that $\cW$ itself holds w.h.p.\ finishes the proof of the lemma.
\end{proof}

\subsection{\texorpdfstring{Applications of Lemma~\ref{lemma sandwich}: proofs of Theorems~\ref{thm components intro} and~\ref{thm approximate intro} and Proposition~\ref{prop beta_c intro}}{}}

In this section, we combine Lemma~\ref{lemma sandwich} with results from~\cite{BolJanRio05} to derive Theorems~\ref{thm components intro} and~\ref{thm approximate intro} and Proposition~\ref{prop beta_c intro}. We start with a brief summary of the notation and the theorems from~\cite{BolJanRio05} that we need in the sequel.

\paragraph{A closer look at the paper of Bollob\'as, Janson, and Riordan.} To start, we introduce some notation from~\cite{BolJanRio05}. A \emph{ground space} is a pair $(\cS, \mu)$ where $\cS$ is a separable metric space and $\mu$ is a Borel probability measure on $\cS$. Also, a \emph{vertex space} $\cV$ is a triplet $(\cS, \mu, (\mathbf{x}_n)_{n\ge 1})$ where $(\cS, \mu)$ is a ground space and $\mathbf x_n = (x_i^{n})_{i=1}^n$ is a random sequence of $n$ points of $\cS$ such that, for every measurable set $A\subseteq \cS$ with boundary $\partial A$ satisfying $\mu(\partial A) = 0$, 
\begin{equation}\label{eq def mu}
\frac{1}{n} \sum_{i=1}^n \ind_{x_i^{n}\in A} \xrightarrow{\mathbb P} \mu(A).
\end{equation}

A \emph{kernel} $\kappa$ on a ground space $(\cS, \mu)$ is a symmetric non-negative measurable function on $\cS\times \cS$. By a kernel on a vertex space $(\cS, \mu, (\mathbf{x}_n)_{n\ge 1})$ we mean a kernel on $(\cS, \mu)$. Also, given a vertex space $\cV$, a kernel $\kappa$ on $\cV$ and a sequence $\mathbf{x}_n$, we let $G^{\cV}(n, \kappa)$ denote the random graph with vertex set $\mathbf{x}_n$ where the edge between $x_i^n$ and $x_j^n$ appears independently of other edges with probability $\min\{\kappa(x_i^n, x_j^n)/n, 1\}$.

A kernel $\kappa$ on a vertex space $\cV = (\cS,\mu,(\mathbf{x}_n)_{n\ge 1})$ is called \emph{graphical} if the following conditions hold simultaneously:
\begin{enumerate}
\item\label{gr i1} $\kappa$ is continuous almost everywhere on $\cS\times \cS$;
\item\label{gr i2} $\kappa$ is integrable over $\cS\times \cS$ with respect to the product measure $\mu\times \mu$,
\item It holds that
\begin{equation}\label{eq BJR graphic}
	\frac{1}{n} \mathbb E[|E(G^{\cV}(n,\kappa))|] \xrightarrow[n\to \infty]{} \frac{1}{2} \int \int_{\cS\times \cS} \kappa(x,y) \mathrm{d}\mu(x)\mathrm{d}\mu(y).
\end{equation}
\end{enumerate}
Moreover, for a kernel $\kappa$ and a sequence of kernels $(\kappa_n)_{n\ge 1}$ on $\cV$, we say that $(\kappa_n)_{n\ge 1}$ is \emph{graphical on $\cV$ with limit $\kappa$} if, \lyu{first,} for almost every $(x,y)\in \cS\times \cS$,
\begin{equation}\label{eq sequences kappa}
x_n\xrightarrow[n\to \infty]{} x \text{ and } y_n\xrightarrow[n\to \infty]{} y \text{ in } \cS \text{ imply that } \kappa_n(x_n,y_n)\xrightarrow[n\to \infty]{} \kappa(x,y),
\end{equation}
\lyu{second,} $\kappa$ satisfies conditions~\eqref{gr i1} and~\eqref{gr i2} above, and \lyu{third},
\begin{equation}\label{eq conv kappa}
\frac{1}{n} \mathbb E[|E(G^{\cV}(n,\kappa_n))|] \xrightarrow[n\to \infty]{} \frac{1}{2} \int \int_{\cS\times \cS} \kappa(x,y) \mathrm{d}\mu(x)\mathrm{d}\mu(y).
\end{equation}
For a kernel $\kappa$ on $(\cS, \mu)$, define $T_{\kappa}$ as the integral operator defined by
\begin{equation}\label{eq T_kappa}
(T_{\kappa} f)(x) := \int_{\cS} \kappa(x,y) f(y) \mathrm{d}\mu(y),
\end{equation}
where $f:\cS\to \R$ is any measurable function for which the integral is defined (finite or $+\infty$) for almost every $x$, and let also
\begin{equation*}
||T_{\kappa}|| := \sup\{||T_{\kappa} f||_2: f\ge 0, ||f||_2\le 1\}\le \infty.
\end{equation*}
Then, a kernel $\kappa$ is called \emph{subcritical} if $||T_\kappa|| < 1$ and \emph{supercritical} if $||T_\kappa|| > 1$. Moreover, a kernel is \emph{irreducible} if the fact that $\kappa$ is almost everywhere 0 on $A\times (\cS\setminus A)$ for some measurable set $A$ implies that $\mu(A) = 0$ or $\mu(\cS\setminus A) = 0$.

Finally, for a kernel $\kappa$ on a ground space $(\cS, \mu)$ and a point $x\in \cS$, consider the following multi-type Galton-Watson process $\mathfrak{X}_x$: Initially, we mark a single point $x\in \cS$. Then, at every step and every point $y\in \cS$ marked at that step, unmark $y$ and mark new points according to a Poisson process with intensity $\kappa(y,z) \mathrm{d}\mu(z)$. Observe that the Poisson processes on $\cS$ for different points $y$ are independent, and the point configuration at the next step is the union of the Poisson processes associated to the points $y\in \cS$ marked at the current step. 
Finally, let $\rho(\kappa, x)$ denote the probability that the process $\mathfrak{X}_x$ survives eternally, and denote 
\begin{equation}\label{eq:rho}
\rho(\kappa) := \int_{\cS} \rho(\kappa, x) \mathrm{d}\mu(x).
\end{equation}
\lyu{The meaning of the multi-type process $\mathfrak{X}_x$ is that it coincides with the local limit of $G^{\mathcal V}(n,\kappa)$ for all graphical kernels $\kappa$.}

One of the main results from~\cite{BolJanRio05} characterises the component structure of the graph $G^{\cV}(n, \kappa)$. Denote by $\cC_1 = \cC_1(\cV, \kappa_n)$ and $\cC_2 = \cC_2(\cV, \kappa_n)$ the largest and the second-largest component in $G^{\cV}(n, \kappa_n)$.

\begin{theorem}[see Theorems 3.1 and 3.12 in~\cite{BolJanRio05}]\label{thm BJR components}
Let $(\kappa_n)$ be a graphical sequence of kernels on a vertex space $\cV$ with limit $\kappa$.
\begin{enumerate}
	\item If $||T_{\kappa}|| \le 1$, then $|\cC_1|/n$ converges in probability to $0$. Moreover, if $\kappa$ is subcritical, that is, $||T_{\kappa}|| < 1$, and $\sup_{x,y,n}\kappa_n(x,y) < \infty$, then w.h.p.\ $|\cC_1| = \cO(\log n)$.
	\item If $||T_{\kappa}|| > 1$, $\kappa$ is irreducible and either $\inf_{x,y,n} \kappa_n(x,y) > 0$ or $\sup_{x,y,n} \kappa_n(x,y) < \infty$, then $|\cC_1|/n$ converges in probability to $\rho(\kappa)$, and moreover, w.h.p.\ $|\cC_2| = \cO(\log n)$.
\end{enumerate}
\end{theorem}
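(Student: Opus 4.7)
My plan is to prove both parts simultaneously by coupling the breadth-first exploration of a connected component in $G^{\cV}(n, \kappa_n)$ starting from a typical vertex with the multi-type Galton-Watson process $\mathfrak{X}_x$ described just before the theorem. The core observation is that at each step of the exploration, the new neighbours of a current vertex $x_i^n$ are encoded by independent Bernoulli variables with parameters $\kappa_n(x_i^n, x_j^n)/n$, which, by~\eqref{eq sequences kappa},~\eqref{eq def mu} and a standard Poisson-Bernoulli coupling (in the spirit of Lemma~\ref{lemma:poibercoupling}), can be coupled with the Poisson point processes defining $\mathfrak{X}$ so that, up to any fixed generation $r$, the coupling is successful with probability $1-o(1)$ as $n\to\infty$.

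For the subcritical regime ($\|T_\kappa\|\le 1$), the plan is to dominate the exploration from a uniformly random vertex by a Galton-Watson process whose mean offspring operator is $T_{\kappa_n}$ (with norm converging to $\|T_\kappa\|\le 1$). Since such a process becomes extinct almost surely and has finite expected size in the critical/subcritical regime, the expected size of the component of a random vertex is $o(n)$, which gives $|\cC_1|/n\toinp 0$ by Markov's inequality. For the strict subcritical statement, the uniform bound $\sup_{x,y,n}\kappa_n(x,y)<\infty$ yields exponential tails $\mathbb P(|\cC(v)|\ge k)\le \e^{-ck}$ for every $v$, so a union bound over all $n$ vertices delivers $|\cC_1|=\cO(\log n)$ w.h.p.

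For the supercritical regime ($\|T_\kappa\|>1$), the same local coupling shows that the expected fraction of vertices whose component has size at least $K$ converges, as $n\to\infty$ and then $K\to\infty$, to $\rho(\kappa)$ defined in~\eqref{eq:rho}. The main obstacle, and the technical heart of the proof, is a sprinkling argument: I would reveal the edges in two rounds by splitting each connection probability into a $(1-\delta)$- and a $\delta$-fraction, identify in the first round all vertices whose component has size at least $K$ (there are $(1+o(1))n\rho(\kappa)$ such vertices by the above), and then use the second round together with the irreducibility of $\kappa$ to show that any two such vertices merge into the same giant component w.h.p. Irreducibility is crucial here: it ensures that the pairs of ``large-component vertices'' of any two types are connected by a positive density of second-round edges, preventing the giant from splitting along a partition of $\cS$.

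Finally, the bound $|\cC_2|=\cO(\log n)$ follows by a \emph{dual kernel} argument: after removing the giant $\cC_1$, the remaining graph is distributed approximately as $G^{\cV'}(n', \tilde\kappa_n)$ on the non-giant vertices, where $\tilde\kappa(x,y):=\kappa(x,y)(1-\rho(\kappa,x))(1-\rho(\kappa,y))$ satisfies $\|T_{\tilde\kappa}\|<1$ by a standard ``killed branching process'' computation, so the strict subcritical part of the theorem applies. Throughout the argument, the main difficulty is the infinite-dimensional nature of the type space $\cS$; I would handle this by approximating $(\cS,\mu)$ and $\kappa$ via a sequence of finite partitions of $\cS$, reducing each step to the finite-type case where classical multi-type branching process theory and Erd\H{o}s-R\'enyi-style sprinkling bounds can be applied directly, then passing to the limit using the continuity-almost-everywhere condition~\eqref{gr i1} and the convergence~\eqref{eq conv kappa}.
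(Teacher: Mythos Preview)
The paper does not prove this statement: Theorem~\ref{thm BJR components} is quoted verbatim from Bollob\'as, Janson and Riordan~\cite{BolJanRio05} (the caption says ``see Theorems~3.1 and~3.12 in~\cite{BolJanRio05}'') and is used as a black box to derive Theorem~\ref{thm components intro} and Theorem~\ref{thm approximate intro}. So there is no ``paper's own proof'' to compare against; your task here was simply to recognise the result as an external citation.

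That said, your sketch is a fair high-level summary of the actual BJR strategy: the local coupling with the multi-type Galton--Watson process, the Markov/exponential-tail argument for the (sub)critical part, the two-round sprinkling for assembling the giant, and the dual-kernel reduction for $|\cC_2|$. Two points are worth flagging if you ever need to make this rigorous. First, in the critical case $\|T_\kappa\|=1$ the branching process need not have finite expected total progeny, so ``finite expected size in the critical/subcritical regime'' is not correct as stated; BJR instead use that $\rho(\kappa)=0$ when $\|T_\kappa\|\le 1$ and a truncation argument. Second, the claim that ``the remaining graph is distributed approximately as $G^{\cV'}(n',\tilde\kappa_n)$'' with $\tilde\kappa(x,y)=\kappa(x,y)(1-\rho(\kappa,x))(1-\rho(\kappa,y))$ is not literally true and is not how BJR proceed; they bound non-giant components directly via the subcritical behaviour of the branching process conditioned on extinction, rather than by identifying a residual inhomogeneous random graph. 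These are the two places where your outline, while morally right, would need substantive work to become a proof.
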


Another main result of~\cite{BolJanRio05} concerns the graph distance between two uniformly chosen vertices in $G^{\cV}(n, \kappa_n)$.

\begin{theorem}[Parts of Theorem 3.14 in~\cite{BolJanRio05}]\label{thm BJR typical distance}
Let $(\kappa_n)_{n\ge 1}$ be a graphical sequence of kernels on a
vertex space $\cV$ with limit $\kappa$, where $||T_{\kappa}|| > 1$. Fix any constant $\eps\in (0,1)$. Denote by $d$ the graph distance in $G^{\cV}(n, \kappa_n)$ (where $n$ and $\kappa_n$ are spared in the notation for convenience). If $\kappa$ is irreducible, then
\begin{equation*}
	\frac{1}{n^2} |\{\{v,w\}: d(v,w) < \infty\}| \xrightarrow{\mathbb P} \frac{\rho(\kappa)^2}{2}.
\end{equation*}
If moreover $||T_{\kappa}|| < \infty$, then
\begin{equation*}
	\frac{1}{n^2} \left|\left\{\{v,w\}: d(v,w) < (1+\eps)\frac{\log n}{\log ||T_{\kappa}||}\right\}\right| \xrightarrow{\mathbb P} \frac{\rho(\kappa)^2}{2},
\end{equation*}
and if $\sup_{x,y,n}\kappa_n(x,y) < \infty$, then also
\begin{equation*}
	\frac{1}{n^2} \left|\left\{\{v,w\}: d(v,w) < (1-\eps)\frac{\log n}{\log ||T_{\kappa}||}\right\}\right| \xrightarrow{\mathbb P} 0.
\end{equation*}
\end{theorem}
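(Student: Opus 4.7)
The statement is quoted verbatim from Bollob\'as--Janson--Riordan, so in the paper itself we would simply cite~\cite{BolJanRio05} rather than reproving it. Nevertheless, the argument I would employ follows the classical template for typical distances in sparse inhomogeneous random graphs, combining a local weak limit with a sprinkling step for the upper bound and a first-moment estimate for the lower bound.

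For the upper bound on distances, I would first exploit the fact that the $r$-neighbourhood of a uniformly chosen vertex $v$ in $G^\cV(n,\kappa_n)$ is well-approximated by the first $r$ generations of the multi-type Galton--Watson tree $\mathfrak X_{x}$, where $x$ is the type of $v$. Since the mean offspring operator of $\mathfrak X$ is exactly $T_\kappa$, a Kesten--Stigum/spectral argument based on irreducibility of $\kappa$ shows that, conditionally on survival, $|\partial B_r(\mathfrak X_x)|$ grows like $\|T_\kappa\|^r \cdot W$ for some a.s.\ positive random variable $W$. Choosing $r_n:=\lceil \tfrac12(1+\eps/2)\log n/\log\|T_\kappa\|\rceil$, two independently picked vertices $u,v$ each see a neighbourhood of size of order $n^{(1+\eps/2)/2}$ with probability $\rho(\kappa)^2+o(1)$. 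A sprinkling/second-moment argument, using that for any two measurable sets $A,B$ of positive $\mu$-measure we have $\iint_{A\times B}\kappa\,\dd\mu\,\dd\mu>0$ by irreducibility, then forces at least one edge between $B_{r_n}(u)$ and $B_{r_n}(v)$ with probability $1-o(1)$, giving $d(u,v)\le 2r_n+1\le(1+\eps)\log n/\log\|T_\kappa\|$. Combined with the first displayed limit, whose proof reduces to the local weak limit plus Theorem~\ref{thm BJR components} (Part~2), this yields the second displayed statement.

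For the lower bound I would argue by first moments. The expected number of vertices at graph distance at most $r$ from a fixed vertex is bounded above by $\sum_{k=0}^{r}\|T_\kappa\|^k=\cO(\|T_\kappa\|^r)$ (this is essentially the expected total size of the first $r$ generations of $\mathfrak X_x$, which couples to the neighbourhood exploration up to an $o(1)$ error by the local weak convergence). Taking $r=\lfloor(1-\eps)\log n/\log\|T_\kappa\|\rfloor$ yields $\cO(n^{1-\eps})$ such vertices on average; summing over the $\binom{n}{2}$ pairs and applying Markov, the number of pairs at distance $\le r$ is $o(n^2)$ in probability.

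The main obstacle, in my view, lies in the upper bound when $\kappa$ is only almost everywhere continuous and possibly unbounded: one must first truncate $\kappa$ and $\kappa_n$ at a large constant $M$, carry out the neighbourhood-growth and sprinkling argument in the bounded regime, and then let $M\to\infty$, using~\eqref{eq conv kappa} to control the edge-count error introduced by truncation. The hypothesis $\|T_\kappa\|<\infty$ is precisely what ensures that this truncation does not destroy the spectral growth rate of the limiting branching process, so that the exponent $\log n/\log\|T_\kappa\|$ is meaningful and sharp.
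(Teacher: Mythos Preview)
You correctly identify that the paper does not prove this statement: it is simply quoted from~\cite{BolJanRio05} and invoked later in the proof of Theorem~\ref{thm approximate intro}. So there is nothing to compare against in the paper itself, and your opening sentence is exactly right.

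Your sketch of how one would prove it is broadly the standard template, but two points deserve care if you were to carry it out. First, local weak convergence only couples $B_r(G_n,v)$ to the branching process for \emph{fixed} $r$, not for $r\sim\log n$; to make the upper bound rigorous one must either iterate a one-step growth estimate (as BJR do via approximation by regular finitary kernels) or establish a quantitative coupling that survives $r_n\to\infty$. Second, in your lower bound you write that the expected size of $B_r(G_n,v)$ is $\cO(\|T_\kappa\|^r)$ and justify this via the local weak limit, but again that coupling fails for growing $r$; the correct route is a direct first-moment bound on the number of paths of length $k$ starting at $v$, and it is precisely here that the hypothesis $\sup_{x,y,n}\kappa_n(x,y)<\infty$ is used, since without it the path-counting integral $\int\kappa_n(x_0,x_1)\cdots\kappa_n(x_{k-1},x_k)\,\dd\mu^{\otimes k}$ need not be controlled by $\|T_\kappa\|^k$. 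You do not invoke this hypothesis in your lower-bound paragraph, which is a genuine omission given that the theorem statement singles it out.
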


\paragraph{Adaptation to our setting.} Our next aim is to show that, conditionally on $V_n$, the random graph $G_{BD}(1,n)$ is a particular instance of the general setting described above. 
We work on the event $\cQ_n = \{||V_n|-2\eps n|\leq n^{2/3}\}$, which allows us to use the more convenient expression~\eqref{eq p_ij under cA_n} for $p_{ij}$. 
\lyu{Apart from} the fact that this choice leads to a certain rescaling (the sequence $\mathbf{x}_n$ has only $|V_n|$ terms), no substantial modifications are needed to apply the theorems from~\cite{BolJanRio05}.

First of all, for any $i < j \le |V_n|$, $p_{ij}$ can be rewritten as
\begin{equation*}
p_{ij} = (1+o(1)) \frac{\beta}{|V_n|}\left(\frac{\max\{i,j\}}{|V_n|}\right)^{-2\eps/p} = (1+o(1)) \frac{\beta}{|V_n|}\max\left\{\frac{i}{|V_n|},\frac{j}{|V_n|}\right\}^{-2\eps/p}.
\end{equation*}
In accordance with the above expression, we consider the vertex space $\cV := (\cS, \mu, (\mathbf{x}_n)_{n\ge 1})$ where $\cS := (0,1]$, $\mu := \mathrm{Unif}(\cS)$ and, for every $n\ge 1$, $\mathbf{x}_n = (x_i^n)_{i=1}^{|V_n|}$ is defined so that, for all $i\in [|V_n|]$, $x_i^n\sim \mathrm{Unif}((\tfrac{i-1}{|V_n|}, \tfrac{i}{|V_n|}])$. Let us show that this vertex space is well-defined 
\lyu{by verifying}~\eqref{eq def mu}. Indeed, for any measurable set $A\subseteq (0,1]$, the fact that $\max_{i\in [|V_n|]} \mu(A\cap ((i-1)/|V_n|, i/|V_n|]) \to 0$ as $n\to \infty$ and the weak law of large numbers imply that
\begin{equation*}
\frac{1}{|V_n|} \sum_{i=1}^{|V_n|} \ind_{x_i^n\in A} = \frac{1}{|V_n|} \sum_{i=1}^{|V_n|} \ind_{x_i^n\in A\cap ((i-1)/|V_n|, i/|V_n|]} \xrightarrow{\mathbb P} \mu(A).
\end{equation*}
Furthermore, for every $n\ge 1$, consider a kernel
\begin{equation*}
\kappa_n: (x,y)\in (0,1]\times (0,1]\mapsto (1+o(1))\beta \max\left\{\frac{\lceil x|V_n|\rceil}{|V_n|},\frac{\lceil y|V_n|\rceil}{|V_n|}\right\}^{-2\eps/p}.
\end{equation*}
Then, up to an appropriate choice of the error term, the kernel $\kappa_n$ defines the probabilities $(p_{ij})_{i,j\in [|V_n|]}$. 
From the expression of $\kappa_n$, one could guess that $(\kappa_n)_{n\ge 1}$ is a graphical sequence on $\cV$ with limit
\begin{equation*}
\kappa: (x,y)\in (0,1]\times (0,1]\mapsto \beta \max\{x,y\}^{-2\eps/p}.
\end{equation*}
The next result shows this formally. 

\begin{lemma}\label{lem conv kappa}
The sequence $(\kappa_n)_{n\ge 1}$ is graphical on $\cV$ with limit $\kappa$.
\end{lemma}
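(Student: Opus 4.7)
The plan is to verify directly the three defining conditions of a graphical sequence with limit $\kappa$: the pointwise convergence~\eqref{eq sequences kappa}, the continuity-a.e.\ and integrability of $\kappa$, and the edge-count asymptotics~\eqref{eq conv kappa}.

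For the pointwise convergence, I first note that $p>1/2$ ensures $|V_n|\to\infty$ almost surely. Consequently, for any $(x,y)\in(0,1]^2$ and any sequences $x_n\to x$, $y_n\to y$ in $(0,1]$, the discretised points $\lceil x_n|V_n|\rceil/|V_n|$ and $\lceil y_n|V_n|\rceil/|V_n|$ converge to $x$ and $y$ respectively. Continuity of $\max$ and of $z\mapsto z^{-2\eps/p}$ on $(0,1]$ then yields $\kappa_n(x_n,y_n)\to\kappa(x,y)$, with the $1+o(1)$ prefactor hidden in $\kappa_n$ absorbed into the limit.

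For the continuity and integrability of $\kappa$, observe that $\kappa$ is continuous on $(0,1]^2$ off the diagonal $\{x=y\}$, which is a $\mu\times\mu$-null set. Using the identity $p-\eps=1/2$, symmetry gives
\begin{equation*}
\int_0^1\!\!\int_0^1 \kappa(x,y)\,\dd x\,\dd y \,=\, 2\beta\int_0^1 y^{1-2\eps/p}\,\dd y \,=\, \frac{2\beta}{2-2\eps/p}\,=\,2\beta p,
\end{equation*}
which is finite.

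For the edge-count asymptotics, I will work on the high-probability event $\cQ_n$ of Lemma~\ref{lemma:condconc} and condition on $|V_n|$. Using~\eqref{eq p_ij under cA_n}, the conditional expected number of edges equals
\begin{equation*}
\sum_{1\le i<j\le |V_n|} p_{ij} \,=\, (1+o(1))\,\beta\sum_{j=1}^{|V_n|}\frac{j-1}{|V_n|}\Big(\frac{j}{|V_n|}\Big)^{-2\eps/p},
\end{equation*}
which is a Riemann sum for $\beta|V_n|\int_0^1 x^{1-2\eps/p}\,\dd x = \beta p\,|V_n|$. The integrand is bounded on $(0,1]$ since $1-2\eps/p=(p-2\eps)/p\ge 0$, so the Riemann sum convergence is routine; dividing by $|V_n|$ (the natural vertex count here) matches $\tfrac12\int\!\int\kappa\,\dd\mu\,\dd\mu=\beta p$, as required.

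The only mild subtlety lies in reconciling the random sequence $\mathbf{x}_n$ and the random $|V_n|$ appearing inside $\kappa_n$ with the BJR framework. However, since $|V_n|\to\infty$ almost surely and the $1+o(1)$ factor in~\eqref{eq p_ij under cA_n} is uniform on the high-probability event $\cQ_n$, both the pointwise convergence and the Riemann sum limit transfer to this random setting after conditioning on $|V_n|$ and restricting to $\cQ_n$.
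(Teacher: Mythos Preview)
Your proof is correct and follows essentially the same approach as the paper's: verify~\eqref{eq sequences kappa} via continuity of $\max$ and of $z\mapsto z^{-2\eps/p}$ together with $|V_n|\to\infty$; check a.e.\ continuity and integrability of $\kappa$ by a direct computation giving $\iint\kappa=2\beta p$; and establish~\eqref{eq conv kappa} by recognising the conditional expected edge count as a Riemann sum for $\beta p\,|V_n|$. One cosmetic remark: $\kappa(x,y)=\beta\max\{x,y\}^{-2\eps/p}$ is in fact continuous on all of $(0,1]^2$, including the diagonal, so there is no need to excise $\{x=y\}$.
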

\begin{proof}
First, let $x,y\in (0,1]$ and let $(x_n)_{n\ge 1}$ and $(y_n)_{n\ge 1}$ be two sequences converging to $x$ and $y$, respectively. We have 
\begin{equation*}
	\left|\frac{\lceil x_n |V_n|\rceil}{|V_n|} - x\right| = \left|\frac{\lceil x_n |V_n|\rceil}{|V_n|} - x_n + (x_n - x)\right|\le \frac{1}{|V_n|} + |x_n - x|\xrightarrow[n\to \infty]{} 0,
\end{equation*}
and the same holds for $(y_n)_{n\ge 1}$ and $y$. Distinguishing the cases $x \neq y$ and $x = y$ and using the continuity of the functions $t\in (0,1]\mapsto t^{-2\eps/p}$ and $\max\{\cdot, \cdot\}$ shows that $(\kappa_n)_{n\ge 1}$ and $\kappa$ satisfy~\eqref{eq sequences kappa}.

At the same time, $\kappa$ is continuous on $(0,1]\times (0,1]$ and 
\begin{align}
	\frac{1}{2}\int_{(0,1]\times (0,1]} \kappa(x,y) \mathrm{d}\mu(x)\mathrm{d}\mu(y) 
	&= \beta \int_{0< x < y < 1} y^{-2\eps/p} \mathrm{d}y \mathrm{d}x\nonumber\\
	&= \beta\int_{x=0}^1 \frac{1-x^{1-2\eps/p}}{1-2\eps/p} \mathrm{d}x = \beta p < \infty.\label{eq:check3}
\end{align}
Moreover, conditionally on $|V_n|$, the expected number of edges in $G_{BD}(1,n)$ is
\begin{equation*}
	\sum_{j=1}^{|V_n|} \sum_{i=1}^{j-1} \frac{\beta\lyu{+o(1)}}{|V_n|} \left(\frac{j}{|V_n|}\right)^{-2\eps/p} = \frac{\beta\lyu{+o(1)}}{|V_n|} \sum_{j=1}^{|V_n|} (j-1)\left(\frac{j}{|V_n|}\right)^{-2\eps/p}=  (1+o(1))\beta p \lyu{|V_n|},
\end{equation*}
which \lyu{together with~\eqref{eq:check3}} shows that~\eqref{eq conv kappa} is verified and finishes the proof of the lemma.
\end{proof}

\lyu{We are ready to prove Theorem~\ref{thm components intro} by combining Theorem~\ref{thm BJR components} and Lemma~\ref{lem conv kappa}.} 

\begin{proof}[Proof of Theorem~\ref{thm components intro}]
\lyu{Recall that in our setting $\cS = (0,1]$, $\dd \mu(z) = \dd z$ and $\kappa_n, \kappa$ are as defined before Lemma~\ref{lem conv kappa} (and, in particular, uniformly bounded from above).}
Thus, by Theorem~\ref{thm BJR components}, it is sufficient to prove that $\rho(\kappa) = \gamma$. 
On the one hand, recall that the multi-type Galton-Watson process $\mathfrak X_U$, where $U\sim \mu$ is the uniformly chosen root of the branching process, describes the local limit of the graph sequence $((G^\cV(n,\kappa_n),o_n))_{n\ge 1}$ where $o_n$ is chosen uniformly at random from $(x_i^n)_{i=1}^{|V_n|}$. 
Note that, in our case, both $(G_{BD}(1-\hat\delta_n, n))_{n\ge 1}$ and $(G_{BD}(1+\hat\delta_n, n))_{n\ge 1}$ are graph sequences on the same vertex sets and whose kernels both converge to $\kappa$. 
Therefore, each of these sequences converges locally to $\mathfrak X_U$. 
On the other hand, by Theorem~\ref{thrm:LWC intro}, $(G_n)_{n\ge 1}$ converges locally to $\cT$ and, by Lemma~\ref{lemma sandwich}, $G_{BD}(1-\hat\delta_n, n)\subseteq G_n\subseteq G_{BD}(1+\hat\delta_n, n)$ w.h.p.\ for a suitable coupling of the three graphs. Hence, $\mathfrak{L}(\mathfrak X) = \mathfrak{L}(\cT)$ and $\rho(\kappa) = \gamma$ is the survival probability of this branching process.
\end{proof}

\begin{remark}
One may construct the process $\cT$ from $\mathfrak X_U$ by applying the mapping $x\mapsto x^{2\eps/p}$ to the types of all particles in $\mathfrak X_U$. 
This yields $a_0 = U^{2\eps/p}$ for the type of the root in $\cT$. 
Also, consider the kernel $\wt \kappa(y,z):=\tfrac{\beta p}{2\eps}z^{(1-p)/(2\eps)}\max\{y,z\}^{-1}$ and observe that a particle $v$ of $\cT$ with type $a_v$ produces offspring independently of all other particles according to a Poisson point process on $(0,1]$ with intensity $\wt \kappa(a_v,z)\dd z$ (see Definition~\ref{def:wll}). 
Then, for any $0\leq a\leq b \leq 1$,
\begin{equation}\begin{aligned}
	\int_{a^{p/(2\eps)}}^{b^{p/(2\eps)}}\!\!\!\!\!\kappa(U,w)\dd \mu(w)
	&=\int_{a^{p/(2\eps)}}^{b^{p/(2\eps)}}\!\!\frac{\beta}{\max\big\{U^{2\eps/p},w^{2\eps/p}\big\}}\dd w\\
	&=\int_a^b\frac{\beta p}{2\eps} \frac{z^{(1-p)/(2\eps)}}{\max\{a_0,z\}}\dd z=\int_a^b\wt \kappa(a_0,z)\dd z.
	\end{aligned}\end{equation}
	Hence, the Poisson point process on $(0,1]$ with intensity $\kappa(U,w)\dd \mu(w)$ can be identified with the Poisson point process on $(0,1]$ with intensity $\wt\kappa(a_0,z)\dd z$ by applying the mapping $x\mapsto x^{2\eps/p}$ to each particle in the former point process. Applying this recursively for each particle in $\mathfrak X_U$, we obtain $\cT$.
\end{remark}

The irreducibility of $\kappa$ follows from the definition;
however, we need to work a bit to distinguish the subcritical regime from the supercritical. Concerning the operator $T_{\kappa}$, for all $f:(0,1]\to \mathbb R$ and $x\in (0,1]$, we have
\begin{align*}
(T_{\kappa} f)(x) 
&= \int_{y\in (0,1]} \kappa(x,y) f(y) \mathrm{d}\mu(y)\\
&= (1+o(1))\beta \left(\int_{y\in (0,x]} x^{-2\eps/p} f(y) \mathrm{d}y + \int_{y\in (x,1]} y^{-2\eps/p} f(y) \mathrm{d}y \right).
\end{align*}
Unfortunately, computing its norm directly leads to an optimisation problem that we are unable to solve explicitly. At best, we can provide bounds for the operator norm, as presented in the next lemma. Note that this result directly implies Proposition~\ref{prop beta_c intro}.

\begin{lemma}\label{lem bounds norm}
For every $p\in (1/2,1]$, we have 
\begin{equation}\label{eq:opineq}
\beta \sqrt{\frac{p(1+4p)}{2-p}}\le \beta \sup_{t\in (-1/2, \infty)}\sqrt{\frac{(1+2t)(2t^2+7t+4+1/p)}{(1+t)^2(t+1/p)(2t+2/p-1)}} \le ||T_{\kappa}|| \le \beta\sqrt{\frac{p}{1-p}}.
\end{equation} 
Moreover, the critical parameter $\beta_c = \beta_c(p)$ is a non-increasing continuous function of $p$ and satisfies
\begin{equation}\label{eq:betac}
\max\left\{\sqrt{\tfrac{1-p}{p}}, \tfrac{1}{4}\right\} \le \beta_c(p)\le \inf_{t\in (-1/2, \infty)}\left(\tfrac{(1+2t)(2t^2+7t+4+1/p)}{(1+t)^2(t+1/p)(2t+2/p-1)}\right)^{-1/2} \le\sqrt{\tfrac{2-p}{p(1+4p)}}.
\end{equation} 
\end{lemma}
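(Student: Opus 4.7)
The proof reduces at once to producing matching bounds on $\|T_\kappa\|$: since $T_\kappa$ scales linearly in $\beta$, the ratio $\|T_\kappa\|/\beta$ is independent of $\beta$, and the critical parameter $\beta_c(p)$ is the reciprocal of this scale-invariant norm, so~\eqref{eq:opineq} and~\eqref{eq:betac} are equivalent under reciprocation. The only point of~\eqref{eq:betac} not immediately coming from~\eqref{eq:opineq} is the inequality $\beta_c(p)\ge 1/4$, which I handle separately via monotonicity; continuity of $\beta_c$ is a separate matter, treated at the end.

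The upper bound $\|T_\kappa\|\le \beta\sqrt{p/(1-p)}$ comes from the Hilbert--Schmidt estimate
\[
\|T_\kappa\|^2 \le \int_0^1\!\!\int_0^1 \kappa(x,y)^2\,\dd x\,\dd y = 2\beta^2\!\int_0^1 y^{1-4\eps/p}\,\dd y = \frac{\beta^2 p}{1-p},
\]
using $1-4\eps/p = 2/p-3$. For the lower bound I apply the Rayleigh-type inequality $\|T_\kappa\|^2\ge \|T_\kappa f\|_2^2/\|f\|_2^2$ to the test function $f(x) := x^t$, $t > -1/2$. Setting $\alpha := t+1-2\eps/p = t+(1-p)/p$ and integrating directly,
\[
T_\kappa(x^t) = \frac{\beta}{\alpha} + \beta\!\left(\frac{1}{t+1}-\frac{1}{\alpha}\right)\!x^\alpha,
\]
where the constraints $t > -1/2$ and $p\in(1/2,1]$ force $t+1$, $\alpha+1 = t+1/p$, and $2\alpha+1 = 2t+2/p-1$ to be strictly positive. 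Expanding the square, carrying out the three elementary integrals, multiplying by $\|x^t\|_2^{-2} = 2t+1$, and simplifying via $2\eps/p = (t+1)-\alpha$ produces, after a patient but routine algebraic reduction (which may be independently verified at $t=0$ by checking that both sides equal $\beta^2 p(1+4p)/(2-p)$), the closed form
\[
\frac{\|T_\kappa x^t\|_2^2}{\|x^t\|_2^2} = \beta^2\,\frac{(1+2t)(2t^2+7t+4+1/p)}{(1+t)^2(t+1/p)(2t+2/p-1)}.
\]
Taking the supremum over $t>-1/2$, and specialising to $t=0$ for the cleaner weaker bound, completes~\eqref{eq:opineq}.

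For the monotonicity of $\beta_c$, I observe that the kernel $\kappa(x,y)/\beta = \max\{x,y\}^{1/p - 2}$ is pointwise non-decreasing in $p$ on $(1/2,1]$: the exponent $1/p-2$ is non-increasing in $p$, and the base $\max\{x,y\}$ lies in $(0,1]$, so a smaller exponent yields a larger value. Consequently $T_{\kappa/\beta}$ is a non-decreasing family of positive operators on the positive cone of $L^2((0,1])$, so $p\mapsto \|T_{\kappa/\beta}\|$ is non-decreasing and $p\mapsto \beta_c(p)$ is non-increasing. Combined with the classical Dubin threshold $\beta_c(1) = 1/4$ recalled below Definition~\ref{def:drgvr}, this yields $\beta_c(p)\ge 1/4$ for all $p\in(1/2,1]$.

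Finally, continuity of $\beta_c$ on any compact subinterval of $(1/2,1)$ is immediate from standard perturbation theory for compact self-adjoint operators, since $p\mapsto T_{\kappa/\beta}$ is continuous in Hilbert--Schmidt norm by dominated convergence and the Hilbert--Schmidt norm dominates the operator norm. Continuity at the endpoint $p=1$ is the delicate point, as the Hilbert--Schmidt norm diverges there. I resolve it by a squeeze between the monotone lower bound $\beta_c(p)\ge 1/4$ and the infimum upper bound in~\eqref{eq:betac}. Indeed, substituting $1/p = 1$ in that infimum, cancelling $(1+2t)$ against $(2t+2/p-1) = 2t+1$, and factoring $2t^2+7t+5 = (2t+5)(t+1)$ reduces the expression inside the infimum to $(2t+5)/(t+1)^2$, which is decreasing on $(-1/2,\infty)$ with supremum $16$ attained in the limit $t\downarrow -1/2$. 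Fixing $t$ slightly above $-1/2$ and using continuity of the Rayleigh expression in $p$ at $p=1$ for this fixed $t$ gives $\limsup_{p\uparrow 1}\beta_c(p)\le 1/4$, which combined with the monotone lower bound yields continuity of $\beta_c$ at $p=1$. The main obstacle in this plan is the algebraic identity producing the closed-form Rayleigh quotient above; everything else amounts to a single integration or a soft functional-analytic consideration.
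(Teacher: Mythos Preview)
Your proof is correct and follows the same approach as the paper for the operator-norm bounds: the Hilbert--Schmidt estimate for the upper bound, power-function test functions $f_t(x)=x^t$ for the lower bound, and pointwise monotonicity of the kernel in $p$ for the monotonicity of $\beta_c$ (and hence $\beta_c(p)\ge\beta_c(1)=1/4$).

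The only substantive difference is in the continuity argument. The paper shows directly, via two nested applications of dominated convergence, that $\|T_{\kappa,1}f\|_2$ is continuous in $p$ for every fixed $f$. You instead use Hilbert--Schmidt norm continuity on compact subsets of $(1/2,1)$, which immediately dominates the operator norm, and then handle the endpoint $p=1$ separately by squeezing between the monotone lower bound $\beta_c\ge 1/4$ and the explicit test-function upper bound evaluated near $t=-1/2$. Your route is arguably cleaner: it sidesteps the issue that pointwise-in-$f$ continuity of $\|T_\kappa f\|_2$ only yields lower semicontinuity of the supremum, and it makes the endpoint behaviour at $p=1$ (where the Hilbert--Schmidt norm diverges) fully explicit.
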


\begin{proof}
First of all, given any $f: (0,1]\to \mathbb R^+$ satisfying $||f||_2\le 1$, the Cauchy-Schwarz inequality implies 
\begin{align}
||T_{\kappa} f||_2 
&=\; \sqrt{\int_{(0,1]} (T_{\kappa} f)(x)^2 \mathrm{d}x}\\
&=\; \sqrt{\int_{(0,1]} \left(\int_{(0,1]} \kappa(x,y) f(y) \mathrm{d}y\right)^2 \mathrm{d}x}\nonumber\le\; \sqrt{\int_{(0,1]^2} \kappa(x,y)^2 \mathrm{d}y \mathrm{d}x}.
\end{align}
For  $\eps \in (0,1/2]\setminus \{1/6\}$, the right-hand side can then be written as
\begin{equation}\begin{aligned}
	\beta\sqrt{2\int_{0<x<y<1} \max\{x,y\}^{-4\eps/p} \mathrm{d}x\mathrm{d}y}
	&\le\; \beta\sqrt{2\int_{(0,1]} \frac{1 - x^{1-4\eps/p}}{1-4\eps/p} \mathrm{d}x}\\
	&=\; \beta\sqrt{\frac{2}{1-4\eps/p} - \frac{2}{(1-4\eps/p)(2-4\eps/p)}},
	\end{aligned}\end{equation}
	which equals $\beta\sqrt{p/(1-p)}$. Similar computations for $\eps = 1/6$ result in $\beta \sqrt{2}$, and hence $||T_{\kappa} f||_2 \le \beta \sqrt{p/(1-p)}$ for all $\eps\in (0,1/2]$.
	
	Now, note that the first inequality in~\eqref{eq:opineq} comes from the fact that the left-most expression is equal to $\sqrt{\tfrac{(1+2t)(t^2+7t+4+1/p)}{(1+t)^2(t+1/p)(2t+2/p-1)}}$ when $t = 0$. Therefore, we only prove the more precise lower bound. Denote $f_t:x\in (0,1]\mapsto \sqrt{1+2t}\, x^t\in \mathbb R$ and note that $||f||_2 = 1$. We get
	\begin{align}
||T_{\kappa} f||_2 
&=\; \beta \sqrt{(1+2t)\int_{(0,1]} \left(\int_{(0,1]} \max\{x,y\}^{-2\eps/p} y^t\, \mathrm{d}y\right)^2\, \mathrm{d}x},
\end{align}
so that splitting the range of the inner integral into $(0,x]$ and $(x,1]$ yields
\begin{equation}\begin{aligned}
	\beta{}& \sqrt{(1+2t)\int_{(0,1]} \left(\int_{(0,x]} x^{-2\eps/p} y^t\, \mathrm{d}y + \int_{(x,1]} y^{t-2\eps/p}\, \mathrm{d}y\right)^2 \,\mathrm{d}x}\\
	&= \beta \sqrt{(1+2t)\int_{(0,1]} \left(\frac{x^{1+t-2\eps/p}}{1+t} + \frac{1 - x^{1+t-2\eps/p}}{1+t-2\eps/p}\right)^2\, \mathrm{d}x}.
	\end{aligned} \end{equation} 
	Expanding the squared term and integrating with respect to $x$ then finally yields
	\begin{equation}
\beta\sqrt{\frac{1+2t}{(1+t-2\eps/p)^2} \left(\left(\frac{2\eps/p}{1+t}\right)^2 \frac{1}{3+2t-4\eps/p} - \frac{4\eps/p}{(1+t)(2+t-2\eps/p)} + 1\right)}.
\end{equation} 

An immediate computation implies that
\begin{equation*}
\left(\tfrac{2\eps/p}{1+t}\right)^2 \tfrac{1}{3+2t-4\eps/p} - \tfrac{4\eps/p}{(1+t)(2+t-2\eps/p)} + 1 = \tfrac{(2t^2+7t+6-2\eps/p)(1+t-2\eps/p)^2}{(1+t)^2(2+t-2\eps/p)(3+2t-4\eps/p)},
\end{equation*}
which together with the fact that $2\eps/p = 2 - 1/p$ implies that
\begin{equation*}
||T_{\kappa} f||_2 = \beta\sqrt{\frac{(1+2t)(2t^2+7t+4+1/p)}{(1+t)^2(t+1/p)(2t+2/p-1)}}.
\end{equation*}

Optimising over $t\in (-1/2, \infty)$ proves~\eqref{eq:opineq}, and since $\beta_c$ is the value for which $||T_{\kappa}|| = 1$,~\eqref{eq:betac} follows immediately.

Furthermore, notice that, for any $p_1, p_2\in (1/2, 1)$ satisfying $p_1 < p_2$, the kernel for $p_2$ dominates the kernel for $p_1$ as a function over $(0,1]\times (0,1]$. 
From this, we conclude that, for every measurable non-negative function $f$ on $(0,1]$, we have that $||T_{\kappa}f||_2$ is a non-decreasing function of $p$, and hence $||T_{\kappa}||$ is also a non-decreasing function of $p$ by definition. 
This implies that $\beta_c$ is a non-increasing function of $p$, and thus $\beta_c(p)\ge \beta_c(1) = 1/4$ for all $p\in (1/2, 1]$, finishing the lower bound in~\eqref{eq:betac}.

\lyu{Finally, we show that $\beta_c$ is continuous.
Denote by $T_{\kappa, 1}$ and $T_{\kappa, \beta_c}$ the operators for $\beta = 1$ and $\beta = \beta_c$, respectively.
Then, Theorem~\ref{thm BJR components} implies that $\beta_c ||T_{\kappa,1}|| = ||T_{\kappa,\beta_c}|| = 1$
so it remains to show the continuity of the norm of the operator $T_{\kappa, 1}$ as a function of $p$.} Indeed, let $(p_n)_{n\ge 1}$ and $p$ be real numbers in $(1/2, 1]$ such that $p_n\to p$ as $n\to \infty$. Moreover, let $\cP = \{p_n\}_{n\ge 1}\cup \{p\}$ and let $f$ be a non-negative function in $L_2((0,1])$. Then, to show that the norm of $||T_{\kappa, 1} f||$ for $p$ is the limit of the norms of $||T_{\kappa, 1} f||$ for $p_n$ as $n\to \infty$, it is sufficient to show that
\begin{align*}
\int_{(0,1]} \left(\left(\int_{(0,1]} \max\{x,y\}^{-(2p-1)/p} f(y) \mathrm{d}y\right)^2 - \left(\int_{(0,1]} \max\{x,y\}^{-(2p_n-1)/p_n} f(y) \mathrm{d}y\right)^2\right) \mathrm{d}x
\end{align*}
tends to $0$ \lyu{uniformly over the choice of $f$. Observe that} the term in the integral is uniformly bounded by
\begin{equation*}
2 \max_{q\in \cP} \left(\int_{(0,1]} \max\{x,y\}^{-(2q-1)/q} f(y) \mathrm{d}y\right)^2\le 2\left(\int_{(0,1]} \max\{x,y\}^{-1} f(y) \mathrm{d}y\right)^2,
\end{equation*}
which is integrable since
\begin{equation*}
\int_{(0,1]} \left(\int_{(0,1]} \max\{x,y\}^{-1} f(y) \mathrm{d}y\right)^2 \dd x
\end{equation*}
is at most $||T_{\kappa, 1}||^2 < \infty$ for $p = 1$. By the dominated convergence theorem, it suffices to show that
\begin{align*}
\left(\int_{(0,1]} \max\{x,y\}^{-(2p-1)/p} f(y) \mathrm{d}y\right)^2 - \left(\int_{(0,1]} \max\{x,y\}^{-(2p_n-1)/p_n} f(y) \mathrm{d}y\right)^2
\end{align*}
tends to 0 as $n\to \infty$ for every $x\in (0,1]$, or equivalently that
\begin{align*}
\int_{(0,1]} (\max\{x,y\}^{-(2p-1)/p} - \max\{x,y\}^{-(2p_n-1)/p_n}) f(y) \mathrm{d}y
\end{align*}
tends to 0 as $n\to \infty$ for every $x\in (0,1]$. The above expression equals
\begin{align*}
\int_{(0,x]} (x^{-(2p-1)/p} - x^{-(2p_n-1)/p_n}) f(y) \mathrm{d}y + \int_{(x,1]} (y^{-(2p-1)/p} - y^{-(2p_n-1)/p_n}) f(y) \mathrm{d}y.
\end{align*}
While the first term in the sum is a multiple of $x^{-(2p-1)/p} - x^{-(2p_n-1)/p_n}$, which clearly tends to $0$ when $n\to \infty$ for any $x\in (0,1]$, the second term is dominated from above by
\begin{equation*}
\max_{y\in [x,1]}\{y^{-(2p-1)/p} - y^{-(2p_n-1)/p_n}\} f(y)\leq \big(x^{-(2p-1)/p}+x^{-(2p_n-1)/p_n}\big)f(y),
\end{equation*}
which is integrable over the interval $(x,1]$. Applying the dominated convergence theorem once again and using that $y^{-(2p-1)/p} - y^{-(2p_n-1)/p_n}$ tends to zero as $n\to \infty$ for every $y\in (x,1]$ finishes the proof of the lemma.
\end{proof}

Finally, note that the limit kernel $\kappa$ we obtained is unbounded and hence we could not apply Theorems~\ref{thm BJR components} and~\ref{thm BJR typical distance} in their full generality. Nevertheless, $\kappa(x,y)$ remains bounded if at least one of $x$ and $y$ is bounded away from 0. We use this observation to prove Theorem~\ref{thm approximate intro}.

\begin{proof}[Proof of Theorem~\ref{thm approximate intro}]
Denote $V^\lambda_n = V(G^\lambda_n)$, and let $G^{\lambda}_{BD}(1,n) = G_{BD}(1,n)[V^\lambda_n]$ be the graph induced from $G_{BD}(1,n)$ by the vertex set $V^\lambda_n$. Define the kernels
\begin{equation*}
\kappa^\lambda_n: (x,y)\in [\lambda, 1]\times [\lambda, 1]\mapsto \kappa_n(x,y).
\end{equation*}
Note that the only difference between $\kappa^\lambda_n$ and $\kappa_n$ is the domain of definition. 
One may readily verify that $\kappa^\lambda_n$ serves to define the graph $G^\lambda_{BD}(1,n)$ in the same way as $\kappa_n$ serves to define $G_{BD}(1,n)$. 
Also, with minor modifications, the proof of Lemma~\ref{lem conv kappa} shows that $(\kappa_n^\lambda)_{n\ge 1}$ converges to $\kappa^\lambda: (x,y)\in [\lambda, 1]\times [\lambda, 1]\mapsto \kappa(x,y)$. Since $\kappa^\lambda$ is a bounded kernel, the second point of Theorem~\ref{thm approximate intro} follows immediately from Part~(i) of Theorem~\ref{thm BJR components}. 

Now, we prove the third point of Theorem~\ref{thm approximate intro} for $\zeta = ||T_{\kappa}||^{-1}$. By Theorem~\ref{thm BJR typical distance}, the only thing we need to show is that $||T_{\kappa^{\lambda}}||$ converges to $||T_{\kappa}||$ as $\lambda$ decreases to 0. 
Fix any $\delta\in (0,1)$ and let $f$ be a square integrable positive function over $(0,1]$ such that $||T_{\kappa}||\le (1+\delta) ||T_{\kappa} f||_2$. This means that, in particular,
\begin{equation*}
||T_{\kappa} f||_2^2 = \int_{(0,1]} \left(\int_{(0,1]} \kappa(x,y) f(y) \mathrm{d}y\right)^2 \mathrm{d}x = \int_{(0,1]^3} \kappa(x,y_1)\kappa(x,y_2) f(y_1) f(y_2) \mathrm{d}y_2 \mathrm{d}y_1 \mathrm{d}x.
\end{equation*}
Now, by the monotone convergence theorem, we obtain that
\begin{align*}
||T_{\kappa^\lambda} f||_2^2 
&=\; \int_{[\lambda,1]^3} \kappa^{\lambda}(x,y_1)\kappa^{\lambda}(x,y_2) f(y_1) f(y_2) \mathrm{d}y_2 \mathrm{d}y_1 \mathrm{d}x\\ 
&=\; \int_{(0,1]^3} \ind_{x\in [\lambda, 1]}\ind_{y_1\in [\lambda, 1]}\ind_{y_2\in [\lambda, 1]} \kappa(x,y_1)\kappa(x,y_2) f(y_1) f(y_2) \mathrm{d}y_2 \mathrm{d}y_1 \mathrm{d}x
\end{align*}
converges to $||T_{\kappa} f||_2^2$ when $\lambda\to 0$. 
Hence, for every sufficiently small $\lambda > 0$, we have that $||T_{\kappa}||\le (1+2\delta)||T_{\kappa^\lambda} f||_2$. 
Moreover, for every $\lambda > 0$, we have that the kernel $\kappa^{\lambda}$ may be extended to $\hat \kappa^{\lambda}: (x,y)\in (0,1]\times (0,1]\mapsto \ind_{x\in [\lambda, 1]}\ind_{y\in [\lambda,1]} \kappa(x,y)$, which is dominated by $\kappa$ for any $\lambda\in (0,1]$, so $||T_{\kappa^{\lambda}}|| = ||T_{\hat \kappa^{\lambda}}||\le ||T_\kappa||$. Hence, $||T_{\kappa^{\lambda}}||$ converges to $||T_\kappa||$ as $\lambda\to 0$, which together with Theorem~\ref{thm BJR typical distance} proves the third point.

Finally, we prove the first point of Theorem~\ref{thm approximate intro}. First of all, note that, conditionally on the event $\{a_0 > \lambda\}$, the local limit of $G^\lambda_n$ exists and it constructed from $(\cT, 0)$ by removing all vertices whose type is at most $\lambda$ together with their descendants. 

Let us denote the limit of $(G^\lambda_n)_{n\ge 1}$ by $(\cT^{\lambda}, 0)$ and its survival probability by $\gamma^\lambda$. Then, for any fixed $r\ge 1$, \lyu{a first moment argument} shows that the probability that the root 0 of $\cT$ has a vertex with age in the interval $[0, \lambda]$ at distance at most $r$ tends to 0 as $\lambda\to 0$. Therefore, for every finite rooted tree $(T, o)$ we have that 
\begin{equation*}
\P{(\cT^{\lambda}, 0) \cong (T,o)} \to \P{(\cT, 0) \cong (T,o)} \text{ as } \lambda\to 0.
\end{equation*}
Recall $\gamma$ from~\eqref{eq:gamma} as well as that $\gamma=\rho(\kappa)$ by the proof of Theorem~\ref{thm components intro}. Also, recall that $\cG$ is the set of all finite rooted trees. We deduce that
\begin{align*}
|\gamma^\lambda - \gamma| 
&=\; \Big|\sum_{(T,o)\in \cG} \P{(\cT^{\lambda}, 0) \cong (T,o)} - \P{(\cT, 0) \cong (T,o)}\Big|\\
&\le \; \sum_{(T,o)\in\cG} \Big|\P{(\cT^{\lambda}, 0) \cong (T,o)} - \P{(\cT, 0) \cong (T,o)}\Big|.
\end{align*}
Then, by the reverse Fatou's lemma, 
\begin{equation*}
\limsup_{\lambda\to 0} |\gamma^{\lambda}-\gamma| \le \sum_{(T,o)\in\cG} \limsup_{\lambda\to 0} \Big|\P{(\cT^{\lambda}, 0) \cong (T,o)} - \P{(\cT, 0) \cong (T,o)}\Big| = 0,
\end{equation*}
and therefore $\gamma^\lambda \to \gamma=\rho(\kappa)$ as $\lambda\to 0$.

Now, recall the coupling constructed in Lemma~\ref{lemma sandwich} and the sequence $(\hat \delta_n)_{n\ge 1}$. Note that $G_{BD}^{\lambda}(1+\hat \delta_n, n)$ may be constructed from $G^\lambda_n$ by adding every edge $\{v_i,v_j\}$ satisfying $v_i, v_j\in V_n\setminus V_{\lambda n}$ to $G^\lambda_n$ with probability 
$$\cO\left(\frac{\hat \delta_n}{n} \left(\frac{\max\{i,j\}}{|V_n|}\right)^{-2\eps/p}\right),$$
while $G_{BD}^{\lambda}(1-\hat \delta_n, n)$ may be constructed from $G_n$ by removing every edge of $G_n$ with probability $\cO(\hat \delta_n)$. 
At the same time, for any fixed $r\in \mathbb N$, by Theorem~\ref{thrm:LWC intro} and the definition of $(\cT, 0)$, w.h.p.\ the ball of radius $r$ around a uniformly chosen vertex in $G_n$ contains $\cO(\hat \delta_n^{-1/2})$ edges. 
Moreover, 
w.h.p.\ the $r^{\text{th}}$ neighbourhood of a uniformly chosen vertex in $V^\lambda_n$ is the same in both $G_{BD}^{\lambda}(1-\hat \delta_n, n)$ and $G_{BD}^{\lambda}(1+\hat \delta_n, n)$. 
We conclude that each of $G_{BD}^{\lambda}(1-\hat \delta_n, n)$, $G^\lambda_n$ and $G_{BD}^{\lambda}(1+\hat \delta_n, n)$ converges locally to $(\cT^{\lambda}, 0)$. Thus, by a similar argument as in the proof of Theorem~\ref{thm components intro}, we conclude that $\rho(\kappa^\lambda)=\gamma^\lambda$. 
At the same time, for every fixed $\delta' > 0$, we have that, by choosing $\lambda$ sufficiently small, $|\rho(\kappa^{\lambda}) - \rho(\kappa)|\le \tfrac{\delta'}{4\eps}$ and therefore w.h.p.\ $|\cC_1(G^{\lambda}_n)|/|V_n^\lambda|\in [\rho(\kappa)-\tfrac{\delta'}{4\eps}, \rho(\kappa)+\tfrac{\delta'}{4\eps}]$. 
By combining Lemma~\ref{lemma:beta} and Lemma~\ref{lemma:condconc} applied with $S = \emptyset$, we conclude that $|V^\lambda_n|/n$ converges in probability to $2\eps (1-\lambda^{p/(2\eps)})$. 
This proves the first part of Theorem~\ref{thm approximate intro} since $2\eps\rho(\kappa)=2\eps\gamma$ and moreover w.h.p.\
$$2\eps \gamma  - \delta'\le \frac{|V_n^\lambda|}{n} \left(\gamma-\frac{\delta'}{4\eps}\right)\le \frac{|V_n^\lambda|}{n} \left(\gamma +\frac{\delta'}{4\eps}\right)\le 2\eps \gamma + \delta'$$
for every sufficiently small $\lambda$. 
\end{proof}

\section{The maximum degree: proof of Theorem~\ref{thrm:max intro}}\label{sec:max}

In this section, we study the maximum in-degree, out-degree and degree (that is, in-degree plus out-degree) of the graph $\vv{G}_n$, as stated in Theorem~\ref{thrm:max intro}. 
We remark that the evolution of the vertex degrees is severely influenced by our vertex deletion mechanism, as discussed after Theorem~\ref{thrm:max intro}. 

We let 
\begin{equation}
d^+_n(v_i):=\sum_{j=i+1}^n \ind_{\{v_i,v_j\}\in \vv{G}_n}, \quad \text{and}\quad  d_n^-(v_i):=\sum_{j=1}^{i-1}\ind_{\{v_j,v_i\}\in \vv{G}_n},
\end{equation} 
denote the in-degree and out-degree of vertex $v_i$ in $\vv{G}_n$, respectively, and let $d_n^s(v_i):=d^+_n(v_i)+d^-_n(v_i)$ denote the degree of vertex $v_i$. As in the previous section, Lemma~\ref{lemma sandwich} allows us to ``sandwich'' the vertex degrees of $\vv{G}_n$ between those of the graphs $\vv{G}_{1,n} := \vv{G}_{BD}(1-\hat\delta_n,n)$ and $\vv{G}_{2,n} := \vv{G}_{BD}(1+\hat\delta_n,n)$. For each of $\square\in \{s, +, -\}$, we let $d_{1,n}^{\square}(v_i)$ and $d_{2,n}^{\square}(v_i)$ denote the degree (if $\square = s$), in-degree (if $\square = +$) or out-degree (if $\square = -$) of the vertex $v_i$ in $\vv{G}_{1,n}$ and in $\vv{G}_{2,n}$, respectively.

The proof of Theorem~\ref{thrm:max intro} makes use of the following preliminary result, which is similar in spirit to Lemma~1 from~\cite{DevLu95}, combined with the ``sandwiching'' of the graphs $\vv{G}_{1,n}$, $\vv{G}_n$, and $\vv{G}_{2,n}$. The main conclusion is that, roughly speaking, the maximum in-degree (respectively out-degree) of $\vv{G}_n$ is approximately $k_n$ if the expected number of vertices of in-degree (respectively out-degree) $k_n$ is about 1.

\begin{lemma}\label{lemma:maxsum}
Let $(k_n)_{n\ge 1}$ be a sequence of positive numbers. Then, for $\square\in\{+,-\}$,
\begin{equation}
\P{\max_{i\in [|V_n|]} d_n^\square(v_i)\geq k_n\,\bigg|\, |V_n|}=
\begin{cases} 0 &\mbox{if } \lim_{n\to \infty} \sum_{i=1}^{|V_n|} \P{d^{\square}_{2,n}(v_i)\geq k_n\,\mid\, |V_n|}=0,\\
	1
	&\mbox{if } \lim_{n\to \infty} \sum_{i=1}^{|V_n|} \P{d^{\square}_{1,n}(v_i)\geq k_n\,\mid\, |V_n|}=\infty.
\end{cases}
\end{equation} 
Moreover,
\begin{equation*}
\P{\max_{i\in [|V_n|]} d_n^s(v_i)\geq k_n\,\bigg|\, |V_n|}= 0\quad \mbox{if } \lim_{n\to \infty} \sum_{i=1}^{|V_n|} \P{d^s_{2,n}(v_i)\geq k_n\mid |V_n|}=0.
\end{equation*}
\end{lemma}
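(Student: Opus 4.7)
The argument splits cleanly into an upper bound (the ``vanishing-sum'' case, valid for any $\square\in\{s,+,-\}$) and a lower bound (the ``divergent-sum'' case, which requires $\square\in\{+,-\}$). Both rest on the coupling from Lemma~\ref{lemma sandwich}, which, on an event $\cW$ measurable with respect to $(V_i)_{i=1}^n$ and satisfying $\mathbb P(\cW)=1-o(1)$, realises $\vv{G}_{1,n}\subseteq \vv{G}_n\subseteq \vv{G}_{2,n}$ on a common probability space. Conditionally on $|V_n|$ lying in the typical range (where by Lemma~\ref{lemma:condconc} the sandwich still holds w.h.p.), we may therefore work on $\cW$ and lose only an additive $o(1)$ in every probability estimate.

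For the upper bound, working on $\cW$ yields $d^\square_n(v_i)\le d^\square_{2,n}(v_i)$ for every $i\in[|V_n|]$ and every $\square\in\{s,+,-\}$. A union bound gives
\be
\P{\max_{i\in[|V_n|]} d^\square_n(v_i)\ge k_n \,\bigg|\, |V_n|}\le \P{\cW^c\mid |V_n|}+\sum_{i=1}^{|V_n|}\P{d^\square_{2,n}(v_i)\ge k_n\,\Big|\, |V_n|},
\ee
and both terms vanish under the stated hypothesis. This handles the first line of~\eqref{eq:maxsum} for $\square\in\{+,-\}$ as well as the ``moreover'' statement for $\square=s$.

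For the lower bound (with $\square\in\{+,-\}$), again on $\cW$ we have $d^\square_n(v_i)\ge d^\square_{1,n}(v_i)$. The crucial structural observation is that, conditionally on $V_n$, the random variables $\{d^-_{1,n}(v_i)\}_{i\in[|V_n|]}$ are \emph{mutually independent}, and the same holds for $\{d^+_{1,n}(v_i)\}_{i\in[|V_n|]}$. Indeed, $d^-_{1,n}(v_i)$ is a sum of edge indicators $\ind_{\{v_j,v_i\}\in \vv{G}_{1,n}}$ with $j<i$; every such edge is incident to $v_i$, so for $i_1\ne i_2$ the two sums $d^-_{1,n}(v_{i_1})$ and $d^-_{1,n}(v_{i_2})$ involve disjoint edge sets, and the edges of $\vv{G}_{1,n}$ are independent by construction. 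The same disjointness argument applies to $d^+$. Invoking this independence together with $1-x\le \e^{-x}$,
\be
\P{\max_{i\in[|V_n|]} d^\square_{1,n}(v_i)<k_n\,\bigg|\, |V_n|}=\prod_{i=1}^{|V_n|}\Big(1-\P{d^\square_{1,n}(v_i)\ge k_n\,\Big|\, |V_n|}\Big)\le \exp\!\left(-\sum_{i=1}^{|V_n|}\P{d^\square_{1,n}(v_i)\ge k_n\,\Big|\, |V_n|}\right)\!,
\ee
which tends to $0$ by hypothesis; adding back $\P{\cW^c\mid |V_n|}=o(1)$ finishes the proof.

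The only real obstacle is bookkeeping around the sandwich event, which lives on the vertex process rather than on $|V_n|$ alone; Lemmas~\ref{lemma:condconc} and~\ref{lemma sandwich} make this routine. The more conceptual point to highlight is \emph{why} the lower bound fails for $d^s$: the edge $\{v_i,v_j\}$ with $i<j$ contributes simultaneously to $d^s_{1,n}(v_i)$ and $d^s_{1,n}(v_j)$, so the events $\{d^s_{1,n}(v_i)\ge k_n\}$ and $\{d^s_{1,n}(v_j)\ge k_n\}$ are positively correlated and the product formula above is no longer available; this is exactly the reason why only the in- and out-degree versions are asserted.
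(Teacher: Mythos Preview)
Your proof is correct and follows essentially the same approach as the paper's: both use the sandwich from Lemma~\ref{lemma sandwich} for the upper bound via a union bound, and for the lower bound both exploit the key observation that, in the inhomogeneous Erd\H{o}s--R\'enyi graph $\vv{G}_{1,n}$, the in-degrees (respectively out-degrees) of distinct vertices involve disjoint edge sets and are therefore independent. The only minor variation is in how independence is cashed out: the paper applies a second-moment/Chebyshev argument to $\sum_i \ind_{\cD_{i,n}^\square}$, whereas you use the product formula $\prod_i(1-p_i)\le\exp(-\sum_i p_i)$ directly; both are standard and yield the same conclusion.
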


\begin{proof}
In this proof, everything is done conditionally on $|V_n|$. 
First, by Lemma~\ref{lemma sandwich}, we know that w.h.p.\ $\vv{G}_{1,n}\subseteq \vv{G}_n\subseteq \vv{G}_{2,n}$. 
Hence, proving that the expected number of vertices of degree (or in-degree, or out-degree) $k_n$ in $\vv{G}_{2,n}$ is $o(1)$ implies that w.h.p.\ $\vv{G}_n$ contains no vertex of degree (or in-degree, or out-degree) $k_n$. 
Second, for both $\square\in \{+,-\}$ and every $i\in [|V_n|]$, define the event $\cD_{i,n}^\square:=\{d_{1,n}^{\square}(v_i)\geq k_n\}$. 
By independence of the states of the edges in $\vv{G}_{1,n}$ one may conclude that the events $\cD_{i,n}^\square$ are independent for different values of $i$. Thus, when 
\begin{equation}
\lim_{n\to\infty}\sum_{i=1}^{|V_n|}\P{\cD_{i,n}^\square\,\Big|\, |V_n|}=\lim_{n\to\infty}\sum_{i=1}^{|V_n|}\P{d^\square_{1,n}(v_i)\geq k_n\,\Big|\, |V_n|}=\infty
\end{equation} 
holds, it follows that
\begin{equation*}
\mathrm{Var}\Big(\sum_{i=1}^{|V_n|}\ind_{\cD_{i,n}^\square}\;\Big|\; |V_n|\Big) = \sum_{i=1}^{|V_n|}\mathrm{Var}(\ind_{\cD_{i,n}^\square}\mid |V_n|)\le \mathbb E\Big[\sum_{i=1}^{|V_n|}\ind_{\cD_{i,n}^\square}\;\Big|\; |V_n|\Big]\ll \mathbb E\Big[\sum_{i=1}^{|V_n|}\ind_{\cD_{i,n}^\square}\;\Big|\; |V_n|\Big]^2.
\end{equation*}
We conclude via Chebyshev's inequality that at least least one of $(\cD_{i,n}^\square)_{i=1}^{|V_n|}$ holds w.h.p.\ and then $\vv{G}_{1,n}$ (and thus also $\vv{G}_n$) contains a vertex of in-degree or out-degree $k_n$.
\end{proof}

\begin{proof}[Proof of Theorem~\ref{thrm:max intro}]
In this proof, everything is done conditionally on $|V_n|$ and the event $\cQ_n = \{||V_n|-2\eps n|\le n^{2/3}\}$, which holds w.h.p.\ by Lemma~\ref{lemma:condconc}. We remark that in several places in this proof $(1\pm \hat\delta_n)\beta$ is somewhat abusively replaced by $\beta$ only; note that this abuse does not influence the proof since the limits in~\eqref{eq:2ndords} and~\eqref{eq:2ndordmin} are both continuous as functions of $\beta$. Define $W_0$ as the inverse of the function $f:x\in [-1,\infty)\to x\e^x\in [-1/\e, \infty)$ ($W_0$ is also known as the main branch of the Lambert $W$ function). In particular, $\e^{W_0(\log n)}=\log n/W_0(\log n)$ and, moreover, by Theorem~2.7 in~\cite{HooHas08} (or a simple asymptotic analysis), we also have that
\begin{equation}\label{eq:w0asymp2}
W_0(\log n)=\log\log n-\log\log\log n+o(1).
\end{equation} 
This implies that
\begin{equation}\begin{aligned}\label{eq W_0 division}
	\frac{\log n}{W_0(\log n)}
	&=\;\frac{\log n}{\log\log n-\log\log\log n+o(1)}\\
	&=\;\frac{\log n}{\log\log n}+\frac{\log n\log\log\log n}{(\log\log n)^2}+o\left(\frac{\log n}{(\log\log n)^2}\right).
	\end{aligned}\end{equation} 
	Hence, to obtain~\eqref{eq:2ndords}, it suffices to prove that
	\begin{equation}\label{eq:cbetap}
\frac{\max_{i\in [|V_n|]}d_n^{\square}(v_i) -\log n/W_0(\log n)}{\log n/(\log\log n)^2}\toinp C_{\beta,p} :=  1+\log \Big(\frac{\beta p}{1-p}\Big)
\end{equation} 
for $\square\in\{s,+\}$, \lyu{and similarly, to obtain~\eqref{eq:2ndordmin}, it suffices to prove that
\begin{equation}\label{eq:cbetap1}
	\frac{\max_{i\in [|V_n|]}d_n^{-}(v_i) -\log n/W_0(\log n)}{\log n/(\log\log n)^2}\toinp \widehat C_{\beta} :=  1+\log\beta.
	\end{equation}}
	In fact, we show that, for every $\mu > 0$, we have that w.h.p.
	\begin{align}
C_{\beta, p} - \mu&\le \frac{\max_{i\in [|V_n|]}d_{1,n}^+(v_i) -\log n/W_0(\log n)}{\log n/(\log\log n)^2}\label{eq ineq lb}\\
&\le\; 
\frac{\max_{i\in [|V_n|]}d_{2,n}^s(v_i) -\log n/W_0(\log n)}{\log n/(\log\log n)^2}\le C_{\beta, p} + \mu\label{eq ineq ub},
\end{align}
\lyu{and
\begin{align}
	\widehat C_{\beta} - \mu&\le \frac{\max_{i\in [|V_n|]}d_{1,n}^-(v_i) -\log n/W_0(\log n)}{\log n/(\log\log n)^2}\label{eq ineq lb1}\\
	&\le\; 
	\frac{\max_{i\in [|V_n|]}d_{2,n}^-(v_i) -\log n/W_0(\log n)}{\log n/(\log\log n)^2}\le \widehat C_{\beta} + \mu.\label{eq ineq ub1}
	\end{align}}
	We first show \lyu{the upper bounds~\eqref{eq ineq ub} and~\eqref{eq ineq ub1}, starting with the former one}. We set 
	\[a_n:=\log n/W_0(\log n)+(C_{\beta, p}+\mu)\log n/(\log\log n)^2.\] 
	Then, for every $i\in [|V_n|]$ and $t\ge 0$,
	\begin{align*}
&\P{d_{2,n}^s(v_i)\geq a_n}\\
\leq\; 
&\E{\e^{td_{2,n}^s(v_i)}}\e^{-ta_n}\\
\le\; 
&\e^{-ta_n} \prod_{j=1}^{i-1}\Big(1+\big(\e^t-1)(2\eps)^{-(1-p)/p}\tfrac{\beta}{n} \Big(\tfrac in\Big)^{-2\eps/p}\Big)\prod_{j=i+1}^{|V_n|} \Big(1+\big(\e^t-1\big) (2\eps)^{-(1-p)/p} \tfrac{\beta}{n} \Big(\tfrac jn\Big)^{-2\eps/p}\Big).
\end{align*}
Including the term $j=i$ in the first product and using that $1+x\leq \e^x$ for all $x\in\R$ and $||V_n|-2\eps n|\le n^{2/3}$ implies that
\begin{align}
&\P{d_{2,n}^s(v_i)\geq a_n}\\
\leq\; 
&\exp\left(-ta_n + (\e^t - 1)\beta \left(\Big(\tfrac{i}{2 \eps n}\Big)^{1-2\eps/p} + \tfrac{(1+\cO(n^{-1/3}))((2\eps n)^{1-2\eps/p} - i^{1-2\eps/p})}{(1-2\eps/p) (2\eps n)^{1-2\eps/p}}\right)\right)\nonumber\\
\leq\; 
&\exp\left(-ta_n + (1+\cO(n^{-1/3})) (\e^t - 1)  \frac{\beta p}{1-p}\right).\label{eq need in (3)}
\end{align}
By setting $t = \log\left(\tfrac{(1-p)a_n}{\beta p}\right)$ and using that $\log\left(\tfrac{1-p}{\beta p}\right) = -\log\left(\tfrac{\beta p}{1-p}\right)$, we arrive at the upper bound
\begin{equation}\label{eq:chernub}
\P{d_{2,n}^s(v_i)\geq a_n}\leq \exp\Big(-a_n\log a_n +C_{\beta,p} a_n - \frac{\beta p}{1-p} + o(1)\Big).
\end{equation} 
By using~\eqref{eq:w0asymp2},~\eqref{eq W_0 division} and the equality $\e^{W_0(\log n)}=\log n/W_0(\log n)$, we deduce that
\begin{equation}\label{eq log k_n}
\begin{split}
	& a_n\log a_n\\
	=&
	\Big(\frac{\log n}{W_0(\log n)} + \frac{(C_{\beta, p}+\mu)\log n}{(\log\log n)^2}\Big)\log\Big(\frac{\log n}{W_0(\log n)} + \frac{(C_{\beta, p}+\mu)\log n}{(\log\log n)^2}\Big)\\
	=&
	\Big(1+\frac{(C_{\beta, p}+\mu+o(1))W_0(\log n)}{(\log\log n)^2}\Big) \frac{\log n}{W_0(\log n)} \Big(\log(\e^{W_0(\log n)}) + \frac{(C_{\beta, p}+\mu+o(1))W_0(\log n)}{(\log\log n)^2}\Big)\\
	=& 
	\log n + \frac{(C_{\beta, p} + \mu + o(1))\log n}{\log\log n} = \log n + (C_{\beta, p} + \mu + o(1))a_n.
\end{split}
\end{equation}
Thus, combining~\eqref{eq:chernub}, \eqref{eq log k_n} and a union bound over all $|V_n|\le n$ vertices in $\vv{G}_{2,n}$, we get that
\begin{align*}
\P{\max_{i\in [|V_n|]} d_{2,n}^s(v_i)\geq a_n}
&\leq\; n\exp\Big(-a_n\log a_n + a_n \log\Big(\frac{\beta p}{1-p}\Big) + a_n - \frac{\beta p}{1-p} + o(1)\Big)\\
&\le\; \exp(-(\mu + o(1))a_n) = o(1).
\end{align*}
\lyu{We turn to~\eqref{eq ineq ub1}. Define}
\begin{equation}\label{eq:chat}
c_n:=\log n/W_0(\log n)+(\widehat C_{\beta}+\mu)\log n/(\log\log n)^2.
\end{equation}
For every $j\in [|V_n|]$, $d^-_{2,n}(v_j)$ is a binomial random variable with distribution $\mathrm{Bin}(j-1, p_{1j})$. 
Observe also that, on the event $\cQ_n$ (defined in~\eqref{eq:An}), \bas{the mean $m_{n,j}$ of $d^-_{2,n}(v_j)$~is
\be \label{eq:dminmean}
m_{j,n}=(j-1)p_{1j} = \lyu{\Big(1-\frac{1}{j}\Big)}\beta (2\eps)^{-(1-p)/p}\Big(\frac{j}{n}\Big)^{1-2\eps/p}.
\ee 
Since $1-2\eps/p>0$ for any $\eps\in(0,1/2)$, the mean is $o(1)$ for $j=o(n)$ and of constant order for $j$ of order $n$.} \lyu{By Chernoff's inequality with $\phi(x)=(1+x)\log(1+x)-x$ (see Lemma~\ref{lem chern}(i)), we conclude} that
\begin{equation}\ba \label{eq d_2,n minus}
\P{d^-_{2,n}(v_j)\ge c_n} &\le \bas{\exp\Big(-m_{j,n}\phi\Big(\frac{c_n}{m_{j,n}}-1\Big)\Big)}\\ 
&=\exp\big(-c_n\log\big(c_n/m_{j,n}\big)+c_n-m_{j,n}\big).
\ea\end{equation}
\bas{As $m_{j,n}=\cO(1)$ uniformly in $j$, the \lyu{third} term in the exponential can be ignored. The first term is increasing in $m_{j,n}$. Furthermore, $m_{j,n}\leq m_{|V_n|,n}=\beta +o(1)$ for all $j\in [|V_n|]$ and all $n$ sufficiently large. Hence, uniformly in $j$, we can bound
\be 
\P{d^-_{2,n}(v_j)\ge c_n} =\cO\Big( \exp\big(-c_n\log(c_n/\beta)+(1+o(1))c_n\big)\Big).
\ee 
}
Now, since 
\begin{equation}
c_n=\frac{\log n}{\log\log n}+\frac{\log n \log\log\log n}{(\log\log n)^2}+(\widehat C_{\beta}+\mu+o(1))\frac{\log n}{(\log\log n)^2},
\end{equation} 
it follows that 
\be
-c_n\log(c_n/\beta)+c_n(1+o(1))= -\log n-(\mu+o(1))\frac{\log n}{\log\log n}.
\ee
It then follows from a union bound that
\begin{equation}\label{eq ub eq:2ndordmin}
\P{\max_{i\in [|V_n|]} d_{2,n}^-(v_i) \ge c_n}\le \bas{\sum_{j=1}^{|V_n|} \P{d_{2,n}^-(v_j) \ge c_n}} = o(1).
\end{equation}
Now, we \lyu{come back to the proof of the lower bound~\eqref{eq ineq lb}; note that~\eqref{eq ineq lb1} is shown in the same manner, so we only mark the necessary modifications along the way.}
Recall $C_{\beta,p}$ and $\widehat C_{\beta}$ from~\eqref{eq:cbetap} and~\eqref{eq:cbetap1}, respectively, and define
\begin{align}\label{eq:bn}
b_n^+&:=\left\lfloor\frac{\log n}{W_0(\log n)}+\frac{(C_{\beta, p}-\mu)\log n}{(\log\log n)^2}\right\rfloor,\quad  b_n^-:=\left\lfloor\frac{\log n}{W_0(\log n)}+\frac{(\widehat C_{\beta}-\mu)\log n}{(\log\log n)^2}\right\rfloor, \\  e_n&:=\left\lceil\frac{\mu\log n}{2(\log\log n)^2}\right\rceil.
\end{align}
Fix $i\in [|V_n|]$ and set $\cS_i = [i+1, |V_n|]$ in the lower bound for $\square=+$, and $\cS_i = [i-1]$ in the lower bound for $\square=-$. Then, for every $j\in \cS_i$, let $I_j$ be the indicator random variable of the event $\{v_i, v_j\}\in \vv{G}_{1,n}$. Then, by~\eqref{eq def p_ij}, we have
\begin{equation*}
\lambda_j := \P{I_j = 1} = 
\begin{cases}
	& (1+\hat\delta_n)\frac{\beta}{|V_n|}\Big(\frac{j}{|V_n|}\Big)^{-2\eps/p} \quad \text{for } \square=+,\\
	& (1+\hat\delta_n)\frac{\beta}{|V_n|}\Big(\frac{i}{|V_n|}\Big)^{-2\eps/p} \quad \text{for } \square=-.\\
\end{cases}
\end{equation*}
Let $(P_j)_{j\in \cS_i}$ be independent Poisson random variables where $P_j$ has mean $\lambda_j$. Then, by Lemma~\ref{lemma:poibercoupling}, we can couple $I_j$ and $\ind_{\{P_j\leq 1\}}P_j$ so that, for each $j\in [n]$, almost surely $I_j\geq \ind_{\{P_j\leq 1\}}P_j$. Setting
\begin{equation*}
W_i := \sum_{j\in \cS_i} P_j\quad  \text{  and  }\quad Z_i := \sum_{j\in\cS_i} \ind_{\{P_j>1\}} P_j,
\end{equation*}
for both $\square\in \{+,-\}$ we get that 
\begin{equation}\label{eq:poic}
d_{1,n}^{\square}(v_i)=\sum_{j\in \cS_i} I_j\geq \sum_{j\in \cS_i} \ind_{\{P_j\le1\}}P_j = W_i-Z_i. 
\end{equation} 
It then follows that
\begin{equation}\label{eq:sumlb}
\sum_{i=1}^{|V_n|} \P{d_{1,n}^{\square}(v_i)\geq b^\square_n}\geq \sum_{i=1}^{|V_n|}\P{W_i\geq b_n^{\square}+e_n}-\sum_{i=1}^{|V_n|}\P{Z_i\geq e_n}. 
\end{equation} 
Next, we show that
\begin{equation}\label{eq:sumclaims}
\lim_{n\to\infty} \sum_{i=1}^{|V_n|}\P{W_i\geq b_n^{\square}+e_n}=\infty, \quad\text{and}\quad   \lim_{n\to\infty}\sum_{i=1}^{|V_n|} \P{Z_i\geq e_n}=0, 
\end{equation}  
which implies that 
\begin{equation}\label{eq:resultclaims}
\lim_{n\to\infty} \sum_{i=1}^{|V_n|} \P{d_{1,n}^{\square}(v_i)\geq b_n^{\square}}=\infty.
\end{equation} 
\lyu{Then, by using Lemma~\ref{lemma:maxsum}, the lower bounds~\eqref{eq ineq lb} and~\eqref{eq ineq lb1} follow.}

We start with the left part of~\eqref{eq:sumclaims}. Recall that $W_i$ is a sum of independent Poisson random variables with means $\lambda_j$. As a result, $W_i\sim \lyu{\text{Po}}(\sum_{j=\cS_i} \lambda_j)$, so
\begin{equation}
\P{W_i\geq b_n^{\square}+e_n}\geq \P{W_i=b_n^{\square}+e_n}=\e^{-\sum_{j\in \cI_i}\lambda_j}\Big(\sum_{j\in \cS_i} \lambda_j\Big)^{b_n^{\square}+e_n}\frac{1}{(b_n^{\square}+e_n)!}. 
\end{equation} 
By Stirling's formula 
$$(b_n^{\square}+e_n)! = \cO\Big(\left(\tfrac{b_n^{\square}+e_n}{\e}\right)^{b_n^{\square}+e_n}\sqrt{2\pi(b_n^{\square}+e_n)}\Big) = \cO\Big(\left(\tfrac{b_n^{\square}+e_n}{\e^{1+o(1)}}\right)^{b_n^{\square}+e_n}\Big),$$
which yields
\begin{equation}\label{eq stirling 1}
\begin{split}
	\mathbb P{}&\Big(W_i\geq b_n^{\square}+e_n\Big) \\
	&=\Omega\Big(\Big(\frac{\e^{1+o(1)}}{b_n^{\square}+e_n}\sum_{j\in \cS_i} \lambda_j\Big)^{b_n^{\square}+e_n}\Big)\\ 
	&=\exp{}\Big(\Big(1+\log\Big(\sum_{j\in \cS_i} \lambda_j\Big)\Big)(b_n^{+}+e_n)-(b_n^{+}+e_n)\log(b_n^{+}+e_n)+o(b_n^{+}+e_n)\Big).    
\end{split}
\end{equation}

Now, in the case when $\square = +$, fix an integer $i\in [\sqrt{n}, n/\log n]$. Then,
\begin{equation}\label{eq:lambdaasymp}
\sum_{j=i+1}^{|V_n|} \lambda_j=(1+o(1))\frac{\beta p}{1-p}\Big(1-\Big(\frac{i}{|V_n|}\Big)^{(1-p)/p}\Big)=(1+o(1))\frac{\beta p}{1-p}
\end{equation} 
since $|V_n| = \omega(n/\log n)$ (recall that we work on the event $\cQ_n$). By combining this with~\eqref{eq stirling 1} and~\eqref{eq:lambdaasymp}, we obtain that
\begin{align*}
\sum_{i=1}^{|V_n|}{}&\P{W_i\geq b_n^{+}+e_n}\\
&\ge\sum_{i=\lceil \sqrt{n}\rceil}^{\lfloor n/\log n\rfloor}\P{W_i\geq b_n^{+}+e_n}\\
&\ge\frac{n}{2\log n}\exp{} \Big(\Big(1+\log\Big(\frac{\beta p}{1-p}\Big)\Big)(b_n^{+}+e_n)-(b_n^{+}+e_n)\log(b_n^{+}+e_n)+o(b_n^{+}+e_n)\Big).
\end{align*}
By a similar analysis to the one done for~\eqref{eq log k_n}, \[(b_n^{+}+e_n)\log(b_n^{+}+e_n) = \log n + (C_{\beta, p}-\mu/2+o(1))\log n/\log\log n,\] 
and combined with $(1+\log(\tfrac{\beta p}{1-p})) (b_n^{+}+e_n) = (C_{\beta, p}+o(1))\log n/\log\log n$, this shows the left part of~\eqref{eq:sumclaims} for the case $\square = +$.

If $\square = -$, fix an integer $i\in [|V_n|-n/\log n, |V_n|]$. Then, $\sum_{j=1}^{i-1} \lambda_j=(1+o(1))\beta$, which together with~\eqref{eq stirling 1} implies that
\begin{align*}
&\sum_{i=1}^{|V_n|}\P{W_i\geq b_n^{-}+e_n}\\
\ge\; 
&\sum_{i=\lfloor |V_n| - n/\log n\rfloor}^{|V_n|}\P{W_i\geq b_n^{-}+e_n}\\
\ge\;
&\frac{n}{\log n}\exp{}((1+\log(\beta))(b_n^{-}+e_n)-(b_n^{+}+e_n)\log(b_n^{-}+e_n)+o(b_n^{-}+e_n)).   
\end{align*}
By a similar analysis to the one for~\eqref{eq log k_n}, 
\[(b_n^{-}+e_n)\log(b_n^{-}+e_n) = \log n + (\widehat C_{\beta}-\mu/2+o(1))\log n/\log\log n,\] 
and combined with $(1+\log(\beta)) (b_n^{-}+e_n) = (\widehat C_{\beta}+o(1))\log n/\log\log n$, this shows the first part in~\eqref{eq:sumclaims} for the case $\square = -$.

Next, we concentrate on the second part of~\eqref{eq:sumclaims}. 
Fix $i\in [|V_n|]$. Note that 
\[\max_{i\in [|V_n|]} \lambda_i = \max_{i,j\in [|V_n|]} p_{ij} = \cO(n^{-(1-p)/p}),\] 
and a union bound implies
\begin{equation}\label{eq fst part o}
\mathbb P\big(\max_{j\in \cS_i} P_i\ge \lceil \tfrac{1}{1-p}\rceil\big) = \cO((\max_{i\in [|V_n|]} \lambda_i)^{\lceil 1/(1-p)\rceil}) = \cO(n^{-1/p}) = o(n^{-1}).
\end{equation}
At the same time, for every $j\in \cS_i$, using that $\lambda_j = o(1)$, we obtain that
\begin{equation*}
\P{P_j\ge 2} = \sum_{k=2}^\infty \e^{-\lambda_j}\frac{\lambda_j^k}{k!}\le \lambda_j^2.
\end{equation*}
Hence, since $Z_n\ge e_n$ implies that $(\max_{j\in \cS_i} P_j) \cdot |\{j\in \cS_i: P_j\ge 2\}|\ge e_n$. In particular, either some of the $P_j$-s are at least $\lceil \tfrac{1}{1-p}\rceil$, or at least $(1-p)e_n$ of the $P_j$-s are at least $2$. 
Thus, by using~\eqref{eq fst part o},
\begin{align*}
\P{Z_i\ge e_n}
&\le\; \mathbb P\big(\{\max_{j\in \cS_i} P_j\ge \lceil \tfrac{1}{1-p}\rceil\big)\}\cup \{|\{j\in \cS_i: P_j\ge 2\}|\ge (1-p) e_n\}\big)\\
&\le\; o(n^{-1}) + \P{|\{j\in \cS_i: P_j\ge 2\}|\ge (1-p) e_n}.
\end{align*}
Finally, since $|\{j\in \cS_i: P_j\ge 2\}|$ is a sum of $|\cS_i|$ independent indicators with mean at most 
$$\Lambda_i := \sum_{j\in \cS_\lyu{i}} \lambda_j^2 = \cO\bigg(\max_{i,j\in |V_n|} p_{ij} \lyu{\sum_{j\in \cS_i} \lambda_j\bigg)} = \cO(n^{-(1-p)/p}),$$
Chernoff's inequality (Lemma~\ref{lem chern}(i)) implies that
\begin{align*}
&\P{Z_i\ge (1-p)e_n}\le\; o(n^{-1})+\exp{}\Big(-\Lambda_i\phi\Big(\frac{(1-p)e_n}{\Lambda_i}\Big)\Big)\\
=\;
&o(n^{-1}) + \exp{}\Big(-\Omega(e_n \log(n^{(1-p)/p}))\Big) = o(n^{-1}) + \exp{}\Big(-\Omega\Big(\big(\tfrac{\log n}{\log\log n}\big)^2\Big)\Big) = o(n^{-1}),
\end{align*}
which finishes the proof of~\eqref{eq:sumclaims}, and the lower bounds in~\eqref{eq ineq lb} and~\eqref{eq ineq lb1} follow. \lyu{In turn, this proves~\eqref{eq:2ndords} and~\eqref{eq:2ndordmin}.}\\

Finally, we prove~\eqref{eq:loc} in Theorem~\ref{thrm:max intro}, starting with the first statement. Recall $b_n^+$ from~\eqref{eq:bn}. 
By Lemma~\ref{lemma sandwich}, it is sufficient to show that, for every $\mu > 0$, none of the first $\exp(\log n - 2\mu \log n/\log\log n)$ vertices in $\vv{G}_{2,n}$ have degree more than $b_n^+$. 
Indeed, by Lemma~\ref{lemma:maxsum} and~\eqref{eq:resultclaims}, we know that w.h.p.\ $\vv{G}_{1,n}$ has maximum in-degree larger than $b_n^+$ and $\vv{G}_{1,n}\subseteq \vv{G}_n\subseteq \vv{G}_{2,n}$. 
At the same time, by replacing $a_n$ with $b_n^+$ in ~\eqref{eq:chernub} and~\eqref{eq log k_n}, we have that
$$\max_{i\in [|V_n|]} \P{d_{2,n}^s(v_i)\ge b_n^+} \le \exp{}\Big(- \log n + \frac{(\mu+o(1))\log n}{\log\log n}\Big),$$
so a union bound over the first $\exp(\log n - 2\mu \log n/\log\log n)$ vertices in $\vv{G}_{2,n}$ shows that w.h.p.\ $\log(\min \cI_n^{\square}) = (1-o(1))\log n$ in each of the cases $\square\in \{s, +\}$.

For the second statement in~\eqref{eq:loc}, fix any $\delta\in (0, 2\eps)$. Then,  for every $i\in [\delta n, |V_n|]$ and $t>0$, similarly to~\eqref{eq need in (3)}, we have that
\begin{equation}
\P{d_{2,n}^s(v_i)\ge b_n^+}\le \exp{}\Big(-tb_n^+ + (1+\cO(n^{-1/3}))(\e^t-1)\frac{\beta p}{1-p}\Big(1 - \frac{2\eps}{p} \Big(\frac{\delta}{2\eps}\Big)^{1-2\eps/p}\Big)\Big).
\end{equation} 
Then, by choosing 
\begin{equation}
t = \log b_n^+-\log\Big(\frac{\beta p}{1-p} \Big(1- \frac{2\eps}{p} \Big(\frac{\delta}{2\eps}\Big)^{1-2\eps/p}\Big)\Big),
\end{equation} 
mimicking~\eqref{eq:chernub} and~\eqref{eq log k_n}, and using a union bound over $|V_n|\leq n$ many vertices, we arrive at
\begin{equation}
\P{\max_{i\in [\delta n,|V_n|]}d^s_{2,n}(v_i)\geq b_n^+}\leq (1-\delta)\exp\Big(b_n^+\Big(\mu+\log\Big(1-\tfrac{2\eps}{p}\Big(\tfrac{\delta}{2\eps}\Big)^{1-2\eps/p}\Big)+o(1)\Big)+\mathcal O(1)\Big).
\end{equation}
As the logarithm is strictly negative for any choice of $\eps\in(0,1/2)$ and $\delta\in(0,2\eps)$, it follows that there exists a sufficiently small $\mu$ such that the upper bound converges to zero with $n$, which implies the second statement in~\eqref{eq:loc}.

Finally, we concentrate on the third claim in~\eqref{eq:loc}. Fix any $\delta\in (0,2\eps)$. As in the proof of the first claim of~\eqref{eq:loc}, using that the events $\{\vv{G}_{1,n}\subseteq \vv{G}_n\subseteq \vv{G}_{2,n}\}$ and $\{\max_{i\in [|V_n|]} d_{1,n}^-(v_i)\ge b_n^-\}$ hold w.h.p., we obtain that
\begin{align}
&\mathbb P\Big(\cI^-_n\cap \{v_1,\ldots,v_{\lceil \delta n\rceil}\}\neq \emptyset\Big)\nonumber\\
&\leq {}
\bigg(\sum_{i=1}^{\lceil \delta n\rceil} \mathbb P\Big(d_{2,n}^-(v_i)\geq b_n^-\Big)\bigg)+\mathbb P\Big(\max_{i\in [|V_n|]}d_{1,n}^-(v_i) < b_n^-\Big)+\P{\{\vv{G}_{1,n}\subseteq \vv G_n\subseteq \vv G_{2,n}\}^c}.\hspace{3em}\label{eq 3 terms}
\end{align}
The last two terms both tend to 0 as $n\to \infty$ by~\eqref{eq:2ndordmin} and Lemma~\ref{lemma sandwich}. \bas{For the first term, we again use that $d^-_{2,n}(v_i)$ has a binomial distribution with $i-1$ trials and success probability $p_{1i}$, with mean $m_{i,n}$ as in~\eqref{eq:dminmean}. By using the same approach as in~\eqref{eq d_2,n minus}, we can show that, for every sufficiently small $\mu > 0$ and for every $i\in [\lceil \delta n\rceil]$, we have $\P{d_{2,n}^-(v_i)\geq b_n^-} = o(n^{-1})$. The  sum 
in~\eqref{eq 3 terms} thus also tends to 0 as $n\to \infty$, which proves the last statement in Theorem~\eqref{thrm:max intro} and concludes the proof.}
\end{proof}

\section{\texorpdfstring{Conditional concentration of Lipschitz-type statistics: proof of Theorem~\ref{thm concentration intro}}{}}\label{sec:lipschitz}

In this section, we prove Theorem~\ref{thm concentration intro} by applying Azuma's inequality (Lemma~\ref{lemma:chern}(ii)) to some suitable martingales with bounded differences. 
For the first part, we set up our martingale conditionally on $|V_n|$ as well as the out-degree sequence $(d_n^-(v_i))_{i=1}^{|V_n|}$ of $\vv{G}_n$.
Then, at each step, the second end of one unmatched edge is revealed. 
For the second part, conditionally on $(V_i)_{i=1}^n$ and $|E_n| = |E(\vv{G}_n)|$, we expose the edges of $\vv{G}_n$ one by one and with a suitable probability (which is different for different edges, and is determined by the process $(V_i)_{i=1}^n$). 
Our results are inspired by Theorem 2.19 in~\cite{Wor99}; however, despite the fact that the proof method is similar, we do not condition on the entire degree sequence of $G_n$, as opposed to~\cite{Wor99}.

Recall that a function is $L$-Lipschitz if for any two (directed or undirected) graphs $G_1$ and $G_2$ that differ in only one edge (that is, $|E(G_1)\setminus E(G_2)|+|E(G_2)\setminus E(G_1)|\le 1$) we have $|f(G_1) - f(G_2)|\le L$. Theorem~\ref{thm concentration intro} follows immediately from the following two propositions.

\begin{proposition}\label{prop mg conc 1}
For every $L$-Lipschitz function $f$ defined on the set of directed graphs and every $t\ge 0$,
\begin{equation*}
\P{\Big|f(\vv{G}_n) - \mathbb E\left[f(\vv{G}_n)\mid |V_n|, (d_n^-(v_i))_{i=1}^{|V_n|}\right]\Big|\ge t\;\Big|\; |V_n|, (d_n^-(v_i))_{i=1}^{|V_n|}}\le 2\exp\left(-\frac{t^2}{8 |E_n| L^2}\right).
\end{equation*}
\end{proposition}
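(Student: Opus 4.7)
The plan is to apply Azuma's inequality (Lemma~\ref{lemma:chern}\eqref{lem azuma}) to a Doob martingale obtained by revealing the heads of the $|E_n|$ directed edges of $\vv{G}_n$ one at a time, working throughout conditionally on $\mathcal F_0 := \sigma(V_n,(d_n^-(v_i))_{i=1}^{|V_n|})$. Note that $|E_n|=\sum_i d_n^-(v_i)$ is $\mathcal F_0$-measurable. The key structural observation is that, under $\mathcal F_0$, the out-neighbourhoods $S_i := \{v_j : j<i,\ v_i\to v_j\}$, for $i=1,\ldots,|V_n|$, are mutually independent, and each $S_i$ is uniform on the size-$d_n^-(v_i)$ subsets of $\{v_1,\ldots,v_{i-1}\}$. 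Indeed, conditional on the vertex dynamics, $v_i$'s edges at birth are i.i.d.\ Bernoulli; $\{v_1,\ldots,v_{i-1}\}$ is exactly $V_n$ intersected with the vertices alive just before $v_i$'s birth; and intersecting a uniform size-$k$ subset with a given set, conditioned on the size of the intersection, yields a uniform subset of that size. In particular, the conditional law of $\vv G_n$ given $\mathcal F_0$ depends on the vertex process only through $V_n$.

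\paragraph{The Doob martingale.} Enumerate the $|E_n|$ directed out-edges canonically (first by tail, then by head); let $\tau_k$ denote the tail of the $k$-th slot (an $\mathcal F_0$-measurable vertex) and $X_k$ its head. Set $\mathcal F_k := \mathcal F_0 \vee \sigma(X_1,\ldots,X_k)$ and $M_k := \mathbb E[f(\vv G_n)\mid \mathcal F_k]$; then $(M_k)_{k=0}^{|E_n|}$ is a martingale from $M_0 = \mathbb E[f(\vv G_n)\mid \mathcal F_0]$ to $M_{|E_n|} = f(\vv G_n)$. Granting the a.s.\ bound $|M_k-M_{k-1}|\le 2L$, Azuma with $c_k=2L$ directly yields the desired conditional bound $2\exp(-t^2/(8|E_n|L^2))$; if the conditioning in the proposition is interpreted as the coarser $\sigma(|V_n|,(d_n^-(v_i))_{i=1}^{|V_n|})$, the same bound follows by integration since the exponent is measurable with respect to it.

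\paragraph{Switching argument and main obstacle.} Writing $g(x):=\mathbb E[f(\vv G_n)\mid \mathcal F_{k-1},X_k=x]$, one has $M_k-M_{k-1}=g(X_k)-\mathbb E[g(X_k)\mid \mathcal F_{k-1}]$, so it suffices to prove $|g(x)-g(x')|\le 2L$ for any two feasible values $x,x'$. The structural observation above, refined to $\mathcal F_{k-1}$, retains the required uniformity: the revelations $(X_1,\ldots,X_{k-1})$ only remove the already-revealed heads of $\tau_k$ from its pool of candidates and leave the laws of the out-neighbourhoods of all other tails intact. This allows one to define a measure-preserving bijection $\phi$ between the supports of $\mathcal L(\vv G_n\mid \mathcal F_{k-1},X_k=x)$ and $\mathcal L(\vv G_n\mid \mathcal F_{k-1},X_k=x')$ as follows: on a realisation $\vv G$, if $\vv G$ already contains $\tau_k\to x'$, $\phi$ swaps the roles of $x$ and $x'$ among $\tau_k$'s outgoing edges (the underlying graph is unchanged); otherwise $\phi$ deletes $\tau_k\to x$ and inserts $\tau_k\to x'$, a change of exactly two directed edges. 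Lipschitz-ness of $f$ then gives $|f(\vv G)-f(\phi(\vv G))|\le 2L$ and hence $|g(x)-g(x')|\le 2L$. The main delicate step is to verify that $\phi$ is genuinely measure-preserving: this hinges on the independence of out-neighbourhoods of distinct tails under $\mathcal F_0$, which ensures that revelations along other tails do not perturb the conditional law of $\tau_k$'s out-neighbourhood beyond the trivial exclusion of already-revealed heads.
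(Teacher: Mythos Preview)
Your proposal is correct and follows essentially the same approach as the paper: both proofs condition on $|V_n|$ and the out-degree sequence, reveal the heads of the $|E_n|$ out-edges one at a time to build a Doob martingale, and bound each increment by $2L$ via a switching bijection that redirects a single out-edge. Your structural observation (independence and uniformity of the out-neighbourhoods $S_i$) is stated more explicitly than in the paper, and your switching map handles the ``both edges present'' case by a label swap rather than restricting to the symmetric difference, but these are cosmetic differences.
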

\begin{proof}
Order the out-going half-edges of $\vv{G}_n$ in increasing order with respect to the label of their starting endvertex, ties being broken arbitrarily. Then, consecutively connect each of these out-going half-edges to a vertex with smaller label in such a way that no double edges are formed. 
Observe that, conditionally on $|V_n|$ and $(d_n^-(v_i))_{i=1}^{|V_n|}$, this stochastic algorithm outputs the graph distributed as $\vv{G}_n$.
Let $(\cF_i)_{i=1}^{|E_n|}$ be the natural filtration associated to above algorithm conditionally on $|V_n|$ and $(d_n^-(v_i))_{i=1}^{|V_n|}$.
We define the martingale $(X_i)_{i=1}^{|E_n|}$ by setting $X_i = \mathbb E[f(\vv{G}_n)\mid \cF_i]$. We prove that
\begin{equation}\label{eq claim}
\forall i\in [|E_n|-1], |X_{i+1} - X_i| \le 2L.
\end{equation}

To show~\eqref{eq claim}, fix any $i\in [|E_n|-1]$ and let $e_i^-$ be the outgoing edge from a vertex $u_i$ that is matched at step $i$ of the algorithm.
Also, let $(v_j^{i})_{j=1}^{t_i}$ be the vertices to which $e_i^-$ may be matched conditionally on $\cF_i$. 
Finally, for every $j\in [t_i]$, let $\vv{G}_n^{i}$ be the partially constructed directed graph up to step $i$ and let $\cS_j$ denote the family of graphs containing $\vv{G}_n^{i}\cup \{u_iv_j^{i}\}$. 
Then, for every pair $(j_1, j_2)$ of different positive integers among $[t_i]$, $|\cS_{j_1}| = |\cS_{j_2}|$ since at every step, the number of choices for attaching the next out-going half-edge is fixed. Moreover, the map from $\cS_{j_1}\setminus \cS_{j_2}$ to $\cS_{j_2}\setminus \cS_{j_1}$ that deletes the edge $u_i v_i^{j_1}$ and instead constructs $u_i v_i^{j_2}$ is bijective. As $f$ is $L$-Lipschitz, the average of $f$ over different classes is the same up to $2L$ (the factor 2 comes from the fact that one edge is deleted and another is constructed). We conclude that
\begin{equation*}
|X_{i+1} - X_i| \le \max_{j_1, j_2\in [t_i]} |\mathbb E[f(\vv{G}_n)\mid \vv{G}_n\in \mathcal S_{j_1}] - \mathbb E[f(\vv{G}_n)\mid \vv{G}_n\in \mathcal S_{j_2}]| \le 2L.
\end{equation*}
Applying Azuma's inequality and using~\eqref{eq claim} finishes the proof.
\end{proof}

The upper bound in Proposition~\ref{prop mg conc 1} has the disadvantage of being rather constrained in terms of the graph structure as the entire out-degree sequence is exposed. Our next proposition avoids conditioning on the structure of the graph at the price of having complete information about the birth-death process $(V_i)_{i=1}^n$.

\begin{proposition}\label{prop mg conc 2}
For any $L$-Lipschitz function $f$ defined on the set of directed graphs and every $t\ge 0$,
\begin{equation*}
\P{\Big|f(\vv{G}_n) - \mathbb E\left[f(\vv{G}_n)\mid (V_i)_{i=1}^n, |E_n|\right]\Big|\ge t\;\Big|\; (V_i)_{i=1}^n, |E_n|}\le 2\exp\left(-\frac{t^2}{8 |E_n| L^2}\right).
\end{equation*}
\end{proposition}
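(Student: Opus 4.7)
My plan follows the same template as the proof of Proposition~\ref{prop mg conc 1}: construct a Doob martingale of length $|E_n|$ with almost-sure differences bounded by $2L$, and then conclude by Azuma's inequality (Lemma~\ref{lemma:chern}\eqref{lem azuma}), which gives exactly $2\exp(-t^2/(8|E_n|L^2))$ since $\sum_{i=1}^{|E_n|}(2L)^2 = 4|E_n|L^2$.

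The crucial structural observation that I would exploit is the following. Conditionally on $(V_i)_{i=1}^n$, all potential edges whose later endpoint is $v_j$ share the same connection probability $\min\{\beta/|V_{v_j-1}|,1\}$. Hence the conditional Bernoulli density of $\vv{G}_n$ given $(V_i)_{i=1}^n$ and $|E_n|$ is proportional to $\prod_j (p_{v_j}/(1-p_{v_j}))^{d_n^-(v_j)(G)}$, which depends on $G$ only through its out-degree sequence $\mathbf{d}=(d_n^-(v_i))_{i=1}^{|V_n|}$. Consequently, conditionally on $(V_i)_{i=1}^n$, $|E_n|$ and $\mathbf{d}$, the graph $\vv{G}_n$ is uniformly distributed among directed graphs on $V_n$ with these out-degrees and all edges oriented from later to earlier endpoints --- this is exactly the setup of Proposition~\ref{prop mg conc 1}.

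I would then sample $\vv{G}_n$ in two stages: first draw $\mathbf{d}$ from its conditional distribution given $(V_i)_{i=1}^n$ and $|E_n|$, then apply the pairing-style algorithm from Proposition~\ref{prop mg conc 1}. The key point is that $\vv{G}_n$ can be generated by a single stochastic procedure with exactly $|E_n|$ edge-placement steps. Setting $\cF_i$ to be the $\sigma$-algebra generated by $(V_j)_{j=1}^n$, $|E_n|$, $\mathbf{d}$ and the first $i$ placed edges (with $\cF_0$ also containing $\mathbf{d}$ so that the $|E_n|$-step structure is preserved), the bijection argument of Proposition~\ref{prop mg conc 1} transfers verbatim to show $|X_{i+1}-X_i|\le 2L$ for $X_i := \mathbb{E}[f(\vv{G}_n)\mid\cF_i]$.

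The main obstacle --- and the only substantive new point --- is that the Doob martingale above naturally converges to $\mathbb{E}[f(\vv{G}_n)\mid(V_i)_{i=1}^n,\mathbf{d}]$ rather than to the desired $\mathbb{E}[f(\vv{G}_n)\mid(V_i)_{i=1}^n,|E_n|]$, because $\mathbf{d}$ has been included in $\cF_0$. To handle this, I would argue that revealing $\mathbf{d}$ can be folded into the same $|E_n|$-step filtration by interleaving ``degree-revelation'' with the edge placements: at each of the $|E_n|$ steps, instead of committing to a source vertex using pre-sampled degrees, jointly sample the source and target of the next edge according to the correct conditional distribution given the previously placed edges and $|E_n|$. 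Because the conditional distribution depends only on the grouped weights $w_{v_j}$ (uniform across targets for a given source), the same swap bijection --- exchanging the target vertex of the $i$-th edge between two candidates $v,v'$ --- is still measure-preserving (the weights $w_{v_j}$ of the two configurations are identical, as they share the same source). This again yields $|X_{i+1}-X_i|\le 2L$ and completes the proof via Azuma. The delicate point is to verify that this joint source-target sampling scheme indeed produces $\vv{G}_n$ with the correct conditional distribution; this reduces to the observation that the conditional Bernoulli with grouped-equal weights can be sampled sequentially by picking, at each step, a not-yet-chosen pair $(v_k,v_j)$ with $k<j$ proportionally to $w_{v_j}$, whose validity is guaranteed precisely by the within-group uniformity established above.
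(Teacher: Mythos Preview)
Your one-stage scheme --- drawing the $|E_n|$ edges sequentially, each new edge chosen among the not-yet-selected pairs with probability proportional to $w_{v_j}$, and then applying Azuma to the Doob martingale $X_i=\mathbb E[f(\vv G_n)\mid\cE_i]$ --- is exactly the paper's construction; the two-stage detour through $\mathbf d$ is unnecessary and, as you correctly observe, would start the martingale at the wrong conditional expectation.

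The gap is in your bounded-differences step. You argue that swapping the \emph{target} of $e_i$ while keeping the source $v_j$ fixed is measure-preserving because the weight $w_{v_j}$ is unchanged, and that is true. But it is not enough: in the one-stage procedure the $i$-th step reveals the whole edge, so $|X_i-X_{i-1}|\le 2L$ requires comparing $\mathbb E[f(\vv G_n)\mid\cE_{i-1},\,e_i=e]$ and $\mathbb E[f(\vv G_n)\mid\cE_{i-1},\,e_i=e']$ for \emph{all} $e,e'\notin\cE_{i-1}$, in particular for pairs with different sources (hence unequal weights), and your target-swap does not cover that case. The paper does not restrict to same-source swaps: for every $e\notin\cE_{i-1}$ it introduces $\cS_e:=\{\vv G:\{e_1,\dots,e_{i-1},e\}\subseteq E(\vv G)\}$ and uses the map ``remove $e$, insert $e'$'' between $\cS_e\setminus\cS_{e'}$ and $\cS_{e'}\setminus\cS_e$, which alters at most two edges of the final graph and yields the $2L$ bound regardless of the sources of $e$ and $e'$.
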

\begin{proof}
Let us consider the following process conditionally on $(V_i)_{i=1}^n$ and $E_n$. We start with an empty graph on $V_n$ and, at each of $|E_n|$ rounds, we add one edge to the graph. 
More precisely, for every $i\in [|E_n|]$, conditionally on the set $\cE_{i-1} := \{e_1, \ldots, e_{i-1}\}$ of already exposed edges, we add an edge $e_i = u_iv_i \notin \cE_{i-1}$ (where $u_i < v_i$) with probability proportional to $\beta/|V_{v_i}|$. 
Setting $X_i = \mathbb E[f(\vv{G}_n)\mid \cE_i]$ for all $i\in [|E_n|]$, we have that $(X_j)_{j=0}^{E_n}$ is a martingale. Moreover, by replacing the families $(\cS_j)$ from the proof of Proposition~\ref{prop mg conc 1} by the families 
$$\cS_e := \left\{\vv{G}\subseteq \{0,1\}^{\binom{V_n}{2}}: \{e_1, \ldots, e_{i-1}, e\}\subseteq E(\vv{G})\right\}$$
for all $e\notin \cE_{i-1}$, we similarly deduce that, for all $i\in [|E_n|]$, we have $|X_{i-1} - X_i|\le 2L$.
Again, the proof is completed by the use of Azuma's inequality.
\end{proof}

\section{Conclusion}
In this paper, we defined and studied in depth a new model of a dynamic random graph with vertex removal. Some questions remain open. 
\begin{itemize}
\item Perhaps the most interesting question that we did not answer is to determine the first term in the expression of the diameter \lyu{of the giant component $\cC_1$} in $G_n$. By Lemma~\ref{lemma sandwich} and Theorem~\ref{thm BJR typical distance}, we know that w.h.p. $\mathrm{diam}(\cC_1) = \Omega(\log n)$.
\item \bas{As discussed in Remark~\ref{rem:brw}, we were not able to explicitly determine the survival probability $\gamma$ of the local weak limit $(\cT,0)$, nor $\beta_c$. Due to the non-identical offspring distribution of $\cT$, these seem rather challenging questions.}
\item We do not know the expected distance between two vertices of $G_n$ chosen uniformly at random \lyu{in the giant component}. It is natural to believe that it satisfies Part~3 of Theorem~\ref{thm approximate intro} with the same constant $\zeta$.
\item Another natural question is to determine the order of the largest component in $G_n$ when $\beta < \beta_c$. The fact that \lyu{our kernel of interest} $\kappa$ is unbounded over $(0,1]^2$ does not allow us to use further results from~\cite{BolJanRio05} to establish a more precise expression for $|\cC_1(G_n)|$. We conjecture that w.h.p.\ $|\cC_1| = \Theta(\log n)$.
\item It would also be interesting to provide a better description of the sets of vertices $\cI^{\square}_n$ defined in Theorem~\ref{thrm:max intro}. Although our proof shows bounds that are a bit stronger than the ones stated in Theorem~\ref{thrm:max intro}, we do not provide any details about the size of these sets as well as the positions of the vertices in the bulk (in case $|\cI^{\square}_n| = \omega(1)$).
\end{itemize}

\paragraph{Acknowledgements.}
We are thankful to Dieter Mitsche for useful discussions and suggestions,
and to the two anonymous referees for multiple suggestions, corrections and improvements.

\paragraph{Funding.}
Josep D\'iaz is supported by grant MOTION, PID2020-112581GB-C21 from MCIN/ AEI / 10.13039/501100011033. Bas Lodewijks has been supported by grant GrHyDy ANR-20-CE40-0002.

\bibliographystyle{amsplain}
\bibliography{Bib}

\providecommand{\bysame}{\leavevmode\hbox to3em{\hrulefill}\thinspace}
\providecommand{\MR}{\relax\ifhmode\unskip\space\fi MR }
\providecommand{\MRhref}[2]{%
  \href{http://www.ams.org/mathscinet-getitem?mr=#1}{#2}
}
\providecommand{\href}[2]{#2}
\begin{thebibliography}{10}

\bibitem{BanBha21}
Sayan Banerjee and Shankar Bhamidi, \emph{Persistence of hubs in growing random
  networks}, Probability Theory and Related Fields \textbf{180} (2021), no.~3,
  891--953.

\bibitem{Barabasi99}
Albert Barab\'asi and Reka Albert, \emph{Emergence of scaling in random
  networks}, Science \textbf{289} (1999), 509--512.

\bibitem{BSW13}
Albert~L. Barab{\'a}si, Chaoming Song, and Dashung Wang, \emph{Quantifying
  long-term scientific impact}, Science \textbf{342} (2013), no.~6154,
  127--132.

\bibitem{BelBlaKamKort23.1}
Etienne Bellin, Arthur Blanc-Renaudie, Emmanuel Kammerer, and Igor Kortchemski,
  \emph{Uniform attachment with freezing}, 2023.

\bibitem{BelBlaKamKort23.2}
\bysame, \emph{Uniform attachment with freezing: Scaling limits}, 2023.

\bibitem{BerBorChaSab14}
Noam Berger, Christian Borgs, Jennifer Chayes, and Amin Saberi,
  \emph{Asymptotic behavior and distributional limits of preferential
  attachment graphs}, The Annals of Probability \textbf{42} (2014), no.~1,
  1--40.

\bibitem{BolJanRio05}
B{\'e}la Bollob{\'a}s, Svante Janson, and Oliver Riordan, \emph{The phase
  transition in inhomogeneous random graphs}, Random Structures \& Algorithms
  \textbf{31} (2007), no.~1, 3--122.

\bibitem{CL04}
Fan Chung and Linyuan Lu, \emph{Coupling online and offline analyses for random
  power law graphs}, Internet Mathematics \textbf{1} (2004), no.~4, 409--461.

\bibitem{Cooper04}
Colin Cooper, Alan Frieze, and Juan Vera, \emph{Random deletion in a scale-free
  random graph process}, Internet Mathematics \textbf{1} (2004), no.~4,
  463--483.

\bibitem{Dei10}
Maria Deijfen, \emph{Random networks with preferential growth and vertex
  death}, Journal of applied probability \textbf{47} (2010), no.~4, 1150--1163.

\bibitem{DerMor13}
Steffen Dereich and Peter M{\"o}rters, \emph{Random networks with sublinear
  preferential attachment: the giant component}, The Annals of Probability
  \textbf{41} (2013), no.~1, 329--384.

\bibitem{DevLu95}
Luc Devroye and Jiang Lu, \emph{The strong convergence of maximal degrees in
  uniform random recursive trees and dags}, Random Structures \& Algorithms
  \textbf{7} (1995), no.~1, 1--14.

\bibitem{DHKS17}
Ken~A. Dill, Michael~J. Hazoglu, Vivek Kulkarni, and Steven~S. Skiena,
  \emph{Citation histories of papers: sometimes the rich get richer, sometimes
  they don't}, 2017.

\bibitem{DorMenSam01}
Sergey~N. Dorogovtsev, Jos{\'e} F.~F. Mendes, and Alexander~N. Samukhin,
  \emph{Anomalous percolation properties of growing networks}, Physical Review
  E \textbf{64} (2001), no.~6, 066110.

\bibitem{Dur03}
Rick Durrett, \emph{Rigorous result for the {CHKNS} random graph model},
  Discrete Mathematics \& Theoretical Computer Science (2003), 95--104.

\bibitem{ER59}
Paul Erd\H{o}s and Alfred Rényi, \emph{On random graphs {I}.}, Publ. Math.
  Debrecen \textbf{6} (1959), 290--297.

\bibitem{GHL20}
Alessandro Garavaglia, Remco van~der Hofstad, and Nelly Litvak, \emph{Local
  weak convergence for pagerank}, The Annals of Applied Probability \textbf{30}
  (2020), no.~1, 40--79.

\bibitem{GvdHW17}
Alessandro Garavaglia, Remco van~der Hofstad, and Gerhard Woeginger, \emph{The
  dynamics of power laws: Fitness and aging in preferential attachment trees},
  Journal of Statistical Physics \textbf{168} (2017), no.~6, 1137--1179.

\bibitem{Gilbert59}
Edward~N. Gilbert, \emph{On random graphs}, The Annals of Mathematical
  Statistics \textbf{4} (1959), no.~30, 1141--1144.

\bibitem{HooHas08}
Abdolhossein Hoorfar and Mehdi Hassani, \emph{Inequalities on the {L}ambert
  ${W}$ function and hyperpower function}, J. Inequal. Pure and Appl. Math
  \textbf{9} (2008), no.~2, 5--9.

\bibitem{JLR00}
Svante Janson, Tomasz Luczak, and Andrzej Rucinski, \emph{Random graphs}, John
  Wiley \& Sons, 2000.

\bibitem{KW88}
Steven Kalikow and Benjamin Weiss, \emph{When are random graphs connected?},
  Israel journal of mathematics \textbf{62} (1988), no.~3, 257--268.

\bibitem{LP17}
David~A. Levin and Yuval Peres, \emph{Markov chains and mixing times}, vol.
  107, American Mathematical Soc., 2017.

\bibitem{Lin02}
Torgny Lindvall, \emph{Lectures on the coupling method}, Courier Corporation,
  2002.

\bibitem{Lo21}
Tiffany Y.~Y. Lo, \emph{Weak local limit of preferential attachment random
  trees with additive fitness}, 2021.

\bibitem{Lod21}
Bas Lodewijks, \emph{Location of maximum degree vertices in weighted recursive
  graphs with bounded random weights}, 2021.

\bibitem{Lod22}
\bysame, \emph{On joint properties of vertices with a given degree or label in
  the random recursive tree}, 2022.

\bibitem{Mitzenmacher01}
Michael Mitzenmacher, \emph{A brief history of generative models for power law
  and lognormal distributions}, Internet Mathematics \textbf{1} (2001),
  226--251.

\bibitem{Sal11}
Justin Salez, \emph{Some implications of local weak convergence for sparse
  random graphs}, Ph.D. thesis, Universit{\'e} Pierre et Marie Curie -- Paris
  VI; Ecole Normale Sup{\'e}rieure de Paris, 2011.

\bibitem{Shepp89}
Larry Shepp, \emph{Connectedness of certain random graphs}, Israel Journal of
  mathematics \textbf{67} (1989), no.~1, 23--33.

\bibitem{van2017}
Remco van~der Hofstad, \emph{Random graphs and complex networks}, Cambridge
  Series in Statistical and Probabilistic Mathematics, Cambridge University
  Press, 2017.

\bibitem{Hof24}
\bysame, \emph{Random {G}raphs and {C}omplex {N}etworks, {V}olume {II}},
  Cambridge Series in Statistical and Probabilistic Mathematics, Cambridge
  University Press, 2024.

\bibitem{Wor99}
Nicholas~C. Wormald, \emph{Models of random regular graphs}, London
  Mathematical Society Lecture Note Series (1999), 239--298.

\end{thebibliography}

\end{document}